\newtheorem{theorem}{Theorem}
\newtheorem{lemma}{Lemma}[section]
\newtheorem{corollary}[lemma]{Corollary}
\newtheorem{definition}[lemma]{Definition}
\newtheorem{remark}[lemma]{Remark}
\newtheorem{proposition}[lemma]{Proposition}
\def\be{\begin{eqnarray}}
\def\ee{\end{eqnarray}}
\def\beal{\begin{aligned}}
\def\enal{\end{aligned}}
\newcommand{\eps}{\varepsilon}
\newcommand{\ga}{\gamma}
\newcommand{\dps}{\displaystyle}
\newcommand{\RR}{\mathbb{R}}
\newcommand{\CC}{\mathbb{C}}
\newcommand{\TT}{\mathbb{T}}
\newcommand{\Q}{\mathbb{Q}}
\newcommand{\ZZ}{\mathbb{Z}}
\newcommand{\MM}{\mathcal{M}}
\newcommand{\WW}{\mathcal{W}}
\newcommand{\PP}{\mathcal{P}}
\newcommand{\GG}{\mathcal{G}}
\newcommand{\II}{\mathcal{I}}
\newcommand{\BB}{\mathcal{B}}
\newcommand{\LL}{\mathcal{L}}
\newcommand{\YY}{\mathcal{Y}}
\newcommand{\KK}{\mathcal{K}}
\newcommand{\KKK}{\mathbb{K}}
\newcommand{\SSS}{\mathcal{S}}
\newcommand{\DD}{\mathbb{D}}
\newcommand{\XX}{\mathcal{X}}
\newcommand{\OO}{\mathcal{O}}
\newcommand{\FF}{\mathcal{F}}
\newcommand{\HH}{\mathcal{H}}
\newcommand{\VV}{\mathcal{V}}
\newcommand{\RRR}{\mathcal{R}}
\newcommand{\UU}{\mathcal{U}}
\newcommand{\NNN}{\mathcal{N}}
\newcommand{\EE}{\mathcal{E}}
\newcommand{\JJ}{\mathcal{J}}
\newcommand{\AAA}{\mathcal{A}}
\newcommand{\ZZZ}{\mathcal{Z}}
\newcommand{\CCC}{\mathcal{C}}
\newcommand{\QQQ}{\mathcal{Q}}
\newcommand{\Id}{\mathrm{Id}}
\newcommand{\DDD}{\mathcal{D}}
\newcommand{\ii}{^{-1}}
\newcommand{\de}{\delta}
\newcommand{\pa}{\partial}
\newcommand{\la}{\lambda}
\newcommand{\inn}{\mathrm{in}}
\newcommand{\out}{\mathrm{out}}
\newcommand{\al}{\alpha}
\newcommand{\kk}{\kappa}
\newcommand{\rr}{\rho}
\newcommand{\tet}{\theta}
\newcommand{\ol}{\overline}
\newcommand{\La}{\Lambda}
\newcommand{\bet}{\beta}
\renewcommand{\Re}{\mathrm{Re\, }}
\renewcommand{\Im}{\mathrm{Im\,}}
\newcommand{\wt}{\widetilde}
\newcommand{\wh}{\widehat}
\newcommand{\hyp}{\mathrm{hyp}}
\newcommand{\el}{\mathrm{ell}}
\newcommand{\mix}{\mathrm{mix}}
\newcommand{\adj}{\mathrm{adj}}
\newcommand{\loc}{\mathrm{loc}}
\newcommand{\glob}{\mathrm{glob}}
\begin{document}

\title{Growth of Sobolev norms in the cubic defocusing nonlinear
Schr\"odinger equation}
\maketitle

\author{\qquad \qquad \qquad \qquad
\qquad M. Guardia
 \footnote{ IAS and University of Maryland at College Park
    (\url{marcel.guardia@upc.edu})}
\qquad V. Kaloshin \footnote{ IAS and University of
    Maryland at College Park (\url{vadim.kaloshin@gmail.com})} }


\begin{abstract}
We consider the cubic defocusing nonlinear Schr\"odinger
equation in the two dimensional torus. Fix $s>1$.
Colliander, Keel, Staffilani, Tao and Takaoka proved in
\cite{CollianderKSTT10} the existence of solutions with
$s$-Sobolev norm growing in time.

We establish the existence of solutions with polynomial time
estimates. More exactly, there is $c>0$ such that for any
$\KK\gg 1$ we find a solution $u$ and a time $T$
such that $\| u(T)\|_{H^s}\geq\KK \| u(0)\|_{H^s}$. Moreover,
time $T$ satisfies polynomial bound $0<T<\KK^c$.
\end{abstract}

\tableofcontents

\section{Introduction}
Let us consider the periodic cubic defocusing nonlinear Schr\"odinger
equation (NLS),
\begin{equation}\label{def:NLS}
\left\{\begin{aligned}
&-i \pa_t u+\Delta u=|u|^2 u\\
&u(0,x)=u_0(x)
\end{aligned}\right.
\end{equation}
where $x\in\TT^2=\RR^2/(2\pi\ZZ)^2$, $t\in\RR$ and
$u:\RR\times\TT^2\rightarrow\CC$.

The solutions of equation \eqref{def:NLS}
conserve two quantities: the Hamiltonian
\begin{equation*}
 E[u](t)=\int_{\TT^2}\left(\frac{1}{2}\left|\nabla
u\right|^2+\frac{1}{4}|u|^4\right)
 dx(t)
\end{equation*}
and mass
\begin{equation}\label{def:NLS:mass}
 \MM[u](t)=\int_{\TT^2}|u|^2dx(t)=\int_{\TT^2}|u|^2dx(0),
\end{equation}
which is just the square of the $L^2$-norm of the solution for any $t>0$.
It is useful to study solutions $u(t)$ in a family of Sobolev spaces $H^s$
with the corresponding $H^s$-norms
\[
\|u(t)\|_{H^s(\TT^2)}:=\|u(t,\cdot)\|_{H^s(\TT^2)}:=
\left(\sum_{n\in\ZZ^2} \langle n\rangle ^{2s}|\hat u(t,n)|^2\right)^{1/2},
\]
where $\langle n\rangle=(1+|n|^2)^{1/2}$ and,
\[
 \hat u(t,n):= \int_{\TT^2} u(t,x)e^{-in\cdot x}\ dx.
\]
The local-in-time well-posedness for any $u_0\in H^s(\TT^2),\ s>0$
was proven by Bourgain \cite{Bourgain93}.
This along with the two conservation
laws, implies existence of a smooth solution (\ref{def:NLS}) for all time.
It follows from conservation of energy $E[u](t)$ that the $H^1$-norm
of any solution of (\ref{def:NLS}) is uniformly bounded. Our main
goal is {\it to look for solutions whose higher Sobolev norms
$\|u(t)\|_{H^s(\TT^2)},\ s>1$, can grow in time.}

If the $H^s$-norm can grow indefinitely for some given $s>1$, while
the $H^1$-norm stays bounded, then we have solutions which initially
oscillate only on the scales comparable to the spatial period and
eventually oscillate on arbitrarily small scales. To see that compare
these norms. The only possibility for $H^s$ to grow indefinitely is
that the energy of a solution of (\ref{def:NLS}) can penetrate to
higher and higher Fourier modes.

On the one-dimensional torus, equation (\ref{def:NLS})
is completely integrable due to the famous result of
Zakharov-Shabat \cite{ZakharovS71} (see also \cite{Grebert12}).
As a corollary $\|u(t)\|_{H^s(\TT^1)}\le C \|u(0)\|_{H^s(\TT^1)},\
s\ge 1$ for all $t>0$. If one replaces the nonlinearity
$|u|^2 u=\partial_{\bar u} P(|u|^2)$ in (\ref{def:NLS}) with
a more general polynomial, then Bourgain \cite{Bourgain96} and
Staffilani \cite{Staffilani} proved at most polynomial growth
of Sobolev norms. Namely, for some $C>0$ we have
\[
\|u(t)\|_{H^s}\le t^{C(s-1)}\|u(0)\|_{H^s} \quad \text{ for }\qquad t\to \infty.
\]
In \cite{Bourgain00a} Bourgain applied a version of Nekhoroshev theory.
He proved that for a 1-dimensional NLS with a polynomial nonlinearity
$P(|u|^2)$ satisfying $P(0)=P'(0)=P''(0)=0$ for $s$ large
and a typical initial data $u(0) \in H^s(\TT)$ of small size $\eps$,
i.e. $\|u(0)\|\le \eps$ we have
\[
\sup_{|t|<T}\|u(t)\|_{H^s}\le C \eps,
\]
where $T\le \eps^{-A}$ with $A=A(s)\to 0$ as $s\to +\infty$.
This is an indication of absence of a polynomial growth and
motivated Bourgain \cite{Bourgain00b} to pose the following question:

\vskip 0.1in

{\it Are there solutions in dimension $2$ or higher with unbounded
growth of $H^s$-norm for $s>1$? }

\vskip 0.1in

Moreover, he conjectured, that in case this is true, the growth should
be subpolynomial in time, that is,
\[
\|u(t)\|_{H^s}\ll  t^{\eps}\|u(0)\|_{H^s} \quad \text{ for }
\qquad t\to \infty, \text{ for all }\eps>0.
\]
There are several papers obtaining improved polynomial
{\it upper} bounds for the growth of Sobolev norms for equation \eqref{def:NLS} and
also generalizing these results to other nonlinear Schr\"odinger equations either on $\RR$,
or $\RR^2$, or on compact manifolds
\cite{Staffilani97, CollianderDKS01,Bourgain04,Zhong08, CatoireW10, Sohinger11,CollianderKO12}.
Similar results have been obtained for the wave equation \cite{Bourgain96} and
for the Hartree equation \cite{Sohinger10a,Sohinger10b}.

All of the cited above papers give {\it upper} bounds of the growth
but {\it do not obtain} orbits which undergo growth. Indeed, there
are few results obtaining such orbits. In \cite{Bourgain96}, Bourgain
constructs orbits with unbounded growth of the Sobolev norms for
the wave equation with a cubic nonlinearity but with a spectrally
defined Laplacian. In \cite{GerardG10,Pocovnicu11}, it is shown growth
of Sobolev norms for the Szeg\"o equation, and in \cite{Pocovnicu12}
for certain nonlinear wave equation.

Concerning the nonlinear Schr\"odinger equation, Kuksin in \cite{Kuksin97b}
(see related works \cite{Kuksin95, Kuksin96, Kuksin97, Kuksin99})
studied the growth of Sobolev norms but for the equation
\[
-i\dot w=-\de \Delta w+|w|^{2p}w, \,\,\de\ll 1,\ p\geq 1.
\]
He obtained solutions whose Sobolev norms grow by an inverse power of $\de$.
Note that $u_\de(t,x)=\de^{-\frac{1}{2}}w(\de\ii t,x)$ is a solution of  \eqref{def:NLS}.
Therefore, the solutions that he obtains correspond to orbits of equation
\eqref{def:NLS} with large initial data. The present paper is closely related
to  \cite{CollianderKSTT10}. In this paper, it was shown that for any $s>1$
the $H^s$-norm can grow by any predetermined factor.
The initial data there are not required to be large as \cite{Kuksin97b},
but rather have a small initial $H^s$-norm with $s>1$.  {\it Essentially using
construction from this paper} \cite{CollianderKSTT10} we not only construct
solutions with similar properties, but also estimate their speed of diffusion.

The main result of this paper is
\begin{theorem}\label{thm:main}
Let $s>1$. Then there exists $c>0$ with the following property:
for any large $\KK\gg 1$ there exists a a global solution $u(t,x)$
of (\ref{def:NLS}) and a time $T$ satisfying
\[
0 < T \leq \KK^c
\]
such that
\[
\|u(T)\|_{H^s}\ge \KK\,\|u(0)\|_{H^s}.
\]
Moreover, this solution can be chosen to satisfy
\[
 \|u(0)\|_{L^2}\le \KK^{-(s-1)c/4+2/(s-1)}.
\]
\end{theorem}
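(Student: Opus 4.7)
I would follow the resonant-construction approach of Colliander--Keel--Staffilani--Takaoka--Tao (CKSTT) and then upgrade every estimate to be quantitative in $\KK$. The plan is to (i) choose a finite set $\Lambda = \Lambda_1 \cup \dots \cup \Lambda_N \subset \ZZ^2$ of frequency modes organized into $N$ ``generations'' with combinatorial resonance properties, so that four-wave rectangle families within $\Lambda$ couple only consecutive generations; (ii) restrict the NLS flow to data essentially supported on $\Lambda$; (iii) perform a Poincar\'e normal form removing non-resonant four-wave interactions, reducing the dynamics on $\Lambda$ to a resonant finite system and further to a ``toy model'' of $N$ complex scalars $b_1(t),\dots,b_N(t)$; (iv) produce a toy-model trajectory that transfers mass from generation $1$ to generation $N$; and (v) approximate this trajectory by a genuine NLS solution while controlling every truncation error over the full sliding time. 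To force $H^s$-growth by a factor $\KK$, one distributes the generations so that the weighted frequency radius grows by $\KK^{1/(s-1)}$ across $\Lambda_1 \to \Lambda_N$, which forces $N \sim \log_2 \KK$.

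\textbf{The toy model and sliding.} The toy model is an autonomous Hamiltonian ODE on $\CC^N$ with conserved mass $\sum_j |b_j|^2$; each coordinate axis $\mathbf e_k=(0,\dots,1,\dots,0)$ is a hyperbolic equilibrium on the mass sphere, and CKSTT construct a ``sliding'' orbit passing successively within distance $\sigma\ll 1$ of $\mathbf e_1,\dots,\mathbf e_N$. Local saddle analysis gives a sojourn time $O(|\log\sigma|)$ near each $\mathbf e_k$, so the total toy-model travel time is $T_{\mathrm{toy}} = O(N|\log\sigma|)$. With $N\sim\log\KK$ this is polylogarithmic in $\KK$, already much better than what a black-box application of CKSTT would produce; the task is to show that this toy-model budget can actually be realized by a true NLS solution.

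\textbf{Approximation and polynomial-in-$\KK$ bound.} After rescaling $u(t,x) = \lambda v(\lambda^2 t,x)$ to shrink the data, I would pass to interaction-picture Fourier coefficients $r_n(t)=e^{it|n|^2}\hat v(t,n)$ and split their evolution into a resonant part (giving precisely the toy model on $\Lambda$), a near-resonant correction supported on $\Lambda$, and a fast-oscillating non-resonant part. Integration by parts on the oscillatory phases (Birkhoff-normal-form trick) shows the non-resonant contribution produces an error of order $\lambda^2$ per unit rescaled time. A Gronwall bootstrap along the toy-model trajectory then gives tracking error $\lesssim \lambda^2 e^{CT_{\mathrm{toy}}}$, which is only polynomial in $\KK$ because $T_{\mathrm{toy}}\sim\log\KK$. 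Choosing $\lambda$ as a suitable negative power of $\KK$ keeps this below $\sigma$ everywhere along the sliding orbit. Unwinding the rescaling converts $T_{\mathrm{toy}}$ into the physical time $T\leq\KK^c$, and the $L^2$ norm at $t=0$ is controlled by $\lambda$ times $|\Lambda|^{1/2}$, yielding the stated $\|u(0)\|_{L^2}\leq \KK^{-(s-1)c/4+2/(s-1)}$ after optimization.

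\textbf{Main obstacle.} The delicate step is the error control during the hyperbolic passages: near each $\mathbf e_k$ the linearized toy flow has a positive Lyapunov exponent, so any error in the NLS-vs-toy tracking is amplified exponentially in the local sojourn $|\log\sigma|$, and this happens $N$ times in succession. The proof must therefore balance the polynomial-in-$\KK$ gain from the normal-form truncation against this polynomial-in-$\KK$ amplification, keeping the total error below $\sigma$ uniformly over all generations. Threading this needle requires a simultaneous choice of $\lambda$, the frequency radius $R$ on which $\Lambda$ sits, the saddle closeness $\sigma$, and the generation count $N$, together with sharp estimates on the near-resonant corrections not removed by the normal form; this quantitative tuning is the heart of the polynomial time bound and the principal technical hurdle.
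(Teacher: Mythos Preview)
Your overall architecture matches the paper's: normal form, resonant set $\Lambda$, reduction to the toy model, sliding orbit through the saddle chain, and approximation of the toy trajectory by a true NLS solution. Where your proposal has a genuine gap is in the toy-model step itself, which you treat as essentially settled by a ``local saddle analysis'' giving sojourn time $O(|\log\sigma|)$ per saddle and hence $T_{\mathrm{toy}}=O(N|\log\sigma|)$. This is exactly the point at which the paper departs from CKSTT, and it is the heart of the polynomial bound.

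The periodic orbits $\TT_j$ of the toy model are not generic hyperbolic saddles: by the symmetry of the planes $L_{j-1}$ and $L_j$ the hyperbolic eigenvalues have multiplicity two, so the local normal form retains resonant cubic terms such as $y_1^2 x_2^2$. The paper shows (Section~\ref{sec:MainIdeasSaddle}) that these resonant terms make the local map \emph{not} well approximated by its linearization: an incoming point at distance $\sim\delta$ from the heteroclinic acquires an extra factor $\ln(1/\delta)$ in two of its components upon exit, and after undoing the normal form this becomes $\ln^2(1/\delta)$. Iterating through $N$ saddles the accumulated deviation is $\delta\,\ln^{2^{N}}(1/\delta)$, which forces $\delta\lesssim C^{-2^N}$ to keep control and yields diffusion time $\sim C^{\KK^\alpha}$ rather than $\KK^c$ (see Remark~\ref{remark:SlowerTime}). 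The fix is not a sharper Gronwall but a structural one: at each saddle one must \emph{choose the incoming data on a specific codimension-one set} (equation~\eqref{def:ChoiceEtaHat}) so that the leading resonant contribution to the $x_1$-component cancels exactly at exit time. Propagating this constraint through the chain requires maintaining a product-like structure on the tracked sets (Definition~\ref{def:ProductLikeSet}) and a Shilnikov-type boundary-value analysis of the local map. Your proposal locates the main obstacle in the NLS-versus-toy approximation; in fact that step is comparatively soft once the toy orbit is built with the cancellation, and the real difficulty is internal to the finite-dimensional toy model.
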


Note that Theorem \ref{thm:main} does not contradict Bourgain conjecture
about the subpolynomial growth. Indeed, Theorem \ref{thm:main} only obtains
solutions with arbitrarily large but finite growth in the Sobolev norms
whereas Bourgain conjecture refers to unbounded growth.

%
\begin{remark}
Even if Theorem \ref{thm:main} is stated for \eqref{def:NLS} in
the two torus, it can be applied to the $d$ dimensional torus
with $d\geq 2$, since the solution we obtain is also a solution
for equation \eqref{def:NLS} in the $\TT^d$ setting all
the other harmonics to zero.
\end{remark}
\begin{remark}
In fact, we can obtain more detailed information about
the distribution of the Sobolev norm of the solution $u(T)$
from Theorem \ref{thm:main} among its Fourier modes.
More precisely, we can ensure that there exist $n_1,n_2\in\ZZ^2$
such that
\[
  \|u(T)\|_{H^s}\ge
  |n_1|^{2s}|u_{n_1}(T)|^2+|n_2|^{2s}|u_{n_2}(T)|^2\geq \KK.
\]
That is, when $t=T$ the Sobolev norm is essentially localized
on {\bf two} Fourier coefficients.
\end{remark}
\begin{remark}
Using more careful analysis of the proof we can establish
existence of solutions whose Sobolev norms are lower bounded
for each time $t\in [1,T].$ Namely,
\[
\ln \|u(t)\|_{H^s} \ge \dfrac{t\,\ln \KK}{\KK^c} + \ln \|u(0)\|_{H^s}.
\]
\end{remark}
\begin{remark}\label{remark:SlowerTime}
Our solutions differ from solutions studied in \cite{CollianderKSTT10} in
a substantial way. If one applies to information about dynamics contained
in \cite{CollianderKSTT10} supplied with the theory of normal forms
and a beautiful trick of Shilnikov \cite{Shilnikov67}, then it is
possible to compute certain ``local maps'' and diffusion time.
It turns out to be super-exponential in $\KK$, namely, it grows
as $C^{\,\KK^\al}$ for some $C>0$ and $\al\geq 2$ (see Section
\ref{sec:MainIdeasSaddle} for more details).
Even equipped with the aforementioned dynamical technique in order to obtain
polynomial diffusion time we need to achieve $\sim \ln \KK$ cancelations.
These cancellations are spilled out in Section \ref{sec:MainIdeasSaddle}
on an heuristic level and then worked out in Sections \ref{sec:HypToyModel}
and \ref{sec:FullSystem}.
\end{remark}

In \cite{CollianderKSTT10} initial conditions of solutions with growth of
Sobolev norms can be chosen with small $\|u(0)\|_{H^s}$\footnote{As Terence
Tao pointed to us, our solutions have small $L^2$-norm, but not $H^s$-norm}.
In our case it is also possible, but leads to slowing down of time of
 growth. This fact is explained in Appendix \ref{app:SmallSobolev}.

The present paper deals with growth of Sobolev norms for a Hamiltonian partial
differential equation.
We show the existence of unstable solutions.
As we have explained, there have not been many results showing the existence of
these instabilities. In \cite{CarlesF10} a solution of \eqref{def:NLS} with spreading of mass among modes is constructed. Nevertheless the spreading does not lead to growth of Sobolev norms. In \cite{Hani11} a progress toward infinite
growth of Sobolev norms is made. Let us say also that in the past decades there
has been a considerable progress in the study of other types of dynamics for
Hamiltonian partial differential equations. For instance, in the existence of
periodic, quasi-periodic or almost-periodic solutions (see e.g. \cite{Rabinowitz78,Wayne90,CraigW93,KappelerP03,Kuksin93,KuksinP96,
Berti07, BertiB11}), in Nekhoroshev type results (see e.g.
\cite{Bambusi97,Bambusi99b}) and normal forms
(see e.g. \cite{Bambusi031, BambusiG06,GrebertIP09, Grebert12, ProcesiP12}). Of particular
interest for the present paper are \cite{Bourgain98,Eliasson10} since, in
these papers, the authors study the existence of quasi-periodic solutions for
the nonlinear Schr\"odinger equation in the 2-dimensional torus \cite{Bourgain98}
and in a torus of any dimension \cite{Eliasson10}. Nevertheless, they consider
slightly different equations  containing a convolution potential.

\section{Main ideas and structure of the proof}\label{sec:MainIdeas}

One of remarkable contributions in \cite{CollianderKSTT10}
is the formulation of a finite-dimensional toy model, which
after a certain lift approximates solutions of (\ref{def:NLS}).
The Hamiltonian of the toy model from \cite{CollianderKSTT10} has a specific
form. It has a nearest neighbors interaction
and is integrable inside a certain family of $4$-dimensional planes.
In this section we present a class of Hamiltonians with a nearest
neighbors interaction for which our method applies.
It is specified at the end of Section \ref{sec:FeaturesModel}.

\subsection{Features of the model}\label{sec:FeaturesModel}
\begin{itemize}
\item Write (\ref{def:NLS}) as infinitely ODE's for Fourier
coefficients of solutions. It is a Hamiltonian system with Hamiltonian $\HH$ (see \eqref{def:HamForFourier}).

\item ({\it Two step reduction})

--- Obtain a Normal Form of the original Hamiltonian
near the origin by removing non-resonant terms (see Theorem \ref{thm:NormalForm}).

--- Use gauge freedom to remove linear and some
non-linear terms (see \eqref{eq:InftyODE:VariationConstants}).

\item ({\it The Toy Model})

Select a finite subset of Fourier coefficients $\La$ in
$\ZZ^2$ so that they can be split into pairwise disjoint
generations $\La=\cup_{j=1}^N \La_j$ and
{\it only neighboring}
generations $\La_j$ and $\La_{j+1}$ interact.

This can be done so that dynamics of each element in each
generation has exactly the same as dynamics of any other
member of this generation (see Corollary \ref{coro:Invariant}).
Truncating we are reduced to a complex $N$-dimensional system
given by a Hamiltonian
\[
h(b)=
\dfrac  14 \sum_{j=1}^N |b_j|^4-
\dfrac  12 \sum_{j=2}^{N-1}
\left( b_j^2 \overline b^2_{j-1}+
\overline b_j^2 b^2_{j-1}\right),
\]
where each $b_j$ is complex valued, and the symplectic
form $\Omega=\frac{i}{2}db_j\wedge \ol b_j$. The system
conserves mass $\MM(b)=\sum_{j=1}^N |b_j|^2$. We study
the dynamics
restricted to mass $\MM(b)=1$. Dynamics of this Hamiltonian
is called in \cite{CollianderKSTT10} {\it the Toy Model} and
is the focal point of analysis. It is convenient to study
this system in real coordinates and identify $\CC\cong \RR^2$.

Notice also that the Hamiltonian $h(b)$ can be viewed as
a Hamiltonian on a lattice $\ZZ$ with nearest neighbor
interactions. Our main result relies on the construction of
energy transfer from $b_3\approx 1, b_j \approx 0,\ j\ne 3$
to $b_{N-2}\approx 1,\ j\ne N-1$ for this Hamiltonian.
Construction of a somewhat similar energy transfer for
the pendulum lattice is done in \cite{Kaloshin-Levi-Saprykina}.

\item ({\it Invariant low-dimensional subspaces})

Notice that each $4$-dimensional plane
\[
L_j=\{b_1=\cdots=b_{j-1}=b_{j+2}=\cdots=b_N=0\}
\]
is invariant. Moreover, dynamics in $L_j$ is given by a simple
Hamiltonian
\[
h_j(b_{j},b_{j+1})=\dfrac 14 \left(|b_j|^4+|b_{j+1}|^4\right)- \dfrac 12
\left(b_j^2 \overline b^2_{j+1}+
\overline b_j^2 b^2_{j+1}\right).
\]
 Denote
$\MM_j(b_j,b_{j+1})=|b_j|^2+|b_{j+1}|^2$. Both $h_j$ and $\MM_j$
are conserved. The mass $\MM_j$ is assumed to be $1$.

\begin{center}
{\it The solutions constructed stays close to the planes
$\{L_j\}_{j=2}^{N-1}$  \newline
\qquad \qquad \qquad and  go from one intersection
$l_j=L_j\cap L_{j+1}$ to the next one
\newline
 $l_{j+1}=L_{j+1}\cap L_{j+2}$ consequently for $j=3, \dots, N-2$
 (see Figure \ref{fig:plane-approx}).\qquad \qquad \qquad }
\end{center}

To make a closer look at solutions we need to understand dynamics
in the planes $L_j$'s.

\begin{figure}[t]
  \centering
  \includegraphics[width=5in]{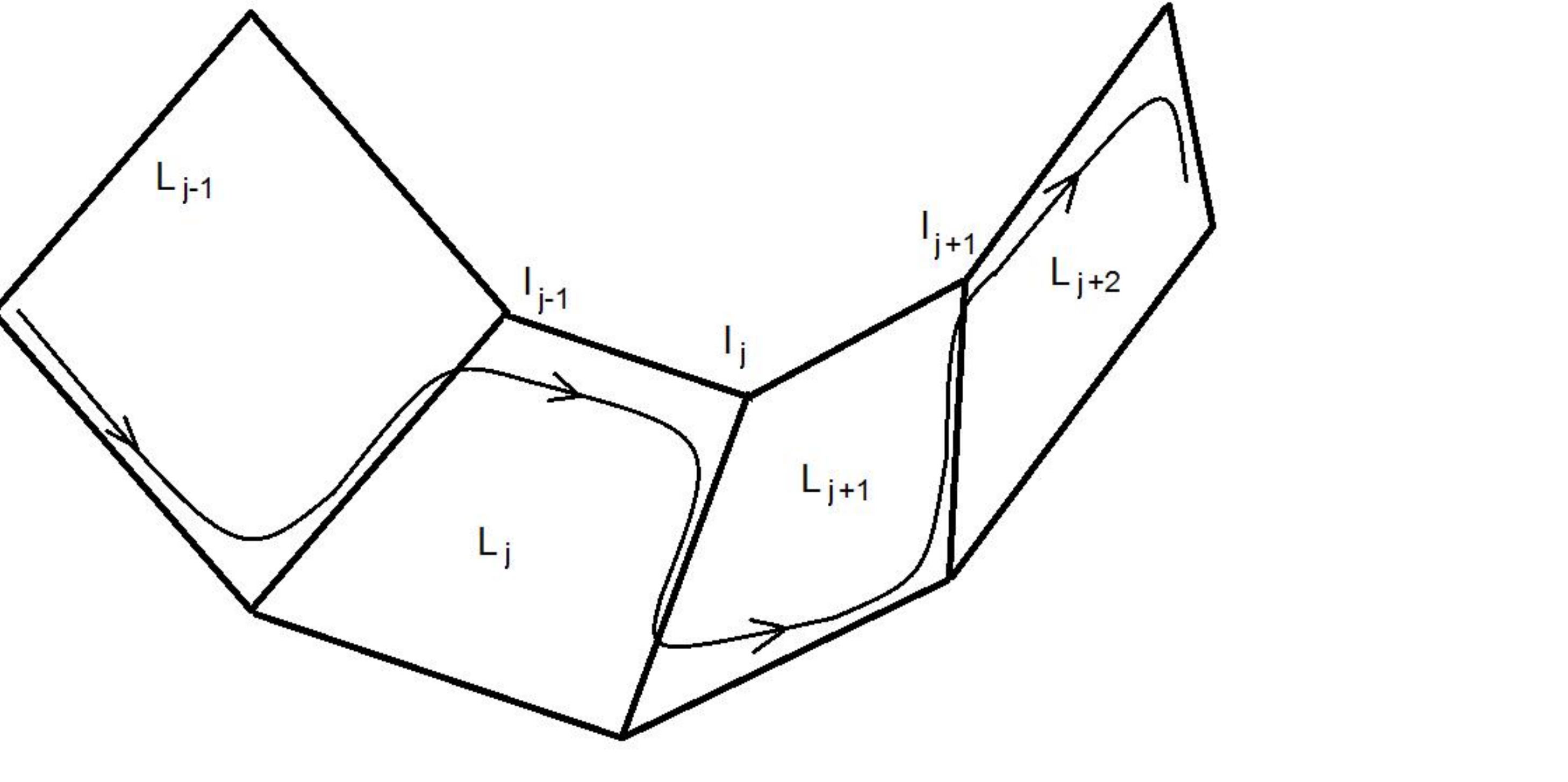}
  \caption{Planes approximating solutions}
  \label{fig:plane-approx}
\end{figure}

\item ({\it Integrable dynamics in each plane $L_j$})

Dynamics in each $2$-dimensional plane $L_j$ is integrable.
Indeed, there are two first integrals $h_j$  and $M_j$ in
involution. By Arnold-Liouville theorem away from degeneracies
the $4$-dimensional plane $L_j$ is foliated by $2$-dimensional
invariant tori with dynamics smoothly conjugated to a constant flow.

We are interested in two specific periodic orbits: $\tet_j$-direction
$\{|b_j|=1,\ b_{j+1}=0\}$ and $\tet_{j+1}$-direction
$\{|b_{j+1}|=1,\ b_j=0\}$ and in a family of heteroclinic orbits
$\{\ga_j\}$ connecting the former with the later.
All these orbits can be found explicitly, but {\it their
existence can be predicted having $h_j$ and $\MM_j$
satisfying some properties}.

\begin{itemize}
\item Having the mass $\MM_j=|b_j|^2+|b_{j+1}|^2$ conserved it is natural
to expect that the boundary is invariant. The boundary consists of
$b_j=0$ and $b_{j+1}=0$ (both periodic orbits) and belong to
the same $h_j$-energy surface.

\item It is a straightforward calculation
to check that both orbits are hyperbolic, i.e. of saddle type.

\item Notice that $\{h_j=\frac 14,\ \MM_j=1\}$ is a $2$-dimensional
surface with the boundary given by periodic orbits $b_j=0$ and $b_{j+1}=0$.
Away from these periodic orbits it is a locally analytic surface, i.e.
gradients $\nabla h_j$ and $\nabla \MM_j$ are linearly independent.

\item Away from the periodic orbits $b_j=0$ and $b_{j+1}=0$
the surface $\{h_j=\frac 14,\ \MM_j=1\}$ consists of stable and
unstable $2$-dimensional manifolds. Unless the periodic orbits $b_j=0$
and $b_{j+1}=0$ on $\{h_j=\frac 14,\ \MM_j=1\}$ are separated by a degenerate
periodic orbit, they have to be connected by these manifolds.

\item Now we verify that there is no such a degenerate periodic orbit.
Moreover,
we find explicitly the family of connecting heteroclinic orbits.
Even though these explicit formulas is not used in our proof.
\end{itemize}

Write in polar coordinates $b_k=\sqrt r_k\, e^{i\theta_k},\ k=j,j+1$.
The mass conservation becomes $\MM_j(b)=r_j+r_{j+1}$, the symplectic form $\Omega=\dfrac 12 dr_j\wedge d\theta_j$ and the Hamiltonian
\[
h_j\left(\sqrt{r_j}\ e^{i\theta_j},\sqrt{r_{j+1}}\ e^{i\theta_{j+1}}\right)= \dfrac
14 \left[r_j^2+r_{j+1}^2+4r_jr_{j+1}\cos 2(\theta_j-\theta_{j+1}))\right].
\]
Then the equation of motion are
\begin{align*}
\dot \tet_j &=  r_j-2r_{j+1}\cos 2(\theta_j-\theta_{j+1}) \\
\dot \tet_{j+1}&=  r_{j+1}-2r_j\cos 2(\theta_j-\theta_{j+1})\\
\dot r_j &= 4r_j \,r_{j+1} \sin 2(\theta_j-\theta_{j+1}) \\
\dot r_{j+1}&= - 4r_j \,r_{j+1} \sin 2(\theta_j-\theta_{j+1}).
\end{align*}
For the energy surface $h_j=\dfrac 14$  we have
\begin{itemize}
\item Two families of periodic solutions \newline
$\{(\tet_j,\tet_{j+1},r_j,r_{j+1}):\ r_j=0\}$  and
$\{(\tet_j,\tet_{j+1},r_j,r_{j+1}):\ r_{j+1}=0\}$.

\begin{figure}[t]
  \centering
  \includegraphics[width=5in]{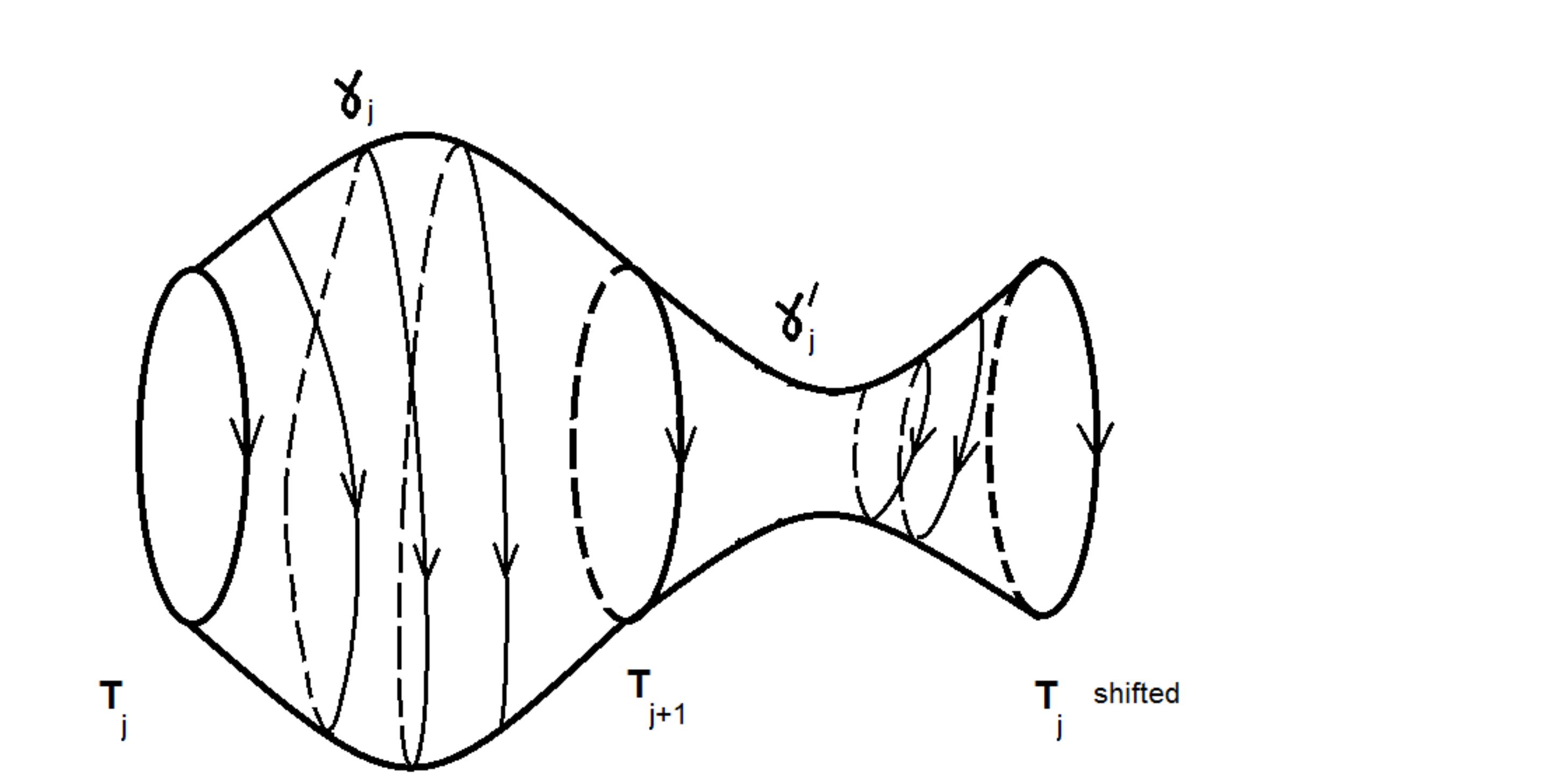}
  \caption{Heteroclinic orbits}
  \label{fig:heteroclinics}
\end{figure}

\item Each family has two special solutions: $2(\tet_j-\tet_{j+1})$
equals either $\frac {2\pi}{3}$ and $\frac {4\pi}{3}$. Both planes
are invariant:
$\frac{d}{dt}{(\tet_j-\tet_{j+1})}=
- (r_{j}+r_{j+1}) (1+2\cos 2(\theta_j-\theta_{j+1}))=0.$
Denote $\TT_j=\{2(\tet_j-\tet_{j+1})=\frac {2\pi}{3}\ (\text{ mod }2\pi),\ r_j=0\}$.

\item On $\MM_j=1,\ h_j=\frac 14,\ \tet_j-\tet_{j+1}=\frac {2\pi}{3}$
we have $\dot r_j= r_j r_{j+1}=-\dot r_{j+1}$. Thus, there
is a heteroclinic orbit $\ga_j$ connecting $\TT_{j}$ with the second
family $r_{j+1}=0$.
\end{itemize}
Now we can be more specific in location of orbits:
\be \label{plane-location-orbit-bis}
\beal
\textit{ The solutions constructed go from one periodic orbit }
\  \TT_{2} \textit{ to the next }\ \TT_{3}
\\
 \textit{ along }\ga_{2}, \textit{ then from }\ \TT_{3}
 \textit{ to }\ \TT_{4}\ \textit{ along }\ga_{3}
  \textit{ and so on for }j=4, \dots, N-2.
\enal
\ee

In a view of the above discussion we have the following  description:
{\small \be
\beal
\leadsto\qquad
&  \leadsto\ \TT_{j}\ \leadsto & \qquad \leadsto\ \ga_j \ \leadsto
\qquad \qquad  & \qquad \leadsto\ \TT_{j+1}\qquad \leadsto  \\
& \dot \tet_i\approx 0,\ |i-j|>1 \qquad &\qquad  \tet_j-\tet_{j+1}\approx \frac{\pi}{3}\qquad  &
\qquad \dot \tet_i\approx 0,\ |i-j-1|>1
\\
& |b_i|\approx 0,\ i\ne j,j+1 \qquad &  |b_j| \leadsto |b_{j+1}|
\qquad \quad & \qquad |b_i| \approx 0,\ i\ne j+1,j+2.
\enal
\ee }

\item ({\it Local behavior of periodic orbits} $\TT_j$)
Due to the above analysis, the periodic orbits $\TT_j$ viewed in $\RR^{2N}$
have at least two expanding and
two contracting directions: one pair from $L_{j-1}$-plane
and the other from $L_j$-pane. Due to symmetry of
the restricted systems in $L_{j-1}$-plane and $L_{j}$-plane
these periodic orbits have {\bf multiple} hyperbolic eigenvalues.
Multiplicity turns out to be {\bf exactly} $2$.

\item ({\it Resonant normal forms near} $\TT_j$) Presence of
resonance complicated analysis and as formulas (\ref{local-map})
show resonance changes local behavior compare to the linear case.
To resolve it we use a beautiful trick of Shilnikov \cite{Shilnikov67}
and obtain precise information about local behavior, which is
explained in Section \ref{sec:MainIdeasSaddle}.

\item ({\it Connecting heteroclinic orbits})
As we showed above there are orbits $\ga_j$ connecting $\TT_j$
with $\TT_{j+1}$ for each $j=3,\dots, n-2$. We need
to analyze dynamics near these heteroclinic orbits.

\item ({\it Local
almost product structure}) Once we obtain information
about behavior near $\TT_j$'s and near connecting orbits $\ga_j$,
we can describe dynamics of the Toy Model as if it close to
the direct product of $(N-3)$ planes $L_j,\ j=3,\dots,N-1$.
\end{itemize}

\begin{center}
Properties of the Hamiltonian $h(b)$ used in the proof.
\end{center}
As we mentioned in the introduction to this section
we do not use a specific form of $h$. Here is the list
of properties that we need.
\begin{itemize}
 \item $h$ has nearest neighbors interaction;
 \item $h$ has $2$-dimensional (complex)
 invariant planes intersecting transversally;
 \item there are two first integrals (coming from
 two conserved quantities: energy and mass);
 \item some generic properties of $h$ and $\MM$.
\end{itemize}

\subsection{The dynamics close to the periodic orbits: a heuristic model} \label{sec:MainIdeasSaddle}
One of the crucial steps in analyzing the toy model $h(b)$ is the study of
the dynamics in a neighborhood of the periodic orbits $\TT_j$. Namely,
we want to analyze how points which lie close their stable invariant
manifold evolve under the flow until reaching points close to their unstable
one (see Figure \ref{fig:local-map}). As we have explained, these periodic orbits are
of mixed type (four eigenvalues are hyperbolic and the rest are elliptic).
Since in each plane $L_j$ dynamics is the same explained in the previous
section, the hyperbolic eigenvalues have multiplicity two and, therefore, are
equal to $\la,\la,-\la,-\la$ for some $\la>0$.
Since in this section serves exposition purposes
we let $\la=1$ and set the elliptic modes to zero.
\footnote{To be more precise near each saddle,
the elliptic directions remain almost constant and, since they will be taken
small enough, it turns out they do not make much influence in the dynamics
of hyperbolic components. Thus, to simplify the exposition, we set
the elliptic modes to zero
and study how the hyperbolic ones evolve. This implies that we only need
to study three modes $b_{j-1}$, $b_j$ and $b_{j+1}$. This analysis is performed
in Section \ref{sec:HypToyModel} in great detail.}

Essentially the study has three steps:
\begin{itemize}
\item Using conservation of $\MM$, make a simplectic
reduction so the periodic orbit $\TT_j$ becomes a fixed point.
\item Perform a normal form procedure to reduce the size of the higher
order non-resonant terms.
\item Analyze the dynamics of the new vector field and achieve a cancelation
for a local map.
\end{itemize}

The first step is performed in Section \ref{sec:SaddleMapFirstChanges}.
It leads to  a Hamiltonian of two degrees of freedom of the form
\[
 H(p,q)=p_1q_1+p_2q_2+H_4(q,p),
\]
where $H_4$ is a homogeneous polynomial of degree four.
The variables $(p_1,q_1)$ correspond to the variable $b_{j-1}$
after diagonalizing the saddle and the variables $(q_2,p_2)$
correspond to $b_{j+1}$.


Fix a small $\sigma>0$. To study the local dynamics, it suffices
to analyze a map from  a section $\Sigma_-=\{q_1=\sigma,\
|p_1|,|q_2|,|p_2|\ll \sigma\}$, to a section
$\Sigma_+=\{p_2=\sigma,\ |p_1|,|q_1|,|q_2|\ll \sigma\}$
(see Figure \ref{fig:local-map}). Using rescaling assume $\sigma=1$.
This can change time by a fixed factor.

\begin{figure}[t]
  \centering
  \includegraphics[scale=0.5]{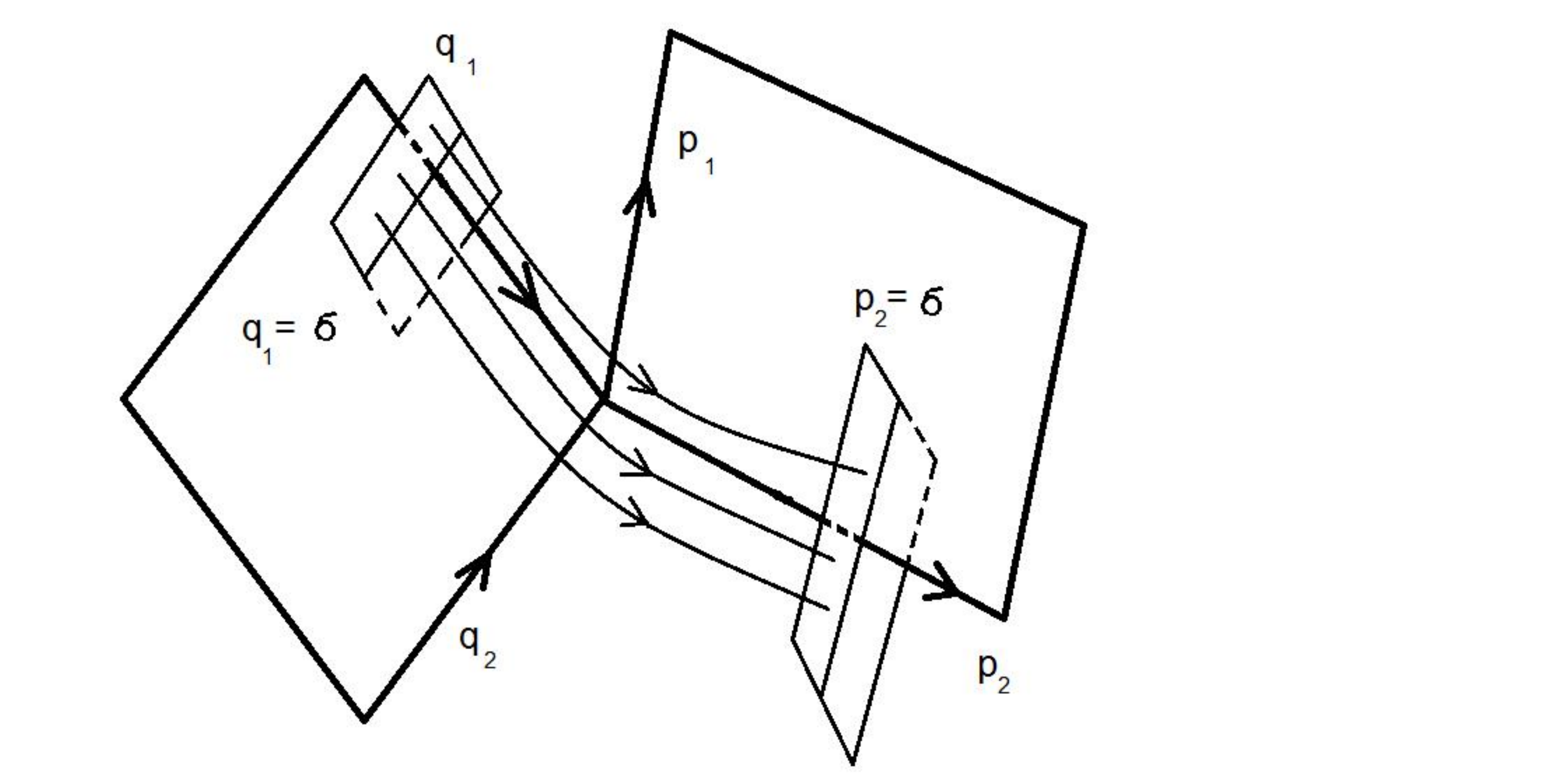}
  \caption{Local map}
  \label{fig:local-map}
\end{figure}

Since we are in a neighborhood of the origin, one would expect that
the dynamics of the system associated to this Hamiltonian is well
approximated by its first order, that is, by a linear equation.
Then, the solutions are just given by
\[
\begin{split}
p_1(t)&=p_1^0e^t,\,\,\,q_1(t)=q_1^0e^{-t}\\
p_2(t)&=p_2^0e^t,\,\,\,q_2(t)=q_2^0e^{-t}\\
\end{split}
\]
and then the local map $\BB_0$ from $U\subset \Sigma_-$ to
$\Sigma_+$ for this system  sends points
\[
\left(p_1^0,q_1^0,p_2^0,q_2^0\right)\sim \left(
\de,1,\sqrt{\de},\sqrt{\de}\right)
\]
to
\[
\BB_0\left(p_1^0,q_1^0,p_2^0,q_2^0\right)\sim
\left(\sqrt{\de},\sqrt{\de},1,\de\right),
\]
where $0<\de\ll 1$. Moreover, the travel time of orbits
by this map is always $T=-\ln\sqrt{\de}+\OO(1)$.

We will see that the image point {\it changes substantially} when
we add $H_4$ to the system, due to {\it both resonant and nonresonant terms}.
To exemplify this, we consider  \emph{a simplified model} which in fact contains
\emph{all the difficulties} that the true model has,
\begin{equation}\label{def:SimplifiedHam}
 H(p,q)=p_1q_1+p_2q_2+q_1^2p_2^2+p_1^2p_2^2.
\end{equation}
Since the term $p_1^2p_2^2$ is nonresonant, we first perform
one step of normal form $(x,y)=\Psi(p,q)$ (see Section \ref{sec:HypToyModel}
for details). It can be easily seen that the change $\Psi$ is of the form
\begin{equation}\label{def:Simplified:SizeNormalForm}
\Psi(p,q)=\left(p_1,q_1+\OO(p_1p_2^2),p_2,q_2+\OO(p_1^2p_2)\right)
\end{equation}
and, therefore, keeps the size of initial points of the form
\[
\left(p_1^0,q_1^0,p_2^0,q_2^0\right)\sim \left(
\de,1,\sqrt{\de},\sqrt{\de}\right).
\]
That is, $(x^0,y^0)=\Psi(p^0,q^0)$ satisfies
\[
\left(x_1^0,y_1^0,x_2^0,y_2^0\right)\sim \left(
\de,1,\sqrt{\de},\sqrt{\de}\right).
\]
The change to normal form leads to a Hamiltonian system of the form
\[
 H'(x,y)=x_1q_1+x_2y_2+ y_1^2x_2^2+\text{higher order terms}.
\]
Drop the higher order terms. Then, the solutions of the system associated
to this Hamiltonian can be computed explicitly and are given by
\[
 \begin{split}
  x_1&=x_1^0e^t+2y_1^0(x_2^0)^2te^t=\left(x_1^0+2y_1^0(x_2^0)^2t\right)e^t\\
  y_1&=y_1^0e^{-t}\\
  x_2&=x_2^0e^t\\
  y_2&=y_2^0e^{-t}-2(y_1^0)^2x_2^0te^t.\\
 \end{split}
\]
Thus, since the travel time is $t=-\ln\,\sqrt{\de}+\OO(1)$, it is clear
that {\it the nonlinear terms are bigger than the linear ones}, leading
to an image point of the form
\[
\left(x_1^f,y_1^f,x_2^f,y_2^f\right)\sim
\left(\sqrt{\de}\ln(1/\de), \sqrt{\de},
1,
\de\ln(1/\de)\right).
\]
Using \eqref{def:Simplified:SizeNormalForm}, in the original variables the image point of the map $\BB_1$
associated to Hamiltonian $H$  is of the form
\[
\BB_1\left(p_1^0,q_1^0,p_2^0,q_2^0\right)\sim
\left(\sqrt{\de}\ln(1/\de), \sqrt{\de},
1,\de\ln^2(1/\de)\right).
\]
We want to emphasize that the presence of these
{\it logarithmic terms is
a serious problem} we need to deal with. Recall that we need to travel
through $N-3$ saddles ($\TT_3\leadsto\TT_4\leadsto\ldots\leadsto\TT_{N-1}$).
Roughly speaking, this implies that we need to compose $N-4$ local maps.
Thanks to the symmetries, at each saddle we can consider a system of coordinates
such that the dynamics is essentially given by a Hamiltonian of
the form \eqref{def:SimplifiedHam}. Moreover, since at each local map
we gain some logarithms,  the initial points of the local map associated to the $j$ saddle are of the form
\[
\left(p_1^0,q_1^0,p_2^0,q_2^0\right)\sim
\left(\de \ln^{2^{j-1}}(1/\de),1,\sqrt{\de},\sqrt{\de}\right),
\]
which, thanks to \eqref{def:Simplified:SizeNormalForm}, in the normal form variables satisfy
\[
\left(x_1^0,y_1^0,x_2^0,y_2^0\right)\sim
\left(\de \ln^{2^{j-1}}(1/\de),1,\sqrt{\de},\sqrt{\de}\right).
\]
Then, proceeding as before, these points are mapped to points of the form
\[
\left(x_1^f,y_1^f,x_2^f,y_2^f\right)\sim
\left(\sqrt \de \ln^{2^{j-1}}(1/\de), \de^{1/2},
1,\de\ln(1/\de)\right)
\]
which in the original variables read
\[
\BB_1\left(p_1^0,q_1^0,p_2^0,q_2^0\right)\sim
\left(\sqrt \de \ln^{2^{j-1}}(1/\de), \sqrt \de, 1,\de\ln^{2^{j}}(1/\de)\right).
\]
That is, the amount of logarithms doubles at each step and thus grows exponentially.
This accumulation of  logarithmic terms  leads to \emph{very bad estimates}.
Indeed, to keep track of the orbit after $N-4$ local maps, we would need that
\[
 \de\ln^{2^{N-4}}(1/\de)\ll 1.
\]
Therefore, we would need to choose $\de$ extremely small with respect to $N$.

For example, if $\de \gtrsim C^{-\KK^{2a}} \sim C^{-\,2^{aN}}$ for some $C>0$
independent of $N$, then the above expression gives
\[
C^{-\,2^{aN}} (2^{aN}\,\ln C)^{2^{N-4}}\gg 1 \text{ for }a\le 1.
\]
In this case,  the constant $\la$ appearing in Theorem \ref{thm:Approximation} would need to satisfy  $\la \sim \de^{-b}$ for some $b>0$ and independent of $N$.
As a result, Theorem \ref{thm:ToyModelOrbit} gives a
diffusion time $T\sim \la^2 \KKK \ga N \ln 1/\de \gtrsim C^{\KK^2}$ (see formula \eqref{def:Time:Rescaled}).
Thus, choosing such a small $\de$ would lead to very bad estimates for
the diffusion time of Sobolev norms as we pointed out
in Remark \ref{remark:SlowerTime}.

To overcome this problem, we modify slightly the initial
conditions. Notice that if we choose $x_1^0$ such that
\[
 x_1^0-2y_1^0(x_2^0)^2\ln\,\sqrt{\de}=0,
\]
we obtain that at the end $x_1^f\sim\sqrt{\de}$ and thus we avoid
the logarithmic term. This cancelation \emph{will be crucial} in our proof.
If we restrict $x_1^0$ to this set, we are taking $x_1^0\sim \de\ln(1/\de)$
and therefore we will be sending points
\[
\left(x_1^0,y_1^0,x_2^0,y_2^0\right)\sim
\left(\de\ln(1/\de),1,\sqrt{\de},\sqrt{\de}\right),
\]
to points
\[
\left(x_1^f,y_1^f,x_2^f,y_2^f\right)\sim
\left(\sqrt{\de},\sqrt{\de},1,\de\ln(1/\de)\right).
\]
The map will keep the same form expressed in the original variables, and,
therefore, we will avoid having increasing separation from the invariant manifolds.

\subsection{Outline of the Proof}
\begin{itemize}
\item Find symplectic coordinates near the origin in $\ell^1$, where
the original Hamiltonian $\HH$ simplifies (see Theorem \ref{thm:NormalForm}).
Namely,  $\HH \circ \Gamma=\DDD+\wt \GG+\RRR$, where $\DDD$ is a quadratic Hamiltonian, $\wt\GG$ is of degree
four and only contains resonant terms, and $\RRR$ is smaller.

\item  Dynamics of $\DDD+\wt \GG$ has invariant finite-dimensional
subspaces, which give rise to a simpler (no simple!) finite-dimensional
Hamiltonian $h(b)$ given by (\ref{def:Hamiltonian}). In terminology of
\cite{CollianderKSTT10} this Hamiltonian defines the {\it Toy Model}. In Theorem  \ref{thm:ToyModelOrbit} we obtain orbits of the toy model which have  transfer of energy.

\item We show that are solutions of the system associated to $\HH$ which are close to those of the toy model for long enough time (Theorem \ref{thm:Approximation}). These orbits undergo the wanted growth of the Sobolev norm.

\item The proof of Theorem \ref{thm:ToyModelOrbit} occupies most of the paper. Theorems \ref{thm:NormalForm} and \ref{thm:Approximation} are deferred to Appendices \ref{app:NormalForm} and \ref{app:Approx} respectively. Now we describe the plan of the proof of Theorem \ref{thm:ToyModelOrbit}.

\item Following \cite{CollianderKSTT10} we detect a collection of
periodic orbits $\{\TT_j\}_{j=1}^{N-1}$ of $h(b)$, defined in
(\ref{def:periodic-orbits}), and heteroclinic orbits $\{\ga_j\}_{j=1}^{N-2}$
connecting them (see (\ref{def:heteroclinic})).

The whole proof consists in a careful analysis of dynamics near
the union of these periodic orbits and their connecting orbits.
Our analysis naturally splits into
\begin{itemize}
\item {\it local dynamics} near periodic orbits $\{\TT_j\}_{j=1}^{N-1}$ and
\item {\it global dynamics} near heteroclinic orbits $\{\ga_j\}_{j=1}^{N-2}$.
\end{itemize}
\item More formally, Theorem \ref{thm:ToyModelOrbit} follows from
Theorem \ref{theorem:iterative}. The latter Theorem in turn follows
from Lemmas \ref{lemma:iterative:saddle} and \ref{lemma:iterative:hetero}.

\item The Local Lemma \ref{lemma:iterative:saddle} provides
refined information about local behavior near periodic orbits $\{\TT_j\}_j$
with quantitative estimates.

\item Global Lemma \ref{lemma:iterative:hetero} provides
refined information about local behavior near heteroclinic orbits
from (\ref{def:heteroclinic}) with quantitative estimates.

\item The proof of Local Lemma \ref{lemma:iterative:saddle} consists of
several steps. As we have explained in Section \ref{sec:FeaturesModel}, the periodic orbits $\{\TT_j\}_j$ have mixed type.
Namely, in some directions the local behavior is hyperbolic, while in
others it is elliptic. It turns out that the closer orbits under
investigation pass to the periodic orbits $\{\TT_j\}_j$, the more
decoupled (direct product-like) behavior they have.

\item In Section \ref{sec:HypToyModel} we set all the elliptic
variables zero and study the ($4$-dimensional) hyperbolic Toy Model.

\item In Section \ref{sec:FullSystem} we use these results
to deal with the full hyperbolic-elliptic system and prove Lemma
\ref{lemma:iterative:saddle}.

\item In Section \ref{sec:ProofHeteroMap} we prove
Global Lemma \ref{lemma:iterative:hetero}. As we pointed out, this implies Theorem \ref{theorem:iterative},
which in turn, implies Theorem \ref{thm:ToyModelOrbit}.

\item Combining this result with Theorem \ref{thm:NormalForm}, proved in Appendix \ref{app:NormalForm}, and Theorem \ref{thm:Approximation} proved in Appendix \ref{app:Approx}, we complete the proof of the main result (Theorem \ref{thm:main}).
\end{itemize}
We summarize this in the following diagram:
\be
\beal
\qquad  \boxed{\text{ Theorem \ref{thm:main} }}  \qquad \qquad  \qquad \qquad \\
  \Uparrow \quad  \qquad \qquad  \qquad \qquad \qquad
\\
\overbrace{\boxed{\text{ Theorem \ref{thm:NormalForm}} }+\quad
\underbrace{\boxed{\text{ Theorem \ref{thm:ToyModelOrbit} }}}\quad +\
\boxed{\text{ Theorem \ref{thm:Approximation} }}}\ \\
 \Uparrow \quad  \qquad \qquad  \qquad \qquad \qquad
\\
 \qquad
\boxed{\text{ Theorem \ref{theorem:iterative} }}  \qquad \qquad  \qquad \qquad \\
  \Uparrow \quad  \qquad \qquad  \qquad \qquad \qquad
\\
\overbrace{\boxed{\text{ Local Lemma \ref{lemma:iterative:saddle}} }\ \ +\
\boxed{\text{ Global Lemma \ref{lemma:iterative:hetero} }}}\qquad
\enal
\ee

\subsection{Major ingredients of the proof}
We summarize here the new set of tools that we apply to the problem compared to  \cite{CollianderKSTT10}.
\begin{itemize}
 \item In Theorem \ref{thm:NormalForm}, we use a standard normal form (e.g. see \cite{KuksinP96}).
\item Theorem \ref{thm:ToyModelOrbit} requires several new ideas:
\begin{itemize}
\item Finitely smooth resonant normal form for hyperbolic saddles \cite{BronsteinK94}.
\item Shilnikov boundary value problem \cite{Shilnikov67} to study the local behavior close to the periodic orbits $\TT_j$.
\item As we explained for the model case in Section \ref{sec:MainIdeasSaddle}, to control the dynamics of the toy model we need a peculiar cancellation (see Section \ref{sec:HypToyModel}).
\item To have cancellations at each stage, we need to establish local product structure for the orbits we are interested in (see Definition \ref{def:ProductLikeSet}).
\end{itemize}
\item Due to
the good control of the solutions of the toy model, we are able to approximate the solutions of the original systems with the ones of the toy model for longer time compared with  \cite{CollianderKSTT10} (see Theorem \ref{thm:Approximation}). To achieve this, we also modify the set $\Lambda$ (see condition $6_\Lambda$).  This modification
allows to slow down spreading outside $\Lambda$.
\end{itemize}

\section{The three key theorems}\label{sec:SketchProof}

We start the proof analyzing the infinite system
of equations which describe the behavior of Fourier coefficients.
Namely, consider the Fourier series of $u$,
\[
 u(t,x)=\sum_{n\in\ZZ^2}a_n(t) e^{inx}, \qquad
 a_n(t):=\hat u(t,n).
\]
Therefore, the equation \eqref{def:NLS} becomes an infinite system of
equations for $\{a_n\}_{n\in\ZZ^2}$, which are given by
\begin{equation}\label{eq:NLSForFourierCoefs}
 -i\dot a_n=|n|^2 a_n+\sum_{\substack{n_1,n_2,n_3\in \ZZ^2\\n_1-n_2+n_3=n}}
 a_{n_1}\ol{a_{n_2}}a_{n_3}.
\end{equation}
Note that this equation is Hamiltonian. Indeed, it can be written as
\[
 \dot a_n=2i \pa_{\ \ol{a_n}}\ \HH(a, \ol a),
\]
where
\begin{equation}\label{def:HamForFourier}
\HH (a,\ol a)=\DDD (a,\ol a)+\GG  (a,\ol a)
\end{equation}
with
\begin{align*}
 \DDD (a,\ol a)&=\frac{1}{2}\sum_{n\in\ZZ^2}|n|^2|a_n|^2\\
\GG  (a,\ol a)&=\frac{1}{4}\sum_{\substack{n_1,n_2,n_3,n_4\in \ZZ^2\\
n_1-n_2+n_3=n_4}}a_{n_1}\ol{a_{n_2}}a_{n_3}\ol{a_{n_4}}.
\end{align*}
We will study equation \eqref{eq:NLSForFourierCoefs} in
a family of Banach spaces: all $H^s$-Sobolev spaces with $s> 1$
as well as in the $\ell^1$-space. The $\ell^1$ space is defined as
\[
 \ell^1=\left\{a:\ZZ^2\rightarrow \CC: \|a\|_{\ell^1}=
 \sum_{n\in\ZZ^2}|a_n|<\infty\right\}.
\]
Note that, $\ell^1$ is a Banach algebra with respect to the convolution
product. Namely, if $a,b\in \ell^1$ its convolution product $a\ast b$,
which is defined by
\[
( a\ast b)_n=\sum_{n_1+n_2=n} a_{n_1}b_{n_2}
\]
satisfies
\[
 \|a\ast b\|_{\ell^1}\leq \|a\|_{\ell^1}\| b\|_{\ell^1}.
\]

Finally, let us point out that the $L^2$-norm conservation of \eqref{def:NLS},
becomes now conservation of the $\ell^2$-norm of $a$, defined as above.
Namely, we have that $  \|a(t)\|_{\ell^2}=\|a(0)\|_{\ell^2}$ for all $t\in\RR$.

We want to study the evolution of certain solutions of  equation
\eqref{eq:NLSForFourierCoefs}, which will be small in the $\ell^1$ norm.
Now we make an outline of the proof.

The first step is to find out which terms make the biggest
contribution to this evolution. To this end, we perform one step
of normal form and bound the remainder in the $\ell^1$-norm.

\begin{theorem}\label{thm:NormalForm}
For the Hamiltonian $\HH$ in \eqref{def:HamForFourier} there exists a symplectic
change of coordinates $a=\Gamma(\alpha)$ in a neighborhood of $0$ in $\ell^1$
which takes it into its Birkhoff normal form up to order four,
that is,
\[
 \HH\circ\Gamma=\DDD+\wt \GG+\RRR,
\]
where $\wt\GG$ only contains  resonant terms, namely
\[
 \wt \GG(\al, \ol\al)=\frac{1}{4}\sum_{\substack{n_1,n_2,n_3,n_4\in
\ZZ^2\\n_1-n_2+n_3=n_4\\|n_1|^2-|n_2|^2+|n_3|^2=|n_4|^2}}\al_{n_1}\ol{\al_{n_2}}
\al_{n_3}\ol{\al_{n_4}}
\]
and $X_\RRR$, the vector field associated to
the Hamiltonian $\RRR$, satisfies
\[
 \|\XX_\RRR\|_{\ell^1}\leq \OO\left(\|\al\|_{\ell^1}^5\right).
\]
Moreover, the change $\Gamma$ satisfies
\[
 \left\|\Gamma-\mathrm{Id}\right\|_{\ell^1}\leq
\OO\left(\|\al\|_{\ell^1}^3\right).
\]
\end{theorem}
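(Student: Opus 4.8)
The plan is to carry out a single step of Birkhoff normal form in the standard way, with all estimates taken in the Banach algebra $\ell^1$ rather than in Sobolev spaces, so that the homological equation can be solved by division by small divisors that are in fact bounded below by $1$ on the non-resonant set. Write $\HH=\DDD+\GG$ as in \eqref{def:HamForFourier}. We look for the generating Hamiltonian $\FF$, a homogeneous quartic in $(\al,\ol\al)$, whose time-one Hamiltonian flow $\Gamma=\Phi_\FF^1$ conjugates $\HH$ to $\DDD+\wt\GG+\RRR$. Expanding $\HH\circ\Gamma$ by the Lie series,
\[
\HH\circ\Phi_\FF^1=\DDD+\GG+\{\DDD,\FF\}+\RRR,
\]
where $\RRR$ collects $\{\GG,\FF\}$, $\tfrac12\{\{\DDD,\FF\},\FF\}$ and all higher iterated brackets. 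To kill the non-resonant part of $\GG$ we must solve the homological equation $\{\DDD,\FF\}=\wt\GG-\GG$, i.e. $\{\DDD,\FF\}$ must equal minus the non-resonant part of $\GG$. Since $\DDD$ is diagonal, for the quartic monomial $\al_{n_1}\ol{\al_{n_2}}\al_{n_3}\ol{\al_{n_4}}$ with $n_1-n_2+n_3=n_4$ one has $\{\DDD,\cdot\}$ acting as multiplication by $i\bigl(|n_1|^2-|n_2|^2+|n_3|^2-|n_4|^2\bigr)$; hence
\[
\FF(\al,\ol\al)=\frac{1}{4}\sum_{\substack{n_1-n_2+n_3=n_4\\ |n_1|^2-|n_2|^2+|n_3|^2\ne |n_4|^2}}
\frac{-1}{i\bigl(|n_1|^2-|n_2|^2+|n_3|^2-|n_4|^2\bigr)}\,\al_{n_1}\ol{\al_{n_2}}\al_{n_3}\ol{\al_{n_4}},
\]
and the remaining (resonant) part of $\GG$ is exactly $\wt\GG$ as stated. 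The key arithmetic point, which replaces the usual Diophantine estimate, is that $|n_1|^2-|n_2|^2+|n_3|^2-|n_4|^2$ is a nonzero integer on the summation set, so $|{\rm divisor}|\ge 1$ and the coefficients of $\FF$ are bounded by those of $\GG$.

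Next I would convert this into the required $\ell^1$ bounds. Because $\ell^1$ is a Banach algebra under convolution and $\GG$ (hence $\FF$) is a quartic expression whose coefficients are uniformly bounded (by $1$ after the division), the associated vector field $X_\FF$ satisfies $\|X_\FF(\al)\|_{\ell^1}=\OO(\|\al\|_{\ell^1}^3)$; this is the standard ``tame'' estimate for a convolution-type cubic vector field, proved by writing each component as a triple convolution and using $\|a\ast b\|_{\ell^1}\le\|a\|_{\ell^1}\|b\|_{\ell^1}$ together with the $\ell^1\to\ell^1$ boundedness of the coefficient multiplier. The time-one flow $\Gamma=\Phi_\FF^1$ then exists on a neighborhood of $0$ in $\ell^1$ by Picard iteration, and integrating $\dot\al=X_\FF(\al)$ gives $\|\Gamma-\Id\|_{\ell^1}=\OO(\|\al\|_{\ell^1}^3)$, the last claimed bound. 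For the remainder, each term of $\RRR$ is an iterated Poisson bracket of $\GG$ and $\FF$ (both quartic) or higher, so its vector field is at least quintic in $\al$; applying the algebra property once more to $\{\GG,\FF\}$, $\{\{\DDD,\FF\},\FF\}$, and the tail of the Lie series (which converges on a small $\ell^1$-ball), one gets $\|X_\RRR(\al)\|_{\ell^1}=\OO(\|\al\|_{\ell^1}^5)$.

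The main obstacle I anticipate is not the algebra of the normal form but the bookkeeping needed to make the $\ell^1$ estimates genuinely uniform: one must check that the coefficient multiplier $(n_1,n_2,n_3,n_4)\mapsto \bigl(|n_1|^2-|n_2|^2+|n_3|^2-|n_4|^2\bigr)^{-1}\mathbf 1_{\ne 0}$, after restriction to the plane $n_1-n_2+n_3=n_4$, still defines a bounded operator on $\ell^1$ when the quartic form is viewed as a map built from convolutions — i.e. that summing over the three free indices produces no logarithmic or power loss. Since the multiplier is bounded by $1$ in absolute value and the convolution structure already supplies summability, this works, but it has to be written carefully (e.g. by realizing the quartic as $\langle \al\ast\ol\al\ast\al,\ol\al\rangle$-type pairings and differentiating). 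A secondary technical point is verifying convergence of the Lie series on a fixed small ball, which again follows from the algebra property and a geometric-series argument. I would defer the full details to Appendix \ref{app:NormalForm}, as the authors indicate.
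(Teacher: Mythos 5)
Your proposal is essentially the proof in the paper: the same generating Hamiltonian $F$ whose coefficients are reciprocals of the integer divisor $|n_1|^2-|n_2|^2+|n_3|^2-|n_4|^2$, the same observation that this divisor has absolute value at least $1$ on the non-resonant set, and the same use of the $\ell^1$ Banach-algebra (convolution) structure to bound $X_F$, $X_\RRR$, and $\Gamma-\Id$. The only organizational difference is that the paper sidesteps the Lie-series convergence issue by writing $\HH\circ\Phi_F^1=\HH+\{\HH,F\}+\int_0^1(1-t)\{\{\HH,F\},F\}\circ\Phi_F^t\,dt$ (Taylor with integral remainder), so the remainder $\RRR=\{\GG,F\}+\int_0^1(1-t)\{\{\HH,F\},F\}\circ\Phi_F^t\,dt$ is a single closed expression rather than an infinite tail; this is slightly cleaner than the geometric-series argument you sketch, and otherwise the two proofs coincide.
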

The proof of this theorem is postponed to Appendix \ref{app:NormalForm}.

Once we
perform one step of normal form, we have a new vector field
\begin{equation}\label{eq:InfiniteODEAfterNormalForm}
- i \dot \alpha_n = |n|^2 \alpha_n +
\sum_{(n_1,n_2,n_3)\in \AAA_0 (n)} \al_{n_1} \overline { \al_{n_2}}\al_{n_3}+
\partial_{\overline \al_n} \RRR,
\end{equation}
where
\begin{equation}\label{def:ResonantLatticeBeforeGauge}
\begin{split}
 \AAA_0(n)=\Big\{&(n_1,n_2,n_3)\in\left(\ZZ^2\right)^3:\,n_1-n_2+n_3=n,\\
&|n_1|^2-|n_2|^2+|n_3|^2=|n|^2\Big\}.
\end{split}
\end{equation}
As a first step, we focus our attention to
the degree 4 truncation of it, which will give the main contribution to the
dynamics. Namely, we consider the Hamiltonian
\[
 \HH'=\DDD+\wt\GG,
\]
which has associated equations
\begin{equation}\label{eq:InftyODE:AfterNF}
 -i\dot\al_n=|n|^2\al_n+\sum_{(n_1,n_2,n_3)\in\AAA_0(n)}\al_{n_1}\ol{\al_{n_2}}
\al_{n_3}.
\end{equation}

Note that the $\ell^2$-norm of $\al$ is a first integral of this system
as well as for \eqref{eq:NLSForFourierCoefs} and \eqref{eq:InfiniteODEAfterNormalForm}.
Namely,
\[
 \|\al(t)\|_{\ell^2}=\|\al(0)\|_{\ell^2} \,\,\,\text{for all }t\in\RR.
\]
Then, to study the dynamics of $\al$ close to the origin (in
the $\ell^1$-norm) we remove its linear terms using the variation of
constants formula. Moreover, following  \cite{CollianderKSTT10}, we also
remove certain cubic terms using the gauge freedom of equation \eqref{def:NLS}. To this end, we make the change of coordinates
\begin{equation}\label{eq:InftyODE:VariationConstants}
 \al_n=\beta_n e^{i\left(G+|n|^2\right)t},
\end{equation}
where $G>0$ is a constant
to be determined. The equations for $\beta$ read
\[
 -i\dot\beta_n= G\beta_n+\sum_{(n_1,n_2,n_3)\in\AAA_0(n)
}\beta_{n_1}\ol{\beta_{n_2}}\beta_{n_3}.
\]
Choosing $G$ properly we can remove certain terms in the sum.
Indeed, we split the sum as
\[
\sum_{(n_1,n_2,n_3)\in\AAA_0(n)}=\sum_{\substack{(n_1,n_2,n_3)\in\AAA_0(n)\\n_1,
n_3\neq
n}}+\sum_{\substack{(n_1,n_2,n_3)\in\AAA_0(n)\\n_1=n}}+\sum_{\substack{(n_1,n_2,
n_3)\in\AAA_0(n)\\n_3=n}}-\sum_{\substack{(n_1,n_2,n_3)\in\AAA_0(n)\\n_1=n_3=n}}
\]
The last sum is just one term, which is given by $-\beta_n |\beta_n|^2$. The
second and third sums, are in fact single sums and each of them is given by
\[
 \beta_n \sum_{k\in\ZZ^2}|\beta_k|^2=\beta_n \|\beta\|_{\ell^2}^2.
\]
Recall that both \eqref{eq:InftyODE:AfterNF} and \eqref{eq:InftyODE:VariationConstants}
preserve the $\ell^2$-norm. Therefore,
taking $G=-2\|\al\|^2_{\ell^2}=-2\|\beta\|^2_{\ell^2}$, we can remove these two
terms. Thus, with this choice, we obtain the equation for $\beta$, which reads
\begin{equation}\label{eq:InftyODE:AfterVariation}
 -i\dot\bet_n=-\bet_n|\bet_n|^2+\sum_{n_1,n_2,n_3\in\AAA(n)}\bet_{n_1}\ol{\bet_{
n_2}}\bet_{n_3}
\end{equation}
where
\[
\begin{split}
 \AAA(n)=\Big\{&(n_1,n_2,n_3)\in\left(\ZZ^2\right)^3:\,n_1-n_2+n_3=n\\
&|n_1|^2-|n_2|^2+|n_3|^2=|n|^2, n_1\neq n, n_3\neq n\Big\}.
\end{split}
\]
We define also the set of all resonant frequencies as
\[
 \AAA=\Big\{(n_1,n_2,n_3,n_4)\in\left(\ZZ^2\right)^4:\,(n_1,n_2,
n_3)\in\AAA(n_4)\Big\}.
\]
Note that if $(n_1,n_2,n_3,n_4)\in\AAA$, then the four points  form a
rectangle in $\ZZ^2$.

We reduce this system to
a finite-dimensional one, which corresponds to an invariant
finite-dimensional plane. To this end, we  consider a set $\Lambda\subset\ZZ^2$
such that the corresponding harmonics do not interact to the harmonics outside
of $\Lambda$. Moreover, we  obtain a set $\Lambda$ such that the harmonics in
$\Lambda$ interact in a very particular way. This set was constructed in
\cite{CollianderKSTT10}. We explain now its construction and impose
 an additional condition on $\Lambda$ from \cite{CollianderKSTT10}.

Fix $N\gg 1$.
Following \cite{CollianderKSTT10} we define a set
$\La\subset \ZZ^2$ consisting of $N$ pairwise disjoint \emph{generations}:
\[
 \Lambda=\Lambda_1\cup\ldots\cup\Lambda_N.
\]
Define a \emph{nuclear family} to be a rectangle $(n_1,n_2,n_3,n_4)\in\AAA$,
such that $n_1$ and $n_3$ (known as the \emph{parents}) belong to a generation
$\Lambda_j$ and $n_2$ and $n_4$ (known as the \emph{children}) live in the next
generation $\Lambda_{j+1}$. Note that if $(n_1,n_2,n_3,n_4)$ is a nuclear
family, then so are  $(n_1,n_4,n_3,n_2)$,  $(n_3,n_2,n_1,n_4)$ and
$(n_3,n_4,n_1,n_2)$. These families are called trivial permutations of the
family $(n_1,n_2,n_3,n_4)$.

The conditions to impose to the set $\La$ are
\begin{description}
\item[$1_\Lambda$\ {\it Closure}]\
If $n_1, n_2, n_3\in\Lambda$ and $(n_1,n_2,n_3)\in\AAA(n)$,
then $n\in\Lambda$.
    In other words, if three vertices of a rectangle are in $\La$ so is
    the last fourth one.
\item[$2_\Lambda$\ {\it Existence and uniqueness of spouse and children}]\
For any $1\leq j <N$ and any $n_1\in\Lambda_j$, there exists a unique nuclear
family $(n_1,n_2,n_3,n_4)$ (up to trivial permutations) such that $n_1$ is
a parent of this family. In particular, each $n_1\in\Lambda_j$ has
a unique spouse $n_3\in\Lambda_j$ and has two unique children
$n_2,n_4\in\Lambda_{j+1}$ (up to permutation).

\item[$3_\Lambda$\ {\it Existence and uniqueness of sibling and parents}]\
For any $1\leq j <N$ and any $n_2\in\Lambda_{j+1}$, there exists a unique
nuclear family $(n_1,n_2,n_3,n_4)$ (up to trivial permutations) such that
$n_2$ is a child of this family. In particular each $n_2\in\Lambda_{j+1}$
has a unique sibling $n_4\in\Lambda_{j+1}$ and two unique parents
$n_1,n_3\in\Lambda_{j}$ (up to permutation).

\item[$4_\Lambda$\ {\it Nondegeneracy}]\
The sibling of a frequency $n$ never equal to its spouse.

\item[$5_\Lambda$\ {\it Faithfulness}]\
Apart from the nuclear families, $\Lambda$ does not contain any other rectangle.
\end{description}
These
are the conditions imposed on $\Lambda$ in \cite{CollianderKSTT10}.
We will impose an additional condition:
\begin{description}
\item[$6_\Lambda$\ {\it No spreading condition}]\
Let us consider $n\not\in \Lambda$. Then, $n$ is vertex of
at most two rectangles having two vertices in $\Lambda$ and two vertices out of
$\Lambda$.
\end{description}

\begin{proposition}\label{thm:SetLambda}
Let $\KK\gg 1$. Then, there exists
$N\gg 1$ large and a set $\Lambda\subset\ZZ^2$, with
\[
 \Lambda=\Lambda_1\cup\ldots\cup\Lambda_N,
\]
which satisfies  conditions $1_\Lambda$ -- $6_\Lambda$ and also
\begin{equation}\label{def:Growth}
 \frac{\sum_{n\in\Lambda_{N-1}}|n|^{2s}}{\sum_{n\in\Lambda_3}|n|^{2s}}\geq
 \dfrac 12 2^{(s-1)(N-4)}\ge \KK^2.
\end{equation}
Moreover, given any $R>0$
(which may depend on $\KK$), we can ensure that each generation
$\Lambda_j$ has $2^{N-1}$ disjoint frequencies $n$ satisfying $|n|\geq R$.
\end{proposition}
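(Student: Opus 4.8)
The plan is to construct the set $\Lambda$ following the combinatorial scheme of Colliander--Keel--Staffilani--Tao--Takaoka, working in an abstract ``frequency space'' first and only at the end embedding it into $\ZZ^2$. First I would build a \emph{standard placeholder configuration}: take the combinatorial model where generations $\Sigma_1,\dots,\Sigma_N$ live in the abelian group $\Q(i)$ (or a suitable $2$-torsion-free abelian group), $|\Sigma_j|=2^{N-1}$, and the parent-child relation is encoded by the requirement that each nuclear family is a (combinatorial) rectangle: $n_1-n_2+n_3-n_4=0$ together with the Pythagorean-type relation $|n_1|^2-|n_2|^2+|n_3|^2-|n_4|^2=0$, which geometrically forces $(n_1,n_2,n_3,n_4)$ to be the vertices of an actual rectangle. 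The key device is the ``$1$-to-$2^{N-1}$ self-similar doubling'': one starts from two free points in $\Sigma_1$, and at each generation one uses a fixed angle $\theta$ (with $e^{2i\theta}$ transcendental, or at least chosen to avoid all finitely many bad algebraic relations) to produce, from each parent pair, a child pair, doubling the population. This gives conditions $2_\Lambda$ and $3_\Lambda$ for free by construction, and genericity of $\theta$ is what yields the \emph{Nondegeneracy} ($4_\Lambda$) and \emph{Faithfulness} ($5_\Lambda$) conditions: any unwanted coincidence or extra rectangle would impose a nontrivial polynomial relation on $e^{2i\theta}$, of bounded degree depending only on $N$, hence is avoided for generic $\theta$.

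Next I would verify the \emph{Closure} condition $1_\Lambda$: if three corners of a rectangle lie in $\Lambda$, the fourth is determined, and one must check it again lies in $\Lambda$. This is where the precise self-similar structure matters --- the argument in \cite{CollianderKSTT10} shows that the only rectangles with three vertices in $\Lambda$ are (trivial permutations of) nuclear families, so the fourth vertex is the remaining family member. I would then turn to the genuinely new ingredient, condition $6_\Lambda$ (\emph{No spreading}): for $n\notin\Lambda$, we must bound by $2$ the number of rectangles with two vertices in $\Lambda$, two outside, having $n$ as a vertex. The idea is that such a rectangle is determined by an edge (or a diagonal) with both endpoints in $\Lambda$ together with the position of $n$; fixing $n$ and one $\Lambda$-vertex $m$, the other $\Lambda$-vertex $m'$ and the remaining outside vertex are then constrained, and again a generic choice of the base configuration kills all but a controlled number of possibilities. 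I expect the careful bookkeeping here --- enumerating the ways a ``mixed'' rectangle can be incident to a fixed external point and showing genericity forces at most two --- to be \textbf{the main obstacle}, since this condition is not in \cite{CollianderKSTT10} and must be threaded through the same inductive construction without breaking $1_\Lambda$--$5_\Lambda$.

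With the abstract configuration in hand, the growth estimate \eqref{def:Growth} is essentially the same computation as in \cite{CollianderKSTT10}: by the self-similar doubling each generation has $2^{N-1}$ frequencies, and the magnitudes roughly double in modulus (in an appropriate sense, $\sum_{\Lambda_{j+1}}|n|^{2s}\gtrsim 2^{\,2s}\cdot\tfrac12\sum_{\Lambda_j}|n|^{2s}$ fails naively, but the correct statement, using that the ``intermediate'' generations carry the mass while the outer ones are small, gives the stated $\tfrac12\,2^{(s-1)(N-4)}$); then choosing $N$ with $2^{(s-1)(N-4)}\ge 2\KK^2$ gives the claim. Finally, the embedding into $\ZZ^2$ and the dilation freedom: the abstract points lie in $\Q(i)$, so clearing denominators places a scaled copy in $\ZZ^2$, and because the defining relations $n_1-n_2+n_3=n_4$ and $|n_1|^2-\cdots=|n_4|^2$ are homogeneous (degree $1$ and $2$ respectively), any integer dilation of $\Lambda$ again satisfies $1_\Lambda$--$6_\Lambda$; scaling by a large integer multiple of $R$ then forces $|n|\ge R$ for every $n\in\Lambda$, giving the last sentence. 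The whole argument is a ``genericity over a large algebraic extension'' scheme, and modulo the extra combinatorial care for $6_\Lambda$ it parallels \cite{CollianderKSTT10}.
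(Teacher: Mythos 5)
Your overall scheme matches the paper's: defer to the CKSTT construction (Proposition~2.1 of \cite{CollianderKSTT10}) for $1_\Lambda$--$5_\Lambda$ and the growth estimate, isolate $6_\Lambda$ as the new ingredient to be threaded through the same inductive placement, and at the end scale a $\Q(i)$ configuration into $\ZZ^2$, using the homogeneity of the defining relations to get $|n|\geq R$. This is also what the paper does, with the quantitative version worked out in Appendix~\ref{app:SmallSobolev}.

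There is however a genuine gap in how you propose to realize the rotations. You write that the doubling should use ``a fixed angle $\theta$ (with $e^{2i\theta}$ transcendental, or at least chosen to avoid all finitely many bad algebraic relations),'' and later that ``the abstract points lie in $\Q(i)$, so clearing denominators places a scaled copy in $\ZZ^2$.'' These two statements are incompatible: if $e^{2i\theta}$ is transcendental (or even merely irrational), then rotating a Gaussian-rational configuration by $\theta$ does not produce Gaussian-rational points, and clearing denominators has nothing to act on. The rotations must be by \emph{Pythagorean} angles, i.e.\ angles with $e^{i\theta}\in\Q(i)$, and these form a countable (in particular, measure-zero and nowhere-dense) subset of the circle, so ``generic $\theta$'' in the usual sense lands you outside it. The correct replacement for your genericity heuristic is a counting argument: show that on each circle there are many more Pythagorean points (of controlled denominator) than there are ``forbidden'' intersections coming from $1_\Lambda$--$6_\Lambda$, so one can always pick a good one. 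This is precisely what the paper does in Appendix~\ref{app:SmallSobolev}, lower-bounding the number of Pythagorean angles with numerator and denominator at most $R$ by roughly $\sqrt{R/2}$ via the parametrization of Pythagorean triples, and upper-bounding the number of excluded intersections (including those needed for $6_\Lambda$) by explicit powers of $60^N$, so a valid choice exists at each generation. Your proposal therefore needs that counting input in place of the transcendence argument; otherwise the construction never produces a subset of $\ZZ^2$. A secondary, minor point: the per-generation growth factor in \eqref{def:Growth} is $2^{s-1}$, not $2^{2s}\cdot\tfrac12=2^{2s-1}$ as your display suggests; it comes from the geometry of the prototype embedding of \cite{CollianderKSTT10} (moduli scale by $\sqrt2$, generation cardinality is constant at $2^{N-1}$), so $\sum_{\Lambda_{j+1}}|n|^{2s}\approx 2^{s}\sum_{\Lambda_j}|n|^{2s}$, and the exponent $(s-1)(N-4)$ in the stated bound already reflects this after the normalization is taken into account.
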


The  proof of Proposition 2.1 from \cite{CollianderKSTT10} applies.
We prove a quantitative version of this proposition in Appendix
\ref{app:SmallSobolev}.

We use the set $\Lambda$ to obtain a finite dimensional dynamical system
(of high dimension)
approximating \eqref{eq:InftyODE:AfterVariation}. To this end, let us first
note that, by Property $1_\Lambda$,  the manifold
\[
 M=\left\{\beta\in\CC^{\ZZ^2}: \beta_n=0\,\,\text{for all }
n\not\in\Lambda\right\}
\]
is invariant by the flow associated to \eqref{eq:InftyODE:AfterVariation} and is
finite dimensional.  Indeed, by Proposition \ref{thm:SetLambda} its dimension is
$N2^{N-1}$.  Equation  \eqref{eq:InftyODE:AfterVariation}  restricted to $M$
reads as follows. For each $n\in \La$ we have
\begin{equation}\label{eq:InftyODE:FirstFiniteReduction}
 -i\dot\bet_n=-\bet_n|\bet_n|^2+2
\beta_{n_{\mathrm{child}_1}}\beta_{n_{\mathrm{child}_2}}\ol{\beta_{n_\mathrm{spouse}}}+2
\beta_{n_{\mathrm{parent}_1}}\beta_{n_{\mathrm{parent}_2}}\ol{\beta_{n_\mathrm{sibling}}}
\end{equation}
Indeed, presence of parents, children, and the sibling are guaranteed by $2_\La$
and $3_\La$. Note, that in the first and last generations, the parents and
children are set to zero respectively. In fact, $M$ has a submanifold of
considerably lower dimension which is also invariant.

\begin{corollary}\label{coro:Invariant}(cf. \cite{CollianderKSTT10})
Consider the subspace
\[
 \wt M=\left\{\beta\in M: \beta_{n_1}=\beta_{n_2}\,\,\text{for all
}n_1,n_2\in\Lambda_j\,\,\text{for some }j\right\},
\]
where all the members of a generation take the same value. Then, $\wt M$ is invariant under the flow associated to \eqref{eq:InftyODE:FirstFiniteReduction}.
\end{corollary}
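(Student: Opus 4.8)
The plan is to verify that the vector field on $M$ defined by the right-hand side of \eqref{eq:InftyODE:FirstFiniteReduction} is tangent to the linear subspace $\wt M$; invariance of $\wt M$ then follows at once from uniqueness of solutions of ordinary differential equations.

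First I would set up the obvious linear parametrization $\iota\colon\CC^N\to M$ given by $\iota(b_1,\dots,b_N)_n=b_j$ for $n\in\Lambda_j$, whose image is precisely $\wt M$. Then the corollary is equivalent to the statement that $\iota$ sends solutions of some reduced ODE on $\CC^N$ to solutions of \eqref{eq:InftyODE:FirstFiniteReduction}, i.e.\ that the right-hand side of \eqref{eq:InftyODE:FirstFiniteReduction} is tangent to $\wt M$. Recall that by conditions $2_\Lambda$ and $3_\Lambda$ every $n\in\Lambda_j$ has its spouse and its sibling in the same generation $\Lambda_j$, its two children in $\Lambda_{j+1}$, and its two parents in $\Lambda_{j-1}$, with the convention that the parents term is absent when $j=1$ and the children term is absent when $j=N$; and by $5_\Lambda$ no other resonant quadruples occur inside $\Lambda$, which is exactly what makes \eqref{eq:InftyODE:FirstFiniteReduction} the correct restricted equation.

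Next I would substitute $\beta_n=b_j$ for $n\in\Lambda_j$ into \eqref{eq:InftyODE:FirstFiniteReduction}. The diagonal cubic term becomes $-|b_j|^2b_j$; since both children equal $b_{j+1}$ and the spouse equals $b_j$, the children and spouse term becomes $2b_{j+1}^2\ol{b_j}$; and likewise the parents and sibling term becomes $2b_{j-1}^2\ol{b_j}$. Hence on $\wt M$,
\[
-i\dot\beta_n=-|b_j|^2b_j+2b_{j+1}^2\ol{b_j}+2b_{j-1}^2\ol{b_j}\qquad\text{for every }n\in\Lambda_j,
\]
where we set $b_0=b_{N+1}:=0$. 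The crucial observation is that this expression depends only on the index $j$ of the generation and not on the particular $n\in\Lambda_j$. Therefore $\iota$ conjugates the flow of the reduced $N$-dimensional system $-i\dot b_j=-|b_j|^2b_j+2(b_{j+1}^2+b_{j-1}^2)\ol{b_j}$ on $\CC^N$ to the restriction of \eqref{eq:InftyODE:FirstFiniteReduction} to $\wt M$; in particular this vector field is tangent to $\wt M$. Consequently any solution of \eqref{eq:InftyODE:FirstFiniteReduction} starting in $\wt M$ remains in $\wt M$ throughout its interval of existence, which is the assertion of the corollary. (Up to the Hamiltonian reduction by $\MM$ and a linear time rescaling, this reduced system is the Toy Model $h(b)$ from Section~\ref{sec:FeaturesModel}.)

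There is no genuine obstacle here: the statement is a combinatorial bookkeeping verification, and the only two points demanding care are that the spouse, sibling, children and parents really do lie in the claimed generations, which is the content of $2_\Lambda$ and $3_\Lambda$, and that the first and last generations are handled by the stated zero convention, exactly as the interaction sum $\sum_{j=2}^{N-1}$ in $h(b)$ handles them.
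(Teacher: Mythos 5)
Your proof is correct and follows the same route the paper takes (implicitly): substitute $\beta_n=b_j$ for $n\in\Lambda_j$ into the right-hand side of \eqref{eq:InftyODE:FirstFiniteReduction}, observe via conditions $2_\Lambda$, $3_\Lambda$, $5_\Lambda$ that spouse and sibling lie in $\Lambda_j$, children in $\Lambda_{j+1}$, parents in $\Lambda_{j-1}$, so the resulting expression depends on $j$ alone, and conclude tangency (hence invariance) of $\wt M$. The paper leaves this verification implicit, citing \cite{CollianderKSTT10} and simply writing down the reduced system \eqref{def:model}; you have spelled out exactly the bookkeeping it omits, including the boundary convention $b_0=b_{N+1}=0$ for $j=1,N$.
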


The dimension of  $\wt M$ is equal to the number of generations, namely $N$.
To define equation
\eqref{eq:InftyODE:FirstFiniteReduction} restricted to $\wt M$, let us define
\begin{equation}\label{def:ChangeToToyModel}
 b_j=\beta_n\,\,\,\text{ for any }n\in\Lambda_j.
\end{equation}
Then, \eqref{eq:InftyODE:FirstFiniteReduction}  restricted to $\wt M$ becomes
\begin{equation}\label{def:model}
 \dot  b_j=-ib_j^2\ol b_j+2i \ol
b_j\left(b_{j-1}^2+b_{j+1}^2\right),\,\,j=0,\ldots N,
\end{equation}
which is a Hamiltonian system with  respect to the Hamiltonian
\begin{equation}\label{def:Hamiltonian}
 h(b):=\frac{1}{4}\sum_j |b_j|^4-
 \frac{1}{2}\sum_j \left(\ol b_j^2b_{j-1}^2+b_j^2\ol b_{j-1}^2\right)
\end{equation}
and the  symplectic form $\Omega=\frac{i}{2}db_j\wedge d\ol b_j$.

\begin{theorem}\label{thm:ToyModelOrbit}
Fix a large $\gamma\gg 1$.  Then for any large enough $N$ and
$\de=e^{-\ga N}$, there exists an orbit of system \eqref{def:model},
$\nu>0$ and $T_0>0$ such that
\[
\begin{split}
 |b_3(0)|&>1-\de^\nu\\
|b_j(0)|&< \de^\nu\qquad\text{ for }j\neq 3
\end{split}
\qquad \text{ and }\qquad
\begin{split}
 |b_{N-1}(T_0)|&>1-\de^\nu\\
|b_j(T_0)|&<\de^\nu \qquad\text{ for }j\neq N-1.
\end{split}
\]
Moreover, there exists a constant $\KKK>0$
independent of  $N$ such that $T_0$ satisfies
\begin{equation}\label{def:Time:ToyModel}
 0<T_0<\KKK N \ln \left(\frac 1 \de \right)=\KKK\,\ga\,N^2.
\end{equation}
\end{theorem}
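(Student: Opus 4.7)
The plan is to construct an orbit of the finite dimensional Hamiltonian system \eqref{def:model} that shadows the concatenation of periodic orbits $\TT_3, \TT_4, \dots, \TT_{N-1}$ connected by the heteroclinic orbits $\ga_3, \ga_4, \dots, \ga_{N-2}$ described in Section \ref{sec:FeaturesModel}. Each transit from a neighborhood of $\TT_j$ to a neighborhood of $\TT_{j+1}$ decomposes into a \emph{local map} through a neighborhood of the saddle $\TT_j$ and a \emph{global map} along $\ga_j$. An iterative scheme then produces a nested family of sets of initial data whose intersection yields an orbit with the claimed transfer property, and the time bound follows by summing the travel times of these maps.

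First I would establish the quantitative Local Lemma \ref{lemma:iterative:saddle}. The steps are: perform the symplectic mass reduction so that $\TT_j$ becomes a fixed point; diagonalize the hyperbolic linear part so that the hyperbolic variables coming from $L_{j-1}$ and $L_j$ become the four coordinates $(q_1,p_1,q_2,p_2)$ of Section \ref{sec:MainIdeasSaddle}; perform a finitely smooth resonant normal form (in the spirit of Bronstein--Kopanskii) to kill non-resonant quartic terms; and, instead of integrating the resulting field directly on a long interval (which mixes stable and unstable directions and produces logarithmic corruption of the linear picture), solve a Shilnikov boundary value problem to parametrize the trajectories crossing from the entry section $\Sigma_-$ to the exit section $\Sigma_+$. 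To prevent logarithmic doubling from saddle to saddle, one restricts the initial data to the codimension one hypersurface on which the leading resonant contribution to the unstable coordinate cancels, as explained heuristically in Section \ref{sec:MainIdeasSaddle}. This analysis is carried out first for the four dimensional hyperbolic reduction (Section \ref{sec:HypToyModel}) and then extended in Section \ref{sec:FullSystem} by showing that the elliptic modes stay essentially frozen and do not destroy the cancelation.

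Next, the Global Lemma \ref{lemma:iterative:hetero} handles the transit along $\ga_j$. Since this map is defined over a bounded time interval, it is a smooth, near-identity diffeomorphism from the exit section at $\TT_j$ to the entry section at $\TT_{j+1}$, and the real content is that it preserves the product-like structure of the sets of initial data, cf. Definition \ref{def:ProductLikeSet}. Combining the two lemmas I would prove the iterative Theorem \ref{theorem:iterative} by induction on $j=3,\dots,N-1$: starting from a product-like set in the entry section of $\TT_j$ lying on the cancelation hypersurface, propagate it through the local map to a product-like set in the exit section, then through the global map to a product-like set in the entry section of $\TT_{j+1}$ satisfying the next cancelation condition. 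The intersection of the backward images of these sets is nonempty and provides the orbit of Theorem \ref{thm:ToyModelOrbit}. The time estimate follows because each local map takes time $\sim \ln(1/\de)=\ga N$ while each global map takes $O(1)$, so the total time through $N-4$ saddles is $O(N\ln(1/\de))=\KKK\ga N^2$.

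The principal obstacle is the cancelation step. Without it, as Section \ref{sec:MainIdeasSaddle} shows, the unstable coordinate at the exit of each local map carries a factor $\ln(1/\de)$, which becomes the new entry-size in the next local map and gets squared by the subsequent resonant term; after $N-4$ saddles this produces a factor $\ln^{2^{N-4}}(1/\de)$ and forces $\de$ to be taken super-exponentially small in $N$, destroying the polynomial-in-$\KK$ time bound. The Shilnikov parametrization provides enough explicit control of the local map to identify the offending resonant monomial and impose its vanishing as a codimension one condition on the entry data. Propagating this condition through alternating heteroclinic and local maps is precisely why product-like sets, rather than points or balls, are the natural invariant of the induction; verifying that this structure is preserved at every step and that the cancelation can be realized uniformly in $j$ is the technical heart of the proof.
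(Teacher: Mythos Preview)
Your proposal is correct and follows essentially the same approach as the paper: reduce Theorem~\ref{thm:ToyModelOrbit} to the Iterative Theorem~\ref{theorem:iterative}, which in turn is established by combining the Local Lemma~\ref{lemma:iterative:saddle} (proved via symplectic reduction, a Bronstein--Kopanskii resonant normal form on the hyperbolic toy model, a Shilnikov-type fixed point argument, and the codimension-one cancellation condition, then extended to the full system by showing the elliptic modes are nearly frozen) with the Global Lemma~\ref{lemma:iterative:hetero}, propagating product-like sets through the chain of saddles and summing the travel times. The only small inaccuracy is calling the global map ``near-identity'': it involves the nontrivial change of charts $\Theta^j$ between saddle-adapted coordinates (Lemma~\ref{lemma:ChangeOfSaddle}), but your point that it is a bounded-time smooth map preserving the product-like structure is exactly what the paper uses.
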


\begin{remark} An analog of this proposition also holds for
some smaller $\de$, e.g. $\de=C^{-\KK^2}$. This is
related to Remark \ref{remark:SlowerTime} about
time of diffusion without cancelations.
\end{remark}

Using  \eqref{def:ChangeToToyModel}, Theorem \ref{thm:ToyModelOrbit}
gives an orbit for equation \eqref{eq:InftyODE:AfterVariation}.
Moreover, both equations \eqref{eq:InftyODE:AfterVariation} and
\eqref{def:model} are invariant under certain rescaling.
Indeed if $b(t)$ is a solution of \eqref{def:model},
\begin{equation}\label{def:Rescaling}
b^\lambda(t)=\lambda^{-1}b\left(\lambda^{-2}t\right)
\end{equation}
is  a solution of the same equation.
By Theorem \ref{thm:ToyModelOrbit} duration of this solution in time is
\begin{equation}\label{def:Time:Rescaled}
 T=\la^2 T_0\le \la^2\,\KKK\,\ga\,N^2,
\end{equation}
where $T_0$ is the time obtained in Theorem \ref{thm:ToyModelOrbit},
which satisfies \eqref{def:Time:ToyModel}.

We will see that, modulo a rotation of the modes
(see \eqref{eq:InftyODE:VariationConstants}), there is a solution
of equation \eqref{eq:InfiniteODEAfterNormalForm}
which is close to the orbit $\beta^\la$ of
\eqref{eq:InftyODE:AfterVariation} defined as
\begin{equation}\label{def:RescaledApproxOrbit}
\begin{split}
\beta^\la_n(t)&=\lambda^{-1}b_j\left(\lambda^{-2}t\right)
\,\,\,\text{ for each }\ n\in\Lambda_j\\
\beta^\la_n(t)&=0\,\,\,\text{ for each }\ n\not\in\Lambda.
\end{split}
\end{equation}

To have the original system being well
approximated by the truncated system, we need that $\lambda$
is
large enough. Then the cubic terms in
\eqref{eq:InfiniteODEAfterNormalForm} dominate over
the quintic ones. Nevertheless, the bigger $\la$ is, the slower
the instability
time by
\eqref{def:Time:Rescaled}.
Thus, we  look for the smallest $\la$ (with respect to $N$) for
which the following approximation theorem
applies.

\begin{theorem}\label{thm:Approximation}
Let $\alpha(t)=\{\alpha_n(t)\}_{n\in \ZZ^2}$
be the solution of \eqref{eq:InfiniteODEAfterNormalForm}, $\bet^\la(t)=\{\bet^\la_n(t)\}_{n\in \ZZ^2}$ be the solution of
(\ref{eq:InftyODE:AfterVariation}) given
by \eqref{def:RescaledApproxOrbit}
and $T$ be the time defined in \eqref{def:Time:Rescaled}.
Suppose  $\mathrm{supp}\,\alpha(0)\subset \La$ and $\alpha(0)=\bet^\la(0)$.
Then, there exist a constant $\kk>0$ independent of $N$ and $\ga$ such that, for
\begin{equation}\label{def:LambdaOfN}
 \la=e^{\kk\ga N},
\end{equation}
and $0<t<T$ we have
\begin{equation}\label{eq:Approx:BoundDiff}
\sum_{n\in \ZZ^2}
\left|\alpha_n(t)-e^{i(G+|n|^2)t}\beta^\la_n(t)\right|
 \le \la^{-2},
\end{equation}
where $G=-2\|\al(0)\|_{\ell^2}^2$.
\end{theorem}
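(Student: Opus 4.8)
The plan is a standard Gronwall-type comparison between the full system \eqref{eq:InfiniteODEAfterNormalForm} and the truncated rescaled orbit $\beta^\la$, carried out after absorbing the gauge rotation. First I would set $\gamma_n(t) = e^{-i(G+|n|^2)t}\alpha_n(t)$, so that $\gamma$ solves exactly \eqref{eq:InftyODE:AfterVariation} plus a perturbation coming from $\partial_{\overline\al_n}\RRR$; by Theorem \ref{thm:NormalForm} this perturbation has $\ell^1$-norm $\OO(\|\gamma\|_{\ell^1}^5)$. Since $\beta^\la$ is supported on $\La$ and built from the toy-model solution $b(\lambda^{-2}t)$, and $\|b\|_{\ell^2}=1$ on the reduced mass level, one has $\|\beta^\la(t)\|_{\ell^1} = \lambda^{-1}\sum_j |b_j(\lambda^{-2}t)| \cdot \#\Lambda_j^{1/?}$; more precisely, using Corollary \ref{coro:Invariant} each generation carries $2^{N-1}$ equal modes, so $\|\beta^\la(t)\|_{\ell^1}\lesssim \lambda^{-1} N 2^{N-1}$, which must be kept small — this is exactly why $\lambda$ has to be exponentially large in $N$. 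Choosing $\lambda = e^{\kk\ga N}$ with $\kk$ large enough compared to the combinatorial growth rate makes $\|\beta^\la\|_{\ell^1}\ll 1$ throughout $[0,T]$.

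Next I would write $w(t) = \gamma(t) - \beta^\la(t)$ and derive its evolution equation by subtracting \eqref{eq:InftyODE:AfterVariation} for $\beta^\la$ from the equation for $\gamma$. The right-hand side splits into: (i) a term linear in $w$ with coefficients that are quadratic in $\gamma$ and $\beta^\la$, hence of size $\OO(\|\beta^\la\|_{\ell^1}^2 + \|w\|_{\ell^1}^2)$ since $\|\gamma\|_{\ell^1}\le \|\beta^\la\|_{\ell^1}+\|w\|_{\ell^1}$; (ii) higher-order terms in $w$; (iii) the genuinely new source term, namely the part of the cubic nonlinearity of \eqref{eq:InftyODE:AfterVariation} that couples $\La$ to $\ZZ^2\setminus\La$, which vanishes on $M$ by Property $1_\Lambda$ but is activated once $w$ leaves $M$; and (iv) the quintic remainder $X_\RRR$, of size $\OO(\|\gamma\|_{\ell^1}^5)$. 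Using the Banach-algebra property $\|a\ast b\|_{\ell^1}\le\|a\|_{\ell^1}\|b\|_{\ell^1}$, every convolution-type term is controlled in $\ell^1$, and one gets a differential inequality of the shape
\[
\frac{d}{dt}\|w\|_{\ell^1} \le C\|\beta^\la\|_{\ell^1}^2 \|w\|_{\ell^1} + C\|w\|_{\ell^1}^2 + C\|\beta^\la\|_{\ell^1}^5 .
\]
Integrating by Gronwall over $[0,T]$ with $T = \lambda^2 T_0 \le \lambda^2\KKK\ga N^2$, the linear-in-$w$ coefficient contributes a factor $\exp(C\|\beta^\la\|_{\ell^1}^2 T)$; here is where the \emph{no-spreading condition} $6_\Lambda$ enters, because it bounds the number of external modes $n\notin\La$ that can be fed by $\La$, keeping the effective coupling constant in term (iii) bounded independently of $N$ and thus keeping $\|\beta^\la\|_{\ell^1}^2 T$ under control. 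The source term $\|\beta^\la\|_{\ell^1}^5 T \lesssim (\lambda^{-1}N2^{N-1})^5 \lambda^2 N^2 = \lambda^{-3} N^7 2^{5(N-1)}$, and for $\kk$ large enough this is $\le \tfrac12\lambda^{-2}$, giving the claimed bound $\|w(t)\|_{\ell^1}\le\lambda^{-2}$.

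The main obstacle is controlling term (iii): a priori $w$ can drift off the invariant manifold $M$, and once it does, the external modes $n\notin\La$ start growing and in principle their number (the "boundary" of $\La$) could be so large that the coupling back into $\La$ is not uniformly bounded. Condition $6_\Lambda$ is precisely designed to prevent this: each external $n$ is a vertex of at most two "mixed" rectangles, so the $\ell^1\to\ell^1$ operator norm of the $\La\leftrightarrow\La^c$ coupling is bounded by an absolute constant, and a bootstrap argument — assume $\|w\|_{\ell^1}\le\lambda^{-2}$ on a maximal subinterval, show the differential inequality then forces a strictly better bound — closes the estimate on all of $[0,T]$. A secondary technical point is that the normal-form change $\Gamma$ of Theorem \ref{thm:NormalForm} and the rescaling \eqref{def:Rescaling} must be shown compatible, i.e. that $\alpha(0)=\beta^\la(0)$ with $\mathrm{supp}\,\alpha(0)\subset\La$ can actually be realized as the image under $\Gamma\ii$ of an admissible initial datum of size $\OO(\lambda^{-1})$ in $\ell^1$, which is immediate since $\|\Gamma-\Id\|_{\ell^1}=\OO(\|\cdot\|_{\ell^1}^3)$ is a higher-order correction that does not affect the leading $\ell^1$ size.
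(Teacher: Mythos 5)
Your overall strategy (gauge to rotating coordinates, subtract, Gronwall on the difference) is the same as the paper's, but there is a decisive gap in the way you treat the linear term, and as written the argument cannot close.

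You arrive at the differential inequality
\[
\frac{d}{dt}\|w\|_{\ell^1} \le C\|\beta^\la\|_{\ell^1}^2 \|w\|_{\ell^1} + C\|w\|_{\ell^1}^2 + C\|\beta^\la\|_{\ell^1}^5,
\]
and plan to absorb the linear coefficient by a Gronwall factor $\exp\bigl(C\|\beta^\la\|_{\ell^1}^2 T\bigr)$. But since $\beta^\la$ is supported on all $N$ generations each carrying $2^{N-1}$ equal modes, one only has $\|\beta^\la\|_{\ell^1}\gtrsim \lambda^{-1}2^{N-1}$, so $\|\beta^\la\|_{\ell^1}^2 T \gtrsim \lambda^{-2}2^{2N}\cdot\lambda^{2}\gamma N^2 = \gamma N^2 2^{2N}$. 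The Gronwall factor is therefore doubly exponential in $N$, and no choice of $\lambda = e^{\kk\gamma N}$ can beat it while keeping $T=\lambda^2 T_0$ polynomial in $\KK$. Your remark that condition $6_\Lambda$ "keeps $\|\beta^\la\|_{\ell^1}^2 T$ under control" is not accurate: $\|\beta^\la\|_{\ell^1}^2 T$ is genuinely huge and is not the right quantity to appear in the exponent.

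The paper's actual proof (Appendix \ref{app:Approx}) avoids this by not estimating the linearized operator $\ZZZ^1(t)=D\EE(\beta^\la)$ through $\|\beta^\la\|_{\ell^1}^2$. Instead, Lemma \ref{lemma:Approx:BoundLinear} exploits the sparsity of the resonance structure enforced by $1_\Lambda$--$6_\Lambda$: for each mode $n$, the coefficient $f_n(t)$ (the combined row and column weight of $\ZZZ^1(t)$ seen by $\xi_n$) involves only a bounded number of interaction terms (seven when $n\in\Lambda$, three when $n\notin\Lambda$, and this is exactly where $6_\Lambda$ is used), each quadratic in the $\beta^\la_k$, hence of size $\sim\lambda^{-2}$ pointwise. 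Crucially one further uses Theorem 3--bis: at any given time only a few toy-model components $b_j$ are of order one, the rest being $\lesssim\de^\nu$, which yields $\int_0^T f_n(t)\,dt\le C\gamma N$. The Gronwall argument is then performed \emph{component by component}, via the substitution $\xi_n=\zeta_n\exp\bigl(\int_0^t(f_n(s)+C\lambda^{-5/2})\,ds\bigr)$, and only afterwards summed over $n$. This produces the manageable factor $e^{C\gamma N}$ rather than $e^{C\gamma N^2 2^{2N}}$, and is the essential point your proposal is missing.

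A secondary, minor inaccuracy: the estimate $\|\beta^\la\|_{\ell^1}\lesssim\lambda^{-1}N2^{N-1}$ can be sharpened to $\lambda^{-1}C\,2^{N-1}$ because at any time $\sum_j|b_j|\lesssim C$ (again by Theorem 3--bis), but this refinement alone does not rescue the crude Gronwall; the component-wise argument and the time-integral bound on $f_n$ are both needed.
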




Using the three key theorems:
Theorems \ref{thm:NormalForm}, \ref{thm:ToyModelOrbit} and \ref{thm:Approximation}
we complete the proof of Theorem \ref{thm:main}.

\begin{proof}[Proof of Theorem \ref{thm:main}]
Using the change of variables $\Gamma$ obtained in Theorem \ref{thm:NormalForm}, from the solution $\al$  obtained in Theorem \ref{thm:Approximation} we define  $a=\Gamma (\al)$, which is a solution of system \eqref{eq:NLSForFourierCoefs}. We show that this orbit has the properties stated in Theorem \ref{thm:main}.

To compute the growth of Sobolev norm of this orbit $a$, we use the notation
\begin{equation}\label{def:Sums}
 S_j=\sum_{n\in\Lambda_j}|n|^{2s}
 \text{ for }j=1,\dots, N-1.
\end{equation}
To estimate the mass of our solution recall that
$2^{N-1}=\sum_{n\in\Lambda_j}1=|\Lambda_j|$. We want to prove that
\[
\frac{\|a(T)\|_{H^s}}{\|a(0)\|_{H^s}}\gtrsim \KK
\]
and estimate the mass $\|a(0)\|_{L^2}$ of the solution.
To this end, we start by bounding $\|a(T)\|_{H^s}$ in terms of
$S_{N-1}$. Since
\[
\left \|a(T)\right\|^2_{H^s}\geq \sum_{n\in\Lambda_{N-1}}|n|^{2s} \left|a_n(T)\right|^2\geq S_{N-1}\inf_{n\in\Lambda_{N-1}}\left|a_n(T)\right|^2,
\]
it is enough to obtain a lower bound for $\left|a_n(T)\right|$ with $n\in\Lambda_{N-1}$. Using the results of Theorems \ref{thm:NormalForm} and \ref{thm:Approximation}, we obtain
\begin{equation}\label{eq:ComputationFinalSobolev}
\begin{split}
 \left|a_n(T)\right|\geq & \left|\al_n(T)\right|-\left|\Gamma_n (\al)(T)-\al_n(T)\right|\\
\geq &\left|\beta^\la_n(T) e^{i\left(|n|^2+G\right)T}\right|- \left|\al_n(T)-\beta^\la_n(T) e^{i\left(|n|^2+G\right)T}\right|\\
&- \left|\Gamma_n (\al)(T)-\al_n(T)\right|.
\end{split}
\end{equation}
We need to obtain a lower bound for the first term of the right hand side and upper bounds for the second and third ones. Indeed, using the definition of $\beta^\la$ in \eqref{def:RescaledApproxOrbit} and the results in Theorem \ref{thm:ToyModelOrbit} we have that for $n\in\Lambda_{N-1}$,
\[
\left|\beta_n^\la(T) \right|^2=   \la^{-2}\left| b_{N-1}(T_0)\right|^2 \geq \frac{3}{4}\la^{-2},
\]
(the relation between $T$ and $T_0$ is established in \eqref{def:Time:Rescaled}).

For the second term in the right hand side of \eqref{eq:ComputationFinalSobolev}, it is enough to use Theorem \ref{thm:Approximation} to obtain,
\[
\left|\al_n(T)-\beta^\la_n(T) e^{i\left(|n|^2+G\right)T}\right|^2\leq \left(\sum_{n\in\ZZ^2}\left|\al_n(T)-\beta^\la_n(T) e^{i\left(|n|^2+G\right)T}\right|\right)^2 \leq \frac{\la^{-2}}{8}.
\]
For the  lower bound of the third term, we use the bound for $\Gamma-\mathrm{Id}$ given in Theorem \ref{thm:NormalForm}. Then,
\[
\left|\Gamma_n (\al)(T)-\al_n(T)\right|^2 \leq  \|\Gamma(\al)-\al\|_{\ell^1}^2\leq  \frac{\la^{-2}}{8}.
\]
Thus, we can conclude that
\begin{equation}\label{eq:GrowthSobolev:Final}
 \left \|\al(T)\right\|^2_{H^s}\geq \frac{\la^{-2}}{2}S_{N-1}.
\end{equation}
Now we prove that
\begin{equation}\label{eq:GrowthSobolev:Initial}
 \left \|a(0)\right\|^2_{H^s}\lesssim \la^{-2}S_3
 \qquad \text{ and } \qquad \left \|a(0)\right\|^2_{L^2}\lesssim \la^{-2}\,2^N.
\end{equation}
By the definition of $\la$ in (\ref{def:LambdaOfN}), the second inequality implies that the mass of $a(0)$
is small.  On the contrary, the first inequality does not imply that
the $H^s$-norm of $a(0)$ is small.
 As a matter of fact $S_3$ is large\footnote{As pointed out
to us by Terence Tao.}.

To prove the first inequality of \eqref{eq:GrowthSobolev:Initial}, let us point out that
\[
\left\|a(0)\right\|^2_{H^s}\leq \sum_{n\in\ZZ^2}|n|^{2s} \left|\al_n(0)+\left(\Gamma_n(\al(0)-\al_n(0)\right)\right|^2.
\]
We first bound $\left\|\al(0)\right\|^2_{H^s}$. To this end, let us recall that $\mathrm{supp }\ \al=\Lambda$. Then,
\[
\left\|\al(0)\right\|^2_{H^s}=\sum_{n\in\Lambda}|n|^{2s} \left|\al_n(0)\right|^2.
\]
Using Theorem \ref{thm:Approximation}, we have that
\begin{equation}\label{eq:GrowthSobolev:beta}
\left\|\al(0)\right\|^2_{H^s}\leq \sum_{n\in\Lambda}|n|^{2s} \left(\left|\beta^\la_n(0)\right|+\left|\beta^\la_n(0)-\al_n(0)\right|\right)^2.
\end{equation}
Recalling the definition of $\beta^\la$ in \eqref{def:RescaledApproxOrbit} and the results in Theorem \ref{thm:ToyModelOrbit},
\[
\begin{split}
\sum_{n\in\Lambda}|n|^{2s}\left|\beta^\la_n(0)\right|^2&\leq \left(1-\de^\nu\right)S_3+\de^\nu\sum_{j\neq3}S_j\\
&\leq S_3\left(1-\de^\nu+\de^\nu\sum_{j\neq3}\frac{S_j}{S_3}\right).
\end{split}
\]
From Proposition \ref{thm:SetLambda} we know that $j\neq 3$,
\[
\frac{S_j}{S_3}\lesssim e^{sN}
\]
Therefore, to bound these terms we use the definition of  $\de$ from Theorem \ref{thm:ToyModelOrbit} taking $\ga=\wt \ga (s-1)$.
Since $s-1>s_0-1>0$ is fixed, we can choose such $\wt \ga \gg 1$.
Then, we have that
\[
 \sum_{n\in\Lambda}|n|^{2s}\left|\beta^\la_n(0)\right|^2\lesssim \la^{-2}S_3.
\]
From this statement, \eqref{eq:GrowthSobolev:beta} and Theorem \ref{thm:Approximation}, we can conclude that
\[
 \left\|\al(0)\right\|^2_{H^s}\lesssim \la^{-2}S_3.
\]
To complete the proof of statement \eqref{eq:GrowthSobolev:Initial}
recall that the support of $\Gamma(\al)-\al$ is
\[
 \Lambda^3=\left\{n\in\ZZ^2: n=n_1-n_2+n_3, n_1,n_2,n_3\in\Lambda\right\}
\]
and apply Theorem \ref{thm:NormalForm}.

Using inequalities  \eqref{eq:GrowthSobolev:Final} and \eqref{eq:GrowthSobolev:Initial},
we have that
\[
\frac{\left\|a(T)\right\|^2_{H^s}}{\left\|a(0)\right\|^2_{H^s}}\gtrsim \frac{S_{N-1}}{S_3},
\]
and then, applying Proposition \ref{thm:SetLambda}, we obtain
\[
\frac{\|a(T)\|_{H^s}^2}{\|a(0)\|_{H^s}^2}\gtrsim \frac 12 2^{(s-1)(N-4)}
\ge \KK ^2.
\]

It is left to estimate diffusion time $T$.
Use Proposition \ref{thm:SetLambda} to set $\KK\simeq 2^{(s-1)N/2}$ and
$c=4\kk \ga/(s-1)$, definition \eqref{def:LambdaOfN} to set
$\la=e^{\kk \ga N}\simeq \KK^{c/(2\ln2)}$.
For time of diffusion we obtain
\[
|T|\leq \KKK \ \ga\ \la^2 N^2  \leq \KKK\ \ga\ \KK^{c/\ln 2}
\dfrac{4\ln^2 \KK}{\ln^2 2\ (s-1)^2}\leq \KK^c
\]
for large $\KK$.
This completes the proof of Theorem \ref{thm:main}.
\end{proof}

\section{The finite dimensional model: proof of Theorem \ref{thm:ToyModelOrbit}}\label{sec:ProofToyModelThm}
We devote this section to describe the proof of Theorem \ref{thm:ToyModelOrbit}. The proofs of the partial results stated in this section are deferred to Sections \ref{sec:HypToyModel}--\ref{sec:ProofHeteroMap}.

To prove Theorem \ref{thm:ToyModelOrbit} we need to analyze certain orbits of system \eqref{def:model}
given by Hamiltonian $h(b)$ in  \eqref{def:Hamiltonian}.
Moreover,
there is another conserved quantity:
the mass
\begin{equation}\label{def:mass}
\MM(b)=\sum |b_j|^2.
\end{equation}
We obtain orbits given in Theorem \ref{thm:ToyModelOrbit} on the manifold
$\MM(b)=1$.

It can be easily seen that on $\MM(b)=1$ there are periodic orbits $\TT_j$ given
by
\begin{equation}\label{def:periodic-orbits}
 b_j(t)=e^{-it},\,\,b_k(t)=0\,\,\text{for }k\neq j,
\end{equation}
which
in normally directions have mixed type: hyperbolic in some directions and
elliptic others. Moreover, there
exist two families of heteroclinic orbits, which connect consecutive periodic orbits.
Consider the $2$-dimensional complex plane $L_j=\{\forall k\ne j,j+1:\ b_k=0\}$.
In Section \ref{sec:FeaturesModel} we show that they are invariant and dynamics
inside are integrable.
Then, the (two dimensional) unstable manifold of the periodic orbit $(b_j(t),b_{j+1}(t))=(e^{-it},0)$ coincides with the (two dimensional) stable manifold
of $(b_j(t),b_{j+1}(t))=(0,e^{-it})$ and it is foliated by heteroclinic orbits. As usual,
the stable and unstable invariant manifolds have two branches and, therefore, we have two families of heteroclinic connections. It turns out that they can be explicitly computed
\cite{CollianderKSTT10} and are given by
\begin{equation}\label{def:heteroclinic}
\ga_j^\pm(t)=(0,\ldots,0, b_j(t),  b_{j+1}^{\pm}(t),0,\ldots,0)
\end{equation}
with
\[
 b_j(t)=\frac{e^{-i(t+\vartheta)}\omega}{\sqrt{1+e^{2\sqrt{3}t}}},\qquad
b_{j+1}^{\pm}(t)=\pm\frac{e^{-i(t+\vartheta)}\omega^2}{\sqrt{1+e^{-2\sqrt{3}t}}},
\qquad\vartheta\in\TT.
\]
To prove Theorem \ref{thm:ToyModelOrbit} we  look for
an orbit which shadows the sequence of separatrices,
as follows
 \begin{itemize}
  \item it starts close to the periodic orbit $\TT_3$

  \item later it passes close to the periodic orbit $\TT_4$

  \item later it passes close to the periodic orbit $\TT_5$ and so on

  \item finally it arrives to a neighborhood the periodic orbit $\TT_{N-1}$.
  \end{itemize}
  Our main goal is to prove

 \vskip 0.1in

  \qquad \qquad {\it existence of such orbits and estimate
  the transition time in terms of $N$}.

In making these transition we have the freedom of whether to travel close
to $\ga_j^+$ or $\ga_j^-$. We will choose always $\ga_j^+$
The procedure for $\ga_j^-$ is analogous.

We believe it is helpful to the reader to have the following
information about transition of energy. We have a solution
$b(t)=\{b_j(t)\}_{j=0,\dots,N}$ to the system (\ref{def:model}).
We fix $\sigma>0$ small, but independent
of $N$, and  $\de= e^{-\ga N}$. For each $j=2,\dots,N-1$ near
the periodic orbit $\TT_j$ and later near $\TT_{j+1}$ we have
the following table of orders of magnitude of distribution of energy
\begin{align} \label{energy-distribution}
 \text{near }\TT_j \qquad \qquad \qquad \qquad \qquad
 & \qquad \longrightarrow \qquad  &  \qquad \qquad \text{near }\TT_{j+1}
 \qquad \qquad \qquad \notag \\
 |b_{<j-2}| \qquad \qquad \qquad \qquad \qquad
 & \qquad \longrightarrow \qquad  & |b_{<j-2}|\ (1+O(\de^{r'})) \qquad \qquad \notag \\
 |b_{j-2}| \quad\qquad\quad \qquad \qquad \qquad
 & \qquad \longrightarrow \qquad  & K|b_{j-2}| \quad \qquad \qquad \qquad \notag \\
 |b_{j-1}|=O(\sigma) \qquad \qquad\qquad \qquad
 & \qquad \longrightarrow \qquad  & (C^{(j)}\de)^{1/2}\qquad \qquad \qquad  \\
 |b_j|=1-O(\sigma^2)\ (\textup{mass conservation}) &
 \qquad \longrightarrow \qquad & O(\sigma) \qquad \qquad \qquad\qquad \notag \\
 |b_{j+1}|=(C^{(j)}\de)^{1/2}\qquad \qquad\qquad \qquad &
 \qquad \longrightarrow \qquad & 1-O(\sigma^2)\ (\textup{mass conservation}) \notag \\
 |b_{j+2}|\qquad\,\quad\quad \qquad \qquad \qquad
 & \qquad \longrightarrow \qquad  & K|b_{j+2}| \quad \qquad \qquad \qquad \notag \\
 |b_{>j+2}| \qquad \qquad\qquad \qquad \qquad& \qquad \longrightarrow \qquad
 & |b_{>j+2}|\ (1+O(\de^{r'})).\qquad \qquad  \notag
\end{align}

 \vskip 0.1in

  We decompose
  a diffusing orbit into $N-5$ parts: near each
  periodic orbit $\TT_j,\ j=3,\dots,N-1$ we construct sections transversal to
  the flow so that they divide the orbit appropriately.
  For each transition from one section to the next one we associate
  a map $\BB^j$ which sends points close to $\TT_j$ to points close to
  $\TT_{j+1}$.
  This leads to analysis of the composition of all these maps
\[
 \BB^\ast=\BB^{N-1}\circ\ldots\circ\BB^3.
\]
To study these maps we will consider different systems of coordinates which, on
one hand, will take advantage of the fact that mass \eqref{def:mass} is a conserved
quantity, and on the other hand, will be adapted to the linear normal
behavior of the periodic orbits. These systems of coordinates are specified in Section \ref{sec:SaddleMapFirstChanges}.

\subsection{Symplectic reduction and
diagonalization}\label{sec:SaddleMapFirstChanges}
To study the different transition maps we use
a system of coordinates defined
in \cite{CollianderKSTT10}.
It consists of two steps:
\begin{itemize}
 \item A symplectic reduction
 uses that mass \eqref{def:mass} is conserved and sends
 the periodic orbit $\TT_j$ into a critical point.
\item
A linear transformation diagonalizes the linearization of dynamics near
this critical point.
\end{itemize}
We perform the change corresponding to the traveling close to the $j$ periodic
orbit $\TT_j$. We restrict ourselves to $\MM(b)=1$ and we take
\begin{equation}\label{def:SaddleAdaptedCoordinates}
 b_j=r^{(j)}e^{i\theta^{(j)}},\,\, b_k=c_k^{(j)}e^{i\theta^{(j)}} \ \text{ for all }\ k\ne j,
\end{equation}
where $\tet^{(j)}$ is a variable on $\TT_j$. From now on in this section
we omit the superscripts ${(j)}$. It can be seen that after eliminating $r$
using that $\MM(b)=1$ and omitting the equation for the variable $\theta$,
one obtain a new set of equations whose $c_k$ components form a Hamiltonian system
with the Hamiltonian
\[
\begin{split}
 H^{(j)}(c)=&\frac{1}{4}\sum_{k\neq j}|c_k|^4+\frac{1}{4}\left(1-\sum_{k\neq
j}|c_k|^2\right)^2-\frac{1}{2}\sum_{k\neq j,j+1}\ol c_k^2c_{k-1}^2+c_k^2\ol
c_{k-1}^2\\
&-\frac{1}{2}\left(1-\sum_{k\neq j}|c_k|^2\right)\left(c_{j-1}^2+\ol
c_{j-1}^2+c_{j+1}^2+\ol c_{j+1}^2\right)
\end{split}
\]
and the symplectic form $\Omega=\frac{i}{2}dc_k\wedge d\ol{ c_k}$.
The Hamiltonian $H^{(j)}(c)$ can be written as
\begin{equation}\label{def:Hamiltonian:Reduced}
 H^{(j)}(c)= H^{(j)}_2(c)+ H^{(j)}_4(c)
\end{equation}
with
 \begin{align}
H^{(j)}_2(c)=&-\frac{1}{2}\sum_{k\neq
j}|c_k|^2-\frac{1}{2}\left(c_{j-1}^2+\ol c_{j-1}^2+c_{j+1}^2+\ol
c_{j+1}^2\right)\notag\\
H^{(j)}_4(c)=&\frac{1}{4}\sum_{k\neq j}|c_k|^4+\frac{1}{4}\left(\sum_{k\neq
j}|c_k|^2\right)^2-\frac{1}{2}\sum_{k\neq j,j+1}\ol c_k^2c_{k-1}^2+c_k^2\ol
c_{k-1}^2\notag\\
&+\frac{1}{2}\sum_{k\neq j}|c_k|^2\left(c_{j-1}^2+\ol c_{j-1}^2+c_{j+1}^2+\ol
c_{j+1}^2\right).\label{def:HamC:H4}
 \end{align}
Since we are omitting the evolution of the variable $\theta$, the periodic orbit $\TT_j$ has become now a critical point for the equation associated to this Hamiltonian, which is defined as  $c=0$. For the same reason, the two families of heteroclinic connections defined in \eqref{def:heteroclinic}, now have become just two one dimensional heteroclinic connections.

The second step is to look for a change of
variables which diagonalizes the vector field around this critical point. This
change only modifies the coordinates $(c_{j-1},c_{j+1})$ and is given by
\begin{equation}\label{def:ChangeToDiagonal}
 \left(\begin{array}{c} c_{j-1}\\ c_{j+ 1}\end{array}\right)=
\left(\begin{array}{c} \omega^2 p_1+\omega q_1\\ \omega^2
p_2+\omega q_2\end{array}\right)
\end{equation}
where $\omega=e^{2\pi i/3}$ (see \cite{CollianderKSTT10}). Note that this change
is conformal and leads to the
symplectic form
\begin{equation}\label{def:SymplecticAfterDiagonal}
 \wt \Omega=\frac{i}{2}dc_k\wedge d\ol c_k+dp_1\wedge dq_1+dp_2\wedge dq_2.
\end{equation}
To study the Hamiltonian expressed in the new variables let us introduce some
notation. We define
\begin{equation}\label{def:Set:EllipticModes}
\PP_j=\{1\leq k\leq N; k \neq j-1,j,j+1\},
\end{equation}
which is the set of subindexes of the elliptic modes. From now on we will denote
by $q$ and $p$ all the stable and unstable coordinates $q=(q_1,q_2)$ and
$p=(p_1,p_2)$ respectively and by  $c$ all the elliptic modes, namely $c_k$ with
$k\in \PP_j$.

\begin{lemma}\label{lemma:Diagonalization}
 The change \eqref{def:ChangeToDiagonal} transforms the Hamiltonian
\eqref{def:Hamiltonian:Reduced} into the Hamiltonian
\begin{equation}\label{def:Ham:Diagonal}
 \wt H^{(j)}(p,q,c)=\wt H^{(j)}_2(p,q,c)+\wt H^j_4(p,q,c)
\end{equation}
with homogeneous polynomials
\[
 \wt H^{(j)}_2(p,q,c)=-\frac{1}{2}\sum_{k\in
\PP_j}|c_k|^2+\sqrt{3}\left(p_1q_1+p_2q_2\right).
\]
and
\[
 \wt H^{(j)}_4(p,q,c)= \wt H^{(j)}_{\hyp}\left(p,q\right)+\wt H^{(j)}_{\el}(c)+\wt
H^{(j)}_{\mix}\left(p,q,c\right)
\]
where
\begin{align}
\wt
H^{(j)}_\hyp(p,q)=&\sum_{k=1}^3\nu_{k}
p_1^kq_1^{4-k}+\sum_{k=1}^3\nu_{k}
p_2^kq_2^{4-k}+\sum_{k,\ell=0}^2\nu_{k\ell}
p_1^kq_1^{2-k}p_2^\ell q_2^{2-\ell}\notag\\
\wt H^{(j)}_{\el}\left(c\right)=&\frac{1}{4}\sum_{k\in\PP_j}|c_k|^4+\frac{1}{4}
\left(\sum_{k\in\PP_j}|c_k|^2\right)^2 \label{def:Ham4:Ell:Original}\\
&-\frac{1}{2}\sum_{k\neq j-1,j,j+1,j+2}c_k^2\ol{c_{k-1}}^2+\ol {c_k}^2
c_{k-1}^2\notag\\
\wt H^{(j)}_{\mix}(p,q,c)=&-\frac{\sqrt{3}}{2}\sum_{k\in
\PP_j}|c_k|^2\left(q_1p_1+q_2p_2\right)\label{def:Ham4:Mix:Original}\\
&-\frac{1}{2}\left(\omega^2 p_1+\omega q_1\right)^2\ol{ c_{j-
2}}^2-\frac{1}{2}\left(\omega^2 q_1+\omega p_1\right)^2 c_{j- 2}^2\notag\\
&-\frac{1}{2}\left(\omega^2 p_2+\omega q_2\right)^2\ol{ c_{j+
2}}^2-\frac{1}{2}\left(\omega^2 q_2+\omega p_2\right)^2 c_{j+2}^2\notag
\end{align}
for certain constants and $\nu_k,\nu_{k\ell}\in\RR$.
\end{lemma}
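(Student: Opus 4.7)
The proof will be a direct substitution of the linear change \eqref{def:ChangeToDiagonal} into \eqref{def:Hamiltonian:Reduced}, followed by regrouping the resulting polynomial according to how many of the distinguished modes $c_{j-1}, c_{j+1}$ appear in each monomial. The plan rests on the following observations: (i) the change is the identity on $\{c_k: k\in\PP_j\}$, so any term of $H^{(j)}$ not involving $c_{j\pm 1}$ is left unchanged and must land in $\wt H_\el^{(j)}$ (together with the purely-elliptic piece of $-\frac12\sum_{k\neq j}|c_k|^2$ in the quadratic part); (ii) everything else can be expanded using only the cyclotomic identities $\omega^3=1$, $\omega+\omega^2=-1$, $\ol\omega=\omega^2$, which give the two key computations
\[
|c_{j-1}|^2 = p_1^2 - p_1 q_1 + q_1^2, \qquad c_{j-1}^2 + \ol{c_{j-1}}^2 = -(p_1^2+q_1^2) + 4 p_1 q_1,
\]
and the analogous identities in $(p_2,q_2)$ for $c_{j+1}$.

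With these tools I would proceed in two steps. For the quadratic part I substitute the identities into $H_2^{(j)}$: the elliptic contribution $-\frac12\sum_{k\in\PP_j}|c_k|^2$ passes through unchanged, while the $c_{j\pm 1}$-pieces collapse to multiples of $p_1 q_1$ and $p_2 q_2$; after absorbing the conformal Jacobian of the change (which also rescales the symplectic form to the canonical $dp_\ell\wedge dq_\ell$ recorded in \eqref{def:SymplecticAfterDiagonal}), the coefficient is $\sqrt 3$ and one obtains $\wt H_2^{(j)}$. For the quartic part I split the monomials of $H_4^{(j)}$ by the number of factors lying in $\{c_{j-1},c_{j+1}\}$. \textbf{Four such factors} come from $\tfrac14|c_{j\pm1}|^4$, the diagonal and cross pieces $(|c_{j\pm 1}|^2)^2$, $2|c_{j-1}|^2|c_{j+1}|^2$ inside $\tfrac14(\sum_{k\neq j}|c_k|^2)^2$, and the $k\in\{j-1,j+1\}$ entries of the last sum in $H_4^{(j)}$; after Step~1 the pure powers $p_\ell^4,q_\ell^4$ cancel identically and the remainder has precisely the monomial structure $\sum_{k=1}^3\nu_k p_\ell^k q_\ell^{4-k}$ and $\sum_{k,\ell=0}^2\nu_{k\ell} p_1^k q_1^{2-k}p_2^\ell q_2^{2-\ell}$ of $\wt H_\hyp^{(j)}$. \textbf{Zero such factors} yield $\wt H_\el^{(j)}$ directly (the restriction of $\tfrac14\sum|c_k|^4+\tfrac14(\sum|c_k|^2)^2$ to $\PP_j$ together with the nearest-neighbor terms indexed by $k\neq j-1,j,j+1,j+2$). \textbf{Two such factors} constitute the mixed part: the cross piece $2(|c_{j-1}|^2+|c_{j+1}|^2)\sum_{k\in\PP_j}|c_k|^2$ of the square, together with the same conformal factor used in Step~1, produces the $-\tfrac{\sqrt 3}{2}\sum_{k\in\PP_j}|c_k|^2(p_1q_1+p_2q_2)$ line; the boundary indices $k=j-1$ and $k=j+2$ of the nearest-neighbor sum give terms involving exactly one $c_{j\pm 2}\in\PP_j$ and one $c_{j\pm 1}$, and substituting $c_{j\pm 1}=\omega^2 p_\ell + \omega q_\ell$ (without further expansion) yields the last two lines of $\wt H_\mix^{(j)}$.

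The main obstacle is not analytic but organizational: one must verify that the sum $\sum_{k\neq j,j+1}(\ol c_k^2 c_{k-1}^2+c_k^2\ol{c_{k-1}}^2)$ splits cleanly into the three groups above, which reduces to checking, for each $k$, whether $k$ and $k-1$ lie in $\{j-1,j+1\}$, in $\PP_j$, or straddle the two sets. Only $k=j-1$ and $k=j+2$ straddle, and these are precisely the indices producing the two $c_{j\pm 2}^2$-type contributions appearing in \eqref{def:Ham4:Mix:Original}; all other $k$ lie entirely in $\PP_j$ and contribute to $\wt H_\el^{(j)}$. The cancellation of $p_\ell^4$ and $q_\ell^4$ in the hyperbolic block (arising because the coefficient of $|c_{j\pm1}|^4$ in $H_4^{(j)}$ is $\tfrac14+\tfrac14=\tfrac12$, exactly what is needed to offset the $-\tfrac12$ coefficient of $|c_{j\pm1}|^2(c_{j\pm1}^2+\ol{c_{j\pm1}}^2)$) is a small but reassuring consistency check. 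Since no analytic estimates enter and the change is linear, this completes the reduction.
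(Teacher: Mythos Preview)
Your approach is correct and is essentially the paper's own: both proofs are a direct substitution of \eqref{def:ChangeToDiagonal} into $H^{(j)}$, with the quartic part handled by sorting monomials. The only organizational difference is that the paper first rewrites
\[
H_4^{(j)}(c)=\tfrac14\sum_{k\neq j}|c_k|^4+\tfrac14\Bigl(\sum_{k\neq j}|c_k|^2+c_{j-1}^2+\ol c_{j-1}^2+c_{j+1}^2+\ol c_{j+1}^2\Bigr)^{2}-\tfrac12\sum_{k\neq j,j+1}(\ol c_k^2 c_{k-1}^2+c_k^2\ol c_{k-1}^2)-\tfrac14\bigl(c_{j-1}^2+\ol c_{j-1}^2+c_{j+1}^2+\ol c_{j+1}^2\bigr)^{2},
\]
so that the large squared block is recognized as $(-2H_2^{(j)})^2/4$ and becomes $(\wt H_2^{(j)})^2$ after the change; this delivers the first line of $\wt H_\mix^{(j)}$ and the cross piece of $\wt H_\hyp^{(j)}$ in one stroke, whereas your counting-by-$c_{j\pm1}$-factors scheme reaches the same place with slightly more tracking. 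One small slip in your parenthetical: the coefficient of $|c_{j\pm1}|^2(c_{j\pm1}^2+\ol c_{j\pm1}^2)$ in $H_4^{(j)}$ is $+\tfrac12$, not $-\tfrac12$; the $p_\ell^4,q_\ell^4$ cancellation happens because $|c_{j-1}|^2+(c_{j-1}^2+\ol c_{j-1}^2)=3p_1q_1$, so the common factor $\tfrac12|c_{j-1}|^2$ multiplies a quantity with no pure powers.
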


\begin{remark}\label{remark:FormOfVF}
Even though the proof of this lemma is a simple substitution of $(p,q)$ we do need
specifics of the form of the decomposition into Hamiltonians:
\begin{itemize}

\item $\wt H^{(j)}_2$ is the direct product of two linear saddles $(p_i,q_i),\ i=1,2$
 and $N-2$ linear elliptic points $\{c_k\}_k, \ k\in \PP_j$.

\item $\wt H^{(j)}_{\hyp}$ consists of
some only saddle terms. In particular, it does not contain terms
$p_i^4,q_i^4, \ i=1,2$ so $\{q=0\}$ and $\{p=0\}$ are invariant
manifolds of $\wt H$ if we set $c=0$. This implies that the two heteroclinic orbits which connect the critical point $(p,q,c)=(0,0,0)$ to the next periodic orbit $\TT_{j+1}$ are just defined as
\[
 \left(p_1^\pm(t), q_1^\pm(t),p_2^\pm(t), q_2^\pm(t),c^\pm (t) \right)=\left(0,0,\frac{\pm 1}{1+e^{-2\sqrt{3}t}},0,0\right).
\]
Moreover,
$\TT_{j+1}$ is now defined as $|c_{j+1}|=1$.
 Due to \eqref{def:ChangeToDiagonal} it is equivalent to $p_2^2+q_2^2-p_2q_2=1$.

\item Near $p=q=0$, which corresponds to the periodic orbit $\TT_j$
Hamiltonians $\wt H^{(j)}_{\el}$ and $\wt H^{(j)}_{\mix}$ are almost integrable.
The only source of non-integrability comes from the second line
of (\ref{def:Ham4:Ell:Original}) for $\wt H^{(j)}_{\el}$ and from
the second and third line of (\ref{def:Ham4:Mix:Original})
for $\wt H^{(j)}_{\mix}$.

\item Later we select regions with $c$'s being exponentially small in $N$.
As the result coupling between hyperbolic variables -- $(p,q)$ and
elliptic ones $c$'s is exponentially small in $N$. This decoupling
at the leading order is crucial for our analysis.
\item Among all the constants $\nu_{k}$ which appear in the definition
of Hamiltonian \eqref{def:Ham:Diagonal},  $\nu_{02}\neq 0$ is
the only one which plays a significant role in the proof of
Theorem \ref{thm:ToyModelOrbit}. Indeed, the corresponding term is
resonant and will be the leading term in studying the transition close
to the saddle. We assume, without loss of generality that $\nu_{02}>0$
since the case $\nu_{02}<0$ can be done analogously.
\end{itemize}
\end{remark}

\begin{proof}
To obtain the explicit form of $\wt H^{(j)}_4$, note that
$H^{(j)}_4(c)$ in \eqref{def:HamC:H4} can be rewritten as
\[
\begin{split}
 H^{(j)}_4(c)=&\frac{1}{4}\sum_{k\neq j}|c_k|^4+\frac{1}{4}\left(\sum_{k\neq
j}|c_k|^2+c_{j-1}^2+\ol c_{j-1}^2+c_{j+1}^2+\ol
c_{j+1}^2\right)^2\\
&-\frac{1}{2}\sum_{k\neq j,j+1}\ol c_k^2c_{k-1}^2+c_k^2\ol
c_{k-1}^2-\frac{1}{4}\left(c_{j-1}^2+\ol c_{j-1}^2+c_{j+1}^2+\ol
c_{j+1}^2\right)^2.
\end{split}
\]
Written in this way, the second term in the first row is just a constant times
$\wt H^{(j)}_2$ squared. Then, the particular form of $\wt H^{(j)}_\hyp$, $\wt H^{(j)}_\el$,
and $\wt H^{(j)}_\mix$ can be obtained just performing the change of coordinates.
\end{proof}

Since the symplectic form is given by \eqref{def:SymplecticAfterDiagonal}, equations associated to the Hamiltonian \eqref{def:Ham:Diagonal} are
\begin{eqnarray}\label{def:VF:Full:AfterDiagonal}
\begin{aligned}
\dot p_1
& = & \sqrt{3}p_1+\ZZZ_{\hyp, p_1}+\ZZZ_{\mix, p_1}
& = & \sqrt{3}p_1+\partial_{q_1}\wt H^{(j)}_{\hyp}+\partial_{q_1}\wt H^{(j)}_{\mix}\\
\dot q_1
& = & -\sqrt{3}q_1+\ZZZ_{\hyp, q_1}+\ZZZ_{\mix, q_1}
& = & -\sqrt{3}q_1-\partial_{p_1}\wt H^{(j)}_{\hyp}-\partial_{p_1}\wt H^{(j)}_{\mix}\\
\dot p_2
& = & \sqrt{3}p_2+\ZZZ_{\hyp, p_2}+\ZZZ_{\mix, p_2}
& = & \sqrt{3}p_2+\partial_{q_2}\wt H^{(j)}_{\hyp}+\partial_{q_2}\wt H^{(j)}_{\mix}\\
\dot q_2
& = & -\sqrt{3}q_2+\ZZZ_{\hyp, q_2}+\ZZZ_{\mix, q_2}
& = & -\sqrt{3}q_2-\partial_{p_2}\wt H^{(j)}_{\hyp}-\partial_{p_2}\wt H^{(j)}_{\mix}\\
\dot c_k
& = & ic_k  + \ZZZ_{\el, c_k} + \ZZZ_{\mix, c_k}
& = & ic_k-2i\partial_{c_k}\wt H^{(j)}_{\el}-2i\partial_{c_k}\wt
H^{(j)}_{\mix}.
\end{aligned}
\end{eqnarray}
where
\begin{align}
\ZZZ_{\hyp, p_1}=
&\sum_{k=1}^3(4-k)\nu_{k}p_1^kq_1^{3-k}
+\nu_{12}p_1p_2^2+\nu_{11}p_1p_2q_2+\nu_{10}p_1q_2^2 \label{def:VF:Diag:p1:Hyp}\\
&+2\nu_{02}q_1p_2^2+2\nu_{01}q_1p_2q_2+2\nu_{00}q_1q_2^2\notag\\
\ZZZ_{\hyp,
q_1}=&-\sum_{k=1}^3 k\nu_{k}p_1^{k-1} q_1^{4-k}-2\nu_{22}p_1p_2^2-2\nu_{21}p_1p_2q_2-2\nu_{20}p_1q_2^2
\label{def:VF:Diag:q1:Hyp}\\
&-\nu_{12}q_1p_2^2-\nu_{11}q_1p_2q_2-\nu_{10}q_1q_2^2\notag\\
\ZZZ_{\hyp,
p_2}=&\sum_{k=1}^4 (4-k)\nu_{k}p_2^kq_2^{3-k}+\nu_{21}p_1^2p_2+\nu_{11}p_1q_1p_2+\nu_{01}q_1^2p_2
\label{def:VF:Diag:p2:Hyp}\\
&+2\nu_{20}p_1^2q_2+2\nu_{10}p_1q_1q_2+2\nu_{00}q_1^2q_2\notag\\
\ZZZ_{\hyp,
q_2}=&-\sum_{k=1}^4 k\nu_{k\ell}p_2^{k-1}q_2^{4-k}-2\nu_{22}p_1^2p_2-2\nu_{12}p_1q_1p_2-2\nu_{02}q_1^2p_2
\label{def:VF:Diag:q2:Hyp}\\
&-\nu_{21}p_1^2q_2-\nu_{11}p_1q_1q_2-\nu_{01}q_1^2q_2 \notag \\
 \ZZZ_{\el, c_k}=&-i|c_k|^2
c_k-i\left(\sum_{\ell\in\PP_j}|c_\ell|^2\right)c_k
+2i\ol{c_k}\left(c_{k-1}^2+c_{k+1}^2\right)\label{def:VF:Diag:cm:El}
\end{align}
\begin{align}
\ZZZ_{\mix, q_1}=&\omega^2 (\omega^2 p_1+\omega q_1)\ol{c_{j-2}}^2+\omega
(\omega p_1+\omega^2 q_1){c_{j-2}}^2+\frac{\sqrt{3}}{2}\sum_{\ell\in\PP_j}|c_\ell|^2q_1
\label{def:VF:Diag:q1:Mix}\\
\ZZZ_{\mix, p_1}=&-\omega (\omega^2 p_1+\omega q_1)\ol{c_{j-2}}^2-\omega^2
(\omega p_1+\omega^2 q_1){c_{j-2}}^2-\frac{\sqrt{3}}{2}\sum_{\ell\in\PP_j}|c_\ell|^2p_1
\label{def:VF:Diag:p1:Mix}\\
\ZZZ_{\mix, q_2}=&\omega^2 (\omega^2 p_2+\omega q_2)\ol{c_{j+2}}^2+\omega
(\omega p_2+\omega^2 q_2){c_{j+2}}^2+\frac{\sqrt{3}}{2}\sum_{\ell\in\PP_j}|c_\ell|^2q_2
\label{def:VF:Diag:q2:Mix}\\
\ZZZ_{\mix, p_2}=&-\omega (\omega^2 p_2+\omega q_2)\ol{c_{j+2}}^2-\omega^2
(\omega p_2+\omega^2 q_2){c_{j+2}}^2-\frac{\sqrt{3}}{2}\sum_{\ell\in\PP_j}|c_\ell|^2p_2
\label{def:VF:Diag:p2:Mix}\\
\ZZZ_{\mix,c_k}=&i\sqrt{3}c_k (q_1p_1+q_2p_2)\,\,\,\text{ for }k\in
\PP_j\setminus\{ j\pm 2\}\label{def:VF:Diag:cm:Mix}\\
\ZZZ_{\mix,c_{j- 2}}=&i\sqrt{3}c_{j- 2} (q_1p_1+q_2p_2)
-2i (\omega^2 p_1+\omega q_1)^2\ol{c_{j- 2}}\label{def:VF:Diag:cm:Mix.Adjacent}\\
\ZZZ_{\mix,c_{j+ 2}}=&i\sqrt{3}c_{j+ 2} (q_1p_1+q_2p_2)
-2i (\omega^2 p_2+\omega q_2)^2\ol{c_{j+ 2}}\notag.
\end{align}

\subsection{The iterative Theorem}

Now that we have obtained the adapted coordinates for each saddle we are ready
to explain the strategy to prove Theorem \ref{thm:ToyModelOrbit}. To obtain the
orbit given in Theorem \ref{thm:ToyModelOrbit}, we will consider several
co-dimension one sections $\{\Sigma^{\inn}_j\}_{j=1}^N$ and transition maps
$\BB^j$ from one section $\Sigma^{\inn}_j$ to the next one $\Sigma^{\inn}_{j+1}$.
Then, we will
detect a class of open sets $\{\VV_j\}_j,\ \VV_j \subset \Sigma_j^{\inn},\ j=1,\dots,N-1$,
which have a certain {\it almost product structure} (see Definition \ref{def:ProductLikeSet})
such that $\VV_{j+1}\subset\BB^{j}\left(\VV_j\right)$ and none of them is empty.
Each set $\VV_j$ is located close to the stable manifold of the periodic orbit
$\TT_j$. Composing all these maps we will be able to find orbits claimed to exist
in Theorem \ref{thm:ToyModelOrbit}.

We start by defining these maps. The first step is to define certain transversal
sections to the flow. We use  the coordinates adapted to the saddle $j$,
$(p^{(j)},q^{(j)},c^{(j)})$, which have been introduced in Section
\ref{sec:SaddleMapFirstChanges}, to define these
sections. Indeed, in these coordinates, it can be easily seen that the
heteroclinic connections \eqref{def:heteroclinic}, which connect
$(p^{(j)},q^{(j)},c^{(j)})=(0,0,0)$ with the previous and next saddles are
defined by $(q_1^{(j)},p_2^{(j)},q_2^{(j)},c^{(j)})=(0,0,0,0)$ and
$(p_1^{(j)},q_1^{(j)},p_2^{(j)},c^{(j)})=(0,0,0,0)$ respectively. Thus,
we define the map $\BB^j$ from the section
\begin{equation}\label{def:Section1Saddle}
\Sigma_j^{\inn}=\left\{q^{(j)}_1=\sigma\right\}
\end{equation}
to the  section
\[
\Sigma_{j+1}^{\inn}=\left\{q^{(j+1)}_1=\sigma\right\}.
\]
Here $\sigma>0$ is a small parameter that will be determined later on. In fact,
we do not define the map $\BB^j$ in the whole section but in an open set
$\VV^j\subset\Sigma_j^{\inn}$, which lies close to the heteroclinic that connects the
saddle $j-1$ to the saddle $j$.  Then, we will consider maps
\[
 \BB^{j}:\VV_j\subset\Sigma_j^{\inn}\rightarrow\Sigma_{j+1}^{\inn}
\]
and we will choose the sets $\VV^j$ recursively in such a way that
\begin{equation}\label{cond:ComposeMaps}
 \VV_{j+1}\subset\BB^{j}\left(\VV_j\right).
\end{equation}
This condition will allow us to compose  all the maps $\BB^j$.
Indeed, the domain of definition of the map $\BB^{j+1}$ will intersect the image
of the map $\BB^j$ in an open set.

The sets $\VV_j$ will have a product-like structure as is stated in the next definition.
Before stating it, we introduce some notation. We define the subsets of
indices $\PP_j$ in \eqref{def:Set:EllipticModes},
\begin{equation}\label{def:Elliptic:TwoSets}
\begin{split}
\PP^-_j&=\{k=1,\ldots, j-3\}\\
\PP^+_j&=\{k=j+3,\ldots, N\}.
\end{split}
\end{equation}
The first set consists of preceding non-neighbor modes to $j-1$, the second ---
of foreseeing non-neighbor modes to $j+1$. The modes $k=j\pm 2$ are called
\emph{adjacent}. These modes have a stronger
interaction with the hyperbolic modes.

Note that we split the non-neighbor elliptic modes in two sets:
the $+$ stands for \emph{future} $-$ stands for \emph{past}.
Indeed, along orbits we study future modes will eventually become hyperbolic
in the future, past have already been hyperbolic. Analogously, we call future
adjacent --- the mode  $c_{j+2}^{(j)}$ and past adjacent --- $c_{j-2}^{(j)}$.

For a point $(p^{(j)},q^{(j)},c^{(j)})\in\Sigma_j^\inn$, we define
$c_-^{(j)}=(c_1^{(j)},\ldots,c_{j-2}^{(j)})$ and
$c_+^{(j)}=(c_{j+2}^{(j)},\ldots,c_N^{(j)})$. We define also the projections
$\pi_\pm(p^{(j)},q^{(j)},c^{(j)})=c_\pm^{(j)}$ and
$\pi_{\hyp,+}=(p^{(j)},q^{(j)},c_+^{(j)})$.

\begin{definition}\label{def:ProductLikeSet}
Fix
positive constants $r\in (0,1)$, $\de$ and $\sigma$ and
define a multi-parameter set of positive constants
\begin{equation}\label{def:ProductLikeIndex}
 \II_j=\left\{C^{(j)},m_{\el}^{(j)},  M_{\el,\pm}^{(j)}, m_{\adj}^{(j)},
M_{\adj,\pm}^{(j)}, m_{\hyp}^{(j)}, M_{\hyp}^{(j)}\right\}.
\end{equation}
Then, we say that a (non-empty) set $\UU\subset\Sigma_j^\inn$ has an
$\II_j$-product-like structure
if it satisfies the following two conditions:
\begin{description}
 \item[\textbf{C1}]
\[
 \UU\subset \DD_j^1\times\ldots\times\DD_j^{j-2}\times\NNN_{j}^+\times
\DD_j^{j+2}\times\ldots\times\DD_j^{N},
\]
where
\begin{align*}
\DD_j^k&=\left\{\left|c_k^{(j)}\right|\leq  M^{(j)}_{\el,\pm}
\de^{(1-r)/2}\right\}
\,\,\text{ for }k\in \PP^\pm_j\\
\DD_j^{j\pm 2}&\subset\left\{\left|c_{j\pm2}^{(j)}\right|\leq
M^{(j)}_{\adj,\pm} \left(C^{(j)}\de\right)^{1/2}\right\}
\end{align*}
and
\begin{equation}\label{def:Domain:Hyperbolic:sup}
\begin{split}
\NNN_{j}^+= \Big\{& \left(p_1^{(j)},q_1^{(j)},p_2^{(j)},q_2^{(j)}\right)\in
\RR^4: \\ -C^{(j)}&\de\left(\ln(1/\de)+M^{(j)}_{\hyp}\right)\leq p_1^{(j)}\leq
-C^{(j)}\de\left(\ln(1/\de)-M^{(j)}_{\hyp}\right),\\ &q_1^{(j)}=\sigma,\
g_{\II_j}(p_2,q_2,\sigma,\de)=0, \ |p_2^{(j)}|,|q_2^{(j)}|\leq
M^{(j)}_{\hyp}\left(C^{(j)}\de\right)^{1/2} \Big\}.
\end{split}
\end{equation}
\item[\textbf{C2}]
\[
\NNN^-_{j}\times \DD_{j,-}^{j+2}\times\ldots\times\DD_{j,-}^{N}
\subset\pi_{\hyp,+}\UU,
\]
where
\begin{align*}
\DD_{j,-}^k&=\left\{\left|c_k^{(j)}\right|\leq  m^{(j)}_{\el}
\de^{(1-r)/2}\right\} \,\,\text{ for }k\in \PP_j^+\\
\DD_{j,-}^{j+ 2}&=\left\{\left|c_{j+2}^{(j)}\right|\leq  m^{(j)}_{\adj}
\left(C^{(j)}\de\right)^{1/2}\right\}
\end{align*}
and
\begin{equation}\label{def:Domain:Hyperbolic:inf}
\begin{split}
\NNN_j^-= \Big\{& \left(p_1^{(j)},q_1^{(j)},p_2^{(j)},q_2^{(j)}\right)\in \RR^4:
\\ -C^{(j)}&\de\left(\ln(1/\de)+m^{(j)}_{\hyp}\right)\leq p_1^{(j)}\leq
-C^{(j)}\de\left(\ln(1/\de)-m^{(j)}_{\hyp}\right),\\ &q_1^{(j)}=\sigma,\
g_{\II_j}(p_2,q_2,\sigma,\de)=0, \ |p_2^{(j)}|,|q_2^{(j)}|\leq
m^{(j)}_{\hyp}\left(C^{(j)}\de\right)^{1/2}\Big\}.
\end{split}
\end{equation}
\end{description}
The function $g_{\II_j}(p_2,q_2,\sigma,\de)$ is a smooth function
defined in \eqref{def:Function_g}.
\end{definition}
\begin{remark}\label{remark:SignP1}
Note that for this product-like sets the variable $p_1^{(j)}$
is selected negative. This is related to the fact that
$\nu_{02}>0$
(see Remark \ref{remark:FormOfVF}). The reason of the choice of the sign of $p_1^{(j)}$
will be clear  in Section \ref{sec:HypToyModel}. In particular, see
Remark \ref{remark:cancellation}.
\end{remark}

The domains $\VV_j$ of the maps $\BB^j$ will have $\II_j$-product-like structure
as defined in Definition \ref{def:ProductLikeSet}. Thus, we need to obtain
the multi-parameter sets $\II_j$. They will be defined recursively.
Recall that, to prove
Theorem \ref{thm:ToyModelOrbit}, we want to obtain an orbit which starts close to
the periodic orbit $\TT_3$. Thus, the recursively defined multi-parameter sets $\II_j$
will start with a set $\II_3$.

\begin{definition}\label{def:RecursivelyDefined}
Fix any constants $r,r'\in (0,1)$ satisfying  $0<r'<1/2-2r$, $K>0$ and small $\de,
\sigma>0$.
We say that  a collection of multi-parameter sets $\{\II_j\}_{j=3,\ldots,N-1}$ defined
in \eqref{def:ProductLikeIndex} is $(\sigma,\de, K)$-recursive if for $j=3,\ldots, N-1$ the
constants
$C^{(j)}$ satisfy
\[
\begin{split}
C^{(j)}/K\leq C^{(j+1)}\leq K C^{(j)}\\
0<m_{\hyp}^{(j+1)}\leq m_{\hyp}^{(j)}
\end{split}
\]
and all the other parameters should be strictly positive and are defined
recursively as
\[
 \begin{split}
  M_{\el,\pm}^{(j+1)}&=M_{\el,\pm}^{(j)}+ K\de^{r'}\\
  m_{\el}^{(j+1)}&=m_{\el}^{(j)}- K\de^{r'}\\
  M_{\adj,+}^{(j+1)}&= 2M_{\el,+}^{(j)}+ K\de^{r'}\\
  M_{\adj,-}^{(j+1)}&= KM_{\hyp}^{(j)}\\
  m_{\adj}^{(j+1)}&=\frac{1}{2}m_{\el}^{(j)}- K\de^{r'}\\
  M_{\hyp}^{(j+1)}&=K M_{\adj,+}^{(j)} \\
 \end{split}
\]
\end{definition}

The next Theorem defines recursively the product-like sets  $\VV_j$, so that
condition \eqref{cond:ComposeMaps} is satisfied.

\begin{theorem}[Iterative Theorem]\label{theorem:iterative}
Fix large $\gamma>0$, small $\sigma>0$, and any constants $r,r'\in (0,1)$
satisfying
$0<r'<1/2-2r$. Then, if we
set $\de= e^{-\gamma N}$, there exist strictly positive constants
$K$ and $C^{(3)}$ independent of $N$ satisfying
\begin{equation}\label{cond:GrowthOfCs}
 C^{(3)}\leq \de^{-r}\,K^{-(N-2)},
\end{equation}
and a multi-parameter set $\II_3$ (as defined in \eqref{def:ProductLikeIndex})
with the following property:
there exists a
$(\sigma,\de, K)$-recursive collection of multi-parameter sets
collection of multi-parameter sets
$\{\II_j\}_{j=3,\ldots,N-1}$ and $\II_j$-product-like sets
$\VV_j\subset\Sigma_{j}^\inn$ such that for each $j=3,\ldots,N-1$ we have
\[
\VV_{j+1}\subset \BB^j(\VV_j).
\]
Moreover, the time spent to reach the section $\Sigma^{\inn}_{j+1}$ can be bounded
by
\[
 \left|T_{\BB^j}\right|\leq K\ln(1/\de)
\]
for any $(p,q,c)\in \VV_j$ and any $j=3,\ldots,N-2$.
\end{theorem}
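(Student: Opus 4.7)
The plan is to proceed by induction on $j$, simultaneously constructing the product-like sets $\VV_j \subset \Sigma_j^\inn$ and the multi-parameter sets $\II_j$. The base case consists of choosing $\II_3$ and $\VV_3$: start from a small neighborhood in $\Sigma_3^\inn$ of the intersection with the stable manifold of $\TT_3$, taking all past elliptic coordinates $c_-^{(3)}$ essentially zero (they correspond to generations $\Lambda_1, \Lambda_2$ which have not yet received any mass), the adjacent and future elliptic modes of size $\OO(\de^{(1-r)/2})$, and the hyperbolic variables $(p^{(3)},q^{(3)})$ in the window prescribed by Definition~\ref{def:ProductLikeSet}. This fixes constants $C^{(3)}, M^{(3)}_{\hyp}, M^{(3)}_{\el,\pm}$ compatible with \eqref{cond:GrowthOfCs}.

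For the inductive step, factor $\BB^j = \BB^j_{\mathrm{glob}} \circ \BB^j_{\mathrm{loc}}$, where $\BB^j_{\mathrm{loc}}$ is a local transition map near the saddle $\TT_j$ from $\Sigma_j^\inn$ to an outgoing section $\Sigma_j^\out = \{p^{(j)}_2=\sigma\}$, and $\BB^j_{\mathrm{glob}}$ is a global map shadowing the heteroclinic $\ga_j^+$ from $\Sigma_j^\out$ to $\Sigma_{j+1}^\inn$. Apply the Local Lemma~\ref{lemma:iterative:saddle} to $\BB^j_{\mathrm{loc}}$ and the Global Lemma~\ref{lemma:iterative:hetero} to $\BB^j_{\mathrm{glob}}$. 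Both act within time $\OO(\ln(1/\de))$, giving $|T_{\BB^j}| \leq K \ln(1/\de)$. The verification of $\VV_{j+1} \subset \BB^j(\VV_j)$ reduces to checking that the push-forward of a set with $\II_j$-product structure under $\BB^j$ contains a set with $\II_{j+1}$-product structure, and that the new constants satisfy the recursive rules of Definition~\ref{def:RecursivelyDefined}. In particular, the future adjacent variable $c_{j+2}^{(j)}$ becomes the unstable hyperbolic mode at saddle $j+1$ (hence $M_{\hyp}^{(j+1)} = K M_{\adj,+}^{(j)}$ and $M_{\adj,+}^{(j+1)} \sim 2 M_{\el,+}^{(j)}$ due to the change \eqref{def:ChangeToDiagonal} that couples $c_{j+1}$ with $(p_2,q_2)$), while the past adjacent is fed from the previous hyperbolic mode (giving $M_{\adj,-}^{(j+1)} = K M_{\hyp}^{(j)}$). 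The additive $K\de^{r'}$ drift on the purely elliptic bounds reflects that during one transition the non-adjacent elliptic modes are almost invariant, with a perturbation of size $\OO(\de^{r'})$ absorbed from the nonlinear mixing terms $\ZZZ_{\mix,c_k}$ in \eqref{def:VF:Diag:cm:Mix}.

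The main obstacle, and the whole reason for prescribing the narrow window for $p_1^{(j)}$ in \eqref{def:Domain:Hyperbolic:sup}, is the cancellation explained heuristically in Section~\ref{sec:MainIdeasSaddle}. Without it, the resonant monomial $\nu_{02} q_1 p_2^2$ in \eqref{def:VF:Diag:p1:Hyp} would produce a $\log(1/\de)$ factor at each passage, so that after $N-4$ passes the product-like bounds would blow up by $\log^{2^{N}}(1/\de)$, forcing $\de$ to be super-exponentially small. Instead, localizing $p_1^{(j)}$ precisely around $-C^{(j)}\de \ln(1/\de)$ with an $M^{(j)}_{\hyp}$-wide slot causes the linear escape term and the resonant term to cancel at the exit section $\{p_2^{(j)} = \sigma\}$; this preserves the $\sqrt{\de}$ scale of the $p$-coordinate at exit, which after the global passage becomes the $C^{(j+1)}\de$ scale of $p_1^{(j+1)}$ at the next incoming section, and the product structure closes. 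The bookkeeping is that $C^{(j+1)} \le K C^{(j)}$ and $m^{(j+1)}_{\hyp}\le m^{(j)}_{\hyp}$ is monotone, so the total multiplicative drift after $N-4$ steps is bounded by $K^{N-4}$, which fits inside $\de^{-r}$ thanks to \eqref{cond:GrowthOfCs} and $\de = e^{-\gamma N}$ with $\gamma$ large.

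A secondary obstacle is that the elliptic bounds $M^{(j)}_{\el,\pm}$ grow additively by $K\de^{r'}$ per step and the $m^{(j)}_{\el}$ shrink by the same amount, so preserving non-emptiness of $\VV_j$ requires $(N-4) K \de^{r'} \ll 1$, which holds because $\de^{r'}$ is exponentially small in $N$ while $N-4$ is only linear. The same observation controls $m_{\adj}^{(j+1)} = \tfrac12 m_\el^{(j)} - K\de^{r'}$, which remains strictly positive. Combining all these, the induction closes, condition \eqref{cond:ComposeMaps} is satisfied at every step, and the theorem follows.
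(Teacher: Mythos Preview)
Your approach is essentially the paper's: induct on $j$, split $\BB^j=\BB^j_\glob\circ\BB^j_\loc$, and invoke Lemmas~\ref{lemma:iterative:saddle} and~\ref{lemma:iterative:hetero} at each step; the heuristic discussion of the cancellation and of the additive $\de^{r'}$ drift is correct and matches the mechanism behind those lemmas. The one point you gloss over that the paper makes explicit is the \emph{choice} of $\II_3$: it is not enough to take ``a small neighborhood'' with constants ``compatible with \eqref{cond:GrowthOfCs}''; one must seed the recursion with the specific ordering
\[
1<M_{\el,+}^{(3)}\ll M_{\adj,+}^{(3)}\ll M_{\hyp}^{(3)}\ll M_{\adj,-}^{(3)}\ll M_{\el,-}^{(3)},\qquad 0<m_\el^{(3)}<3\,m_\adj^{(3)},
\]
so that the recursive formulas of Definition~\ref{def:RecursivelyDefined} (in particular $M_{\hyp}^{(j+1)}=K M_{\adj,+}^{(j)}$ and $M_{\adj,+}^{(j+1)}=2M_{\el,+}^{(j)}+K\de^{r'}$) keep $M_\hyp^{(j)}\geq 1$ for all $j$, which is an explicit hypothesis of Lemma~\ref{lemma:iterative:saddle}. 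Without this, the induction could fail at the local step. Otherwise your argument is complete and matches the paper's.
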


Note that the condition
\[
 C^{(j)}/K<C^{(j+1)}<K C^{(j)}
\]
implies
\[
K^{-(j-2)}C^{(3)}\leq  C^{(j+1)}\leq K^{j+2}C^{(3)}
\]
Namely, at each saddle, the orbits we are studying may lie  further
from the heteroclinic orbit. Nevertheless, by the condition on $\de$
from Theorem \ref{thm:ToyModelOrbit} and \eqref{cond:GrowthOfCs},
these constant does not grow too much. Indeed,
\begin{equation}\label{def:UpperBoundCs}
\de^r\leq C^{(j)} \leq \de^{-r},
\end{equation}
where $r>0$  can be taken as small as desired.  We will use the bound
\eqref{def:UpperBoundCs} throughout the proof of Theorem \ref{theorem:iterative}.

Theorem \ref{thm:ToyModelOrbit} is a straightforward consequence of Theorem \ref{theorem:iterative}. In fact,  we need more precise information than the one stated in Theorem \ref{thm:ToyModelOrbit}. This more precise information will be used in the proof of Theorem \ref{thm:Approximation}. We state it in the following theorem. Theorem \ref{thm:ToyModelOrbit} is a straightforward
consequence of it.

{\bf Theorem 3--bis}\ {\it \
Assume that the conditions of Theorem \ref{thm:ToyModelOrbit} hold.
Then, there exists an orbit $b(t)$
of equations \eqref{def:model}, constants $\KKK>0$ and $\nu>0$,
independent of $N$ and $\de$, and $T_0>0$ satisfying
\[
 T_0\leq \KKK N\ln(1/\de),
\]
such that
\[
\begin{split}
 |b_3(0)|&>1-\de^\nu\\
|b_j(0)|&< \de^\nu\qquad\text{ for }j\neq 3
\end{split}
\qquad \text{ and }\qquad
\begin{split}
 |b_{N-2}(T_0)|&>1-\de^\nu\\
|b_j(T_0)|&<\de^\nu \qquad\text{ for }j\neq N-2
\end{split}
\]
Moreover, call $t_j\in [0,T_0]$ the times for which
$b(t_j)\in\Sigma^\inn_j$, Then,
\[
 t_{j+1}-t_j\leq \KKK\ln (1/\de)
\]
and for any $t\in[t_j, t_{j+1}]$ and  $k\neq j-1,j,j+1$,
\[
 |b_k(t)|\leq \de^{\nu}.
\]
}

\begin{proof}[Proof of Theorem 3--bis]
It is enough to take as a initial condition $b^0$ a point in the set
$\VV_3\subset\Sigma_3^\inn$ obtained in Theorem \ref{theorem:iterative}.
Then, thanks to this theorem we know that there exists a time $T_0$
satisfying
\[
 T_0\sim N\ln(1/\de),
\]
such that the corresponding orbit satisfies that
$b(T_0)\in\VV_{N-1}\subset\Sigma_{N-1}^\inn$. Note that in this section
there are two components of $b$ with size independent of $\de$.
Nevertheless, from the proof of Theorem \ref{theorem:iterative} in
Section \ref{sec:FullSystem} it can be easily seen that if we  shift
the time interval $[0,T_0]$ to
$[\rr\ln(1/\de),\rr\ln(1/\de)+T_0]$, for any $\rr<\sqrt{3}$, there
exists $\nu>0$ such that the orbit $b(t)$ satisfies the statements given in Theorem 3--bis.
\end{proof}

\subsection{Structure of the proof of the Iterative Theorem
\ref{theorem:iterative}}
To prove Theorem \ref{theorem:iterative} we split it into two inductive lemmas. The
first part analyzes the evolution of the trajectories close to the saddle $j$
and the second one the travel along the heteroclinic orbit. Thus, we study
$\BB^j$ as a composition of two maps.


We consider an intermediate section transversal to the flow
\begin{equation}\label{def:Section2Saddle}
\Sigma_j^{\out}=\left\{p_2^{(j)}=\sigma\right\},
\end{equation}
and then we consider two maps. First the local map
\begin{equation}\label{def:SaddleMap}
\BB_\loc^j:\VV_j\subset\Sigma_j^{\inn}\longrightarrow \Sigma_j^{\out},
\end{equation}
which studies the trajectories locally close to the saddle.
Then, we consider a second map,
\begin{equation}\label{def:HeteroMap}
\BB_\glob^j: \UU^j\subset\Sigma_j^{\out}\longrightarrow \Sigma_{j+1}^{\inn},
\end{equation}
which we call global map, that studies how the trajectories behave close
to the heteroclinic orbit. Then, the map $\BB^j$ considered in
Theorem \ref{theorem:iterative} is just $\BB^j=\BB_\glob^j\circ\BB_\loc^j$.

Before we go into technicalities we write
a table analogous to (\ref{energy-distribution}) of the properties of the local and
global maps. The local map $\BB_\loc^j$, projected onto hyperbolic variables,
has the form
\begin{align} \label{local-map}
 p_1^{(j)} &\sim C^{(j)}\ \de\ \ln \frac 1 \de
 \qquad  \qquad  \qquad & \longrightarrow & \qquad |p_1^{(j)}|\lesssim (C^{(j)} \de)^{1/2}
 \qquad  \notag \\
 q_1^{(j)} &= \sigma \qquad
 \qquad  \qquad  \qquad & \longrightarrow & \qquad |q_1|\lesssim (C^{(j)} \de)^{1/2}
 \qquad  \notag \\
 |p_2^{(j)}| & \lesssim (C^{(j)} \de)^{1/2}
 \qquad  \qquad  \qquad & \longrightarrow & \qquad \,\,\, p_2^{(j)}=\sigma \qquad \qquad
 \qquad   \\
 |q_2^{(j)}| &\lesssim (C^{(j)} \de)^{1/2}
 \qquad  \qquad  \qquad & \longrightarrow & \qquad |q_2^{(j)}|
 \lesssim  C^{(j)}\ \de\ \ln \frac 1 \de. \qquad  \notag
\end{align}
The global map $\BB_\glob^j$, projected onto hyperbolic variables of
the corresponding saddles, has the form
\begin{align} \label{global-map}
 |p_1^{(j)}| &\lesssim (C^{(j)} \de)^{1/2}
 \qquad  \qquad  \qquad & \longrightarrow & \qquad | p_1^{(j+1)} |
 \lesssim C^{(j)} \de \ \ln \frac 1 \de
 \qquad  \notag \\
 |q_1^{(j)}|  &\lesssim (C^{(j)} \de)^{1/2}
 \qquad  \qquad  \qquad & \longrightarrow & \qquad\,\,\,  q_1^{(j+1)}= \sigma \qquad
 \qquad  \notag \\
 p_2^{(j)} &= \sigma \qquad \qquad
 \qquad  \qquad  \qquad & \longrightarrow & \qquad |p_2^{(j+1)}|
 \lesssim  (C^{(j)} \de)^{1/2}
 \qquad   \\
 |q_2^{(j)}| &\lesssim C^{(j)}\ \de\ \ln \frac 1 \de
 \qquad  \qquad  \qquad & \longrightarrow & \qquad |q_2^{(j+1)}|
 \lesssim  (C^{(j)} \de)^{1/2}.  \qquad  \notag
\end{align}

To compose the two maps we need that  the
set $\UU^j$, introduced in \eqref{def:HeteroMap}, has
a modified product-like structure. To define its properties, we
consider the projection
\[
 \wt \pi \left(c_-^{(j)},p_1^{(j)},q_1^{(j)},p_2^{(j)},q_2^{(j)},c_+^{(j)}\right)=
 \left(p_2^{(j)},q_2^{(j)},c_+^{(j)}\right).
\]
\begin{definition}\label{definition:ModifiedProductLike}
Fix constants $r\in (0,1)$, $\de>0$ and $\sigma>0$ and define
a multi-parameter set of positive constants
\[
 \wt\II_j=\left\{\wt C^{(j)},\wt m_{\el}^{(j)},\wt M_{\el,\pm}^{(j)},
\ \wt m_{\adj}^{(j)},\ \wt M_{\adj,\pm}^{(j)},\ \wt m_{\hyp}^{(j)},\
\wt M_{\hyp}^{(j)}\right\}.
\]
Then, we say that a (non-empty) set $\UU\subset\Sigma_j^\out$ has
a $\wt \II_j$-product-like structure provided it satisfies the following
two conditions:
\begin{description}
 \item[\textbf{C1}]
\[
 \UU\subset \wt\DD_j^1\times\ldots\times\wt\DD_j^{j-2}\times\wt\NNN_{j,-}\times
\wt\DD_j^{j+2}\times\ldots\times\wt\DD_j^{N}
\]
where
\begin{align*}
\wt\DD_j^k&=\left\{\left|c_k^{(j)}\right|\leq  \wt M^{(j)}_{\el,\pm}
\de^{(1-r)/2}\right\} \,\,\text{ for }k\in \PP_j^\pm\\
\wt\DD_j^{j\pm 2}&\subset\left\{\left|c_{j\pm2}^{(j)}\right|\leq
\wt M^{(j)}_{\adj,\pm} \left(\wt C^{(j)}\de\right)^{1/2}\right\},
\end{align*}
and
\[
\begin{split}
\wt\NNN_{j}^+= \Big\{& (p_1^{(j)},q_1^{(j)},p_2^{(j)},q_2^{(j)})\in \RR^4:
\left|p_1^{(j)}\right|,\left|q_1^{(j)}\right|\leq \wt M_\hyp^{(j)}\left(\wt C^{(j)}\de\right)^{1/2},\\
&p_2^{(j)}=\sigma,  -\wt C^{(j)}\,\de\,\left(\ln(1/\de)+\wt M_{\hyp}^{(j)}\right)\leq
q_2^{(j)}\leq -\wt C^{(j)}\,\de\,\left(\ln(1/\de)-\wt M_{\hyp}^{(j)}\right) \Big\},
\end{split}
\]
\item[\textbf{C2}]
\[
 \{\sigma\}\times\left[-\wt C^{(j)}\,\de\,\left(\ln(1/\de)-\wt m_{\hyp}^{(j)}\right), -\wt
C^{(j)}\,\de\,\left(\ln(1/\de)+\wt m_{\hyp}^{(j)}\right)\right]\times
\DD_{j,-}^{j+2}\times\ldots\times\DD_{j,-}^{N} \subset\wt\pi(\UU)
\]
where
\begin{align*}
\DD_{j,-}^k&=\left\{\left|c_k^{(j)}\right|\leq  \wt m^{(j)}_{\el}
\de^{(1-r)/2}\right\} \,\,\text{ for }k\in \PP_j^+\\
\DD_{j,-}^{j+ 2}&=\left\{\left|c_{j+2}^{(j)}\right|\leq
\wt m^{(j)}_{\adj} \left(C^{(j)}\de\right)^{1/2}\right\}.
\end{align*}
\end{description}
\end{definition}
With this definition, we can state the following two lemmas.
Combining these two lemmas we deduce Theorem \ref{theorem:iterative}.

\begin{lemma}\label{lemma:iterative:saddle}
Fix any natural $j$ with $3\leq j\leq N-2$, constants $r,r'\in (0,1)$
satisfying  $0<r'<1/2-2r$ and $\sigma>0$ small enough. Take
$\de=e^{-\ga N}$, $\ga=\ga(\sigma)\gg 1$, depending on $\sigma$,
and consider a parameter set $\II_j$ with $M_\hyp^{(j)}\geq 1$ and
a $\II_j$-product-like set $\VV_j\subset\Sigma_j^\inn$. Then, f
or $N$ big enough, there exists:
\begin{itemize}
\item A constant $K>0$ independent of $N$ and $j$ but which might depend on $\sigma$.
\item A parameter set $\wt\II_j$ whose constants satisfy
\[
 \begin{split}
  C^{(j)}/2\leq \wt C^{(j)}\leq 2 C^{(j)}\\
0<\wt m_{\hyp}^{(j)}\leq m_{\hyp}^{(j)}
 \end{split}
\]
and
\[
 \begin{split}
\wt M_\hyp^{(j)}&=K\\
\wt M_{\el,\pm}^{(j)}&=M_{\el,\pm}^{(j)}+K\de^{r'}\\
\wt m_{\el}^{(j)}& = m_{\el}^{(j)}-K\de^{r'}\\
\wt M_{\adj,\pm}^{(j)}&=M_{\adj,\pm}^{(j)}(1+4\sigma)\\
\wt m_{\adj}^{(j)}&=m_{\adj}^{(j)}\ (1-4\sigma),
 \end{split}
\]
\item A  $\wt\II_j$-product-like set $\UU_j$ for which  the map $\BB_\loc^j$
satisfies
\begin{equation}\label{cond:ComposeMaps:Localmap}
\UU_j\subset\BB_\loc^j\left(\VV_j\right).
\end{equation}
\end{itemize}
Moreover, the time to reach the section $\Sigma^{\out}_{j}$ can be bounded
as
\[
 \left|T_{\BB_\loc^j}\right|\leq K\ln(1/\de).
\]
\end{lemma}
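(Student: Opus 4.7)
The plan is to analyze the flow of the Hamiltonian system $\wt H^{(j)}=\wt H^{(j)}_2+\wt H^{(j)}_\hyp+\wt H^{(j)}_\el+\wt H^{(j)}_\mix$ from the section $\Sigma_j^{\inn}=\{q_1^{(j)}=\sigma\}$ to the section $\Sigma_j^{\out}=\{p_2^{(j)}=\sigma\}$. First I would perform a finitely smooth resonant normal form in the hyperbolic variables $(p_1,q_1,p_2,q_2)$ near the fixed point, in the spirit of Bronstein--Kopanskii \cite{BronsteinK94}, so as to remove all non-resonant monomials of degree $4$. By the structure of $\wt H^{(j)}_\hyp$ (see Remark \ref{remark:FormOfVF}), the only resonant terms that survive after this reduction are $\nu_{02}q_1^2 p_2^2$ (plus its $1\leftrightarrow 2$ symmetric analog) together with pure action-like terms $\nu_{k\ell}p_1^kq_1^{2-k}p_2^\ell q_2^{2-\ell}$. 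After this normal form, the hyperbolic vector field has the schematic structure
\begin{equation*}
\dot p_1=\sqrt3\,p_1+2\nu_{02}q_1 p_2^2+\OO(5),\qquad \dot q_2=-\sqrt3\,q_2-2\nu_{02}q_1^2 p_2+\OO(5),
\end{equation*}
together with the corresponding equations for $(q_1,p_2)$, and the action-like variables $p_1q_1,\ p_2q_2$ are nearly conserved. This is exactly the simplified model treated heuristically in Section \ref{sec:MainIdeasSaddle} around \eqref{def:SimplifiedHam}.

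Next I would show that the elliptic and mixed contributions are dynamically negligible on the set $\VV_j$. Because $|c_k^{(j)}|\lesssim \de^{(1-r)/2}$ for $k\in\PP_j^\pm$ and $|c_{j\pm2}^{(j)}|\lesssim (C^{(j)}\de)^{1/2}$, the vector fields $\ZZZ_{\mix,\ast}$ in \eqref{def:VF:Diag:q1:Mix}--\eqref{def:VF:Diag:cm:Mix.Adjacent} are $\OO(\de^{1-r})$ on hyperbolic directions and the elliptic equations \eqref{def:VF:Diag:cm:El} decouple from $(p,q)$ at leading order up to an $\OO(\de^{r'})$ rotational correction. Consequently I can treat the $(p,q)$-subsystem first, with the $c$'s viewed as slowly varying parameters, and then feed the resulting $(p(t),q(t))$ back into the $c$-equations to update their amplitudes. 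This is the content of the decoupling that Section \ref{sec:HypToyModel} carries out for the hyperbolic toy model.

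The core of the argument is the \textbf{Shilnikov boundary value problem} \cite{Shilnikov67} for the $(p,q)$-equations: instead of imposing four initial conditions, I impose the two stable ones $q_1(0)=\sigma$ and the two unstable ones $p_2(T)=\sigma$ at the (unknown) exit time $T$, plus the two remaining coordinates $p_1(0)$ and $q_2(T)$. Rewriting the integral form of the ODE and using that $e^{-\sqrt3 T}\lesssim (C^{(j)}\de)^{1/2}$ forces $T=\tfrac{1}{2\sqrt3}\ln(1/\de)+\OO(1)$, a contraction mapping argument in an appropriate weighted $C^0$ ball gives existence and uniqueness of the orbit, together with the $C^1$ dependence on the data. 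Tracking how the resonant term $\nu_{02}q_1^2 p_2^2$ contributes over time $T$ reproduces precisely the ``logarithmic'' image point computed in Section \ref{sec:MainIdeasSaddle}.

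\textbf{The main obstacle} — and the whole point of the construction of $\VV_j$ — is the cancellation. The naive image of an interval in $p_1^{(j)}(0)$ of length $\sim C^{(j)}\de$ would shift by $\sim C^{(j)}\de\ln(1/\de)$ in $p_2^{(j)}(T)$, and the logs compound across $N$ saddles as explained after \eqref{def:SimplifiedHam}. To kill them, I would solve, implicitly, for the unique $p_1^{(j)}(0)=p_1^\ast(q_2^{(T)},p_2^{(j)}(0),c^{(j)}(0))\sim -C^{(j)}\de\ln(1/\de)$ such that the integrated resonant contribution along the unique Shilnikov orbit cancels the leading $\ln$ in $p_1^{(j)}(T)$ (see the choice $x_1^0-2y_1^0(x_2^0)^2\ln\sqrt\de=0$ in Section \ref{sec:MainIdeasSaddle}). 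This is exactly why $\NNN_j^+$ in \eqref{def:Domain:Hyperbolic:sup} is a graph over $(p_2,q_2)$ cut out by $g_{\II_j}=0$ with $p_1^{(j)}$ ranging over an interval centered at $-C^{(j)}\de\ln(1/\de)$ of width $2C^{(j)}\de\,M_\hyp^{(j)}$. The implicit function theorem, applied to the $C^1$-dependence of the Shilnikov solution on data, produces the interval in $q_2^{(j)}(T)$ of identical length and centered at $-\wt C^{(j)}\de\ln(1/\de)$ with $\wt C^{(j)}\in[C^{(j)}/2,2C^{(j)}]$, which is precisely the shape of $\wt\NNN_j^+$ in Definition \ref{definition:ModifiedProductLike}.

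Having controlled the hyperbolic coordinates, I would finish by feeding the orbit into the elliptic and adjacent equations. For $k\in\PP_j^\pm$ non-adjacent, integrating \eqref{def:VF:Diag:cm:El}--\eqref{def:VF:Diag:cm:Mix} over the time interval of length $T\sim \ln(1/\de)$ gives a multiplicative phase rotation with an amplitude error of size $\OO(\de^{r'})$, yielding the additive shifts $\pm K\de^{r'}$ on $M_{\el,\pm}^{(j)}$ and $m_\el^{(j)}$. The adjacent modes $c_{j\pm 2}^{(j)}$ are influenced through the genuinely non-integrable terms in \eqref{def:VF:Diag:cm:Mix.Adjacent}: for $c_{j-2}^{(j)}$ (the past adjacent) the dominant contribution comes from $p_1^{(j)}$, which produces the bound $\wt M_{\adj,-}^{(j)}=K M_\hyp^{(j)}$; for $c_{j+2}^{(j)}$ (future adjacent) the coupling through $p_2,q_2$ produces the multiplicative factor $(1\pm 4\sigma)$ on $M_{\adj,+}^{(j)}$ and $m_{\adj}^{(j)}$ respectively, with the factor $4\sigma$ being controlled by the smallness of $\sigma$. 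Assembling these three pieces — hyperbolic Shilnikov orbit with cancellation, non-adjacent elliptic rotation, and adjacent mode coupling — produces a $\wt\II_j$-product-like set $\UU_j\subset\Sigma_j^\out$ contained in the image $\BB_\loc^j(\VV_j)$, with the asserted relations between $\II_j$ and $\wt\II_j$. The time estimate $|T_{\BB_\loc^j}|\leq K\ln(1/\de)$ follows directly from the Shilnikov time $T=\tfrac{1}{2\sqrt3}\ln(1/\de)+\OO(1)$.
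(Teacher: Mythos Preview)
Your high-level strategy matches the paper's: normal form in the hyperbolic variables, a Shilnikov-type fixed point argument for the local passage, the crucial cancellation, and then treating the elliptic modes as nearly decoupled. However, two of your concrete assertions are incorrect and would not carry a proof through, and you omit one technical step the paper needs.

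\textbf{The cancellation variable.} You write that one should ``solve, implicitly, for the unique $p_1^{(j)}(0)=p_1^\ast(\dots)$'' to kill the logarithm, and you describe $g_{\II_j}=0$ as though it constrains $p_1$. This inverts the paper's mechanism. In the paper the coordinate $p_1^{(j)}$ (equivalently $x_1^0$ after the normal form) is \emph{free} in an interval centered at $-C^{(j)}\de\ln(1/\de)$; the function $g_{\II_j}(p_2,q_2,\sigma,\de)$ defined in \eqref{def:Function_g} is a linear constraint on $(p_2,q_2)$ that forces $x_2^0\approx x_2^\ast$, where $x_2^\ast$ is the unique positive root of \eqref{def:ChoiceEtaHat}. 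It is this pinning of $x_2^0$, not a choice of $p_1^0$, that produces the cancellation $x_1^0+2\nu_{02}f_1(\sigma)(x_2^\ast)^2T_j\approx 0$ in the weighted norm \eqref{def:Hyp:Norms}. Your description would also not match the shape of the output set $\wt\NNN_j^+$, where it is $q_2^{(j)}$ that lands in a prescribed interval while $(p_1^{(j)},q_1^{(j)})$ are merely bounded by $\wt M_\hyp^{(j)}(\wt C^{(j)}\de)^{1/2}$.

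\textbf{The past adjacent mode.} Your claim that ``the dominant contribution comes from $p_1^{(j)}$, which produces the bound $\wt M_{\adj,-}^{(j)}=K M_\hyp^{(j)}$'' contradicts the lemma you are proving: the statement is $\wt M_{\adj,\pm}^{(j)}=M_{\adj,\pm}^{(j)}(1+4\sigma)$ for \emph{both} signs. You have confused this with the relation $M_{\adj,-}^{(j+1)}=\wt K\wt M_\hyp^{(j)}$ from the global map (Lemma \ref{lemma:iterative:hetero}). In the local passage the forcing on $c_{j-2}$ in \eqref{def:VF:Diag:cm:Mix.Adjacent} is $(\omega^2 p_1+\omega q_1)^2\ol{c_{j-2}}$; since $q_1$ starts at $\sigma$ and decays exponentially while $p_1$ never exceeds $O(\de^{1/2})$, the time-integrated effect is $O(\sigma)$ times $|c_{j-2}|$, not $O(M_\hyp^{(j)})$. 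This is exactly what produces the Lipschitz constant $K\sigma$ (with $K$ independent of $\sigma$) in Proposition \ref{lemma:Full:Contractive} and hence the factors $(1\pm 4\sigma)$.

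\textbf{A missing intermediate step.} After applying the hyperbolic normal form $\Psi_\hyp$ to the full system (Lemma \ref{lemma:FullModel:NormalForm}), the mixed terms $R_{\mix,x_i},R_{\mix,y_i}$ still contain pieces like $A_{x_i}(z)c_{j\pm2}^2$ that do not vanish on $\{x=0\}$ or $\{y=0\}$. The paper therefore composes with a second change (Lemma \ref{lemma:StraightenInvManifolds}) that straightens the stable and unstable manifolds of the normally hyperbolic invariant manifold $\{x=y=0\}$; only then does the Shilnikov fixed-point argument in Section \ref{sec:ProofLemmaFixedTimeMap} go through cleanly with the stated bounds. Your sketch treats the elliptic modes as parameters from the start, which would leave uncontrolled $O(c_{j\pm2}^2)$ terms in the hyperbolic equations.
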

The proof of this lemma  is the {\it most delicate part} in the proof of
the Iterative Theorem \ref{theorem:iterative}, since we are passing close
to a hyperbolic fixed point, which implies big deviations. It is split in
several parts in the forthcoming sections to simplify the exposition.
First, in Section \ref{sec:HypToyModel}, we set the elliptic modes $c$ to zero,
and we study the saddle map  associated to the corresponding
system. We call to this system \emph{hyperbolic toy model}. It  has two degrees of
freedom. Then, in Section \ref{sec:FullSystem} we use the results obtained for
the hyperbolic toy model to deal with the full system and prove Lemma
\ref{lemma:iterative:saddle}.

Now we state the iterative lemma for the global maps $\BB^j_\glob$.
\begin{lemma}\label{lemma:iterative:hetero}
Fix any natural $j$ with $3\leq j\leq N-2$, constants $r,r'\in (0,1)$
satisfying  $0<r'<1/2-2r$ and $\sigma>0$ small enough. Take
$\de=e^{-\ga N}$, $\ga=\ga(\sigma)\gg 1$, depending on $\sigma$, and
consider a parameter set $\wt \II_j$ and a $\wt \II_j$-product-like
set $\UU_j\subset\Sigma_j^\out$. Then, for $N$ large enough, there exists:
\begin{itemize}
\item A constant $\wt K$
depending on $\sigma$, but independent of $N$ and $j$.
\item A parameter set $\II_{j+1}$ whose constants
satisfy
\[
 \begin{split}
  \wt C^{(j)}/\wt K\leq C^{(j+1)}\leq \wt K\wt C^{(j)}\\
0<m_{\hyp}^{(j+1)}\leq \wt m_\hyp^{(j)}
 \end{split}
\]
and
\[ \begin{split}
M_{\el,-}^{(j+1)}&=\max\left\{\wt M_{\el,-}^{(j)}+\wt K\de^{r'},\wt
K\wt M_{\adj,-}^{(j)}\right\}\\
M_{\el,+}^{(j+1)}&=\wt M_{\el,+}^{(j)}+\wt K\de^{r'}\\
m_{\el}^{(j+1)}&=\wt m_{\el}^{(j)}-\wt K\de^{r'}\\
M_{\adj,+}^{(j+1)}&=\wt M_{\el,+}^{(j)}+\wt K\de^{r'}\\
M_{\adj,-}^{(j+1)}&=\wt K\wt M_{\hyp}^{(j)}\\
m_{\adj}^{(j+1)}&=\wt m_{\el}^{(j)}+\wt K\de^{r'}\\
M_{\hyp}^{(j+1)}&=\max\left\{\wt K \wt M_{\adj,+}^{(j)},\wt K\right\}
 \end{split}
\]
\item A  $\II_{j+1}$-product-like set
$\VV_{j+1}\subset\Sigma_{j+1}^\inn$  for which the map $\BB_\glob^j$ satisfies
\begin{equation}\label{cond:ComposeMaps:Heteromap}
\VV_{j+1}\subset\BB_\glob^j\left(\UU_j\right).
\end{equation}
\end{itemize}
Moreover, the time spent to reach the section $\Sigma^{\inn}_{j+1}$
can be bounded as
\[
 \left|T_{\BB_\glob^j}\right|\leq \wt K.
\]
\end{lemma}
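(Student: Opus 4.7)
My plan is to treat the global map $\BB_\glob^j$ as a finite-time flow along the heteroclinic orbit $\ga_j^+$ given explicitly by \eqref{def:heteroclinic}, in sharp contrast with the delicate $O(\ln(1/\de))$-time analysis that the local map required. Because the travel time between $\Sigma_j^\out=\{p_2^{(j)}=\sigma\}$ and $\Sigma_{j+1}^\inn=\{q_1^{(j+1)}=\sigma\}$ is uniformly bounded in $N$ and $\de$, every estimate reduces to a routine Gronwall-type argument in which all multiplicative constants collapse to $O(1)$ factors, matching the $\wt K$ appearing throughout the statement.

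Starting from an initial datum in $\UU_j$, I would write the vector field \eqref{def:VF:Full:AfterDiagonal} as the unperturbed heteroclinic dynamics in the $(p_2^{(j)},q_2^{(j)})$ pair plus a perturbation coupling to the remaining modes; the smallness of the elliptic and adjacent components imposed by the $\wt\II_j$-product-like structure makes the coupling of order at most $\de^{r'}$. Integrating forward for the finite heteroclinic transit time, the non-adjacent elliptic modes $c_k^{(j)}$ rotate approximately at their linear frequencies with modulus drift bounded by $\wt K\de^{r'}$; the adjacent modes $c_{j\pm2}^{(j)}$ undergo an $O(1)$-bounded smooth deformation; and the $(p_2^{(j)},q_2^{(j)})$ pair tracks the heteroclinic up to a perturbation determined by the initial hyperbolic data.

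The second step is the coordinate change from the saddle-$j$ frame to the saddle-$(j+1)$ frame, combining a fresh symplectic reduction (now eliminating $|b_{j+1}|$ in favour of $|b_j|$) with the diagonalization \eqref{def:ChangeToDiagonal} re-centred at the next saddle. Under this change the splitting of coordinates into hyperbolic, adjacent and non-adjacent elliptic modes shifts by one generation, and matching the incoming and outgoing labels explains every recursive formula in the statement: the former past-hyperbolic pair $(p_1^{(j)},q_1^{(j)})$ tracking $b_{j-1}$ becomes the new past-adjacent $c_{j-1}^{(j+1)}$, giving $M_{\adj,-}^{(j+1)}=\wt K\wt M_\hyp^{(j)}$; the former past-adjacent $c_{j-2}^{(j)}$ becomes non-adjacent past-elliptic at $j+1$, accounting for the $\max$ entering $M_{\el,-}^{(j+1)}$; the former future-adjacent $c_{j+2}^{(j)}$ becomes the new future-hyperbolic pair $(p_2^{(j+1)},q_2^{(j+1)})$, producing $M_\hyp^{(j+1)}$; and analogously for the remaining pieces. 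Enforcing the new section $\{q_1^{(j+1)}=\sigma\}$ by a short straightening flow then rescales the $\wt C^{(j)}\de\,\ln(1/\de)$ window by an $O(1)$ factor, yielding $C^{(j+1)}\in[\wt C^{(j)}/\wt K,\,\wt K\wt C^{(j)}]$.

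The main obstacle I anticipate is not dynamical but structural: verifying the inclusion \eqref{cond:ComposeMaps:Heteromap}, i.e.\ that the image actually \emph{contains} a full $\II_{j+1}$-product-like set. For this I plan to run a graph-transform / implicit function argument: pick an arbitrary target point whose coordinates lie in the prescribed windows of the candidate $\VV_{j+1}$ (with $q_1^{(j+1)}=\sigma$ prescribed by the section), and solve backwards along the flow for a preimage in $\UU_j$, exploiting that the linearization of $\BB_\glob^j$ along the heteroclinic is an $O(1)$-bounded $C^1$-diffeomorphism on the relevant product block. The crucial input is condition \textbf{C2} of Definition~\ref{definition:ModifiedProductLike}, which ensures that $\UU_j$ already contains the full inner block in all the free directions, so the required preimage lies in $\UU_j$ automatically. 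Once this is checked the time bound $|T_{\BB_\glob^j}|\le\wt K$ follows at once from the uniform-in-$N$ transit time of $\ga_j^+$ between the two sections.
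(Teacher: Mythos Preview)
Your proposal is correct and follows essentially the same architecture as the paper's proof in Section~\ref{sec:ProofHeteroMap}: bounded-time flow along the heteroclinic, Gronwall bounds on all modes, then the explicit coordinate change $\Theta^j$ from the $j$-saddle frame to the $(j+1)$-saddle frame (the paper computes $\Theta^j$ in closed form in Lemma~\ref{lemma:ChangeOfSaddle}, which is what makes precise your ``which coordinate becomes which'' accounting of the recursive formulas).

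The only methodological difference is in how you verify the inclusion \textbf{C2}: you propose a backward implicit-function argument, whereas the paper works forward, using variation of constants to show directly that the image in each free coordinate covers the required disk or interval (Lemmas~\ref{lemma;HeteroMap:AdjacentModes} and the subsequent lemma for $q_2$). Both approaches are valid; the paper's forward argument avoids having to check invertibility of the linearization and instead exploits that the leading-order evolution of each coordinate is a linear nonautonomous ODE whose fundamental solution is an $O_\sigma(1)$ diffeomorphism, so one reads off the image directly.
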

The proofs of this lemma is postponed to Section \ref{sec:ProofHeteroMap}.

Now it only remains to deduce from Lemmas \ref{lemma:iterative:saddle}
and \ref{lemma:iterative:hetero}  the Iterative Theorem \ref{theorem:iterative}.
\begin{proof}[Proof of Theorem \ref{theorem:iterative}]
We choose the multiindex $\II_3$ so that we can apply iteratively the
Lemmas \ref{lemma:iterative:saddle} and \ref{lemma:iterative:hetero}. Indeed,
from the recursive formulas in Lemma \ref{lemma:iterative:saddle} and
\ref{lemma:iterative:hetero} it is clear that it is enough to chose a parameter
set $\II_3$ satisfying
\[
 1<M_{\el,+}^{(3)}\ll M_{\adj,+}^{(3)}\ll
M_{\hyp}^{(3)}\ll M_{\adj,-}^{(3)}\ll M_{\el,-}^{(3)}
\]
and
\[
 0<m_\el^{(3)}<3 m_\adj^{(3)}.
\]
From the choice of the constants in $\II_3$ and
the recursion formulas in Lemmas \ref{lemma:iterative:saddle}
and \ref{lemma:iterative:hetero}, we have that $M_\hyp^{(j)}\geq 1$ for any $j=3,\ldots N-1$.
This fact
along with conditions \eqref{cond:ComposeMaps:Localmap} and
\eqref{cond:ComposeMaps:Heteromap}, allow us to apply
Lemmas \ref{lemma:iterative:saddle} and \ref{lemma:iterative:hetero}
iteratively so that we obtain the
$(\de,\sigma,K)$-recursive collection
of multi-parameter sets $\{\II_j\}_{j=3,\ldots,N-1}$ and
the $\II_j$-product-like sets $\VV_j\subset\Sigma_j^\inn$.
In particular, note that the recursion formulas stated in
Theorem \ref{theorem:iterative} can be easily deduced from
the recursion formulas given in Lemmas \ref{lemma:iterative:saddle}
and \ref{lemma:iterative:hetero} and the choice of $\II_3$.

Finally, we bound the time
\[
  \left|T_{\BB^j}\right|\leq  \left|T_{\BB_\loc^j}\right|+
  \left|T_{\BB_\glob^j}\right|\leq(K+\wt K)\ln(1/\de).
\]
This
completes the proof of Theorem \ref{theorem:iterative}.
\end{proof}

\section{The hyperbolic toy model}\label{sec:HypToyModel}
In this section we set the elliptic modes to zero, namely, we deal with the
system
\begin{equation}\label{def:VF:HypToyModel}
\begin{split}
\dot p_1&=\sqrt{3}p_1+\ZZZ_{\hyp, p_1}\\
\dot q_1&=-\sqrt{3}q_1+\ZZZ_{\hyp, q_1}\\
\dot p_2&=\sqrt{3}p_2+\ZZZ_{\hyp, p_2}\\
\dot q_2&=-\sqrt{3}q_2+\ZZZ_{\hyp, q_2},
\end{split}
\end{equation}
where the functions $\ZZZ_{\hyp,\ast}$ are defined in
\eqref{def:VF:Diag:p1:Hyp}, \eqref{def:VF:Diag:q1:Hyp},
\eqref{def:VF:Diag:p2:Hyp} and \eqref{def:VF:Diag:q2:Hyp}.

We start by setting some notation.
We  call
\[
 z=(x_1,y_1,x_2,y_2)
\]
the new set of coordinates, whose components are also denoted by
$z_i=(x_i,y_i)$. We also use the notation $x=(x_1,x_2)$ and $y=(y_1,y_2)$.

Moreover, we  call $K$ to any positive constant independent of $\de$, $N$,
$j$, and $\sigma$ and we call $K_\sigma$ to any positive constant
depending on $\sigma$, but independent of $\de$, $N$ and $j$
Analogously, we say that $a=\OO(b)$ if $|a|\leq K|b|$ and that
$a=\OO_\sigma(b)$ if $|a|\leq K_\sigma|b|$. We will also use all these
notations in Section \ref{sec:FullSystem} and Section \ref{sec:ProofHeteroMap}.

The first step is to perform a resonant $\CCC^k$ normal form in a neighborhood of size $\sigma$ of the saddle. Note that  we do not
need much regularity for the normal form since all our study will
be done in the $\CCC^0$ norm.
It turns out it is enough to consider a $\CCC^1$ normal form.
Before we state our next claim about the normal form we formulate
a well known result of Bronstein-Kopanskii \cite{BronsteinK92} about
finitely smooth normal forms of vector fields near a critical point.
We are unable to use classical results about linearizability,
because our saddle is {\it resonant}.

The main result of Bronstein-Kopanskii \cite{BronsteinK92} is that near
a saddle point a vector field can be transformed into a polynomial
one by a finitely smooth change of coordinates with only
certain (resonant) monomials present.
For convenience of the reader we use notations of this paper.

\subsection{Finitely smooth 
polynomial normal forms of vector fields in
near a saddle point}

Let $\dot x=F(x)$ be a vector with the origin being a critical point,
i.e. $F(0)=0,\ x\in \RR^d$ for some $d \in \ZZ_+$. Assume that $F$ is
$C^K$ for some positive integer $K\in \ZZ_+$, i.e. $F$ has all partial
derivatives of order up to $K$ uniformly bounded. Denote
the linearization of $F$ at $0$ by $A:=DF(0)$ and $f(x)=F(x)-A(x)$.
Then, the equation becomes
\[
\dot x=Ax+f(x),\ f(0)=0,\ Df(0)=0.
\]
Let $\nu_1,\dots,\nu_d$ denote the eigenvalues of $A$ and
$\tet_1,\dots,\tet_n$ be all distinct numbers contained in the set
$\{\Re \nu_i: \ i=1,\dots,d\}$. Assume that none of $\tet_i$'s
is zero or, in other words, the rest point being hyperbolic.

The space $\RR^d$ can be represented as a direct sum of
$A$-invariant subspaces $E_1,\dots,E_n$ such that the eigenvalues
of the operator $A|_{E_i}$ satisfy the condition $\Re \nu_i=\tet_i$.

\begin{theorem} \cite{BronsteinK92}
Let $k$ be positive integer. Assume that the vector field
$\dot x=F(x)$ is of class $C^K,\ x=0$ is a hyperbolic saddle
point and $A=DF(0)$. If $\ K\ge Q(k)$ for some computable
function $Q(\cdot)$, then, for some positive integer $N$,
this vector field near the point $x=0$ can be reduced by a transformation
$y=\Phi(x), \Phi\in C^k$,
to the polynomial resonant normal form
\[
\dot y=Ay+\sum_{|\tau|=2}^N p_\tau y^\tau,
\]
where $\tau\in \ZZ^d_+$ and $p_\tau$ denotes a multi-homogeneous
polynomial 
$p_\tau(E_1,\dots,E_n;E_1\oplus \dots \oplus E_n),
\ p_\tau=(p^1_\tau,\dots,p^d_\tau)$ and $p_\tau^i \ne 0$
implies $\nu_i=\tau^1 \nu_1+\dots+\tau^d \nu_d$
(by the resonant condition).
\end{theorem}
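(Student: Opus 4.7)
The plan is to reduce the problem to two well-separated steps: a formal polynomial normalization followed by a finite-smoothness Sternberg-type linearization applied to the tail. First I would decompose $\RR^d=E_1\oplus\cdots\oplus E_n$ into the generalized real eigenspaces of $A$ grouped by the distinct real parts $\theta_1,\ldots,\theta_n$; after a preliminary smooth linear change this puts $A$ in block-diagonal form and lets me work with the grading induced by the $E_i$'s.

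Next I would run the standard polynomial normal-form algorithm. For each integer $m$ from $2$ to $N$, I would seek a near-identity polynomial change $x=y+h_m(y)$ with $h_m$ multi-homogeneous of degree $m$, so that the degree-$m$ part of the new vector field lies in the kernel of the cohomological operator $L_A h:=[A,h]=Dh\cdot Ay-A\,h(y)$ restricted to degree-$m$ vector-valued monomials. Acting on the monomial basis, $L_A$ is diagonal with eigenvalues $\tau^1\nu_1+\cdots+\tau^d\nu_d-\nu_i$, so it is invertible on the non-resonant part, and solving $L_A h_m=\text{(non-resonant part)}$ kills precisely the non-resonant degree-$m$ monomials. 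Iterating for $m=2,\ldots,N$ I obtain a $C^\infty$ (in fact polynomial) change $\Phi_{\mathrm{poly}}$ after which the vector field has the form
\[
\dot y = Ay+\sum_{|\tau|=2}^N p_\tau y^\tau + R_{N}(y),
\]
where each $p_\tau$ is resonant in the stated sense and $R_N\in C^{K-N}$ with $R_N(y)=O(|y|^{N+1})$. This step costs a fixed amount of smoothness linear in $N$.

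The hard part is to remove the tail $R_N$ by a $C^k$ change of coordinates, i.e., to prove a resonant Sternberg theorem in finite smoothness. Since the saddle is hyperbolic, there are no small divisors, but one must control the loss of derivatives under the conjugacy equation. The standard route, which is the Bronstein--Kopanskii argument, is to look for $\Phi=\mathrm{Id}+\varphi$ tangent to the identity to order $N$ solving the conjugacy equation
\[
D\Phi(y)\cdot\bigl(Ay+P(y)\bigr)=A\Phi(y)+P(\Phi(y))+R_N(\Phi(y)),
\]
and to recast this as a fixed-point problem $\varphi=\mathcal{T}\varphi$ on a ball in a weighted $C^k$ space of maps vanishing to order $N$ at the origin. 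Hyperbolicity of $A$ gives a contraction estimate for $\mathcal{T}$ provided $N$ is chosen large enough compared to $k$ and the spectral spread $\beta/\alpha$, where $\alpha=\min|\theta_i|$ and $\beta=\max|\theta_i|$; concretely one needs roughly $N>k\beta/\alpha$ so that characteristic exponents along the contraction beat the derivative-loss exponents. The function $Q(k)$ then arises as $Q(k)=N+\text{(fixed loss in the fixed-point iteration)}$, and is explicitly computable from $\alpha$, $\beta$, $d$, and $k$.

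I would expect the fixed-point/Sternberg step to be the genuine obstacle, since the polynomial killing in the first step is routine linear algebra. The delicate points are (i) choosing the correct scale-invariant weighted $C^k$ norm on germs tangent to the identity to order $N$ so that both the composition $R_N\circ\Phi$ and the conjugation by the flow of $A+P$ are contractions, (ii) using the hyperbolic splitting of $A$ to trade off the $N$-order flatness at $0$ against the loss of $k$ derivatives in the nonlinear terms, and (iii) verifying that the resulting $\Phi$ is only $C^k$ rather than smooth, which is exactly the sharp statement of the theorem. Once $\Phi=\Phi_{\mathrm{poly}}\circ(\mathrm{Id}+\varphi)$ is assembled, the conclusion follows with $\Phi\in C^k$ and the vector field in the claimed resonant polynomial normal form.
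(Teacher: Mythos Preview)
The paper does not prove this theorem: it is quoted verbatim from Bronstein--Kopanskii \cite{BronsteinK92} and used as a black box, so there is no ``paper's own proof'' to compare against. Your sketch is a reasonable outline of the standard two-step strategy (formal polynomial normalization followed by a finite-smoothness Sternberg-type elimination of the flat tail), and indeed this is essentially the structure of the original Bronstein--Kopanskii argument you would be reproducing.

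One small point worth flagging: in your cohomological step you say $L_A$ is diagonal on monomials with eigenvalues $\tau\cdot\nu-\nu_i$. This is only literally true if $A$ is diagonalizable; in general $A$ may have nontrivial Jordan blocks (and in the application in this paper the hyperbolic eigenvalues have multiplicity two), so $L_A$ is merely block-triangular with those numbers on the diagonal. The conclusion is the same---$L_A$ is invertible off the resonant subspace---but the justification needs the triangular, not diagonal, form.
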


In Theorem 3 \cite{BronsteinK92} the authors give an upper
bound on $N$. In our case $d=4,\ n=2,\ k=1$.
A direct application of this Theorem is the following


\begin{lemma}\label{lemma:ToyModel:NormalForm}
There exists a $\CCC^1$ change of coordinates
\[
 (p_1,q_1,p_2,q_2)=\Psi_\hyp(x_1,y_1,x_2,y_2)=(x_1,y_1,x_2,y_2)+
 \wt\Psi_\hyp(x_1
,y_1,x_2,y_2)
\]
which transforms the vector field \eqref{def:VF:HypToyModel} into the vector
field
\begin{equation}\label{def:HypHam:Transformed}
\XX_\hyp(z)=D z+R_\hyp,
\end{equation}
where $D$ is the diagonal matrix
$D=\mathrm{diag}(\sqrt{3},-\sqrt{3},\sqrt{3},-\sqrt{3})$ and $R_\hyp$ is a
polynomial, which only contains resonant monomials
\footnote{One can even estimate degree of this polynomial using 
\cite{BronsteinK92}}. It can be split as
\begin{equation}\label{def:HypHam:Hyp}
R_\hyp=R_\hyp^0+R_\hyp^1,
\end{equation}
where $R_\hyp^0$ is the first order, which is given by
\begin{equation}\label{def:Hyp:Transf0}
R_\hyp^0(z)=\left(\begin{array}{c}R_{\hyp,x_1}^0(z)\\R_{\hyp,y_1}^0(z)\\R_{\hyp,
x_2}^0(z)\\R_{\hyp,y_2}^0(z)\end{array}\right)=\left(\begin{array}{c}
2\nu_{2}x_1^2y_1+2\nu_{02}y_1x_2^2+\nu_{11}x_1x_2y_2 \\
-2\nu_{2}x_1y_1^2-2\nu_{20}x_1y_2^2-\nu_{11}y_1x_2y_2\\2\nu_{2}y_2x_2^2+2\nu_{20}
x_1^2y_2
+\nu_{11}x_1y_1x_2\\-2\nu_{2}x_2y_2^2-\nu_{02}y_1^2x_2-\nu_{11}
x_1y_1y_2\end{array}\right),
\end{equation}
and $R_\hyp^1$ is the remainder and satisfies
\begin{equation}\label{def:Hyp:Remainder}
 R_{\hyp,x_i}^1=\OO\left(x^3y^2\right)\,\,\,\text{ and
}\,\,\,R_{\hyp,y_i}^1=\OO\left(x^2y^3\right).
\end{equation}
Moreover, the function $\wt\Psi_\hyp=( \wt\Psi_{\hyp,x_1}, \wt\Psi_{\hyp,y_1},
\wt\Psi_{\hyp,x_2}, \wt\Psi_{\hyp,y_2})$ satisfies
\[
\begin{split}
 \wt\Psi_{\hyp,x_1}(z)&=\OO\left(x_1^3,x_1y_1,x_1(x_2^2+y_2^2),
y_1y_2(x_2+y_2)\right)\\
\wt\Psi_{\hyp,y_1}(z)&=\OO\left(y_1^3,x_1y_1,y_1(x_2^2+y_2^2),
x_1x_2(x_2+y_2)\right)\\
 \wt\Psi_{\hyp,x_2}(z)&=\OO\left(x_2^3,x_2y_2,x_2(x_1^2+y_1^2),
y_1y_2(x_1+y_1)\right)\\
\wt\Psi_{\hyp,y_2}(z)&=\OO\left(y_2^3,x_2y_2,y_2(x_1^2+y_1^2),
x_1x_2(x_1+y_1)\right).
\end{split}
\]
\end{lemma}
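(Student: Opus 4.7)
The plan is to apply directly the Bronstein--Kopanskii theorem cited in the excerpt. The linearization of \eqref{def:VF:HypToyModel} at the origin is $D=\mathrm{diag}(\sqrt3,-\sqrt3,\sqrt3,-\sqrt3)$, which is hyperbolic, and every nonlinearity in \eqref{def:VF:Diag:p1:Hyp}--\eqref{def:VF:Diag:q2:Hyp} is already cubic (they come from the quartic Hamiltonian $\wt H^{(j)}_\hyp$). Since we only need a $C^1$ change, the regularity threshold $K\ge Q(1)$ of Bronstein--Kopanskii is trivially satisfied, and that theorem produces a polynomial change $\Psi_\hyp=\mathrm{Id}+\wt\Psi_\hyp$ that puts the system into the polynomial resonant normal form $\dot z=Dz+R_\hyp(z)$, where $R_\hyp$ has finite degree and is built only from resonant monomials. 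What remains is to identify the cubic resonant monomials, read off their coefficients and bound the higher-order remainder.

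For a monomial $z^\tau=x_1^{\tau_1}y_1^{\tau_2}x_2^{\tau_3}y_2^{\tau_4}$, the resonance condition $\lambda_i=\sqrt 3(\tau_1-\tau_2+\tau_3-\tau_4)$ with $\lambda_i\in\{\pm\sqrt 3\}$ reduces to the selection rule $\tau_1-\tau_2+\tau_3-\tau_4=+1$ in the $x$-equations and $=-1$ in the $y$-equations. In particular only odd-degree monomials can be resonant. Enumerating the cubic monomials in \eqref{def:VF:Diag:p1:Hyp}--\eqref{def:VF:Diag:q2:Hyp}, the ones satisfying this rule in the $x$-equations are precisely $x_i^2 y_i$, $y_i x_{3-i}^2$ and $x_i x_{3-i} y_{3-i}$, and symmetrically for the $y$-equations under $x\leftrightarrow y$; reading off their coefficients from the original vector field yields exactly the expressions in \eqref{def:Hyp:Transf0}.

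The remaining cubic monomials are non-resonant and are killed by the cubic part of $\wt\Psi_\hyp$, obtained in the standard way from the homological equation associated to $D$. That equation is diagonal in the monomial basis with denominators $\sqrt 3(\tau_1-\tau_2+\tau_3-\tau_4\mp 1)\in\sqrt 3\cdot\{\pm 2,\pm 4\}$ on the cubic non-resonant spectrum, so no small denominators arise and the cubic part of $\wt\Psi_\hyp$ is a bona fide cubic polynomial; higher-degree steps of the normal form procedure produce only smaller corrections that do not affect the leading estimates. Grouping the non-resonant cubic terms of each $\ZZZ_{\hyp,\cdot}$ according to the variables they involve yields the four $\OO$-descriptions claimed for the components of $\wt\Psi_\hyp$. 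For the remainder, any resonant monomial of degree $\ge 5$ in an $x$-equation satisfies $\tau_1-\tau_2+\tau_3-\tau_4=1$ with $|\tau|\ge 5$, which forces $\tau_1+\tau_3\ge 3$ and $\tau_2+\tau_4\ge 2$ and hence $R_{\hyp,x_i}^1=\OO(x^3 y^2)$; the estimate for $R_{\hyp,y_i}^1$ is symmetric.

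The main difficulty is not conceptual but combinatorial bookkeeping: one must enumerate every cubic monomial appearing in the four components $\ZZZ_{\hyp,\cdot}$, check the selection rule for each and match the surviving resonant ones to \eqref{def:Hyp:Transf0}. This has to be done with care because the cancellation mechanism sketched in Section \ref{sec:MainIdeasSaddle} depends crucially on the precise resonant coefficient $2\nu_{02}$ of the cubic $y_1 x_2^2$ in $R^0_{\hyp,x_1}$; a single misclassification at this stage would spoil the logarithmic cancellation that drives the polynomial diffusion time.
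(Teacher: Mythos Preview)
Your proposal is correct and follows exactly the route the paper intends: the paper states this lemma as a ``direct application'' of the Bronstein--Kopanskii theorem and gives no further proof, so your argument---applying that theorem, reading off the resonance rule $\tau_1-\tau_2+\tau_3-\tau_4=\pm1$, identifying the surviving cubic resonant terms of $\ZZZ_{\hyp,\cdot}$ as \eqref{def:Hyp:Transf0}, and deducing the $\OO$-bounds for $R^1_\hyp$ and $\wt\Psi_\hyp$ from the combinatorics of the selection rule---supplies precisely the details the paper omits.
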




\subsection{The local map for the hyperbolic toy model in the normal form variables}
Recall that our goal in this step of the proof is to study the
evolution of points with initial conditions inside of a certain set
near the section $\Sigma_j^{\inn}$. More specifically, in formulas
\eqref{def:Domain:Hyperbolic:sup} and \eqref{def:Domain:Hyperbolic:inf}
we define sets $\NNN_j^-\subset\NNN_j^+$. We set elliptic modes $c=0$
and shall study the set $\NNN'_j$ satisfying
\[
\NNN_j^-\cap\{c=0\}\subset \NNN_j' \subset\NNN_j^+\cap\{c=0\}.
\]
Since the analysis is done in normal coordinates $\Psi_\hyp:(x,y)\to(p,q)$,
we study the a set $\wh \NNN_j$ such that $\Psi_\hyp^{-1}(\NNN_j')\subset\wh \NNN_j$. To define
this set we need to fix several parameters and define several objects.

Let $C^{(j)}$'s be the constant from Lemma \ref{lemma:iterative:saddle}.
Recall that in Definition \ref{def:RecursivelyDefined} we define a
$(\sigma,\de,K)$-recursive multiparameter set $\II_j$. Its description
includes parameters $M_{\hyp}^{(j)}$ used below.  The parameter $K$
depends on $\sigma$ and we keep this dependence in the notation: $K_\sigma$.
Denote the inverse of the map $\Psi$ from Lemma \ref{lemma:ToyModel:NormalForm},
by
\[
\Upsilon:=\mathrm{Id}+\wt\Upsilon:=\Psi_\hyp^{-1}=:
\mathrm{Id}+(\wt\Upsilon_{x_1},\wt\Upsilon_{y_1},\wt\Upsilon_{x_2},
\wt\Upsilon_{y_2}).
\]
Define
\begin{equation}\label{def:HypToyModel:Chat}
 \wh C^{(j)}:=C^{(j)}\left(1+\pa_{x_1}\wt\Upsilon_{x_1}(0,\sigma,0,0)\right).\\
\end{equation}
Notice that $\wh C^{(j)}=C^{(j)}\left(1+\OO(\sigma)\right).$
Define $f_1(\sigma)$ by
\begin{equation}\label{def:HypToyModel:f1}
f_1(\sigma)= \Upsilon_{y_1}(0,\sigma,0,0).
\end{equation}
Observe that it satisfies $f_1(\sigma)=\sigma+\OO(\sigma^3)$ and
the section $\{y_1=f_1(\sigma)\}$ approximates the image of the section
$\Upsilon(\Sigma_j^{\inn})$. Now we can define the set of points
whose evolution under the local map we shall analyze
\begin{equation}\label{def:Hyp:ModifiedDomain}
\begin{split}
\wh\NNN_j=\Big\{& |x_1+\wh
C^{(j)}\,\de\,(\ln(1/\de)|\le \wh C^{(j)}\,\de\,K_\sigma, \quad
\left|x_2-x_2^\ast\right|\leq 2\, M_{\hyp}^{(j)}\frac{\left(\wh
C^{(j)}\de\right)^{1/2}}{\ln(1/\de)},\\
 & |y_1- f_1(\sigma)|\le
K_\sigma\wh C^{(j)}\de\ln(1/\de),\quad \qquad |y_2|\leq 2\,M_{\hyp}^{(j)}\left(\wh
C^{(j)}\de\right)^{1/2}\Big\},
\end{split}
\end{equation}
where the constant $x_2^\ast$ will be defined later in this section.
It turns out a proper choice of $x_2^\ast$ leads
to a cancelation in the evolution of the $x_1$ coordinate
(described in Section \ref{sec:MainIdeasSaddle} for the simplified model).
This cancelation is crucial to obtain good estimates for the map $\BB_\loc^j$.

We also define the function $f_2(\sigma)$ as
\begin{equation}\label{def:HypToyModel:f2}
f_2(\sigma)= \Upsilon_{x_2}(0,0,\sigma,0).
\end{equation}
By analogy with $f_1(\sigma)$ notice that
the section $\{x_2=f_2(\sigma)\}$ approximates the image of the section
$\Upsilon(\Sigma_j^{\out})$ with $\Sigma_j^{\out}=\{p_2=\sigma\}$.
Later we need to compute an approximate transition time $T_j(x_2)$ from near
$\Upsilon(\Sigma_j^{\inn})$  to $\Upsilon(\Sigma_j^{\out})$.
We use $f_2$ to do that. Notice that
the $x_2$ coordinate behaves almost linearly as
\[
 x_2\sim x_2^0 e^{\sqrt{3}t}.
\]
Therefore, for an orbit to reach  $\{x_2=f_2(\sigma)\}$ it takes an
approximate time
\begin{equation}\label{def:FixedTimeSaddle}
 T_j\left(x_2^0\right)=\frac{1}{\sqrt{3}}\ln\left(\frac{f_2(\sigma)}{x_2^0}
\right).
\end{equation}
Note that this time is defined for any $x_2^0>0$. We will see that the
$x_2^0$ coordinate behaves  as $x_2^0\sim (\wh C^{(j)}\de)^{1/2}$ and,
therefore, $T_j$ behaves as
\[
 T_j\sim\ln\frac{1}{\wh C^{(j)}\de}.
\]
Even if $x_2$ behaves approximately as for a linear system,
this is not the case for the other variables, as we have explained
in Section \ref{sec:MainIdeasSaddle} with a simplified model.
Indeed, if one first considers the linear part of the vector field
\eqref{def:VF:HypToyModel}, omiting the dependence on $\wh
C^{(j)}$,  the transition map sends points
\[
\left(x_1,y_1,x_2,y_2\right)\sim \left(
\OO(\de\ln(1/\de)),\OO(\sigma),
\OO\left(\de^{1/2}\right), \OO\left(\de^{1/2}\right)\right)
\]
to
\[
\left(x_1,y_1,x_2,y_2\right)\sim
\left(\OO\left(\de^{1/2}\ln(1/\de)\right), \OO\left(\de^{1/2}\right),
\OO(\sigma),
\OO(\de)\right).
\]
However, the resonance implies a certain deviation from the heteroclinic orbits.
Indeed, one can see that tipically, the image point is of the form
\[
\left(x_1,y_1,x_2,y_2\right)\sim
\left(\OO\left(\de^{1/2}\ln(1/\de)\right), \OO\left(\de^{1/2}\right),
\OO(\sigma),
\OO(\de\ln(1/\de)\right).
\]
This apparently small deviation, after undoing the normal form, would imply
a considerably big deviation from the heteroclinic orbit and would lead
to very bad estimates. Nevertheless, if one chooses carefully $x_2$ in
terms of $x_1$ and $y_1$, one can obtain a cancelation that leads to
an image point of the form
\[
\left(x_1,y_1,x_2,y_2\right)\sim
\left(\OO\left(\de^{1/2}\right), \OO\left(\de^{1/2}\right), \OO(\sigma),
\OO(\de\ln(1/\de)\right).
\]
Since the points we are dealing with belong to the set $\wt\NNN_j$
defined in \eqref{def:Hyp:ModifiedDomain}, this cancellation boils down
 to choosing a suitable constant $x_2^\ast$. Next lemma shows that
a particular choice of $x_2^\ast$ leads to a cancellation that allow
us to obtain good estimates for the saddle map in spite of the resonance.
The choice we do is essentially the same as the one choosen in
Section \ref{sec:MainIdeasSaddle} for the simplified model that
has been considered in that section.

\begin{lemma}\label{lemma:HypToyModel:SaddleMap}
Let us consider the flow $\Phi^\hyp_t$ associated to \eqref{def:HypHam:Transformed}
and a point $z^0\in\wh\NNN_j$. Then, if we choose $x_2^\ast$ as the unique
positive solution of
\begin{equation}\label{def:ChoiceEtaHat}
\left(x_2^\ast\right)^2 T_j(x_2^\ast)=\frac{\wh
C^{(j)}\,\de\,\ln(1/\de)}{2\,\nu_{02}\,f_1(\sigma)}
\end{equation}
and we take $\de$ and $\sigma$ small enough, the point
\[
 z^f=\Phi^\hyp_{T_j}(z^0),
\]
where $T_j=T_j(x_2^0)$ is the time defined in \eqref{def:FixedTimeSaddle},
satisfies
\[
\begin{split}
 |x_1^f|\qquad \qquad \leq &\ K_\sigma\left(\wh C^{(j)}\de\right)^{1/2}\\
  |y_1^f|\qquad \qquad \leq&\ K_\sigma\left(\wh C^{(j)}\de\right)^{1/2}\\
|x_2^f-f_2(\sigma)|\qquad \leq &\
K_\sigma\left(\wh C^{(j)}\de\right)^{1/2}\ln^2(1/\de)\\
\left|y_2^f+\frac{f_1(\sigma)}{f_2(\sigma)}\wh C^{(j)}\de\ln(1/\de)
\right| \leq &\ K_\sigma\wh C^{(j)}\de.
\end{split}
\]
\end{lemma}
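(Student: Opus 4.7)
My plan is to exploit the near-linear structure of the normal form \eqref{def:HypHam:Transformed} at the saddle, together with the cancelation built into the definition of $x_2^\ast$. To leading order, orbits follow the linear saddle flow $x_i(t) = x_i^0 e^{\sqrt{3}t}$, $y_i(t) = y_i^0 e^{-\sqrt{3}t}$, and by \eqref{def:FixedTimeSaddle} this linear flow sends $x_2^0$ to $f_2(\sigma)$ in time $T_j$. Since $R_\hyp^0$ contains only resonant monomials, evaluated along the linearized trajectory each of its components is a pure exponential of rate $\pm\sqrt{3}$; hence variation of parameters against the linear Green's function produces \emph{secular} corrections of size $t$ times the linear amplitude, rather than exponentially growing ones. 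This is the mechanism that makes the long window $T_j \sim \tfrac{1}{2\sqrt{3}}\ln(1/\de)$ tractable.

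The heart of the proof is the variation-of-parameters calculation isolating the dominant secular terms. On the linearized trajectory the leading contribution to $\dot x_1$ is $2\nu_{02} y_1 x_2^2 \approx 2\nu_{02} f_1(\sigma) (x_2^0)^2 e^{\sqrt{3}t}$, which yields
\begin{equation*}
x_1(T_j) \approx \bigl(x_1^0 + 2\nu_{02} f_1(\sigma)\,(x_2^0)^2\, T_j\bigr)\,\frac{f_2(\sigma)}{x_2^0}.
\end{equation*}
Without cancelation the parenthesis is of size $\wh C^{(j)}\de\ln(1/\de)$, and after the prefactor $f_2(\sigma)/x_2^0 \sim (\wh C^{(j)}\de)^{-1/2}$ this would give an unacceptable $x_1^f \sim (\wh C^{(j)}\de)^{1/2}\ln(1/\de)$. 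The defining condition \eqref{def:ChoiceEtaHat} of $x_2^\ast$ is \emph{exactly} the one that forces the parenthesis to vanish when $x_1^0 = -\wh C^{(j)}\de\ln(1/\de)$. The slack permitted by $\wh\NNN_j$ allows $x_1^0$ to deviate from $-\wh C^{(j)}\de\ln(1/\de)$ by $\OO_\sigma(\wh C^{(j)}\de)$ and $x_2^0 - x_2^\ast$ by $\OO_\sigma((\wh C^{(j)}\de)^{1/2}/\ln(1/\de))$; a first-order Taylor expansion of $\xi\mapsto \xi^2 T_j(\xi)$ around $x_2^\ast$ shows the surviving parenthesis is $\OO_\sigma(\wh C^{(j)}\de)$, producing the claimed $|x_1^f| \leq K_\sigma(\wh C^{(j)}\de)^{1/2}$. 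The analogous calculation, now isolating the resonant monomial $-2\nu_{02} y_1^2 x_2$ in $\dot y_2$ and using \eqref{def:ChoiceEtaHat} in reverse, yields the asymptotic expression for $y_2^f$. The remaining bounds for $y_1^f$ and $x_2^f - f_2(\sigma)$ follow from smaller secular contributions: the resonant terms in $\dot y_1$ all carry an $x$-factor of size $\OO_\sigma((\wh C^{(j)}\de)^{1/2}\ln(1/\de))$ and are subdominant, while the correction to $x_2$ comes from $\nu_{11} x_1 y_1 x_2$, which effectively shifts the exponential rate by $\OO_\sigma(\wh C^{(j)}\de\ln(1/\de))$ and after time $T_j$ produces the stated polylogarithmic error.

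Everything above is closed by an a priori bootstrap: one assumes the heuristic bounds on $|x_i(t)|, |y_i(t)|$ throughout $[0, T_j]$, substitutes into the full system, and uses the remainder bound \eqref{def:Hyp:Remainder} together with the subleading resonant monomials to obtain a Gr\"onwall estimate closing the loop. The main obstacle is precisely making this bootstrap work over the long time $T_j \sim \ln(1/\de)$: a naive Gr\"onwall would yield an exponential $e^{KT_j}$ loss, destroying the estimates. The key is to exploit that (i) the monomials in $R_\hyp^0$ are exactly resonant, so their effect is secular ($\sim t$) rather than exponential, and (ii) the remainder $R_\hyp^1 = \OO(x^3y^2) + \OO(x^2y^3)$ contributes errors that, after integrating against the Green's function and using that products $x_iy_i$ remain $\OO(\sigma)$ along the trajectory, are absorbed into $K_\sigma$. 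The cancelation \eqref{def:ChoiceEtaHat} is the single nonnegotiable ingredient: without it the orbit drifts logarithmically from the heteroclinic, which compounded over the $N-4$ local maps in Theorem \ref{theorem:iterative} would destroy the polynomial diffusion time of Theorem \ref{thm:main}.
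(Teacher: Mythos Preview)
Your heuristic analysis is correct and captures exactly the mechanism the paper exploits: variation of constants, the secular (rather than exponential) effect of the resonant monomials in $R_\hyp^0$, and above all the cancelation in the $x_1$-component forced by the choice \eqref{def:ChoiceEtaHat}. Your leading-order computations for $x_1^f$ and $y_2^f$ match the paper's.

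Where your route diverges from the paper is in the closing mechanism. You propose a bootstrap-plus-Gr\"onwall argument and correctly flag that a naive Gr\"onwall fails over $T_j\sim\ln(1/\de)$; but you do not specify what replaces it. The paper's device here is concrete and somewhat nonobvious: after the change $x_i=e^{\sqrt{3}t}u_i$, $y_i=e^{-\sqrt{3}t}v_i$ one writes the integral equations as a fixed-point problem for an operator $\FF_\hyp$, observes that $\FF_\hyp$ is \emph{not} contractive because in the $u_1$ and $v_2$ components the dominant contribution is the integral term rather than the initial datum, and then applies Shilnikov's trick: one defines a modified operator $\wt\FF_\hyp$ by substituting $\FF_{\hyp,v_1}$ and $\FF_{\hyp,u_2}$ into the $u_1$- and $v_2$-slots of $\FF_\hyp$. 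This modified operator has the same fixed points but is now contractive in a Banach space with weighted norms tailored to each component; in particular the $u_1$-weight is $|{-\wh C^{(j)}\de\ln(1/\de)+2\nu_{02}f_1(\sigma)(x_2^\ast)^2t+\wh C^{(j)}\de}|^{-1}$, which encodes the expected secular drift and is what makes your cancelation computation rigorous uniformly over $[0,T_j]$.

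So the gap in your proposal is not conceptual but technical: the ``bootstrap'' you invoke needs, in effect, precisely this Shilnikov modification and these weighted norms to close without picking up a factor of $\ln(1/\de)$ in the Lipschitz constant. If you formulate your bootstrap hypothesis as membership in the weighted ball and iterate once, you recover the paper's argument; absent that, the step ``obtain a Gr\"onwall estimate closing the loop'' is not yet a proof.
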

\begin{remark}\label{remark:cancellation}
The particular choice of $x_2^\ast$ being a solution \eqref{def:ChoiceEtaHat}
will ensure a cancellation. This cancellation is crucial to obtain good
estimates for the local map.

Equation \eqref{def:ChoiceEtaHat} has real solutions because
$\nu_{02}>0$ (see Remark \ref{remark:FormOfVF})
and $x_1<0$ (and $p_1<0$ in the original variables, see Remark \ref{remark:SignP1}).
Indeed, if $x_1>0$ and $x_1\sim \wh C^{(j)}\de\ln(1/\de)$ we have
\[
\left(x_2^\ast\right)^2 T_j(x_2^\ast)=-\frac{\wh
C^{(j)}\,\de\,\ln(1/\de)}{2\,\nu_{02}\,f_1(\sigma)}.
\]
If there is no solution to this equation, we cannot attain the desired cancellation.
\end{remark}

Let us point out that taking into account the estimates for the points in
$\wh\NNN^{(j)}$, the definition of $T_j$ in \eqref{def:FixedTimeSaddle} and
condition \eqref{def:UpperBoundCs}, one can deduce that condition
\eqref{def:ChoiceEtaHat}  implies
\[
 |x_2^\ast|\leq K_\sigma\left(\wh C^{(j)}\de\right)^{1/2}\leq
K_\sigma\de^{(1-r)/2}.
\]
and then,
\begin{equation}\label{def:BoundSaddleTime}
T_j(x_2^0) \leq K_\sigma\ln(1/\de).
\end{equation}
We  use this estimate throughout the proof of Lemma
\ref{lemma:HypToyModel:SaddleMap}. Note also that for the modes $(x_1^f,y_1^f)$
we just need upper bounds, since after the passage of the saddle $j$, the
associated mode will become elliptic and therefore we will not need accurate
estimates anymore.

\begin{proof}[Proof of Lemma \ref{lemma:HypToyModel:SaddleMap}]
We prove the lemma using a fixed point argument. We look for
a contractive operator using the variation of constants formula.
Namely, we perform the change of coordinates
\begin{equation}\label{def:HypVariationConstants}
 x_i=e^{\sqrt{3}t}u_i,\,\,y_i=e^{-\sqrt{3}t}v_i
\end{equation}
and then we obtain the integral equations
\begin{equation}\label{def:Hyp:FixedPoint}
\begin{split}
u_i&=x_i^0+\int_0^T e^{-\sqrt{3}t}R_{\hyp,x_i}\left(u e^{\sqrt{3}t},v
e^{-\sqrt{3}t}\right)dt\\
v_i&=y_i^0+\int_0^T e^{\sqrt{3}t}R_{\hyp,y_i}\left(u e^{\sqrt{3}t},v
e^{-\sqrt{3}t}\right)dt.
\end{split}
\end{equation}
In the linear case $u_i$'s and $v_i$'s are fixed. We use these variables
to find a fixed point argument. We define the contractive operator in
two
steps. This approach is inspired by Shilnikov \cite{Shilnikov67}.

First we define an
auxiliary (non-contractive) operator we follows
\[
\FF_\hyp=(\FF_{\hyp,u_1},\FF_{\hyp,v_1},\FF_{\hyp,u_2},\FF_{\hyp,v_2})
\]
 as
\begin{equation}\label{def:Hyp:Operator1}
\begin{split}
 \FF_{\hyp,u_i}(u,v)&=x_i^0+\int_0^T e^{-\sqrt{3}t}R_{\hyp,x_i}\left(u
e^{\sqrt{3}t},v e^{-\sqrt{3}t}\right)dt\\
 \FF_{\hyp,v_i}(u,v)&= y_i^0+\int_0^T e^{\sqrt{3}t}R_{\hyp,y_i}\left(u
e^{\sqrt{3}t},v e^{-\sqrt{3}t}\right)dt.
\end{split}
\end{equation}
One can easily see that in the $u_1$ and $v_2$ components the main terms
are {\it not} given by the initial condition {\it but by the integral terms}.
This indicates that the dynamics near the saddle is {\it not } well
approximated by the linearized  dynamics and
the operator is not contractive.

We modify slightly two of the components of $\FF_\hyp$ and obtain
a contractive operator. We define a new operator
\[
\wt\FF_\hyp=(\wt\FF_{\hyp,u_1},\wt\FF_{\hyp,v_1},\wt\FF_{\hyp,u_2},
\wt\FF_{\hyp, v_2})
\]
as
\begin{equation}\label{def:Hyp:NewOperator}
\begin{split}
\wt
\FF_{\hyp,u_1}(u_1,v_1,u_2,v_2)&=\FF_{\hyp,u_1}(u_1,\FF_{\hyp,v_1}(u_1,v_1,u_2,
v_2),\FF_{\hyp,u_2}(u_1,v_1,u_2,v_2),v_2)\\
\wt \FF_{\hyp,v_1}(u_1,v_1,u_2,v_2)&=\FF_{\hyp,v_1}(u_1,v_1,u_2,v_2)\\
\wt \FF_{\hyp,u_2}(u_1,v_1,u_2,v_2)&=\FF_{\hyp,u_2}(u_1,v_1,u_2,v_2)\\
\wt
\FF_{\hyp,v_2}(u_1,v_1,u_2,v_2)&=\FF_{\hyp,v_2}(u_1,\FF_{\hyp,v_1}(u_1,v_1,u_2,
v_2),\FF_{\hyp,u_2}(u_1,v_1,u_2,v_2),v_2)
\end{split}
\end{equation}
Note that the fixed points of these operators are exactly the same as the fixed
points of  $\FF_\hyp$. Thus, the fixed points of the operator $\wt\FF_\hyp$
are solutions of equation \eqref{def:Hyp:FixedPoint}.

It turns out the operator $\wt\FF_\hyp$ is contractive in a suitable
Banach space. We define the following 
weighted norms. To fix notation, we denote by $\|\cdot\|_\infty$
the
standard supremum norm. Then
define
\begin{equation}\label{def:Hyp:Norms}
\begin{split}
\|h\|_{\hyp, u_1}=&\sup_{t\in [0,T_j]}\left|\left(-\wh
C^{(j)}\de\ln(1/\de)+2\nu_{02}f_1(\sigma)\left(x_2^\ast\right)^2 t+\wh
C^{(j)}\de\right)^{-1}h(t)\right| \\
\|h\|_{\hyp, v_1}=&\,f_1(\sigma)\ii\|h\|_\infty \\
\|h\|_{\hyp, u_2}=&\left(x_2^\ast\right)\ii\|h\|_\infty \\
\|h\|_{\hyp, v_2}=&\left(\left(y_1^0\right)^2x_2^0 T_j\right)\ii\|h\|_\infty
\end{split}
\end{equation}
and the norm
\begin{equation}\label{def:Hyp:FullNorm}
\|(u,v)\|_\ast=\sup_{i=1,2}\left\{\|u_i\|_{\hyp,u_i},\|v_i\|_{\hyp,v_i}\right\}.
\end{equation}
This gives rise to the following Banach space
\[
\YY_\hyp=\left\{(u,v):[0,T]\rightarrow \RR^4;  \|(u,v)\|_\ast<\infty\right\}.
\]
The contractivity of $\wt\FF_\hyp$ is a consequence of the following two
auxiliary propositions.

\begin{proposition}\label{lemma:Hyp:FirstIteration}
Assume \eqref{def:ChoiceEtaHat}, then there exists a constant
$\kk_0>0$ independent of $\sigma$, $\de$ and $j$ such that for $\de$ and $\sigma$ small enough, the operator $\wt \FF_\hyp$ satisfies
\[
 \|\wt \FF (0)\|_\ast\leq \kk_0.
\]
\end{proposition}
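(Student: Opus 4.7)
The plan is to evaluate $\wt\FF_\hyp(0,0,0,0)$ componentwise, using the explicit form of the resonant normal form \eqref{def:Hyp:Transf0}--\eqref{def:Hyp:Remainder}, and then verify that each component is $\OO(1)$ in its respective weighted norm from \eqref{def:Hyp:Norms}, exploiting the cancellation built into the choice \eqref{def:ChoiceEtaHat} of $x_2^\ast$. Since $R_\hyp$ is a polynomial without constant or linear terms, one sees from \eqref{def:Hyp:Operator1} that $\FF_{\hyp,v_1}(0)(t)\equiv y_1^0$ and $\FF_{\hyp,u_2}(0)(t)\equiv x_2^0$. Substituting these constants into \eqref{def:Hyp:NewOperator} reduces the remaining two components to $\wt\FF_{\hyp,u_1}(0)=\FF_{\hyp,u_1}(0,y_1^0,x_2^0,0)$ and $\wt\FF_{\hyp,v_2}(0)=\FF_{\hyp,v_2}(0,y_1^0,x_2^0,0)$, which can be computed essentially by inspection because most monomials in $R^0_\hyp$ vanish when $x_1=0$ and $y_2=0$.

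For the $v_1$ and $u_2$ components the verification is immediate: $\|y_1^0\|_{\hyp,v_1}=|y_1^0|/f_1(\sigma)=1+\OO_\sigma(\wh C^{(j)}\de\ln(1/\de))$ by the estimate on $y_1$ in the definition \eqref{def:Hyp:ModifiedDomain} of $\wh\NNN_j$ and the relation $f_1(\sigma)=\sigma+\OO(\sigma^3)$; similarly $\|x_2^0\|_{\hyp,u_2}=|x_2^0|/x_2^\ast=1+\OO(M^{(j)}_{\hyp}/\ln(1/\de))$ using $|x_2^0-x_2^\ast|\le 2M^{(j)}_{\hyp}(\wh C^{(j)}\de)^{1/2}/\ln(1/\de)$. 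For the $v_2$ component, only the monomial $-\nu_{02}y_1^2x_2$ in $R^0_{\hyp,y_2}$ survives at the substituted point (since $x_1=y_2=0$), producing $\wt\FF_{\hyp,v_2}(0)(t)=y_2^0-\nu_{02}(y_1^0)^2 x_2^0\,t+E_{v_2}(t)$, where $E_{v_2}$ is the contribution of $R^1_\hyp$; using \eqref{def:Hyp:Remainder}, an integration yields $|E_{v_2}|\le K_\sigma (\wh C^{(j)}\de)^{3/2}$, hence in the norm $\|\cdot\|_{\hyp,v_2}$ (which divides by $(y_1^0)^2 x_2^0 T_j$) both the linear-in-$t$ term and the initial condition $y_2^0$ yield $\OO(1)$ after using $|y_2^0|\le 2M^{(j)}_{\hyp}(\wh C^{(j)}\de)^{1/2}$ together with the lower bound $(y_1^0)^2 x_2^0 T_j\gtrsim \sigma^2 (\wh C^{(j)}\de)^{1/2}\ln(1/\de)$ obtained from \eqref{def:ChoiceEtaHat}.

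The delicate component, and the main obstacle, is $\wt\FF_{\hyp,u_1}(0)$. At the substitution point $(0,y_1^0 e^{-\sqrt3 s}, x_2^0 e^{\sqrt3 s},0)$ only the resonant monomial $2\nu_{02}y_1 x_2^2$ in $R^0_{\hyp,x_1}$ contributes, and the oscillating exponentials cancel exactly, producing
\[
\wt\FF_{\hyp,u_1}(0)(t)=x_1^0+2\nu_{02}y_1^0(x_2^0)^2\, t+E_{u_1}(t),
\]
with $|E_{u_1}(t)|\le K_\sigma \wh C^{(j)}\de$ coming from $R^1_\hyp$ via \eqref{def:Hyp:Remainder}. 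To compare this with the denominator $D(t):=-\wh C^{(j)}\de\ln(1/\de)+2\nu_{02}f_1(\sigma)(x_2^\ast)^2 t+\wh C^{(j)}\de$ defining $\|\cdot\|_{\hyp,u_1}$, I will bound the three differences:
\begin{itemize}
\item $|x_1^0+\wh C^{(j)}\de\ln(1/\de)|\le K_\sigma \wh C^{(j)}\de$ by \eqref{def:Hyp:ModifiedDomain};
\item $|y_1^0-f_1(\sigma)|\cdot (x_2^0)^2\,t\le K_\sigma \wh C^{(j)}\de\ln(1/\de)\cdot \wh C^{(j)}\de\cdot \ln(1/\de)=o(\wh C^{(j)}\de)$;
\item $f_1(\sigma)\,|(x_2^0)^2-(x_2^\ast)^2|\,t\le K_\sigma \wh C^{(j)}\de$, using $|x_2^0-x_2^\ast|\le 2M^{(j)}_\hyp(\wh C^{(j)}\de)^{1/2}/\ln(1/\de)$, $|x_2^0+x_2^\ast|\le K_\sigma(\wh C^{(j)}\de)^{1/2}$, and $t\le T_j\le K_\sigma\ln(1/\de)$ from \eqref{def:BoundSaddleTime}.
\end{itemize}
Adding these, the numerator satisfies $|\wt\FF_{\hyp,u_1}(0)(t)|\le |D(t)|+K_\sigma\wh C^{(j)}\de$, so the ratio is bounded by $1+K_\sigma \wh C^{(j)}\de/|D(t)|\le 1+K_\sigma$ because $|D(t)|\ge \wh C^{(j)}\de$ uniformly on $[0,T_j]$ (the $+\wh C^{(j)}\de$ buffer term is designed precisely for this). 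Combining the four components yields the desired bound $\|\wt\FF_\hyp(0)\|_\ast\le \kk_0$ with $\kk_0$ independent of $\sigma$, $\de$, $j$. The hardest part is tracking that each perturbation from replacing $(y_1^0,x_2^0,T_j(x_2^0))$ by $(f_1(\sigma),x_2^\ast,T_j(x_2^\ast))$ in the resonant term contributes no worse than $\OO(\wh C^{(j)}\de)$, which is exactly what the buffer in the denominator of $\|\cdot\|_{\hyp,u_1}$ is sized to absorb.
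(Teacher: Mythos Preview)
Your proposal is correct and follows essentially the same approach as the paper's proof: evaluate $\wt\FF_\hyp(0)$ componentwise using the resonant normal form \eqref{def:Hyp:Transf0}, observe that $\wt\FF_{\hyp,v_1}(0)=y_1^0$ and $\wt\FF_{\hyp,u_2}(0)=x_2^0$ are trivially bounded, and for $u_1$ exploit the cancellation built into the choice of $x_2^\ast$ via \eqref{def:ChoiceEtaHat}. Your three-item breakdown of the replacement errors $x_1^0\to -\wh C^{(j)}\de\ln(1/\de)$, $y_1^0\to f_1(\sigma)$, $x_2^0\to x_2^\ast$ is more explicit than the paper's, which simply absorbs everything into a single $\OO(\wh C^{(j)}\de)$, and your justification for the $v_2$ component is likewise the same as the paper's; the lower bound on $|D(t)|$ that you invoke at the end is exactly the one the paper records (and uses) in the proof of Proposition~\ref{lemma:Hyp:Contractive}.
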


\begin{proposition}\label{lemma:Hyp:Contractive}
Consider $w, w'\in B(2\kk_0)\subset\YY_\hyp$ and let us assume
\eqref{def:ChoiceEtaHat}, then taking $\de\ll\sigma$, the operator $\wt
\FF_\hyp$ satisfies
\[
 \|\wt \FF_\hyp(w)-\wt \FF_\hyp(w')\|_\ast\leq K_\sigma\left(\wh
C^{(j)}\de\right)^{1/2}\ln^2(1/\de) \|w-w'\|_\ast.
\]
\end{proposition}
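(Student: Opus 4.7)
The plan is to estimate the operator norm of the Fréchet derivative $D\wt\FF_\hyp(w)$ uniformly over $w\in B(2\kk_0)\subset\YY_\hyp$ and conclude by the mean value theorem on the convex ball. After the rescaling \eqref{def:HypVariationConstants}, every resonant cubic monomial of $R_\hyp^0$ in \eqref{def:Hyp:Transf0} becomes \emph{constant} in the integration variable, while the higher-order remainder $R_\hyp^1$ from \eqref{def:Hyp:Remainder} picks up additional exponential damping and is strictly smaller. Consequently every component of $\FF_\hyp-(x^0,y^0)$ is a polynomial in $(u,v)$ integrated against bounded kernels over $[0,T_j]$, and each of its partial derivatives is easy to estimate pointwise.

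First I would treat the two \emph{good} components $\wt\FF_{\hyp,v_1}=\FF_{\hyp,v_1}$ and $\wt\FF_{\hyp,u_2}=\FF_{\hyp,u_2}$. On $B(2\kk_0)$ the integrands of the partial derivatives are quadratic in $(u,v)$ with one slot already contracted, hence of size $\OO_\sigma\bigl((x_2^\ast)^2+f_1(\sigma)^2\bigr)$; integrating over $[0,T_j]$ and then passing to the weighted norms \eqref{def:Hyp:Norms} produces in each case a Lipschitz constant of order $K_\sigma(\wh C^{(j)}\de)^{1/2}\ln^2(1/\de)$, where the factor $(\wh C^{(j)}\de)^{1/2}$ is extracted from \eqref{def:ChoiceEtaHat} in the form $T_j(x_2^\ast)^2=\OO_\sigma(\wh C^{(j)}\de\ln(1/\de))$ and one logarithm comes from $T_j$ itself. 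Next, for the two \emph{bad} components $\wt\FF_{\hyp,u_1}$ and $\wt\FF_{\hyp,v_2}$, I would apply the chain rule which for $\wt\FF_{\hyp,u_1}$ takes the schematic form
\[
\partial_w\wt\FF_{\hyp,u_1}=\text{direct slot in }u_1\text{ or }v_2 + \partial_{v_1}\FF_{\hyp,u_1}\,\partial_w\FF_{\hyp,v_1}+\partial_{u_2}\FF_{\hyp,u_1}\,\partial_w\FF_{\hyp,u_2}.
\]
The direct-slot contributions are small exactly as in the previous step. In the chain terms, the possibly large factors $\partial_{v_1}\FF_{\hyp,u_1}$ and $\partial_{u_2}\FF_{\hyp,u_1}$ cost at worst one $T_j=\OO_\sigma(\ln(1/\de))$, while the remaining factors $\partial_w\FF_{\hyp,v_1},\partial_w\FF_{\hyp,u_2}$ are already small by the previous estimate, so the product is again bounded by $K_\sigma(\wh C^{(j)}\de)^{1/2}\ln^2(1/\de)$ after weighting.

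The subtle point is the weight $\|\cdot\|_{\hyp,u_1}$, whose denominator vanishes at some $t_\ast\in[0,T_j]$ thanks to \eqref{def:ChoiceEtaHat}. This zero is harmless for the Lipschitz estimate because the affine-in-$t$ piece of $\wt\FF_{\hyp,u_1}$, namely $-\wh C^{(j)}\de\ln(1/\de)+2\nu_{02}f_1(\sigma)(x_2^\ast)^2 t$, is independent of $(u,v)$ and cancels in every difference $\wt\FF_{\hyp,u_1}(w)-\wt\FF_{\hyp,u_1}(w')$. What survives is the variation of genuinely subleading terms, whose modulus admits a uniform pointwise bound of the same order as the target denominator; taking the sup of the quotient is therefore well defined, and the weighted norm of the difference is controlled by the Jacobian bounds above. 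The main obstacle of the proof is thus the simultaneous bookkeeping of powers of $\wh C^{(j)}\de$, $\sigma$ and $\ln(1/\de)$ across all sixteen weighted partial derivatives, so that the correct pairing of source and target weights from \eqref{def:Hyp:Norms} yields the announced Lipschitz constant on each entry; once this is set up systematically, the subleading contributions from $R_\hyp^1$ are negligible because they carry extra positive powers of the small parameters.
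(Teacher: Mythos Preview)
Your proposal is correct and follows essentially the same strategy as the paper: bound the Lipschitz constants of the ``good'' components $\FF_{\hyp,v_1},\FF_{\hyp,u_2}$ directly, then exploit the composition defining $\wt\FF_{\hyp,u_1},\wt\FF_{\hyp,v_2}$ (your chain-rule step) to trade the large $\OO(\ln(1/\de))$ partial derivatives in the $v_1,u_2$ slots for the already-small Lipschitz constants of $\FF_{\hyp,v_1},\FF_{\hyp,u_2}$. The paper organizes the computation as direct difference bounds rather than via the Fr\'echet derivative and mean value theorem, but the bookkeeping of powers of $\wh C^{(j)}\de$ and $\ln(1/\de)$ across the weighted norms is identical, and your explicit flagging of the vanishing of the $u_1$-weight is in the same spirit as the paper's treatment.
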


These two propositions show that $\wt\FF_\hyp$ is contractive from
$B(2\kk_0)\subset\YY_\hyp$ to itself. Moreover, using them we can deduce accurate
estimates for the image point. We prove here Proposition
\ref{lemma:Hyp:FirstIteration}. The proof of Proposition \ref{lemma:Hyp:Contractive}
is deferred to the end of the section.

\begin{proof}[Proof of Proposition \ref{lemma:Hyp:FirstIteration}]
 We bound each mode separately. For $\wt \FF_{\hyp,v_1}$ and $\wt \FF_{\hyp,u_2}$, we have that
\[
\wt \FF_{\hyp,v_1}(0)=y_1^0\,\,\,\text{ and }\,\,\,\wt \FF_{\hyp,u_2}(0)=x_2^0
\]
and therefore, they satisfy the desired bounds. Now we bound the first iteration
for  $u_1$. Here we use the particular choice of $x_2^0$ in terms of
$(x_1^0,y_1^0)$ done in \eqref{def:ChoiceEtaHat} to obtain the desired
cancellations (see Remark \ref{remark:cancellation}). Indeed, taking into account the properties of $R_{\hyp,x_1}$ given in Lemma \ref{lemma:ToyModel:NormalForm}, the first iteration is just
\[
\begin{split}
\wt  \FF_{\hyp,u_1}(0)(t)=&x_1^0+\int_0^t\left(2\nu_{02}
y_1^0(x_2^0)^2+\OO((y_1^0)^2(x_2^0)^3\right)dt\\
&x_1^0+2\nu_{02}y_1^0 (x_2^0)^2t+\OO\left(y_1^0)^2(x_2^0)^3\right).
\end{split}
\]
Therefore, taking into account that $z^0\in\wh \NNN_j$ (see \eqref{def:Hyp:ModifiedDomain}) and also \eqref{def:BoundSaddleTime}, we have that
\[
\wt  \FF_{\hyp,u_1}(0)(t)=-\wh C^{(j)}\de\ln(1/\de)+2\nu_{02}f_1(\sigma) (x_2^\ast)^2t+\OO\left(\wh C^{(j)}\de\right).
\]
Thus, applying the norm given in
\eqref{def:Hyp:Norms}, we have that there exists a constant
$\kk_0>0$ such that
\[
 \left\|\wt \FF_{\hyp,u_1}(0)\right\|_{\hyp,u_1}\leq \kk_0.
\]
To bound the first iteration for $v_2$, we just have to take into account that
it is given by
\[
 \wt
\FF_{\hyp,v_2}(0)(t)=y_2^0-\int_0^t\left(2\nu_{02}
x_2^0(y_1^0)^2+\OO\left(\left(y_1^0\right)^3\left(x_2^0\right)^2\right)\right)dt.
\]
Then, recalling that $z^0\in\wh \NNN_j$,
\[
 \left|\wt \FF_{\hyp,v_2}(0)(t)\right|\leq 4\nu_{02}x_2^0(y_1^0)^2 T_j,
\]
which gives
\[
 \left\|\wt \FF_{\hyp,v_2}(0)\right\|_{\hyp,v_2}\leq 4\nu_{02}.
\]
Therefore, we can conclude that
\[
 \left\|\wt\FF(0)\right\|_\ast\leq \kk_0
\]
for certain constant $\kk_0>0$ independent of $\de$, $\sigma$ and $j$.
\end{proof}

The previous two Propositions show that $\wt\FF_\hyp$ is contractive
from $B(2\kk_0)\subset\YY_\hyp$ to itself. Therefore, it has a unique
fixed point in $B(2\kk_0)\subset\YY_\hyp$ which we denote by $w^*$.
Now it only remains to deduce the bounds for $z^f$ stated in Lemma
\ref{lemma:HypToyModel:SaddleMap}. To this end, we use the contractivity
of the operator $\wt\FF_\hyp$ and we undo the change
\eqref{def:HypVariationConstants}. Using the definition of $T_j$
in \eqref{def:FixedTimeSaddle}, we obtain
\[
\begin{split}
x_2^f=&e^{\sqrt{3}T_j}v_2(T_j)\\
=&\frac{f_2(\sigma)}{x_2^0}\left(x_2^0+\wt\FF_{\hyp,v_2}(w^*)(T_j)-\wt\FF_{\hyp,
v_2}(0)(T_j)\right)\\
=&f_2(\sigma)\left(1+\OO\left(\left(\sigma \wh
C^{(j)}\de\right)^{1/2}\ln^2(1/\de)\right)\right)
\end{split}
\]
Analgously, one can see that
\[
|y_1^f|\leq K_\sigma\left(\wh C^{(j)}\de\right)^{1/2}.
\]
To obtain the estimates for $x_1^f$, note that the particular choice that we
have done for $x_2^\ast$ in \eqref{def:ChoiceEtaHat} implies that
\[
\begin{split}
 |u_1(T_j)|\leq
&\left|\wt\FF_{\hyp,u_1}(0)(T_j)\right|+\left|\wt\FF_{\hyp,u_1}
(w^*)(T_j)-\wt\FF_{\hyp,u_1}(0)(T_j)\right|\\
\leq&K_\sigma\wh C^{(j)}\de \left(1+\OO_\sigma\left(\left(\wh
C^{(j)}\de\right)^{1/2}\ln^2(1/\de)\right)\right).
\end{split}
\]
Then, undoing the change of coordinates \eqref{def:HypVariationConstants} and using the definition of  $T_j$ in \eqref{def:FixedTimeSaddle}, one obtains
\[
|x_1^f|\leq K_\sigma\left(\wh C^{(j)}\de\right)^{1/2}.
\]
Finally, proceeding analogously, and taking into account
\eqref{def:ChoiceEtaHat} again, one can see that
\[
 y_2^f=-\frac{f_1(\sigma)}{f_2(\sigma)}\wh
C^{(j)}\de\ln(1/\de)\left(1+\OO_\sigma\left(\frac{1}{\ln(1/\de)}\right)\right)
\]
which completes the proof of Proposition \ref{lemma:HypToyModel:SaddleMap}.
\end{proof}

Now, it only remains to
prove Proposition \ref{lemma:Hyp:Contractive}.
\begin{proof}[Proof of Proposition \ref{lemma:Hyp:Contractive}]
To compute the Lipschitz constant we need first upper bounds for $w\in
B(2\kk_0)\subset\YY_\hyp$ in the classical supremmum norm $\|\cdot\|_\infty$. They
can be deduced from the definition of the norms $\|\cdot\|_{\hyp, \ast}$ in
\eqref{def:Hyp:Norms} and the fact that $z^0\in\wt\NNN^{(j)}$ (see
\eqref{def:Hyp:ModifiedDomain}). Then, we have that
\begin{equation}\label{eq:Lip:SupBounds}
 \begin{split}
 |u_1|&\leq K_\sigma\wh C^{(j)}\de\ln(1/\de)\\
 |v_1|&\leq K\sigma\\
|u_2|&\leq K_\sigma \left(\wh C^{(j)}\de\right)^{1/2}\\
|v_2|&\leq K_\sigma\left(\wh C^{(j)}\de\right)^{1/2}\ln(1/\de).
\end{split}
\end{equation}
where $K>0$ is a constant independent of $\sigma$.

We use these bounds to obtain the Lipschitz constant. We start by computing the
Lipschitz constant of $\wt \FF_{\hyp,v_1}=\FF_{\hyp,v_1}$ and $\wt
\FF_{\hyp,u_2}=\FF_{\hyp,u_2}$ and then we will compute the other two.

Using the properties of $R_{\hyp, y_1}$ given in Lemma \ref{lemma:ToyModel:NormalForm}, \eqref{def:BoundSaddleTime} and the just obtained bounds, one can easily see that
\[
\begin{split}
\left|\FF_{\hyp,v_1}(u,v)-\FF_{\hyp,v_1}(u',v')\right|\leq
&\int_0^{T_j}\OO\left(uv\right)\sum_{i=1,2}|v_i-v_i'|dt+
\int_0^{T_j}\OO\left(v^2\right)\sum_{i=1,2}|u_i-u_i'|dt\\
\leq &K_\sigma\left(\wh
C^{(j)}\de\right)^{1/2}\ln(1/\de)\sum_{i=1,2}\|v_i-v_i'\|_\infty \\
&+ K_\sigma\ln(1/\de)\sum_{i=1,2}\|u_i-u_i'\|_\infty\\
\leq &K_\sigma\left(\wh
C^{(j)}\de\right)^{1/2}\ln(1/\de)\sum_{i=1,2}\|v_i-v_i'\|_{\hyp,v_i} \\
&+ K_\sigma\left(\wh
C^{(j)}\de\right)^{1/2}\ln(1/\de)\sum_{i=1,2}\|u_i-u_i'\|_{\hyp,u_i}.
\end{split}
\]
Note that we are abusing notation since inside the $\OO(\cdot)$ the dependence
of the size on $(u,v)$ means both dependence on $(u,v)$ and $(u',v')$. We do not
write the full dependence since both terms have the same size. Applying
the norms defined in \eqref{def:Hyp:Norms}, we get
\[
 \left\|\FF_{\hyp,v_1}(u,v)-\FF_{\hyp,v_1}(u',v')\right\|_{\hyp, v_1}\leq K_\sigma
\left(\wh C^{(j)}\de\right)^{1/2}\ln(1/\de)\|(u,v)-(u',v')\|_\ast.
\]
Now we bound the Lipschitz constant of $\FF_{\hyp,u_2}$. Proceeding as in the
previous case one obtains
\[
\begin{split}
\left|\FF_{\hyp,u_2}(u,v)-\FF_{\hyp,u_2}(u',v')\right|\leq
&\int_0^{T_j}\OO\left(uv\right)\sum_{i=1,2}|u_i-u_i'|dt+
\int_0^{T_j}\OO\left(u^2\right)\sum_{i=1,2}|v_i-v_i'|dt\\
\leq &K_\sigma\left(\wh
C^{(j)}\de\right)^{1/2}\ln(1/\de)\sum_{i=1,2}\|u_i-u_i'\|_\infty\\
&+ K_\sigma\wh C^{(j)}\de\ln(1/\de)\sum_{i=1,2}\|v_i-v_i'\|_\infty.\\
\leq &K_\sigma\wh C^{(j)}\de\ln(1/\de)\sum_{i=1,2}\|u_i-u_i'\|_{\hyp,u_i}\\
&+ K_\sigma\wh C^{(j)}\de\ln(1/\de)\sum_{i=1,2}\|v_i-v_i'\|_{\hyp,v_i}
\end{split}
\]
and thus
\[
 \left\|\FF_{\hyp,u_2}(u,v)-\FF_{\hyp,u_2}(u',v')\right\|_{\hyp, u_2}\leq
K_\sigma\left(\wh C^{(j)}\de\right)^{1/2}\ln(1/\de) \|(u,v)-(u',v')\|_\ast.
\]
To bound the Lipschitz constant of $\wt \FF_{\hyp, u_1}$ we use its definition
in \eqref{def:Hyp:NewOperator}. First we study $\FF_{\hyp, u_1}(w)- \FF_{\hyp,
u_1}(w')$. We proceed as for $\FF_{\hyp, u_2}$ but we have to be more accurate.
We obtain
\[
\begin{split}
\left|\FF_{\hyp,u_1}(u,v)-\FF_{\hyp,u_1}(u',v')\right|\leq
&\int_0^{T_j}\OO\left(uv\right)\sum_{i=1,2}|u_i-u_i'|dt+
\int_0^{T_j}\OO\left(u^2\right)\sum_{i=1,2}|v_i-v_i'|dt\\
\leq &K_\sigma\left(\wh
C^{(j)}\de\right)^{1/2}\ln(1/\de)\sum_{i=1,2}\|u_i-u_i'\|_\infty\\
&+ K_\sigma\wh C^{(j)}\de\ln(1/\de)\sum_{i=1,2}\|v_i-v_i'\|_\infty\\
\leq &K_\sigma\left( \wh C^{(j)}\de\right)^{1/2}\wh
C^{(j)}\de\ln^2(1/\de)\|u_1-u_1\|_{\hyp,u_1}\\
&+K_\sigma\wh C^{(j)}\de\ln(1/\de)\|u_2-u_2'\|_{\hyp,u_2}\\
&+ K_\sigma\wh C^{(j)}\de\ln(1/\de)\|v_1-v_1'\|_{\hyp,v_1}\\
&+ K_\sigma\left(\wh C^{(j)}\de\right)^{1/2}\wh
C^{(j)}\de\ln^2(1/\de)\|v_2-v_2'\|_{\hyp,v_2}.
\end{split}
\]
Thus, taking into account that for $\de$ small enough,
\[
 \sup_{t\in[0, T_j(x_2^0)]}\left|\frac{1}{-\wh
C^{(j)}\de\ln(1/\de)+2\nu_{02}f_1(\sigma)\left(x_2^\ast\right)^2 t+\wh
C^{(j)}\de}\right|\leq\frac{2}{\wh C^{(j)}\de},
\]
one can deduce that
\[
\begin{split}
 \left\|\FF_{\hyp,u_1}(u,v)-\FF_{\hyp,u_1}(u',v')\right\|_{\hyp,
u_1}\leq&K_\sigma\left(\wh
C^{(j)}\de\right)^{1/2}\ln^2(1/\de)\|u_1-u_1\|_{\hyp,u_1}
\\&+K_\sigma\ln(1/\de)\|u_2-u_2'\|_{\hyp,u_2}\\
&+ K_\sigma\ln(1/\de)\|v_1-v_1'\|_{\hyp,v_1}\\&+ K_\sigma\left(\wh
C^{(j)}\de\right)^{1/2}\ln^2(1/\de)\|v_2-v_2'\|_{\hyp,v_2}.
\end{split}
\]
Therefore, to obtain the Lipschitz constant for $\wt \FF_{\hyp,u_1}$, it only
remains to use its definition in \eqref{def:Hyp:NewOperator} and the Lipschitz
constants already obtained for $\FF_{\hyp,v_1}$ and $\FF_{\hyp,u_2}$ to obtain
\[
 \left\|\wt\FF_{\hyp,u_1}(u,v)-\wt\FF_{\hyp,u_1}(u',v')\right\|_{\hyp, u_1}\leq
K_\sigma\left(\wh C^{(j)}\de\right)^{1/2}\ln^2(1/\de) \|(u,v)-(u',v')\|_\ast.
\]
Proceeding analogously, one can see also that
\[
 \left\|\wt\FF_{\hyp,v_2}(u,v)-\wt\FF_{\hyp,v_2}(u',v')\right\|_{\hyp, v_2}\leq
K_\sigma\left(\wh C^{(j)}\de\right)^{1/2}\ln(1/\de) \|(u,v)-(u',v')\|_\ast.
\]
This
completes the proof.
\end{proof}

\section{The local map: proof of Lemma \ref{lemma:iterative:saddle}}\label{sec:FullSystem}

Analysis of Section \ref{sec:HypToyModel} describes
dynamics of the hyperbolic toy model \eqref{def:VF:HypToyModel}.
Now we add
the elliptic modes and
consider the whole  vector field \eqref{def:VF:Full:AfterDiagonal}.
Our goal is to study the map $\BB_\loc^j$. The key point of this study
is that  the elliptic modes  remain almost constant through
the saddle map and 
do not make much influence on the hyperbolic ones.
In other words, there is {\it an almost product structure.}
This allows us to extend the results obtained for the hyperbolic toy
model  \eqref{def:VF:HypToyModel} in Section \ref{sec:HypToyModel}
to the general system.

As a first step we perform the change obtained  in Lemma
\ref{lemma:ToyModel:NormalForm} by means of a normal form procedure
for the hyperbolic toy model \eqref{def:VF:HypToyModel}. The proof
of this lemma is straightforward taking into account the form of
the vector field \eqref{def:VF:Full:AfterDiagonal} and the properties
of $\Psi_\hyp$ given in Lemma \ref{lemma:ToyModel:NormalForm}.

\begin{lemma}\label{lemma:FullModel:NormalForm}
Let $\Psi_\hyp$ be the map defined in Lemma
\ref{lemma:ToyModel:NormalForm}. Then an application of
the change of coordinates
\begin{equation}\label{def:Full:NFChange}
 (p_1,q_1,p_2,q_2,c)=\left(\Psi_\hyp(x_1,y_1,x_2,y_2),c\right),
\end{equation}
to the vector field \eqref{def:VF:Full:AfterDiagonal}
leads to a vector field of the form
\[
\begin{split}
\dot z&= Dz+R_{\hyp}(z)+R_{\mix,z}(z,c)\\
\dot c_k&= ic_k+\ZZZ_{\el,c_k}(c)+R_{\mix,c}(z,c),
\end{split}
\]
where $z$ denotes $z=(x_1,y_1,x_2,y_2)$,
$D=\mathrm{diag}(\sqrt{3},-\sqrt{3},\sqrt{3},-\sqrt{3})$,  $R_{\hyp}$ has been
given in Lemma \ref{lemma:ToyModel:NormalForm}, $\ZZZ_{\el,c_k}$ is defined in
\eqref{def:VF:Diag:cm:El},
and $R_{\mix,z}$ and $R_{\mix,c_k}$ are defined as
\begin{align}
R_{\mix,x_1}&=A_{x_1}(z)\ol{c_{j-2}}^2+\ol{A_{x_1}(z)}{c_{j-2}}^2+\frac{\sqrt{3}
}{2}\sum_{k\in\PP}|c_k|^2\Psi_{x_1}(z)\notag\\
R_{\mix,y_1}&=A_{y_1}(z)\ol{c_{j-2}}^2+\ol{A_{y_1}(z)}{c_{j-2}}^2+\frac{\sqrt{3}
}{2}\sum_{k\in\PP}|c_k|^2\Psi_{y_1}(z)\notag\\
R_{\mix,x_2}&=A_{x_2}(z)\ol{c_{j+2}}^2+\ol{A_{x_2}(z)}{c_{j+2}}^2+\frac{\sqrt{3}
}{2}\sum_{k\in\PP}|c_k|^2\Psi_{x_2}(z)\notag\\
R_{\mix,y_2}&=A_{y_2}(z)\ol{c_{j+2}}^2+\ol{A_{y_2}(z)}{c_{j+2}}^2+\frac{\sqrt{3}
}{2}\sum_{k\in\PP}|c_k|^2\Psi_{y_2}(z)\notag\\
 R_{\mix,c_k}&=i\sqrt{3}c_k P(z)\,\,\,\text{ for }m\neq j\pm
2\notag\\
R_{\mix,c_{j\pm 2}}&=i\sqrt{3}c_{j\pm 2} P(z)-i \ol c_{j\pm
2}Q_\pm(z)\notag
\end{align}
where $\Psi_{\hyp,z}$ are the functions defined in  Lemma
\ref{lemma:ToyModel:NormalForm},  $A_{z}$ satisfy
\[
 A_{x_i}=\OO(x_i, y_i)\,\,\,\text{ and }\,\,\,A_{y_i}=\OO(x_i, y_i)
\]
and $P$ and $Q_\pm$ satisfy
\[
P(z)=\OO\left(x_1y_1,x_2y_2,z_1^2z_2^2\right),\,\,\,Q_-(z)=\OO\left(x_1,
y_1\right)  \,\,\,\text{ and }\,\,\,Q_+(z)=\OO\left(x_2,y_2\right).
\]
\end{lemma}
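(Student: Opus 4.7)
The plan is to view Lemma \ref{lemma:FullModel:NormalForm} as a direct substitution argument: since the change $\Psi_\hyp$ built in Lemma \ref{lemma:ToyModel:NormalForm} affects only the hyperbolic variables (it is the identity on $c$), applying it to the full system \eqref{def:VF:Full:AfterDiagonal} amounts to (i) invoking the already-proved normal form statement on the ``pure hyperbolic'' part and (ii) tracking how the coupling terms $\ZZZ_{\mix,*}$ from \eqref{def:VF:Diag:q1:Mix}--\eqref{def:VF:Diag:cm:Mix.Adjacent} transform when $(p,q)$ is replaced by $\Psi_\hyp(x,y)$.

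First, I split the full vector field as $\dot w = \XX_{\mathrm{dec}}(p,q,c) + \XX_{\mix}(p,q,c)$, where $\XX_{\mathrm{dec}}$ is the decoupled part (the hyperbolic toy model \eqref{def:VF:HypToyModel} on $(p,q)$ together with the linear and elliptic self-interaction $\ZZZ_{\el,c_k}$ on the $c$-modes), and $\XX_{\mix}$ gathers all terms linear in a power of $c$ coming from $\wt H^{(j)}_{\mix}$. Because $\Psi_\hyp$ leaves $c$ fixed, the change of variables commutes with this splitting, and by Lemma \ref{lemma:ToyModel:NormalForm} the push-forward of $\XX_{\mathrm{dec}}$ is exactly $(Dz+R_\hyp(z),\, ic_k+\ZZZ_{\el,c_k})$ as claimed. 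No new work is required for this half.

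The remaining task is to push forward $\XX_{\mix}$. For the $c$-equations no Jacobian factor is needed: one just substitutes $(p,q)=\Psi_\hyp(x,y)=(x,y)+\wt\Psi_\hyp(x,y)$ into \eqref{def:VF:Diag:cm:Mix}--\eqref{def:VF:Diag:cm:Mix.Adjacent}. The factor $q_1p_1+q_2p_2$ becomes a smooth function $P(z)$, and using the bounds on $\wt\Psi_\hyp$ one sees $P(z)=x_1y_1+x_2y_2+\OO(z_1^2z_2^2)$, which is exactly the asserted order. For the adjacent modes $c_{j\pm2}$ there is the extra contribution $-2i(\omega^2p_i+\omega q_i)^2\overline{c_{j\pm2}}$; substituting $\Psi_\hyp$ gives a smooth function $Q_\pm(z)$ which, since $(\omega^2 p_i+\omega q_i)^2$ vanishes at the origin and depends only on $(p_i,q_i)$, satisfies $Q_-(z)=\OO(x_1,y_1)$ and $Q_+(z)=\OO(x_2,y_2)$. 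For the hyperbolic components, one uses $\dot z = (D\Psi_\hyp)^{-1}(x,y)\cdot \XX_{\mix}(\Psi_\hyp(x,y),c)$; since $(D\Psi_\hyp)^{-1}=\mathrm{Id}+\OO(|z|^2)$, the leading structure of $\XX_{\mix}$ is preserved. The quadratic-in-$(p,q)$ factors $(\omega^2 p_i+\omega q_i)^2$ multiplying $\overline{c_{j\pm2}}^2$ become functions $A_{x_i}(z),A_{y_i}(z)$ that, by the same vanishing argument, satisfy $A_{x_i},A_{y_i}=\OO(x_i,y_i)$, while the factor $\tfrac{\sqrt{3}}{2}\sum|c_k|^2$ multiplying $q_1,p_1,q_2,p_2$ becomes $\tfrac{\sqrt 3}{2}\sum|c_k|^2\Psi_{\hyp,*}(z)$ with $\Psi_{\hyp,*}$ the components of $\Psi_\hyp$ from Lemma \ref{lemma:ToyModel:NormalForm}.

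No serious obstacle is expected: the only bookkeeping issue is to verify that the higher-order corrections generated by $\wt\Psi_\hyp$ and by $(D\Psi_\hyp)^{-1}-\mathrm{Id}$ remain consistent with the declared orders of $A_{x_i},A_{y_i},P$ and $Q_\pm$. This follows immediately from the explicit estimates on $\wt\Psi_\hyp$ supplied in Lemma \ref{lemma:ToyModel:NormalForm} (each component of $\wt\Psi_\hyp$ is cubic in $z$), so the transformation introduces only corrections of strictly higher order than the leading terms and cannot spoil the claimed structure.
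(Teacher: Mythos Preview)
Your proposal is correct and follows essentially the same approach as the paper: the paper's own argument is a single sentence stating that the lemma is ``straightforward taking into account the form of the vector field \eqref{def:VF:Full:AfterDiagonal} and the properties of $\Psi_\hyp$ given in Lemma \ref{lemma:ToyModel:NormalForm}''. You have simply spelled out this substitution in more detail---splitting the field into its decoupled and mixed parts, invoking Lemma \ref{lemma:ToyModel:NormalForm} on the former, and tracking the Jacobian factor $(D\Psi_\hyp)^{-1}=\mathrm{Id}+\OO(|z|^2)$ on the latter---which is exactly what the paper leaves implicit.
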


One can easily see that for this system there is a rather strong interaction between the hyperbolic and the elliptic modes due to the terms $
R_{\mix,x_i}$ and $ R_{\mix,y_i}$. The importance of these terms can be seen as follows. The manifold $\{x=0,y=0\}$ is normally hyperbolic \cite{Fenichel74, Fenichel77, HirschPS77}  for the linear truncation of the vector field obtained in Lemma \ref{lemma:FullModel:NormalForm} and its stable and unstable manifolds  are defined as $\{x=0\}$ and $\{y=0\}$. For the full vector field, the manifold $\{x=0,y=0\}$ is persistent. Moreover it is still normally hyperbolic thanks to \cite{Fenichel74, Fenichel77, HirschPS77}. Nevertheless, the associated invariant manifolds deviate from $\{x=0\}$ and $\{y=0\}$ due to the terms $R_{\mix,x_i}$ and $ R_{\mix,y_i}$. To overcome this problem, we slightly modify  the change \eqref{def:Full:NFChange} to straighten these invariant manifolds completely.

\begin{lemma}\label{lemma:StraightenInvManifolds}
There exist a change of coordinates of the form
\begin{equation}\label{def:Change:FullSystem:Hyp}
(p_1,q_1,p_2,q_2,c)=\left(\Psi(x_1,y_1,x_2,y_2,c),c\right)=\left(x_1,y_1,x_2,y_2
,c\right)+\left(\wt\Psi (x_1,y_1,x_2,y_2,c),0\right)
\end{equation}
which transforms the vector field \eqref{def:VF:Full:AfterDiagonal} into a
vector field of the form
\begin{equation}\label{def:VF:Straight}
\begin{split}
\dot z&= Dz+R_{\hyp}(z)+\wt R_{\mix,z}(z,c)\\
\dot c_k&= ic_k+\ZZZ_{\el,c_k}(c)+ \wt R_{\mix,c_k}(z,c),
\end{split}
\end{equation}
where  $R_\hyp$ and $\ZZZ_\el$ are the functions defined in
\eqref{def:HypHam:Hyp} and \eqref{def:VF:Diag:cm:El} respectively, and
\begin{align}
\wt
R_{\mix,x_1}&=B_{x_1}(z,c)\ol{c_{j-2}}^2+\ol{B_{x_1}(z,c)}{c_{j-2}}^2+\frac{
\sqrt{3}}{2}\sum_{k\in\PP}|c_k|^2C_{x_1}(z,c)\notag
\\
\wt
R_{\mix,y_1}&=B_{y_1}(z,c)\ol{c_{j-2}}^2+\ol{B_{y_1}(z,c)}{c_{j-2}}^2+\frac{
\sqrt{3}}{2}\sum_{k\in\PP}|c_k|^2C_{y_1}(z,c)\notag
\\
\wt
R_{\mix,x_2}&=B_{x_2}(z,c)\ol{c_{j+2}}^2+\ol{B_{x_2}(z,c)}{c_{j+2}}^2+\frac{
\sqrt{3}}{2}\sum_{k\in\PP}|c_k|^2C_{x_2}(z,c)\notag
\\
\wt
R_{\mix,y_2}&=B_{y_2}(z,c)\ol{c_{j+2}}^2+\ol{B_{y_2}(z,c)}{c_{j+2}}^2+\frac{
\sqrt{3}}{2}\sum_{k\in\PP}|c_k|^2C_{y_2}(z,c)\notag
\\
\wt R_{\mix,c_k}&=i\sqrt{3}c_k \wt P(z,c)\,\,\,\text{ for }k\neq j\pm
2\notag\\
\wt R_{\mix,c_{j\pm 2}}&=i\sqrt{3}c_{j\pm 2} \wt P(z,c)-i \ol c_{j\pm 2}\wt
Q_\pm(z,c),\notag
\end{align}
where the functions $B_z$ and $C_z$ satisfy
\begin{align*}
B_{x_1}(z,c)=\OO\left(x_1+y_1x_2z_2\right)&&B_{x_2}(z,
c)=\OO\left(x_2+y_2x_1z_1\right)\\
B_{y_1}(z,c)=\OO\left(y_1+x_1y_2z_2\right)&&B_{y_2}(z,
c)=\OO\left(y_2+x_2y_1z_1\right)\\
C_{x_1}(z,c)=\OO\left(x_1+y_1x_2z_2\right)&&C_{x_2}(z,
c)=\OO\left(x_2+y_2x_1z_1\right)\\
C_{y_1}(z,c)=\OO\left(y_1+x_1y_2z_2\right)&&C_{y_2}(z,
c)=\OO\left(y_2+x_2y_1z_1\right)
\end{align*}
and $\wt P$ and $\wt Q_\pm$ satisfy
\[
\wt P(z,c)=\OO\left(x_1y_1,x_2y_2,z_1^2z_2^2\right),\,\,\,\wt
Q_-(z,c)=\OO\left(x_1,y_1\right)  \,\,\,\text{ and }\,\,\,\wt
Q_+(z)=\OO\left(x_2,y_2\right).
\]
Moreover, the function $\wt\Psi$ satisfies
\[
\begin{split}
\wt\Psi_{x_1}&=\OO\left(x_1^3,x_1y_1,x_1(x_2^2+y_2^2),
y_1y_2(x_2+y_2),c_{j-2}^2y_1,\sum_{k\in\PP}|c_k|^2y_1y_2^2\right)\\
\wt\Psi_{y_1}&=\OO\left(y_1^3,x_1y_1,y_1(x_2^2+y_2^2),
x_1x_2(x_2+y_2),c_{j-2}^2x_1,\sum_{k\in\PP}|c_k|^2x_1x_2^2\right)\\
\wt\Psi_{x_2}&=\OO\left(x_2^3,x_2y_2,x_2(x_1^2+y_1^2),
y_1y_2(x_1+y_1),c_{j+2}^2y_1,\sum_{k\in\PP}|c_k|^2y_2y_1^2\right)\\
\wt\Psi_{y_2}&=\OO\left(y_2^3,x_2y_2,y_2(x_1^2+y_1^2),
x_1x_2(x_1+y_1),c_{j+2}^2x_1,\sum_{k\in\PP}|c_k|^2x_2x_1^2\right).
\end{split}
\]
\end{lemma}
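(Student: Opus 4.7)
The plan is to construct $\Psi$ as a composition of two changes: first $\Psi_\hyp$ from Lemma \ref{lemma:ToyModel:NormalForm}, which puts the hyperbolic part of the vector field into normal form and gives the system written in Lemma \ref{lemma:FullModel:NormalForm}; then a near-identity straightening that forces the center-stable and center-unstable manifolds of the invariant plane $\{x=y=0\}$ to coincide exactly with $\{x=0\}$ and $\{y=0\}$.

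I would start by observing that after the change \eqref{def:Full:NFChange} the plane $\{x=y=0\}$ is invariant: from $A_z(0,0)=0$ and $\Psi_{\hyp,z}(0,0)=0$ we have $R_{\mix,z}(0,0,c)\equiv 0$, and the restricted dynamics is the pure elliptic system $\dot c_k = ic_k + \ZZZ_{\el,c_k}(c)$. Because the normal eigenvalues $\pm\sqrt 3$ dominate the purely imaginary tangential ones by an unbounded spectral gap, the standard theory \cite{Fenichel74,Fenichel77,HirschPS77} provides local $C^k$ center-stable and center-unstable manifolds $W^{cs}$ and $W^{cu}$ for arbitrarily large $k$. Since $R_{\mix,z}$ depends on $c$ only through $c_{j\pm 2}^2$, $\ol{c_{j\pm 2}}^2$ and $\sum_k|c_k|^2$ (all $O(|c|^2)$), these manifolds agree with $\{x=0\}$ and $\{y=0\}$ at $c=0$; hence they can be written as graphs $x = h(y,c)$ and $y = g(x,c)$ with $h,g=O(|c|^2)$.

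Next I would determine the leading behavior of $h$ (the argument for $g$ is symmetric) by substituting $x=h(y,c)$ into the invariance identity $\dot x = h_y\,\dot y + h_c\,\dot c$. At leading order in $h$ and $c$ this reduces to
\[
\sqrt 3\, h + \sqrt 3\, h_y\, y - i\, h_c\, c \;=\; R_{\mix,x}(0,y,c) + \text{higher order},
\]
and, using $A_{x_1}(0,y) = O(y_1)$ together with $\Psi_{\hyp,x_1}(0,y) = O(y_1 y_2^2)$ from Lemma \ref{lemma:ToyModel:NormalForm}, one reads off
\[
h_{x_1}(y,c) = O\!\left(c_{j-2}^2\, y_1\right) + O\!\Bigl(\sum_{k\in\PP}|c_k|^2\, y_1\, y_2^2\Bigr),
\]
which are precisely the new contributions to $\wt\Psi_{x_1}$ in the statement; the remaining three components are obtained identically. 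Setting $\Psi = \Psi_\hyp\circ S$, where $S$ is the straightener $(x,y,c)\mapsto(x-h(y,c),\,y-g(x,c),\,c)$, the first-order corrections add, giving the claimed form of $\wt\Psi$.

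By construction the new vector field has $\{x=0\}$ and $\{y=0\}$ as exactly invariant manifolds, so $\wt R_{\mix,x_i}$ must vanish on $\{x=0\}$ and $\wt R_{\mix,y_i}$ on $\{y=0\}$. This is exactly the statement that each $B_{x_i}$ and $C_{x_i}$ lies in the ideal generated by $x_i$ together with a cross term vanishing on $\{x_1=x_2=0\}$ (namely $y_1 x_2 z_2$ for $i=1$ and $y_2 x_1 z_1$ for $i=2$), and symmetrically for the $y$-components; the equation for $\dot c$ is modified only by substituting the new variables into $R_{\mix,c}$, which preserves the orders of $\wt P$ and $\wt Q_\pm$. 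The main technical hurdle is the bookkeeping in this last verification: substituting $h$ and $g$ into $R_\hyp$ and $R_\mix$ generates many cross terms, each of which must be checked to fall into the prescribed classes. The organizing principle is that the straightening has been designed to annihilate precisely the terms in $\wt R_{\mix,x}$ (resp.\ $\wt R_{\mix,y}$) that would survive on $\{x=0\}$ (resp.\ $\{y=0\}$); every remaining monomial automatically carries either a factor of $x_i$ (resp.\ $y_i$) or one of the allowed cross terms.
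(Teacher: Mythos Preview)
Your proposal is correct and follows essentially the same approach as the paper: compose the normal form change $\Psi_\hyp$ from Lemma~\ref{lemma:ToyModel:NormalForm} (via Lemma~\ref{lemma:FullModel:NormalForm}) with the straightening of the center-stable and center-unstable manifolds supplied by the normally hyperbolic invariant manifold theory of \cite{Fenichel74,Fenichel77,HirschPS77}, then read off the estimates. The paper's proof is in fact far terser than yours --- it simply cites the two lemmas and the NHIM references --- so your additional detail on the graph expansions $h,g$ and the invariance bookkeeping is a genuine fleshing-out rather than a different route.
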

\begin{proof}
It is enough to compose two change of coordinates. The first change is the
change \eqref{def:Change:FullSystem:Hyp} considered in Lemma
\ref{lemma:FullModel:NormalForm}. The second one is the one which straightens
the invariant manifolds of a normally hyperbolic invariant manifold
\cite{Fenichel74, Fenichel77, HirschPS77}. Then, to obtain
the required estimates,
it suffices to combine Lemmas \ref{lemma:ToyModel:NormalForm}
and \ref{lemma:FullModel:NormalForm}
with
the
standard results about normally hyperbolic invariant  manifolds.
\end{proof}

After performing this change of coordinates,  the stable and unstable invariant
manifolds of $\{x=0,y=0\}$ are straightened. This will facilitate the study of the
transition map close to the saddle.

As we have done  in Section
\ref{sec:HypToyModel}, we define a set $\wh \VV_j$ such that
\begin{equation}\label{Cond:SaddleMap:Inclusion}
 \Upsilon\left(\VV_j\right)\subset \wh \VV_j,
\end{equation}
where  $\VV_j$ is the set defined in Lemma \ref{lemma:iterative:saddle} and
$\Upsilon$ is the inverse of the 
coordinate change $\Psi$ given in Lemma
\ref{lemma:StraightenInvManifolds}. Then, we will apply the flow $\wh \Phi^t$
associated to the vector field \eqref{def:VF:Straight} to points in $\wh\VV_j$.
To obtain the inclusion \eqref{Cond:SaddleMap:Inclusion} we define the function
$g_{\II_j}(p_2,q_2,\sigma,\de)$ involved in the definition of $\VV_j$.


Define the set
\begin{equation*}
 \wh\VV_j=\DD_1^{1}\times\ldots\times \DD_j^{j-2}\times\wh\NNN_j\times
\DD_j^{j+2}\times\ldots\times \DD_j^{N},
\end{equation*}
where  $\wh\NNN_j$ is the set defined in \eqref{def:Hyp:ModifiedDomain} and
$\DD_j^{k}$ are defined as
\[
 \begin{split}
  \DD_j^{k}&=\left\{\left|c_{k}\right|\leq
M_{\el,\pm}\de^{(1-r)/2}\right\}\,\,\,\text{ for }k\in\PP_j^\pm\\
 \DD_j^{j\pm 2}&=\left\{\left|c_{j\pm 2}\right|\leq M_{\adj,\pm}\left(\wh
C^{(j)}\de\right)^{1/2}\right\}.
 \end{split}
\]
Define the function
$g_{\II_j}(p_2,q_2,\sigma,\de)$ involved in the definition of
the set $\VV_j$ as
\begin{equation}\label{def:Function_g}
 g_{\II_j}(p_2,q_2,\sigma,\delta)=p_2+a_p(\sigma) p_2+a_q(\sigma)q_2-x^\ast_2
\end{equation}
where $x_2^\ast$ is the constant defined in \eqref{def:ChoiceEtaHat} and
\begin{equation*}
\begin{split}
a_p(\sigma)&=\pa_{p_2}\wt \Upsilon_{p_2}(0,\sigma,0,0,0)\\
a_q(\sigma)&= \pa_{q_2}\wt \Upsilon_{p_2}(0,\sigma,0,0,0),
\end{split}
\end{equation*}
where  $\Upsilon=\mathrm{Id}+\wt\Upsilon$ is the inverse of
the change $\Psi$ given in Lemma \ref{lemma:StraightenInvManifolds}.
\begin{lemma}\label{lemma:FullModel:TiltedSection} With the above
notations
for $\de$ small enough condition \eqref{Cond:SaddleMap:Inclusion} is satisfied.
\end{lemma}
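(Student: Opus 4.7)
The plan is to pick an arbitrary $(p,q,c)\in\VV_j$, apply the coordinate change $\Upsilon=\mathrm{Id}+\wt\Upsilon$ inverse to the $\Psi$ of Lemma \ref{lemma:StraightenInvManifolds}, and verify that the image lies in $\wh\VV_j$ component by component. Since $\Psi$ fixes the elliptic coordinates, the $\DD_j^k$ bounds on $c_k$ transfer verbatim, so the only real work is to verify the four inequalities defining $\wh\NNN_j$ in \eqref{def:Hyp:ModifiedDomain}. For each of $x_1,y_1,x_2,y_2$ I would Taylor-expand the corresponding component of $\wt\Upsilon$ about the reference point $(p_1,q_1,p_2,q_2,c)=(0,\sigma,0,0,0)$ and then use the explicit monomial structure of $\wt\Psi$ listed in Lemma \ref{lemma:StraightenInvManifolds} (and carried over to $\wt\Upsilon$) to estimate each term.

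The components $y_1$, $x_1$, and $y_2$ are straightforward. For $y_1=\sigma+\wt\Upsilon_{y_1}$, the value at the reference point equals $f_1(\sigma)$ by \eqref{def:HypToyModel:f1}, and every correction carries at least one factor of $p_1$, $p_2$, $q_2$, or $c$ with an $O_\sigma(1)$ coefficient, forcing $|y_1-f_1(\sigma)| \le K_\sigma \wh C^{(j)}\de\ln(1/\de)$. For $x_1$, the only linear contribution from $\wt\Upsilon_{x_1}$ at the reference point is the one produced by the $x_1 y_1$ monomial, which is absorbed by the very definition of $\wh C^{(j)}$ in \eqref{def:HypToyModel:Chat}; after substituting the product-like interval for $p_1$ and bounding the residual quadratic-or-higher monomials by $O_\sigma(\wh C^{(j)}\de)$ (using the bounds on $|c_{j-2}|$ and on $|c_k|$), one recovers the required $|x_1+\wh C^{(j)}\de\ln(1/\de)| \le K_\sigma \wh C^{(j)}\de$. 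For $y_2$, the monomial list for $\wt\Psi_{y_2}$ gives $\pa_{q_2}\wt\Upsilon_{y_2}(0,\sigma,0,0,0)=O(\sigma^2)$, hence $|y_2| \le |q_2|(1+O(\sigma^2)) + O_\sigma(\wh C^{(j)}\de)$, which fits inside $2M_\hyp^{(j)}(\wh C^{(j)}\de)^{1/2}$ once $\sigma$ is small enough to absorb the ratio $C^{(j)}/\wh C^{(j)}=1+O(\sigma)$.

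The delicate step, and the true obstacle, is the bound on $x_2-x_2^\ast$. The monomial list for $\wt\Psi_{x_2}$ shows that $\wt\Upsilon_{x_2}$ itself, together with its $p_1$- and $c_k$-derivatives, all vanish at $(0,\sigma,0,0,0)$, whereas its $p_2$ and $q_2$ derivatives there are precisely the coefficients $a_p(\sigma)$ and $a_q(\sigma)$ entering $g_{\II_j}$. Consequently the Taylor expansion yields
\[
 x_2 \;=\; (1+a_p(\sigma))p_2+a_q(\sigma)q_2+R_3 \;=\; x_2^\ast+R_3,
\]
where the linear part has been killed exactly by the condition $g_{\II_j}(p_2,q_2,\sigma,\de)=0$, and $R_3$ collects only quadratic-or-higher terms in $(p_1,p_2,q_2,c)$. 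The heart of the proof is then to establish $|R_3|\le 2M_\hyp^{(j)}(\wh C^{(j)}\de)^{1/2}/\ln(1/\de)$. A term-by-term inspection, using in particular $|c_{j+2}|\le M_{\adj,+}^{(j)}(C^{(j)}\de)^{1/2}$ and the bounds on $|p_1|$, $|p_2|$, $|q_2|$, shows that each remainder monomial is at worst $O_\sigma(\wh C^{(j)}\de)$, so the required inequality reduces to
\[
 (\wh C^{(j)}\de)^{1/2}\ln(1/\de)\ll 1,
\]
which, in view of \eqref{def:UpperBoundCs} and the choice $\de=e^{-\ga N}$ with $\ga=\ga(\sigma)\gg 1$, is trivially satisfied because the left-hand side is at most $\de^{(1-r)/2}\ga N$, exponentially small in $N$. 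This is precisely the cancellation mechanism foreshadowed by the simplified model in Section \ref{sec:MainIdeasSaddle}; the whole definition of $g_{\II_j}$ was engineered to make it work.
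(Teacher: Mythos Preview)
Your proof is correct and follows exactly the approach the paper indicates: it is, in the paper's own words, ``a straightforward consequence of Lemmas \ref{lemma:ToyModel:NormalForm} and \ref{lemma:StraightenInvManifolds}.'' You have simply supplied the details the paper omits, in particular the Taylor expansion about $(0,\sigma,0,0,0)$ and the observation that the function $g_{\II_j}$ in \eqref{def:Function_g} was defined precisely so that the linear part of $\Upsilon_{x_2}$ reproduces $x_2^\ast$, leaving only an $O_\sigma(\wh C^{(j)}\de)$ remainder that comfortably fits inside the $x_2$-window of $\wh\NNN_j$.
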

\begin{proof}
 It is a straightforward consequence of Lemmas \ref{lemma:ToyModel:NormalForm}
and \ref{lemma:StraightenInvManifolds}.
\end{proof}

After straightening the invariant manifold, next lemma studies the saddle map in
the transformed variables for points belonging to $\VV_j$.

\begin{lemma}\label{lemma:FullModel:SaddleMap}
Let us consider the flow $\wh\Phi_t$ associated to \eqref{def:VF:Straight} and a
point $(z^0,c^0)\in\wh\VV_j$. Then
for $\de$ and $\sigma$ small enough, the point
\[
 \left(z^f,c^f\right)=\wh \Phi_{T_j}\left(z^0,c^0\right),
\]
where $T_j=T_j(x_2^0)$ is the time defined in \eqref{def:FixedTimeSaddle},
satisfies
\[
\begin{split}
 |x_1^f| \qquad  \qquad \leq &\ K_\sigma\left(\wh C^{(j)}\de\right)^{1/2}\\
  |y_1^f| \qquad \qquad \leq &\ K_\sigma\left(\wh C^{(j)}\de\right)^{1/2}\\
 |x_2^f-f_2(\sigma)| \qquad \leq &\ K_\sigma\de^{r'}\\
\left| y_2^f + \frac{f_1(\sigma)}{f_2(\sigma)}
\wh C^{(j)} \de \ln(1/\de) \right|
\leq & \  \frac{f_1(\sigma)}{f_2(\sigma)}\de.
\end{split}
\]
and
\[
\begin{split}
\left| c_k^f-c_k^0 e^{iT_j}\right|\leq & K_\sigma\de^{(1-r)/2+r'}\,\,\,\text{
for }k\in\PP_j^\pm\\
\left| c_{j\pm 2}^f-c_{j\pm 2}^0 e^{iT_j}\right|\leq & 2M_{\adj,\pm} \sigma
\left(\wh C^{(j)}\de\right)^{1/2}.
\end{split}
\]
\end{lemma}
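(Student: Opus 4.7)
The plan is to imitate the proof of Lemma~\ref{lemma:HypToyModel:SaddleMap}, extending the Shilnikov-style fixed-point argument to accommodate the elliptic modes. After the straightening change \eqref{def:Change:FullSystem:Hyp}, the sets $\{x=0\}$ and $\{y=0\}$ are exact stable and unstable manifolds of $\{x=y=0\}$, and by Lemma~\ref{lemma:StraightenInvManifolds} the coefficients $B_{x_i},C_{x_i}$ are $\OO(x_i)$ plus genuinely higher-order products (and analogously for the $y$-components). First I would perform the variation-of-constants substitutions $x_i = e^{\sqrt{3}t}u_i$, $y_i = e^{-\sqrt{3}t}v_i$, $c_k = e^{it}\chi_k$, and rewrite \eqref{def:VF:Straight} as a system of integral equations on $[0,T_j]$, with $T_j=T_j(x_2^0)$ from \eqref{def:FixedTimeSaddle}. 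The function $g_{\II_j}$ in \eqref{def:Function_g} is engineered precisely so that $(z^0,c^0)\in\wh\VV_j$ forces $x_2^0$ to sit at the cancellation value $x_2^\ast$ from \eqref{def:ChoiceEtaHat} up to lower-order corrections, so the key cancellation of Lemma~\ref{lemma:HypToyModel:SaddleMap} carries over.

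\textbf{Banach space and operator.} I would work on an extended Banach space $\YY = \YY_\hyp \times \YY_\el$, supplementing the weighted norms \eqref{def:Hyp:Norms} by
\[
  \|\chi_k\|_{\el,k}=\de^{-(1-r)/2-r'}\|\chi_k\|_\infty, \qquad \|\chi_{j\pm2}\|_{\adj,\pm}=\bigl[M_{\adj,\pm}\sigma(\wh C^{(j)}\de)^{1/2}\bigr]^{-1}\|\chi_{j\pm2}\|_\infty,
\]
calibrated exactly to the target bounds in the statement. The associated integral operator $\GG$ is modified by the same Shilnikov reshuffling as in \eqref{def:Hyp:NewOperator} in its $u_1$ and $v_2$ components. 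To bound the first iterate $\GG(0)$, the hyperbolic block is handled by Proposition~\ref{lemma:Hyp:FirstIteration}; the extra mixing terms $\wt R_{\mix,z}$ contribute only subleading corrections, since they carry a factor of $|c_{j\pm2}|^2\lesssim \wh C^{(j)}\de$ or $\sum |c_k|^2\lesssim N\de^{1-r}$. For the elliptic block, the free rotation absorbs the $ic_k$ term, and one estimates separately: the autonomous $\ZZZ_{\el,c_k}$ contribution (of order $\de^{3(1-r)/2}$), the diagonal coupling $\wt P\cdot c_k$, and the adjacent driving $\wt Q_\pm\cdot\ol c_{j\pm 2}$, which is bounded by $\sigma|c_{j\pm 2}|$ because $y_1\approx f_1(\sigma)$ throughout the saddle passage. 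Multiplying by $T_j\leq K_\sigma\ln(1/\de)$ and dividing by the above weights yields bounded first-iterate norms.

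\textbf{Contractivity and the main obstacle.} The Lipschitz constant of $\GG$ on $B(2\kk_0)\subset\YY$ should come out to $\OO_\sigma((\wh C^{(j)}\de)^{1/2}\ln^2(1/\de))$: the hyperbolic/hyperbolic block reduces to Proposition~\ref{lemma:Hyp:Contractive}, while the coupling blocks pick up an extra $\de^{(1-r)/2}$ or $(\wh C^{(j)}\de)^{1/2}$ upon differentiation of the mixing terms in $c$ or $z$. A Banach fixed point then exists in $B(2\kk_0)$, and undoing the variation-of-constants change yields the stated bounds on $(x^f,y^f,c^f)$ exactly as at the end of the proof of Lemma~\ref{lemma:HypToyModel:SaddleMap}. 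The main obstacle is to verify that the elliptic modes do not destroy the cancellation of the leading $\log$-term in $u_1$: the perturbation of the driving term $2\nu_{02}y_1 x_2^2$ by $\wt R_{\mix,x_1}$ must remain of strictly smaller order than $\wh C^{(j)}\de$. This is exactly what the straightened form of $B_{x_1},C_{x_1}$ in Lemma~\ref{lemma:StraightenInvManifolds} buys us, combined with the smallness $|c_{j-2}|\lesssim (\wh C^{(j)}\de)^{1/2}$ and $\sum|c_k|^2\lesssim N\de^{1-r}$ enforced by membership of $(z^0,c^0)$ in $\wh\VV_j$; for $\ga$ large enough these corrections are dominated by the $\OO_\sigma(\wh C^{(j)}\de)$ error already present in the hyperbolic computation, and the resulting bounds on the elliptic modes follow immediately from the calibration of $\|\cdot\|_{\el,k}$ and $\|\cdot\|_{\adj,\pm}$.
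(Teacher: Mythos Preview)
Your overall strategy---extend the Shilnikov-type fixed-point argument of Lemma~\ref{lemma:HypToyModel:SaddleMap} by appending elliptic variables after variation of constants and reshuffling the $u_1,v_2$ components as in \eqref{def:Hyp:NewOperator}---is exactly the paper's approach in Section~\ref{sec:ProofLemmaFixedTimeMap}. However, two specific calibrations are off and, as written, break the argument.

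First, your elliptic norms are weighted by the \emph{target error}, e.g.\ $\|\chi_k\|_{\el,k}=\de^{-(1-r)/2-r'}\|\chi_k\|_\infty$. But the operator satisfies $\GG_{\el,c_k}(0)=c_k^0$, and for $(z^0,c^0)\in\wh\VV_j$ one only knows $|c_k^0|\leq M_{\el,\pm}\de^{(1-r)/2}$, so $\|\GG_{\el,c_k}(0)\|_{\el,k}\sim\de^{-r'}\to\infty$: the first iterate is unbounded and $\GG$ does not send $B(2\kk_0)$ into itself. The paper instead weights by the \emph{a priori size} of the mode, $\|h\|_{\el,\pm}=(M_{\el,\pm}\de^{(1-r)/2})^{-1}\|h\|_\infty$ and $\|h\|_{\adj,\pm}=(M_{\adj,\pm}(\wh C^{(j)}\de)^{1/2})^{-1}\|h\|_\infty$, so that $\|\wt\GG(0)\|_\ast\leq\kk_1$ holds trivially (Proposition~\ref{lemma:Full:FirstIteration}). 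The error bound on $c_k^f-c_k^0e^{iT_j}$ then follows \emph{a posteriori} from the Lipschitz estimate applied to $\wt\GG(w^*)-\wt\GG(0)$, not from the norm calibration.

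Second, your asserted overall Lipschitz constant $\OO_\sigma((\wh C^{(j)}\de)^{1/2}\ln^2(1/\de))$ is too optimistic. The self-coupling of the adjacent mode through $\wt Q_-\cdot\ol c_{j-2}$ has $\wt Q_-=\OO(x_1,y_1)$ with $y_1(t)=v_1e^{-\sqrt 3 t}\approx f_1(\sigma)e^{-\sqrt 3 t}$; integrating gives a Lipschitz contribution of order $K\sigma$ with $K$ independent of $\sigma$, as recorded in Proposition~\ref{lemma:Full:Contractive}. This is why the paper's contraction constant is $2K\sigma$ rather than a power of $\de$, and why the final bound on $|c_{j\pm2}^f-c_{j\pm2}^0e^{iT_j}|$ carries the factor $\sigma$ rather than $\de^{r'}$. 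Once you recalibrate the elliptic norms to the a priori sizes and accept the $K\sigma$ Lipschitz constant for the adjacent block, your outline goes through and coincides with the paper's proof.
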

We postpone the proof of this lemma to Section \ref{sec:ProofLemmaFixedTimeMap}.

Now, to
complete the proof of Lemma \ref{lemma:iterative:saddle}
we need two steps.

The first is to undo the change of coordinates
performed in Lemma \ref{lemma:StraightenInvManifolds} to express
the estimates of the saddle map in the original variables.

The second step is to adjust the time so that the image belongs to
the section $\Sigma_{j}^\out$. These two final steps are done in
the next two following lemmas.

Concerning the first step, recall that the change of variables $\Psi$
defined in Lemma \ref{lemma:StraightenInvManifolds} does not change the elliptic
variables, and therefore it only affects the hyperbolic ones.

\begin{lemma}\label{lemma:FullModel:SaddleMap:Original:FixedTime}
Let us consider the flow $\Phi_t$ associated to
\eqref{def:VF:Full:AfterDiagonal} and a point $(p^0,q^0,c^0)\in\wh\VV_j$.
Then
for $\de$ and $\sigma$ small enough, the point
\[
 \left(p^f,q^f,c^f\right)=\Phi_{T_j}\left(p^0,q^0,c^0\right),
\]
where $T_j$ is the time defined in \eqref{def:FixedTimeSaddle},  satisfies
\[
\begin{split}
 |p_1^f|\qquad \qquad\leq &\ K_\sigma\left(\wh C^{(j)}\de\right)^{1/2}\\
  |q_1^f| \qquad \qquad\leq&\ K_\sigma\left(\wh C^{(j)}\de\right)^{1/2}\\
|p_2^f- \sigma| \qquad\leq &\ K_\sigma\de^{r'}\\
| q_2^f+ \wt
C^{(j)}\de \ln(1/\de)| \leq & \ \wt C^{(j)}\ \de\ K_\sigma.
\end{split}
\]
for certain constant $\wt C^{(j)}$ satisfying $C^{(j)}/2\leq \wt C^{(j)}\leq
2C^{(j)}$  and
\[
\begin{split}
\left| c_k^f-c_k^0 e^{iT_j}\right|\leq & K_\sigma\de^{(1-r)/2+r'}\,\,\,\text{ for
}m\in\PP^{\pm}\\
\left| c_{j\pm 2}^f-c_{j\pm 2}^0 e^{iT_j}\right|\leq & 2M_{\adj,\pm} \sigma
\left(\wh C^{(j)}\de\right)^{1/2}.
\end{split}
\]
\end{lemma}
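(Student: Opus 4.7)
The plan is to reduce the lemma to Lemma \ref{lemma:FullModel:SaddleMap} by conjugating the flow $\Phi_t$ of \eqref{def:VF:Full:AfterDiagonal} to the flow $\wh\Phi_t$ of \eqref{def:VF:Straight} via the change $\Psi$ of Lemma \ref{lemma:StraightenInvManifolds}. Since $\Phi_t = \Psi\circ\wh\Phi_t\circ\Upsilon$ with $\Upsilon=\Psi^{-1}$, I would first pull back the initial point to the straightened chart: set $(z^0,c^0)=\Upsilon(p^0,q^0,c^0)\in\wh\VV_j$ (this is exactly inclusion \eqref{Cond:SaddleMap:Inclusion}), apply Lemma \ref{lemma:FullModel:SaddleMap} to produce the image $(z^f,c^f)=\wh\Phi_{T_j}(z^0,c^0)$, and then push forward via $(p^f,q^f,c^f)=\Psi(z^f,c^f)=(z^f+\wt\Psi(z^f,c^f),\,c^f)$.

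Next, I would propagate the size estimates through $\wt\Psi$ using the order structure from Lemma \ref{lemma:StraightenInvManifolds}. For $p_1^f$ and $q_1^f$, every monomial in the order lists of $\wt\Psi_{x_1}$ and $\wt\Psi_{y_1}$ is straightforwardly bounded by $K_\sigma(\wh C^{(j)}\de)^{1/2}$ thanks to the estimates $|x_1^f|,|y_1^f|\le K_\sigma(\wh C^{(j)}\de)^{1/2}$, $|x_2^f|=\OO(\sigma)$, $|y_2^f|=\OO_\sigma(\wh C^{(j)}\de\ln(1/\de))$, $|c_{j\pm 2}^f|=\OO(\sqrt{\wh C^{(j)}\de})$ and $|c_k^f|=\OO(\de^{(1-r)/2})$. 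Since $\Psi$ acts as the identity on the elliptic components, the bounds on $c_k^f$ and $c_{j\pm 2}^f$ carry over verbatim from Lemma \ref{lemma:FullModel:SaddleMap}.

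The more delicate estimates are on the outgoing pair $(p_2^f,q_2^f)$. For $p_2^f$ the key observation is that $f_2(\sigma)=\Upsilon_{x_2}(0,0,\sigma,0)$ is chosen precisely so that $\Psi$ sends the reference point $(0,0,f_2(\sigma),0,0)$ exactly to $(0,0,\sigma,0,0)$. Hence $p_2^f-\sigma$ is the increment of the smooth map $x_2+\wt\Psi_{x_2}$ between this reference point and the actual image $(z^f,c^f)$, and a first-order Taylor expansion using the order bounds on $\wt\Psi_{x_2}$ produces each correction at size $\OO_\sigma(\de^{r'})$ or smaller (the dominant term being $\partial_{x_2}\wt\Psi_{x_2}\cdot(x_2^f-f_2(\sigma))=\OO(\sigma^2)\cdot\OO_\sigma(\de^{r'})$). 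For $q_2^f$, every monomial in the order list of $\wt\Psi_{y_2}$ contains at least one of $x_1,y_1,y_2$, so $\wt\Psi_{y_2}$ vanishes at the reference point, and the leading contribution to $q_2^f$ is $y_2^f(1+\OO(\sigma))$. Combining this with $\wh C^{(j)}=C^{(j)}(1+\OO(\sigma))$ and $f_1(\sigma)/f_2(\sigma)=1+\OO(\sigma^2)$ lets me define
\[
\wt C^{(j)} \;=\; \wh C^{(j)}\,\frac{f_1(\sigma)}{f_2(\sigma)}\,\bigl(1+\OO(\sigma)\bigr),
\]
which satisfies $C^{(j)}/2\le \wt C^{(j)}\le 2C^{(j)}$ once $\sigma$ is small enough.

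The main obstacle is the $p_2^f-\sigma$ estimate: a naive application of the $x_2^f$-bound from Lemma \ref{lemma:FullModel:SaddleMap} together with the size of $\wt\Psi_{x_2}$ would give an error $\OO(\sigma^3)+\OO_\sigma(\de^{r'})$, i.e.\ a $\de$-independent piece coming from $f_2(\sigma)-\sigma=\OO(\sigma^3)$, which would break both the present claim and the iteration in Lemma \ref{lemma:iterative:saddle}. It is crucial to exploit the exact identity $\Psi(0,0,f_2(\sigma),0,0)=(0,0,\sigma,0,0)$ and expand every correction around this particular reference point so that the $\OO(\sigma^3)$ discrepancy cancels identically and only genuinely perturbative terms remain.
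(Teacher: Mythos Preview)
Your proof is correct and follows essentially the same approach as the paper: conjugate $\Phi_t$ to $\wh\Phi_t$ via the change $\Psi$ of Lemma~\ref{lemma:StraightenInvManifolds}, invoke Lemma~\ref{lemma:FullModel:SaddleMap} in the straightened chart, and then push the estimates back through the order structure of $\wt\Psi$. The paper's proof is terser---it simply says ``recall the definition of $f_2(\sigma)$'' for the $p_2^f$ bound and writes the leading linear term for $q_2^f$---but your explicit identification of the reference-point cancellation $\Psi(0,0,f_2(\sigma),0,0)=(0,0,\sigma,0,0)$ and the resulting formula for $\wt C^{(j)}$ make the same mechanism transparent.
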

\begin{proof}
In Lemma \ref{lemma:StraightenInvManifolds} we have defined the change  $\Psi$ which relates the two sets of coordinates by
\[
 \left(p_1^f, q_1^f,p_2^f,q_2^f, c^f\right)=\left(\Psi\left(x_1^f, y_1^f,x_2^f,y_2^f, c^f\right), c^f\right).
\]
Then, taking  into account the properties of the change $\Psi$ stated in this
lemma, one can easily see that from the estimates obtained in Lemma
\ref{lemma:FullModel:SaddleMap}, one can deduce the estimates stated in Lemma
\ref{lemma:FullModel:SaddleMap:Original:FixedTime}. First recall that the change
$\Psi$ does not modify the elliptic modes and therefore we only need to deal
with the hyperbolic ones.

Using the properties of $\Psi$ and modifying slightly $K_\sigma$, it is easy to
see that for $\de$ small enough,
\[
\begin{split}
 |p_1^f|\leq &K_\sigma\left(\wh C^{(j)}\de\right)^{1/2}\\
  |q_1^f|\leq&K_\sigma\left(\wh C^{(j)}\de\right)^{1/2}.
\end{split}
\]
To obtain the estimates for $p_2$ it is enough to recall the definition of
$f_2(\sigma)$ in \eqref{def:HypToyModel:f2}. For the estimates for $q_2$, it is
enough to see that from the properties of $\Psi$ and the estimates for $z^f$ one
can deduce that
\[
 q_2=\pa_{x_2}\Psi_{x_2}(0,0,\sigma,0)x_2+\OO_\sigma\left(\wh C^{(j)}\de\right).
\]
Therefore, we can define a constant $\wt C^{j}$ such that the estimate for $q_2$
is satisfied.
\end{proof}

Once we have obtained good estimates for the  approximate time map in the
original variables, we adjust it to obtain image points belonging to the section
$\Sigma_j^\out$.
\begin{lemma}\label{lemma:Saddle:AdjustingSection}
 Let us consider a point $ \left(p^f,q^f,c^f\right)\in \Phi^{T_j}(\VV_j)$,
 where $\Phi^t$ is the flow of \eqref{def:VF:Full:AfterDiagonal}, $T_j$
 is the time defined in \eqref{def:FixedTimeSaddle} and $\VV_j$ is
 the set considered in Theorem \ref{theorem:iterative}.

Then, there exists a time $T'$, which depends on the point
$\left(p^f,q^f,c^f\right)$, such that
\[
\left(p^\ast,q^\ast,c^\ast\right)=
\Phi^{T'}\left(p^f,q^f,c^f\right)\in\Sigma_{j}^\out.
\]
Moreover, there exists a constant $K_\sigma$ such that
\begin{equation}\label{def:AdjustingTime}
|T'|\leq K_\sigma\de^r
\end{equation}
and
\[
\begin{split}
\left|c_k^\ast-c_k^f\right|&\leq K_\sigma \de^{1-r}\,\,\,\text{ for }m\in\PP\\
\left|p_1^\ast-p_1^f\right|&\leq K_\sigma
\left(C^{(j)}\de\right)^{1/2}\de^{1-r}\\
\left|q_1^\ast-q_1^f\right|&\leq K_\sigma
\left(C^{(j)}\de\right)^{1/2}\de^{1-r}\\
p_2&=\sigma\\
\left|q_2^\ast-q_2^f\right|&\leq K_\sigma C^{(j)}\de^{2-r}\ln(1/\de).
\end{split}
\]
\end{lemma}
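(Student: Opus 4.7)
The plan is to find the adjusting time $T'$ by a one-dimensional intermediate-value / implicit-function argument applied to the coordinate $p_2$, and to control every other coordinate by Taylor expansion on the short interval $[T_j, T_j + T']$. The key observation is that at $t = T_j$ the coordinate $p_2$ is already within $K_\sigma \de^{r'}$ of $\sigma$ by Lemma \ref{lemma:FullModel:SaddleMap:Original:FixedTime}, while the transverse velocity $\dot p_2$ is of order $\sqrt{3}\,\sigma$; so a very short additional flow time is enough to land on $\Sigma_j^\out$, and on such a short interval the remaining variables barely move.

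First I would estimate $\dot p_2$ along the flow. From \eqref{def:VF:Full:AfterDiagonal}, \eqref{def:VF:Diag:p2:Hyp} and \eqref{def:VF:Diag:p2:Mix},
\[
\dot p_2 = \sqrt{3}\,p_2 + \ZZZ_{\hyp,p_2} + \ZZZ_{\mix,p_2}.
\]
Using the a priori bounds $p_2^f = \sigma + \OO_\sigma(\de^{r'})$, $|p_1^f|, |q_1^f| \leq K_\sigma(\wh C^{(j)}\de)^{1/2}$, $|q_2^f| \leq K_\sigma \wh C^{(j)}\de\ln(1/\de)$, together with the elliptic smallness of $c^f$, one checks that the cubic hyperbolic and mixed contributions are $\OO_\sigma(\sigma^2+\de^{r'})$ relative to the leading term, so
\[
\dot p_2 = \sqrt{3}\,\sigma\bigl(1 + \OO(\sigma^2) + \OO_\sigma(\de^{r'})\bigr) \geq \tfrac12 \sqrt{3}\,\sigma
\]
for $\sigma$ and $\de$ small enough. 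Hence $t\mapsto p_2(T_j+t)$ is strictly monotone, and there is a unique small $T'$ with $p_2(T_j+T') = \sigma$. The mean-value estimate then yields
\[
|T'| \leq \frac{|p_2^f - \sigma|}{\tfrac12\sqrt{3}\,\sigma} \leq K_\sigma \de^{r'} \leq K_\sigma \de^{r},
\]
which is \eqref{def:AdjustingTime}.

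To bound the remaining coordinate increments I would write
\[
\xi^\ast - \xi^f = \int_0^{T'} \dot\xi(T_j + s)\, ds
\]
for each $\xi \in \{p_1, q_1, q_2\}\cup\{c_k\}_{k}$ and substitute the corresponding component of \eqref{def:VF:Full:AfterDiagonal}. Plugging the a priori sizes of $(p,q,c)$ into \eqref{def:VF:Diag:p1:Hyp}--\eqref{def:VF:Diag:cm:Mix.Adjacent} gives uniform bounds on each $\dot\xi$ throughout the region; multiplying these derivative bounds by $|T'|$ then yields the four displayed estimates for the hyperbolic variables and the two estimates for $c_k$ and $c_{j\pm 2}$. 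The condition $p_2^\ast = \sigma$ is automatic by construction of $T'$.

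The main technical point is verifying that the a priori bounds persist throughout the short interval $[T_j, T_j+T']$, so that the lower bound on $\dot p_2$ and the upper bounds on the other $\dot\xi$ remain valid while integrating. This is handled by a short bootstrap: since $|T'|$ is much smaller than every other scale in the problem, each coordinate moves by an amount strictly smaller than the slack allowed by the region described in Lemma \ref{lemma:FullModel:SaddleMap:Original:FixedTime}, and the argument closes by continuity in time. No cancellations or delicate normal-form work are needed at this last stage — they have already been performed to land close to $\Sigma_j^\out$ at time $T_j$.
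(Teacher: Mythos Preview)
Your approach is correct and matches the paper's own (very brief) argument, which simply refers to the a-priori-bounds-then-refine scheme of Proposition~\ref{prop:HeteroMap}; your monotonicity argument for $p_2$, the mean-value bound on $T'$, and the short-time integration of the remaining components are exactly what that scheme amounts to here. One small caveat: the inequality $\de^{r'}\le \de^{r}$ you invoke for \eqref{def:AdjustingTime} requires $r\le r'$, which is not automatic from the constraint $0<r'<1/2-2r$ alone, but is harmless in context since $r$ may be taken arbitrarily small (and in any case only the crude bound $|T'|\ll \ln(1/\de)$ is used downstream).
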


\begin{proof}
 The proof of this Lemma follows the same lines as the proof of Proposition
\ref{prop:HeteroMap}.
Namely, first we obtain a priori bounds for each
variable, which then allow us to obtain more refined estimates.
\end{proof}

To finish the proof of Lemma \ref{lemma:iterative:saddle}, we define
$\UU_j=\BB_\loc^j(\VV_j)$ and we check that this set
has a  $\wt \II_j$-product-like
structure for a multiindex $\wt\II_j$   satisfying the properties
stated in Lemma \ref{lemma:iterative:saddle} (see Definition
\ref{definition:ModifiedProductLike}). Indeed, from the results obtained in
Lemmas \ref{lemma:FullModel:SaddleMap:Original:FixedTime} and
\ref{lemma:Saddle:AdjustingSection} and recalling that by the hypotheses of Lemma \ref{lemma:iterative:saddle} we have that $M_\hyp^{(j)}\geq 1$, it is easy to see that one can define a
constant $K_\sigma$ so that if we consider the constants $\wt
M_{\el,\pm}^{(j)}$, $\wt M_{\adj,\pm}^{(j)}$ and $\wt M_\hyp^{(j)}$  defined in Lemma \ref{lemma:iterative:saddle}  and the constant
$\wt C^{(j)}$ given in Lemma \ref{lemma:FullModel:SaddleMap:Original:FixedTime},
the set $\UU_j=\BB_\loc^j(\VV_j)$ satisfies condition \textbf{C1} stated in
Definition \ref{definition:ModifiedProductLike}.

Thus, it only remains to check  that the set $\UU_j$ also satisfies condition
\textbf{C2} of Definition \ref{definition:ModifiedProductLike}.  First we check
the part of the condition \textbf{C2} concerning the elliptic modes. Indeed,
from the estimates for the  non-neighbor and adjacent elliptic modes given in Lemma
\ref{lemma:FullModel:SaddleMap:Original:FixedTime} and
\ref{lemma:Saddle:AdjustingSection}, one can easily see that for any fixed
values for the hyperbolic modes, if one takes the constants
$\wt m_{\el}^{(j)}$, $\wt m_{\adj}^{(j)}$ given in Lemma
\ref{lemma:iterative:saddle}, the image of the elliptic modes contains  disks as
stated in  Definition \ref{definition:ModifiedProductLike}. Then, it only
remains to check that the inclusion condition is also satisfied for the variable
$q_2$. From the proof of Lemma \ref{lemma:FullModel:SaddleMap} given in Section
\ref{sec:ProofLemmaFixedTimeMap}, one can easily deduce that the image in the  $y_2$
variable contains an interval of length $\OO(\wh C^{(j)}\de)$ and whose points
are of size smaller than $2\wh C^{(j)}\de\ln(1/\de)$. Then, when we undo the
normal form change of coordinates (Lemma
\ref{lemma:FullModel:SaddleMap:Original:FixedTime}),
this interval is only modified slightly but keeping still  a length of order
$\OO(\wh C^{(j)}\de)$. Thus taking into account the constant $\wt C^{(j)}$ given
Lemma \ref{lemma:FullModel:SaddleMap:Original:FixedTime} and the results of
Lemma \ref{lemma:Saddle:AdjustingSection}, we can obtain a constant $\wt
m_\hyp^{(j)}$ so that condition \textbf{C2} is satisfied.

Finally, it only remains to obtain upper bounds for the time spent by the map
$\BB_\loc^j$. To this end it is enough to recall that the time spent is the sum
of the time $T_j$ defined in \eqref{def:FixedTimeSaddle}, which has been bounded
in \eqref{def:BoundSaddleTime},  and the time $T'$ given in Lemma
\ref{lemma:Saddle:AdjustingSection}, which has been bounded in
\eqref{def:AdjustingTime}. Thus, taking into accounts these two bounds we obtain
the bound for the time spent by $\BB_\loc^j$ given in Lemma
\ref{lemma:iterative:saddle}. This finishes the proof of Lemma
\ref{lemma:iterative:saddle}.

\subsection{Proof of Lemma
\ref{lemma:FullModel:SaddleMap}}\label{sec:ProofLemmaFixedTimeMap}
As we have done in the Section \ref{sec:HypToyModel}, we make variation of
constants to set up a fixed point argument. Namely, we consider
\[
 x_i=e^{\sqrt{3}t}u_i,\,\,y_i=e^{-\sqrt{3}t}v_i,\,\,c_k=e^{it}d_k
\]
and then we obtain the integral equation
\begin{equation}\label{def:Full:ConstantVariation}
\begin{split}
u_i&=x_i^0+\int_0^{T_j} e^{-\sqrt{3}t}\left( R_{\hyp,x_i}\left(u e^{\sqrt{3}t},v
e^{-\sqrt{3}t}\right)+\wt R_{\mix,x_i}\left(u e^{\sqrt{3}t},v
e^{-\sqrt{3}t},de^{it}\right)\right)dt\\
v_i&=y_i^0+\int_0^{T_j} e^{\sqrt{3}t}\left( R_{\hyp,y_i}\left(u e^{\sqrt{3}t},v
e^{-\sqrt{3}t}\right)+\wt R_{\mix,y_i}\left(u e^{\sqrt{3}t},v
e^{-\sqrt{3}t},de^{it}\right)\right)dt\\
d_k&=c_k^0+\int_0^{T_j} e^{-it}\left( \ZZZ_{\el,c_k}\left(de^{it}\right)+\wt
R_{\mix,c_k}\left(u e^{\sqrt{3}t},v e^{-\sqrt{3}t},de^{it}\right)\right)dt.
\end{split}
\end{equation}
Note that the terms $ R_{\hyp,z}$ are the ones considered in Section
\ref{sec:HypToyModel}, and, therefore, we will use the properties of these
functions obtained in that section. We use the same integration time $T_j$ in
\eqref{def:FixedTimeSaddle}.

As before, we use \eqref{def:Full:ConstantVariation} to set up a fixed point
argument in two steps. First we define $\GG=(\GG_\hyp,\GG_\el)$ as
\[
\begin{split}
 \GG_{\hyp,u_i}(u,v,d)&=x_i^0+\int_0^{T_j}
e^{-\sqrt{3}t}\left(R_{\hyp,x_i}\left(u e^{\sqrt{3}t},v
e^{-\sqrt{3}t}\right)+\wt R_{\mix,x_i}\left(u e^{\sqrt{3}t},v
e^{-\sqrt{3}t},de^{it}\right)\right)dt\\
&=\FF_{\hyp,u_i}(u,v)+\int_0^{T_j} e^{-\sqrt{3}t}\wt R_{\mix,x_i}\left(u
e^{\sqrt{3}t},v e^{-\sqrt{3}t},de^{it}\right)dt\\
 \GG_{\hyp,v_i}(u,v,d)&=
y_i^0-\int_0^{T_j}e^{\sqrt{3}t}\left(R_{\hyp,y_i}\left(u e^{\sqrt{3}t},v
e^{-\sqrt{3}t}\right)+\wt  R_{\mix,x_i}\left(u e^{\sqrt{3}t},v
e^{-\sqrt{3}t},de^{it}\right)\right)dt\\
&= \FF_{\hyp,v_i}(u,v)+\int_0^{T_j} e^{\sqrt{3}t}\wt R_{\mix,x_i}\left(u
e^{\sqrt{3}t},v e^{-\sqrt{3}t},de^{it}\right)dt,\\
\end{split}
\]
where  $\FF_\hyp$ is the operator defined in \eqref{def:Hyp:Operator1}, and
\[
 \GG_{\el,c_k}(u,v,d)=c_k^0+\int_0^{T_j} e^{-it}\left(
\ZZZ_{\el,c_k}\left(de^{it}\right)+\wt R_{\mix,c_k}\left(u e^{\sqrt{3}t},v
e^{-\sqrt{3}t},de^{it}\right)\right)dt.
\]
We modify this operator slightly as we have done for  $\FF_\hyp$ in Section
\ref{sec:HypToyModel} to make it contractive.  We define
\[
\begin{split}
\wt
\GG_{\hyp,u_1}(u,v,d)&=\GG_{\hyp,u_1}(u_1,\GG_{\hyp,v_1}(u,v,d),\GG_{\hyp,u_2}(u
,v,d),v_2,d)\\
\wt
\GG_{\hyp,v_2}(u,v,d)&=\GG_{\hyp,v_2}(u_1,\GG_{\hyp,v_1}(u,v,d),\GG_{\hyp,u_2}(u
,v,d),v_2,d).
\end{split}
\]
We denote the new operator by
\begin{equation}\label{def:FullSystem:OperatorModified}
 \wt \GG=\left(\wt \GG_{\hyp,u_1}, \GG_{\hyp,u_2},\GG_{\hyp,v_1},\wt
\GG_{\hyp,v_2},\GG_\el\right),
\end{equation}
whose fixed points coincide with those of $\GG$.

We extend the norm defined in \eqref{def:Hyp:Norms} to incorporate the elliptic
modes. To this end, we define
\[
\begin{split}
\|h\|_{\el,\pm}&=\left(M_{\el,\pm}\de^{(1-r)/2}\right)^{-1}\|h\|_\infty \\
\|h\|_{\adj,\pm}&=M_{\adj,\pm}\ii\left(\wh
C^{(j)}\de\right)^{-1/2}\|h\|_\infty
\end{split}
\]
and
\[
\|(u,v,d)\|_\ast=\sup_{\substack{k\in
\PP_j^\pm\\i=1,2}}\Big\{\|u_i\|_{\hyp,u_i},\|v_i\|_{\hyp,v_i},\|d_k\|_{\el,\pm},
\|d_{j\pm 2}\|_{\adj,\pm}\Big\}
\]
which, abusing notation, is denoted as the norm in \eqref{def:Hyp:FullNorm}. We
also define the Banach space
\[
\YY=\left\{(u,v,d):[0,T]\rightarrow \CC^{N-3}\times\RR^4;
\|(u,v,d)\|_\ast<\infty\right\}.
\]
Proceeding as in Section \ref{sec:HypToyModel}, we state the two following
propositions, from which one can easily deduce
the contractivity of $\wt \GG$. The proof of the first one is straightforward
taking into account the definition of $\wt\GG$ and Lemma
\ref{lemma:Hyp:FirstIteration} and the proof of the second one is deferred to
end of the section.

\begin{proposition}\label{lemma:Full:FirstIteration}
Let us consider the operator $\wt \GG$ defined in
\eqref{def:FullSystem:OperatorModified}. Then, the components of $\wt\GG(0)$ are
given by
\[
 \begin{split}
  \wt\GG_{\hyp,u_1}(0)&=\wt\FF_{\hyp,u_i}(0)\\
 \wt\GG_{\hyp,v_1}(0)&=y_1^0\\
 \wt\GG_{\hyp,u_2}(0)&=x_2^0\\
 \wt\GG_{\hyp,v_2}(0)&=\wt\FF_{\hyp,v_2}(0)\\
 \wt\GG_{\el,c_k}(0)&=c_k^0.
\end{split}
\]
Thus, there exists a constant $\kk_1>0$  independent of
$\sigma$, $\de$ and $j$ such that the operator $\wt \GG$ satisfies
\[
\left \|\wt \GG (0)\right\|_\ast\leq \kk_1.
\]
\end{proposition}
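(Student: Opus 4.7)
The plan is to observe that at $(u,v,d)=0$ the operator $\GG$ collapses to the pure hyperbolic operator $\FF_\hyp$ of Section~\ref{sec:HypToyModel} plus the trivial initial data, because every mixed contribution to the integrand vanishes. The identification of components is then automatic, and the norm bound follows directly from Proposition~\ref{lemma:Hyp:FirstIteration} together with the sizes built into the domain $\wh\VV_j$.

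First I would verify that the mixed integrands vanish at the zero input. For the hyperbolic components, Lemma~\ref{lemma:StraightenInvManifolds} gives $B_z(0,c)=C_z(0,c)=0$, and in addition each summand in $\wt R_{\mix,z}$ carries a factor $c_{j\pm 2}^2$ or $|c_k|^2$ that vanishes at $d=0$; similarly, $\wt P(0,0)=\wt Q_\pm(0,0)=0$ and $\ZZZ_{\el,c_k}$ is at least cubic in $c$. Moreover, every monomial of $R_\hyp$ listed in Lemma~\ref{lemma:ToyModel:NormalForm} contains both an $x$-factor and a $y$-factor, so $R_\hyp(0,0)=0$. Hence
\[
\GG_{\hyp,u_i}(0)=\FF_{\hyp,u_i}(0)=x_i^0,\qquad \GG_{\hyp,v_i}(0)=\FF_{\hyp,v_i}(0)=y_i^0,\qquad \GG_{\el,c_k}(0)=c_k^0.
\]
Substituting these into the definition~\eqref{def:FullSystem:OperatorModified} and using once more that the mixed contribution vanishes whenever $d=0$, one obtains
\[
\wt\GG_{\hyp,u_1}(0)=\FF_{\hyp,u_1}(0,y_1^0,x_2^0,0)=\wt\FF_{\hyp,u_1}(0),
\]
and analogously $\wt\GG_{\hyp,v_2}(0)=\wt\FF_{\hyp,v_2}(0)$, while $\wt\GG_{\hyp,v_1}=\GG_{\hyp,v_1}$ and $\wt\GG_{\hyp,u_2}=\GG_{\hyp,u_2}$ by construction.

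Once the identification is in place, bounding $\|\wt\GG(0)\|_\ast$ is routine. The components $\wt\FF_{\hyp,u_1}(0)$ and $\wt\FF_{\hyp,v_2}(0)$ are already controlled by the constant $\kk_0$ of Proposition~\ref{lemma:Hyp:FirstIteration}. For $\wt\GG_{\hyp,v_1}(0)=y_1^0$, the inclusion $z^0\in\wh\NNN_j$ from \eqref{def:Hyp:ModifiedDomain} gives $|y_1^0-f_1(\sigma)|\le K_\sigma\wh C^{(j)}\de\ln(1/\de)$, whence $\|y_1^0\|_{\hyp,v_1}\le 2$ for $\de$ small. For $\wt\GG_{\hyp,u_2}(0)=x_2^0$, the defining identity \eqref{def:ChoiceEtaHat} forces $x_2^\ast\sim(\wh C^{(j)}\de)^{1/2}$, while the deviation $|x_2^0-x_2^\ast|$ is smaller than $x_2^\ast$ by a factor $\ln(1/\de)^{-1}$, so $\|x_2^0\|_{\hyp,u_2}\le 2$. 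Finally, the very definitions of $\wh\VV_j$ and of the norms $\|\cdot\|_{\el,\pm}$, $\|\cdot\|_{\adj,\pm}$ yield $\|c_k^0\|_{\el,\pm}\le 1$ for $k\in\PP_j^\pm$ and $\|c_{j\pm 2}^0\|_{\adj,\pm}\le 1$. Setting $\kk_1:=\max(\kk_0,2)$ closes the argument. The step presents no real difficulty; the essential point is simply to bookkeep the vanishing of the mixed terms at the origin, which is precisely what decouples the full system from the purely hyperbolic one treated in Section~\ref{sec:HypToyModel}.
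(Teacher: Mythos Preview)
Your proof is correct and follows exactly the approach the paper indicates: the paper itself omits the proof, saying only that it is ``straightforward taking into account the definition of $\wt\GG$ and Lemma~\ref{lemma:Hyp:FirstIteration}'', and you have filled in precisely those details --- the vanishing of the mixed and elliptic integrands at the origin, the reduction of $\wt\GG(0)$ to $\wt\FF_\hyp(0)$ on the hyperbolic block and to the initial data elsewhere, and the norm bound via Proposition~\ref{lemma:Hyp:FirstIteration} and the definition of $\wh\VV_j$.
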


\begin{proposition}\label{lemma:Full:Contractive}
Let us consider $w_1, w_2\in B(2\kk_1)\subset\YY$, a constant $r'$
satisfying $0<r'<1/2-2r$ and  $\delta$ as defined in
Theorem \ref{thm:ToyModelOrbit}. Then taking  $\sigma$ small enough
and $N$ big enough such that $0<\de=e^{-\ga N}\ll 1$,   there exist
a constant $K_\sigma>0$ which is independent of $j$ and $N$, but
might depend on $\sigma$, and a constant $K$ independent of $j$, $N$
and $\sigma$, such that the operator $\wt \GG$ satisfies
\[
 \begin{split}
&\left\|\wt\GG_{\hyp,u_i}(u,v,d)-\wt\GG_{\hyp,u_i}(u',v',d')\right\|_{\hyp,u_i,
v_i}\leq \\
&\qquad\qquad\qquad\leq K_\sigma\de^{r'}\left\|(u,v,d)-(u',v',d')\right\|_\ast\\
&\left\|\wt\GG_{\hyp,v_i}(u,v,d)-\wt\GG_{\hyp,v_i}(u',v',d')\right\|_{\hyp,u_i,
v_i}\leq \\
&\qquad\qquad\qquad\leq K_\sigma\de^{r'}\left\|(u,v,d)-(u',v',d')\right\|_\ast\\
&\left\|\wt\GG_{\el,c_k}(u,v,d)-\wt\GG_{\el,c_k}(u',v',d')\right\|_{\el,\pm}\leq \\
&\qquad\qquad\qquad\leq K_\sigma\de^{r'}
\left\|(u,v,d)-(u',v',d')\right\|_\ast,\,\,\,\,\,\text{ for }m\in\PP^\pm\\
&\left\|\wt\GG_{\adj,\pm}(u,v,d)-\wt\GG_{\adj,
\pm}(u',v',d')\right\|_{\adj,\pm}\leq \\
&\qquad\qquad\qquad\leq  K\sigma\left\|(u,v,d)-(u',v',d')\right\|_\ast.
\end{split}
\]
Thus, since $0<\de\ll \sigma$,
\[
\left \|\wt \GG(w_2)-\wt \GG(w_1)\right\|_\ast\leq 2K\sigma\|w_2-w_1\|_\ast
\]
and therefore, for   $\sigma$ small enough, it is contractive.
\end{proposition}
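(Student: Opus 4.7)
The plan is to estimate the Lipschitz constant of each component of $\wt\GG$ separately with respect to the weighted norm $\|\cdot\|_\ast$, taking advantage of the fact that the operator decomposes as the $\FF_\hyp$ of Section \ref{sec:HypToyModel} plus genuinely new contributions coming from the elliptic modes and mixed terms. First I would split each hyperbolic component as
\[
\GG_{\hyp,\ast}(u,v,d)=\FF_{\hyp,\ast}(u,v)+\int_0^{T_j} e^{\pm\sqrt{3}t}\,\wt R_{\mix,\ast}\!\left(ue^{\sqrt 3 t}, v e^{-\sqrt 3 t}, d e^{it}\right)dt,
\]
and invoke Proposition \ref{lemma:Hyp:Contractive} to bound the $\FF_\hyp$-part of the Lipschitz constant by $K_\sigma(\wh C^{(j)}\de)^{1/2}\ln^2(1/\de)$. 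Using the a priori bound $\wh C^{(j)}\le\de^{-r}$ from \eqref{def:UpperBoundCs}, this is at most $K_\sigma\de^{(1-r)/2}\ln^2(1/\de)$, which is $\le K_\sigma\de^{r'}$ since the hypothesis $r'<1/2-2r<(1-r)/2$ provides a strictly positive power of $\de$ to absorb the logarithm.

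For the new mixed contributions I would use the structural information from Lemma \ref{lemma:StraightenInvManifolds}: in every $\wt R_{\mix,z}$ the elliptic variables appear only through the factors $c_{j\mp 2}^2$ or $\sum_{k\in\PP_j}|c_k|^2$. On the invariant sets $\{|c_k|\le M_{\el,\pm}\de^{(1-r)/2}\}$ and $\{|c_{j\pm 2}|\le M_{\adj,\pm}(\wh C^{(j)}\de)^{1/2}\}$, these quadratic combinations are $\OO_\sigma(\de^{1-r})$ (the cardinality $N\sim\ln(1/\de)$ of $\PP_j$ is absorbed by the remaining power of $\de$). Differentiation in the hyperbolic variables produces contributions linear in $z$, and the $\int_0^{T_j}e^{\pm\sqrt 3 t}dt$ integration produces the expected weights $(\wh C^{(j)}\de)^{-1/2}$ or $(\wh C^{(j)}\de\ln(1/\de))^{-1}$ after division by the norm weight, so that the net contribution to $\|\cdot\|_{\hyp,\ast}$ is bounded by $K_\sigma\de^{1-r}\ln(1/\de)\le K_\sigma\de^{r'}$. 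Differentiation in $d$ produces a factor of $|c_{j\mp 2}|$ or $|c_k|$ times the appropriate norm weight $M_{\adj,\pm}(\wh C^{(j)}\de)^{1/2}$ or $M_{\el,\pm}\de^{(1-r)/2}$, again giving $K_\sigma\de^{r'}$.

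For the elliptic components $\GG_{\el,c_k}$, the cubic term $\ZZZ_{\el,c_k}$ has derivatives of order $|c|^2\le K_\sigma\de^{1-r}$, while the mixed term $i\sqrt{3}c_k\wt P(z,c)$ with $\wt P=\OO(x_1y_1,x_2y_2,z_1^2z_2^2)$ is controlled by the a priori bounds \eqref{eq:Lip:SupBounds}, which give $\wt P=\OO_\sigma(\wh C^{(j)}\de\ln(1/\de))$ along the orbits; integrating in $t\in[0,T_j]$ and reweighting produces the bound $K_\sigma\de^{r'}$. The only genuinely different estimate is for the adjacent modes $c_{j\pm 2}$, where the extra term $-i\ol{c_{j\pm 2}}\wt Q_\pm(z,c)$ with $\wt Q_-=\OO(x_1,y_1)$, $\wt Q_+=\OO(x_2,y_2)$ contributes, via $|v_1|\le K\sigma$ and $|u_2|\lesssim\sigma$ near $t=T_j$, a Lipschitz constant of order $K\sigma$ rather than $K_\sigma\de^{r'}$; this is exactly the bound stated in the proposition.

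The main obstacle is the careful bookkeeping for $\wt\GG_{\hyp,u_1}$ and $\wt\GG_{\hyp,v_2}$, which are defined through composition in \eqref{def:FullSystem:OperatorModified}. Here the chain rule mixes the Lipschitz estimates from all four hyperbolic norms, and, crucially, the weight in $\|\cdot\|_{\hyp,u_1}$ is the time-dependent denominator $-\wh C^{(j)}\de\ln(1/\de)+2\nu_{02}f_1(\sigma)(x_2^\ast)^2 t+\wh C^{(j)}\de$ tailored to the cancellation \eqref{def:ChoiceEtaHat}. Any additional mixed contribution must be small enough to preserve this cancellation; the argument amounts to combining the Lipschitz bound $2/(\wh C^{(j)}\de)$ for this weight (already used in the proof of Proposition \ref{lemma:Hyp:Contractive}) with the $\de^{r'}$ smallness of the new mixed terms. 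Once these composed estimates are established, summing the four types of contributions yields $\|\wt\GG(w)-\wt\GG(w')\|_\ast\le 2K\sigma\|w-w'\|_\ast$ for $0<\de\ll\sigma$, proving contractivity.
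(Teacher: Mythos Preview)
Your proposal is correct and follows essentially the same route as the paper: decompose $\GG_{\hyp,\ast}=\FF_{\hyp,\ast}+\text{(mixed integral)}$, import the Lipschitz bounds for $\FF_\hyp$ from Proposition~\ref{lemma:Hyp:Contractive}, control the new mixed and elliptic pieces using the $\OO_\sigma(\de^{1-r})$ smallness of the quadratic combinations of $c$, and single out the adjacent modes $c_{j\pm2}$ as the worst case yielding the $K\sigma$ constant via the $\wt Q_\pm$ term. The paper carries out exactly this bookkeeping component by component, including the chain-rule step for the composed operators $\wt\GG_{\hyp,u_1}$ and $\wt\GG_{\hyp,v_2}$ that you flag as the main technical point.
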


The previous two
propositions show that the operator $\wt\GG$ is contractive.
Let us denote by $(u^*,v^*,d^*)$ its unique fixed point in the ball
$B(2\kk_1)\subset\YY$. Now,  it only remains to obtain
the estimates stated in Lemma \ref{lemma:FullModel:SaddleMap}.
The estimates for the hyperbolic variables are obtained as in the proof of
Lemma \ref{lemma:HypToyModel:SaddleMap}. For the elliptic ones it is enough
to take into account that
\[
\begin{split}
 c_k^f&=c_k(T_j)=d_k(T_j) e^{iT_j}\\
&=\GG_{\el,c_k}(0)(T_j)e^{iT_j}+\left(\GG_{\el,c_k}(u^*,v^*,d^*)(T_j)-\GG_{\el,c_k}
(0)(T_j)\right)e^{iT_j}\\
&=c_k^0e^{iT_j}+\left(\GG_{\el,c_k}(u^*,v^*,d^*)(T_j)-\GG_{\el,c_k}(0)(T_j)\right)e^{iT_j}
\end{split}
\]
and bound the second term using the Lipschitz constant obtained in
Proposition
\ref{lemma:Full:Contractive}.

We finish the section by proving
Proposition \ref{lemma:Full:Contractive}, which
completes the proof of Lemma \ref{lemma:FullModel:SaddleMap}.

\begin{proof}[Proof of Proposition \ref{lemma:Full:Contractive}]
As we have done in the proof of Proposition \ref{lemma:Hyp:Contractive},
first, we stablish bounds for any $(u,v,d)\in B(2\kk_1)\subset\YY$ in
the supremmum norm, which will be used to bound the Lipschitz constant of
each component of $\wt\GG$. Indeed, if $(u,v,d)\in B(2\kk_1)\subset\YY$,
it satisfies \eqref{eq:Lip:SupBounds} and
\[
 \begin{split}
  |d_k|&\leq K_\sigma\de^{(1-r)/2}\,\,\,\,\text{ for }\,\,\,k\in\PP_j^\pm\\
  |d_{j\pm 2}|&\leq K_\sigma\left(\wh C^{(j)}\de\right)^{1/2}\leq K_\sigma\de^{(1-r)/2}.
 \end{split}
\]
We bound the Lipschitz constant for each component of $\wt\GG_\el$. We split
each component of the operator between the elliptic, hyperbolic and mixed part.
We deal first with the elliptic part. It can be seen that for $k\in\PP_j^{\pm}$,
\[
\begin{split}
\left|\ZZZ_{\el,c_k} \left(d'e^{it}\right)- \ZZZ_{\el,c_k} \left(de^{it}\right)
\right|\leq& K_\sigma\de^{1-r} N (d_k-d_k')\\
&+K_\sigma\de\sum_{\ell\in\PP_j\setminus\{ k\}}(d_\ell-d_\ell').
\end{split}
\]
Therefore,
\[
\left\|\int_0^{T_j} e^{-it} \left(\ZZZ_{\el,c_k}
\left(de^{it}\right)-\ZZZ_{\el,c_k}  \left(d'e^{it}\right)\right)
dt\right\|_{\el,\pm}\leq K_\sigma\de^{1-r} N T_j\|(u,v,d)-(u',v',d')\|_\ast.
\]
Proceeding analogously, one can see also that
\[
\left\|\int_0^{T_j} e^{-it} \left(\ZZZ_{\el,c_{j\pm 2}}
\left(de^{it}\right)-\ZZZ_{\el,c_{j\pm 2}}  \left(d'e^{it}\right)\right)
dt\right\|_{\adj,\pm}\leq K_\sigma\de^{1-r} N T_j\|(u,v,d)-(u',v',d')\|_\ast.
\]
Now we bound the mixed terms. Proceeding analogously and considering the
properties of $\wt R_{\mix,c_k}$ stated in Lemma
\ref{lemma:StraightenInvManifolds}, we can see that for $m\neq j\pm 2$,
\[
 \begin{split}
  \left\|\wt R_{\mix,c_k} \right.&\left.
\left(ue^{\sqrt{3}t},ve^{-\sqrt{3}t},de^{it}\right)-\wt R_{\mix,c_k}
\left(u'e^{\sqrt{3}t},v'e^{-\sqrt{3}t},d'e^{it}\right)\right\|_{\el,\pm}\\
\leq&K_\sigma \wh
C^{(j)}\de\ln^2(1/\de)\sum_{i=1,2}\left(\|u_i-u_i'\|_{\hyp,u_i}+\|v_i-v_i'\|_{
\hyp,v_i}\right)\\
&+K_\sigma\wh
C^{(j)}\de\ln^2(1/\de)\left(\left\|d_k-d_k'\right\|_{\el,\pm}+K_\sigma\de^{(1-r)/2}
\sum_{\ell\in\PP_j^\pm}\left\|d_\ell-d_\ell'\right\|_{\el,\pm}\right)\\
&+K_\sigma \wh
C^{(j)}\de^{1+(1-r)/2}\ln^2(1/\de)\left(\left\|d_{j-2}-d_{j-2}'\right\|_{\adj,-}
+\left\|d_{j+2}-d_{j+2}'\right\|_{\adj,+}\right)\\
&\leq K_\sigma \wh
C^{(j)}\de\ln^2(1/\de)\left(1+K_\sigma N\de^{(1-r)/2}\right)\left\|(u,v,d)-(u',v',
d')\right\|_\ast.
 \end{split}
\]
Therefore, using that $\de=e^{-\ga N}$ and \eqref{def:BoundSaddleTime},
\[
\begin{split}
\left\|\int_0^{T_j} e^{-it}\left(\wt R_{\mix,c_k}
(ue^{\sqrt{3}t},ve^{-\sqrt{3}t},de^{it})-\wt R_{\mix,c_k}
(u'e^{\sqrt{3}t},v'e^{-\sqrt{3}t},d'e^{it})\right)dt\right\|_{\el,\pm}\\
\quad\quad\quad\quad\quad\quad\quad\quad\quad\leq K_\sigma \wh
C^{(j)}\de\ln^3(1/\de)\|(u,v,d)-(u',v',d')\|_\ast.
\end{split}
\]
So, we can conclude that for $m\in\PP^\pm$,
\[
\left\|\GG_{\el,c_k}(u,v,d)-\GG_{\el,c_k}(u',v',d')\right\|_{\el,\pm}\leq
K_\sigma\de^{1-r}\ln^3(1/\de)\|(u,v,d)-(u',v',d')\|_\ast.
\]
Proceeding analogously we can bound the Lipschitz constant for $\GG_{\el,c_{j\pm
2}}$. We bound it for $m=j-2$, the other case can be done analogously. Here $K$ denotes a generic constant independent of $\sigma$.  Note that
now there is an additional term in $\wt R_{\mix,c_{j-2}}$. This implies that
\[
 \begin{split}
  \left|\wt R_{\mix,c_{j-2}}
(ue^{\sqrt{3}t}\right.&\left.,ve^{-\sqrt{3}t},de^{it})-\wt R_{\mix,c_{j-2}}
(u'e^{\sqrt{3}t},v'e^{-\sqrt{3}t},d'e^{it})\right|\\
&\leq K\sigma M_{\adj,-}\left(\wh C^{(j)}\de\right)^{1/2}
e^{-\sqrt{3}t}\sum_{i=1,2}\left(\|u_i-u_i'\|_{\hyp,u_i}+\|v_i-v_i'\|_{\hyp,v_i}
\right)\\
&+K\sigma M_{\adj,-}\left(\wh C^{(j)}\de\right)^{1/2}
e^{-\sqrt{3}t}\left\|d_{j-2}-d_{j-2}'\right\|_{\adj,-}\\
&+K_\sigma M_{\adj,-}\left(\wh C^{(j)}\de\right)^{1/2}\de^{(1-r)/2}
e^{-\sqrt{3}t}\left(\left\|d_{j+2}-d_{j+2}'\right\|_{\adj,+}+\sum_{\ell\in\PP_j^\pm}
\left\|d_\ell-d_\ell'\right\|_{\el,\pm}\right)\\
&\leq  K\sigma M_{\adj,-}\left(\wh C^{(j)}\de\right)^{1/2}
e^{-\sqrt{3}t}\left\|(u,v,d)-(u',v',d')\right\|_\ast.
 \end{split}
\]
Therefore, integrating and applying norms, we obtain
\[
\begin{split}
\left\|\int_0^{T_j} e^{-it}\left(\wt R_{\mix,c_{j-2}}
\left(ue^{\sqrt{3}t},ve^{-\sqrt{3}t},de^{it}\right)-\wt R_{\mix,c_{j-2}}
\left(u'e^{\sqrt{3}t},v'e^{-\sqrt{3}t},d'e^{it}\right)\right)dt\right\|_{\adj,-}
\\
\quad\quad\quad\quad\quad\quad\quad\quad\quad\leq
K\sigma\|(u,v,d)-(u',v',d')\|_\ast,
\end{split}
\]
which leads to
\[
\left\|\GG_{\el,c_{j-2}}(u,v,d)-\GG_{\el,c_{j-2}}(u',v',d')\right\|_{\adj,-}\leq
K\sigma\|(u,v,d)-(u',v',d')\|_\ast.
\]
Now we bound the Lipschitz constant for the hyperbolic components of the
operator. Note that we only need to bound the terms involving $\wt R_{\mix,z}$
since the other terms of the operator have been bounded in
Proposition
\ref{lemma:Hyp:Contractive}. We start with the Lipschitz constants of
$\GG_{\hyp,v_i}$. To this end we bound
\[
\begin{split}
& \left|\int_0^{T_j} e^{\sqrt{3}t}\left(\wt R_{\mix, y_i}
(ue^{\sqrt{3}t},ve^{-\sqrt{3}t},de^{it})-\wt R_{\mix, y_i}
(ue^{\sqrt{3}t},ve^{-\sqrt{3}t},de^{it})\right)dt\right|\\
&\qquad\qquad\leq \int_0^{T_j} \left(\OO\left(\sum_{k\in\PP_j}
|d_k|^2(v_1+v_2)\right)
e^{\sqrt{3}t}\left|u_i-u_i'\right|+\OO\left(\sum_{k\in\PP_j} |d_k|^2\right) \sum
|v_i-v_i'|\right)dt\\
&\qquad\qquad\qquad+\int_0^{T_j} \sum_{k\in \PP_j} \OO(d_k( v_1+v_2))
|d_k-d_k'|dt,
\end{split}
\]
where we abuse notation concerning the $\OO(\cdot)$ as before. Thus, integrating
the exponentials and applying norms, one can easily see that
\[
\begin{split}
& \left|\int_0^{T_j} e^{\sqrt{3}t}\left(\wt R_{\mix, y_i}
(ue^{\sqrt{3}t},ve^{-\sqrt{3}t},de^{it})-\wt R_{\mix, y_i}
(ue^{\sqrt{3}t},ve^{-\sqrt{3}t},de^{it})\right)dt\right|\\
&\qquad\qquad\leq K_\sigma N \de^{1-r} \ln(1/\de) \|(u,v,d)-(u',v',d')\|_\ast.
\end{split}
\]
Therefore, applying norms and using condition on $\de$ from
Theorem \ref{thm:ToyModelOrbit}, we obtain
\[
\begin{split}
& \left\|\int_0^{T_j} e^{\sqrt{3}t}\left(\wt R_{\mix, y_i}
\left(ue^{\sqrt{3}t},ve^{-\sqrt{3}t},de^{it}\right)-\wt R_{\mix, y_i}
\left(ue^{\sqrt{3}t},ve^{-\sqrt{3}t},de^{it}\right)\right)dt\right\|_{\hyp, v_1}\\
&\qquad\qquad\leq K_\sigma \de^{1-r} \ln^2(1/\de) \|(u,v,d)-(u'v',d')\|_\ast\\
& \left\|\int_0^{T_j} e^{\sqrt{3}t}\left(\wt R_{\mix, y_i}
\left(ue^{\sqrt{3}t},ve^{-\sqrt{3}t},de^{it}\right)-\wt R_{\mix, y_i}
\left(ue^{\sqrt{3}t},ve^{-\sqrt{3}t},de^{it}\right)\right)dt\right\|_{\hyp, v_2}\\
&\qquad\qquad\leq K_\sigma \de^{1/2-2r} \ln(1/\de) \|(u,v,d)-(u'v',d')\|_\ast.
\end{split}
\]
Then,  taking into account the results of Lemma \ref{lemma:Hyp:Contractive}, one
can conclude that
\[
\begin{split}
&\left\|\wt\GG_{\hyp,v_1}(u,v,d)-\wt\GG_{\hyp,v_1}(u',v',d')\right\|_{\hyp,v_1}
\leq\\&\qquad\qquad\qquad\leq K_\sigma\left(\left(\wh C^{(j)}
\de\right)^{1/2}\ln(1/\de)+\de^{1-r}
\ln^2(1/\de)\right)\left\|(u,v,d)-(u',v',d')\right\|_\ast\\
&\left\|\wt\GG_{\hyp,v_2}(u,v,d)-\wt\GG_{\hyp,v_2}(u',v',d')\right\|_{\hyp,v_2}
\leq\\&\qquad\qquad\qquad\leq K_\sigma\left(\left(\wh C^{(j)}
\de\right)^{1/2}\ln(1/\de)+ \de^{1/2-2r} \ln(1/\de)
\right)\left\|(u,v,d)-(u',v',d')\right\|_\ast.
\end{split}
\]
Proceeding in the same way, one can obtain that
\[
\begin{split}
&\left\|\wt\GG_{\hyp,u_1}(u,v,d)-\wt\GG_{\hyp,u_1}(u',v',d')\right\|_{\hyp,u_1}
\leq\\
&\qquad\qquad\qquad\leq  K_\sigma\left(\left(\wh C^{(j)}
\de\right)^{1/2}\ln(1/\de)+\de^{1-r}
\ln^2(1/\de)\right)\left\|(u,v,d)-(u',v',d')\right\|_\ast\\
&\left\|\wt\GG_{\hyp,u_2}(u,v,d)-\wt\GG_{\hyp,u_2}(u',v',d')\right\|_{\hyp,u_2}
\leq \\
&\qquad\qquad\qquad\leq K_\sigma\left(\left(\wh C^{(j)}
\de\right)^{1/2}\ln(1/\de)+\de^{1/2-2r}
\ln^2(1/\de)\right)\left\|(u,v,d)-(u',v',d')\right\|_\ast.
\end{split}
\]
This completes
the proof.
\end{proof}

\section{The global map: proof of Lemma
\ref{lemma:iterative:hetero}}\label{sec:ProofHeteroMap}
We devote this section to prove Lemma \ref{lemma:iterative:hetero}.
The continuous dependence with respect to initial conditions of ordinary
differential equations gives for free that the map $\BB_\glob^j$,
defined in \eqref{def:HeteroMap}, is well defined for points
close enough to the heteroclinic connection defined in \eqref{def:heteroclinic}.
Nevertheless, to prove Lemma \ref{lemma:iterative:hetero}, we need
more accurate estimates.

Recall that the map $\BB_\glob^j$ is defined in $\Sigma_j^\out$, which
is contained in  $\MM(b)=1$ (see \eqref{def:mass}). So, as we have
done for $\BB_\loc^j$, we use the system of coordinates defined in
Section \ref{sec:SaddleMapFirstChanges}.  Recall that the initial
section $\Sigma^{\out}_{j}$, defined in \eqref{def:Section2Saddle},
and the final section $\Sigma^{\inn}_{j+1}$,
defined in \eqref{def:Section1Saddle}, are expressed in the variables
adapted to the
$j^{th}$ and $(j+1)^{st}$ saddles respectively. Namely, in the coordinates
$(p_1^{(j)}, q_1^{(j)},  p_2^{(j)}, q_2^{(j)},
c^{(j)})$ and  $(p_1^{(j+1)}, q_1^{(j+1)},  p_2^{(j+1)}, q_2^{(j+1)},
c^{(j+1)})$ (see Section \ref{sec:ProofHeteroMap}). To simplify
the exposition, first we will study the map $\BB_\glob^j$ expressing
both the domain and the image in the variables
$( p_1^{(j)}, q_1^{(j)},  p_2^{(j)}, q_2^{(j)}, c^{(j)})$. Then we will
express the image of $\BB_\glob^j$ in the new variables. To simplify
notation we denote the variables adapted to the
$j^{th}$ and $(j+1)^{st}$ saddles by
\[
( p_1, q_1, p_2, q_2, c)=
\left(p_1^{(j)},q_1^{(j)}, p_2^{(j)}, q_2^{(j)}, c^{(j)}\right)
\]  and
\[
\left(\wt p_1,\wt q_1, \wt p_2, \wt q_2, \wt c\right)=
\left(p_1^{(j+1)},q_1^{(j+1)}, p_2^{(j+1)},  q_2^{(j+1)},
c^{(j+1)}\right)
\]
and we
denote by $\Theta^j$ the change of coordinates that relates them, namely
\[
 (\wt p_1,\wt q_1, \wt p_2, \wt q_2, \wt c)=\Theta^j( p_1, q_1, p_2, q_2, c).
\]

\begin{lemma}\label{lemma:ChangeOfSaddle}
The change of coordinates $\Theta^j$ is given by
\begin{align*}
\Theta_{\wt c_k}^j( p_1, q_1, p_2, q_2, c)&=\frac{\omega p_2+\omega^2 q_2}{\wt r}c_k &\text{ for }k\in \PP_{j+1}^\pm \cup \{j+3\}\\
\Theta_{\wt c_{j-1}}^j( p_1, q_1, p_2, q_2, c)&=\frac{\omega p_2+\omega^2 q_2}{\wt r}\left(\omega^2 p_1+\omega q_1\right)\\
\Theta_{\wt p_1}^j( p_1, q_1, p_2, q_2, c)&= \frac{r}{\wt r}q_2\\
\Theta_{\wt q_1}^j( p_1, q_1, p_2, q_2, c)&= \frac{r}{\wt r}p_2\\
\Theta_{\wt p_2}^j( p_1, q_1, p_2, q_2, c)&= \Re z+\frac{\sqrt{3}}{3}\Im z\\
\Theta_{\wt q_2}^j( p_1, q_1, p_2, q_2, c)&= \Re z-\frac{\sqrt{3}}{3}\Im z,
\end{align*}
where $\omega=e^{2\pi i/3}$ and
\begin{align}
r^2=&1-\sum_{k\neq j-1,j,j+1} |c_k|^2-(p_1^2+q_1^2-p_1q_1)-(p_2^2+q_2^2-p_2q_2)\label{def:RadiusInSaddle}\\
\wt r^2=&p_2^2+q_2^2- p_2q_2\notag\\
z=&\frac{c_{j+2}}{\wt r}\left(\omega p_2+\omega^2 q_2\right)\notag.
\end{align}
\end{lemma}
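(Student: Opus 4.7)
The plan is to derive $\Theta^j$ by composing the two explicit chart transitions described in Section \ref{sec:SaddleMapFirstChanges}: go from the saddle-$j$ variables back to the physical Fourier coefficients $b$, and then back down to the saddle-$(j+1)$ variables. Both legs consist of the symplectic reduction using $\MM(b)=1$ followed by the linear diagonalization \eqref{def:ChangeToDiagonal}, so the composition reduces to (i) computing the phase shift $e^{i(\theta^{(j)}-\theta^{(j+1)})}$, (ii) multiplying the appropriate modes by this phase, and (iii) inverting one linear change of real variables for the diagonalized modes.

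First I would rewrite everything in terms of $b$. In the saddle-$j$ chart we have $b_j = r e^{i\theta^{(j)}}$ with $r$ real, $b_{j\pm 1}=(\omega^2 p_i+\omega q_i)e^{i\theta^{(j)}}$, and $b_k=c_k e^{i\theta^{(j)}}$ for the remaining indices, and the mass constraint combined with the identity $|\omega^2 p+\omega q|^2 = p^2+q^2-pq$ (an immediate consequence of $1+\omega+\omega^2=0$) yields exactly \eqref{def:RadiusInSaddle}. The analogous formulas with $j$ replaced by $j+1$ describe the saddle-$(j+1)$ chart. Matching the two descriptions of $b_{j+1}$, namely $(\omega^2 p_2+\omega q_2)e^{i\theta^{(j)}}=\wt r e^{i\theta^{(j+1)}}$, gives $\wt r = \sqrt{p_2^2+q_2^2-p_2q_2}$ and
\[
 e^{i(\theta^{(j)}-\theta^{(j+1)})}=\frac{\wt r}{\omega^2 p_2+\omega q_2}=\frac{\omega p_2+\omega^2 q_2}{\wt r},
\]
where the second equality uses $\overline{\omega^2 p_2+\omega q_2}=\omega p_2+\omega^2 q_2$ (since $p_2,q_2$ are real and $\bar\omega=\omega^2$) together with $|\omega^2p_2+\omega q_2|^2=\wt r^2$. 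Rewriting the phase in this conjugated form is the one slightly nontrivial algebraic move; it is what makes the final formulas polynomial in the real variables instead of involving square roots.

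Next I would apply the phase shift mode by mode. For each index $k\in\PP_{j+1}^\pm\cup\{j+3\}$ we have $\wt c_k = b_k e^{-i\theta^{(j+1)}} = c_k\,e^{i(\theta^{(j)}-\theta^{(j+1)})}$, giving the first family of formulas, and for $k=j-1$ the same identity with $c_{j-1}^{(j)}$ replaced by its diagonalized form $\omega^2 p_1+\omega q_1$ yields the $\Theta^j_{\wt c_{j-1}}$ formula. For the diagonalized modes at level $j+1$, the relations $c_j^{(j+1)}=\omega^2\wt p_1+\omega\wt q_1=r\cdot\frac{\omega p_2+\omega^2 q_2}{\wt r}$ and $c_{j+2}^{(j+1)}=\omega^2\wt p_2+\omega\wt q_2=z$ must be inverted in terms of real $(\wt p_i,\wt q_i)$. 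Separating real and imaginary parts using $\omega=-\tfrac12+\tfrac{\sqrt 3}{2}i$ gives the unique real solution; for the first relation the imaginary part $-(p_2+q_2)/2$ and real part structure collapse nicely, producing $\wt p_1=\frac{r}{\wt r}q_2$, $\wt q_1=\frac{r}{\wt r}p_2$, and applying the same inversion to $z$ produces the stated expressions $\wt p_2=\Re z+\tfrac{\sqrt 3}{3}\Im z$, $\wt q_2=\Re z-\tfrac{\sqrt 3}{3}\Im z$. The whole argument is essentially bookkeeping; there is no analytic obstacle, only the need to keep careful track of phases and of the $\omega,\omega^2$ linear combinations when inverting the diagonalization.
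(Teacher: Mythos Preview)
Your proposal is correct and follows essentially the same route as the paper: undo the reduction and diagonalization at saddle $j$, reapply them at saddle $j+1$, extract the phase $e^{i(\theta^{(j)}-\theta^{(j+1)})}=(\omega p_2+\omega^2 q_2)/\wt r$ from the two expressions for $b_{j+1}$, and then read off the remaining components. You are in fact more explicit than the paper about the conjugation identity for the phase and about inverting $\omega^2\wt p_i+\omega\wt q_i$ by separating real and imaginary parts, where the paper simply says the remaining components are obtained ``proceeding in the same way.''
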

\begin{proof}
We consider a point $(p,q,c)$ and we express it in the new variables.
We have to undo the changes \eqref{def:ChangeToDiagonal} and
\eqref{def:SaddleAdaptedCoordinates} referred to the saddle $j$ and
then apply them again but referred to the saddle $j+1$.
The point $(p,q,c)$ has associated variables $r$
(as defined in \eqref{def:RadiusInSaddle}) and $\theta$.
We do not need to know the value of $\theta$ to deduce the form of
the change $\Theta^j$. Indeed, note that if we consider the changes
 \eqref{def:SaddleAdaptedCoordinates} and \eqref{def:ChangeToDiagonal}
for the mode $b_{j+1}$ we have
\[
\wt  re^{i\wt \theta}=b_{j+1}=c_{j+1}e^{i\theta}=
\left(\omega^2 p_2+\omega q_2\right)e^{i\theta},
\]
which implies
\begin{equation}\label{def:Change:AngleRelation}
 e^{i(\theta-\wt\theta)}=\frac{\omega p_2+\omega^2 q_2}{\wt r}.
\end{equation}
Using this formula and recalling that
$\wt c_k e^{i\wt\theta}=b_k=c_k e^{i\theta}$, it is straightforward
to deduce the form of $\Theta_{\wt c_k}^j$ for
$k\in \PP_{j+1}^\pm \cup \{j+3\}$. To deduce the form of
$\Theta_{\wt p_1}^j$ and $\Theta_{\wt q_1}^j$ it is enough
to consider the changes
 \eqref{def:SaddleAdaptedCoordinates} and \eqref{def:ChangeToDiagonal}
for the mode $b_j$ to obtain
\[
 re^{i\theta}=b_j=\wt c_je^{i\wt \theta}=
\left(\omega^2 \wt p_1+\omega \wt q_1\right)e^{i\wt \theta}
\]
Then, it is enough to use formula \eqref{def:Change:AngleRelation}
to obtain $\Theta_{\wt p_1}^j$ and $\Theta_{\wt q_1}^j$. The others
components can be obtained proceeding 
in the same way.
\end{proof}

The next step 
of the proof of
Lemma \ref{lemma:iterative:hetero} is to express the
section $\Sigma_{j+1}^{\inn}$ in the variables $(p_1,q_1, p_2,  q_2, c)$
using the change $\Theta^j$ obtained in Lemma \ref{lemma:ChangeOfSaddle}.
This is done in the next corollary, which is a straightforward
consequence of Lemma  \ref{lemma:ChangeOfSaddle}.

\begin{corollary}\label{coro:SectionOldVariables}
Fix $\sigma>0$ and define the set
\[
 \wt \Sigma_{j+1}^{\inn}=\left(\Theta^j\right)^{-1}
\left(\Sigma_{j+1}^{\inn}\cap\WW_{j+1}\right),
\]
where $\Sigma^{\inn}_{j+1}$ is the section defined in
\eqref{def:Section1Saddle} and
\[
 \WW_{j+1}=\left\{|p_1|\leq \eta, |q_1|\leq \eta,
|q_2|\leq \eta, |c_k|\leq \eta\,\,\text{for }k\in\PP_j^\pm
\,\text{ and }k=j\pm 2\right\},
\]
Then, for $\eta>0$ small enough, $ \WW_{j+1}$
can be expressed  as a graph as
\[
 p_2=w(p_1,q_1,  q_2, c).
\]
Moreover, there exist constants $\kk',\kk''$
independent of $\eta$   satisfying
\[
 0<\kk'<\sqrt{1-\sigma^2}<\kk''<1
\]
such that, for any  $(p_1,q_1,q_2,c)\in\WW_{j+1}$,
the function $w$ satisfies
\[
\kk'< w(p_1,q_1,  q_2, c)<\kk''.
\]
\end{corollary}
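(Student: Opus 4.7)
My plan is to reduce the statement to a one-variable implicit function theorem. By Lemma \ref{lemma:ChangeOfSaddle} the component $\Theta^j_{\tilde q_1}$ equals $(r/\tilde r)\,p_2$, so the equation $\tilde q_1=\sigma$ defining $\Sigma_{j+1}^{\inn}$ becomes, in the $(p_1,q_1,p_2,q_2,c)$ variables, the scalar equation
$$F(p_1,q_1,p_2,q_2,c):=r\,p_2-\sigma\,\tilde r=0,$$
with $r,\tilde r$ as in \eqref{def:RadiusInSaddle}. I will invert this for $p_2$ around a natural base point supplied by the heteroclinic $\ga_j^+$.

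To identify the base point I will use that $\ga_j^+$ lies in the invariant plane $L_j=\{b_k=0: k\ne j,j+1\}$ and approaches $\TT_{j+1}$ along its stable manifold. Inspection of the $(\tilde p_1,\tilde q_1)$-Hamiltonian restricted to $L_j$ shows that every monomial carries at least one power of $\tilde p_1$, so $\{\tilde p_1=0\}$ is invariant and coincides with the stable manifold of $\TT_{j+1}$ inside $L_j$; hence $\ga_j^+$ meets $\Sigma_{j+1}^{\inn}$ at $(\tilde p_1,\tilde q_1,\tilde p_2,\tilde q_2,\tilde c)=(0,\sigma,0,0,0)$. Pulling this point back via Lemma \ref{lemma:ChangeOfSaddle}: the identity $\Theta^j_{\tilde p_1}=(r/\tilde r)q_2=0$ forces $q_2=0$ (since $r\ne 0$), the vanishing of every $\tilde c_k$ forces $c=0$ and $p_1=q_1=0$, while the section condition combined with mass conservation $r^2+\tilde r^2=1$ yields $r=\sigma$ and $p_2=\tilde r=\sqrt{1-\sigma^2}$.

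To check non-degeneracy at this base point I compute $\partial_{p_2}F$ directly. Setting $A:=1-\sum_{k\ne j-1,j,j+1}|c_k|^2-(p_1^2+q_1^2-p_1 q_1)$, so that $r^2=A-\tilde r^2$ with $A$ independent of $(p_2,q_2)$, and using $\tilde r^2=p_2^2+q_2^2-p_2 q_2$, I obtain $\partial_{p_2}r\big|_*=-\sqrt{1-\sigma^2}/\sigma$ and $\partial_{p_2}\tilde r\big|_*=1$, hence
$$\partial_{p_2}F\big|_*=(\partial_{p_2}r)\,p_2+r-\sigma\,\partial_{p_2}\tilde r\big|_*=-\frac{1-\sigma^2}{\sigma}\ne 0$$
for $0<\sigma<1$. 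The implicit function theorem then produces a smooth $p_2=w(p_1,q_1,q_2,c)$ solving $F=0$ in a neighborhood of the origin in $(p_1,q_1,q_2,c)$-space, with $w(0,0,0,0)=\sqrt{1-\sigma^2}$. For any preassigned constants $0<\kk'<\sqrt{1-\sigma^2}<\kk''<1$, continuity of $w$ upgrades this value to the uniform bound $\kk'<w<\kk''$ throughout $\WW_{j+1}$ by taking $\eta>0$ sufficiently small. The only genuinely subtle step is the base-point identification, and in particular pinning down that $\ga_j^+$ lands along the $\tilde q_1$-axis of the stable block of $\TT_{j+1}$; that is what selects the correct branch of $r\,p_2=\sigma\,\tilde r$ and makes the IFT application unambiguous. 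Everything after is routine.
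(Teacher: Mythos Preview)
Your proof is correct and is exactly the kind of argument the paper has in mind: the paper records no proof beyond calling the corollary ``a straightforward consequence of Lemma~\ref{lemma:ChangeOfSaddle}'', and your implicit-function-theorem computation from the explicit formula $\Theta^j_{\tilde q_1}=(r/\tilde r)\,p_2$ is precisely that straightforward deduction spelled out in detail. The base-point identification via the heteroclinic is a nice touch but not strictly needed---one can simply observe that $(p_1,q_1,p_2,q_2,c)=(0,0,\sqrt{1-\sigma^2},0,0)$ solves $F=0$ and proceed from there; the paper in fact records this value just below the corollary when discussing the heteroclinic transit time.
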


Once we have defined the section $\wt\Sigma_{j+1}^{\inn}$,
we can define the map
\[
\begin{array}{cccc}
  \wt\BB_\glob^j:&\UU_j\subset\Sigma_j^{\out}&\longrightarrow
&\wt\Sigma_{j+1}^{\inn}\\
&(p_1,q_1,q_2,c)&\mapsto&\wt\BB_\glob^j(p_1,q_1,q_2,c)
\end{array}
 \]
as
\[
 \wt\BB_\glob^j=\Theta_j^{-1}\circ\BB_\glob^j.
\]
We want upper bounds independent of $\de$ and $j$ for
the transition time of the correponding orbits for
this map. In the variables $(p_1,q_1, p_2,  q_2, c)$
the heteroclinic connection \eqref{def:heteroclinic} is
simply given by
\begin{equation}\label{def:heteroclinic:straightened}
\left(p_1^h(t),q_1^h(t),p_2^h(t),q_2^h(t),c^h(t)\right)=
\left(0,0,\frac{1}{1+e^{2\sqrt{3}(t-t_0)}},0,0\right)
\end{equation}
(see \cite{CollianderKSTT10}). Taking $t_0$ such that
\[
\frac{1}{1+e^{2\sqrt{3}t_0}}=\sigma,
\]
one can easily see that $p_2^h(2t_0)=\sqrt{1-\sigma^2}$ and $2t_0\sim\ln(1/\sigma)$.
In the new coordinates this point is  $(\wt p_1,\wt q_1, \wt p_2, \wt q_2, \wt
c)=(0,\sigma, 0,0,0)$ and thus belongs to the section $\wt q_1=\sigma$. Then, thanks to Corollary \ref{coro:SectionOldVariables}, one can easily deduce
that the time $T_{\wt \BB_\glob^j}=T_{\wt \BB_\glob^j}(q_1,p_1,p_2,c)$ spent by
the map $\wt\BB_\glob^j$ for any point $(q_1,p_1,p_2,c)\in\UU_j\subset\Sigma_j^\out$ is also independent of $\de$ and $j$. Recall that the
difference between $\wt\BB_\glob^j$ and $\BB_\glob^j$ is just a change of
coordinates and therefore the time $T_{\BB_\glob^j}$ spent  by $\BB_\glob^j$ is the same
as $T_{\wt \BB_\glob^j}$. Thus, from know on we will only refer to $T_{\BB_\glob^j}$.

Next step is to study the behavior of the map $\wt \BB_\glob^j$. In particular, we want to know the properties of the image set $\wt \BB_\glob^j(\UU_j)$.
\begin{proposition}\label{prop:HeteroMap}
Let us consider a parameter set $\wt \II_j$ (as defined in Definition \ref{definition:ModifiedProductLike}) and a $\wt \II_j$-product-like set $\UU_j$. Then, there exists a constant $\wt K_\sigma$ independent of $j$,  $N$ and $\de$ and a constant $D^{(j)}$ satisfying
\[
\wt C^{(j)} /\wt K_\sigma\leq D^{(j)}\leq \wt K_\sigma \wt C^{(j)},
\]
such that the set $\wt \BB_\glob^j(\UU_j)\subset \wt\Sigma_j^\inn$ satisfies the following conditions:
\begin{description}
 \item[\textbf{C1}]
\[
\wt \BB_\glob^j(\UU_j)\subset \wh\DD_j^1\times\ldots\times\wh\DD_j^{j-2}\times \SSS_j\times
\wh\DD_j^{j+2}\times\ldots\times\wh\DD_j^{N}
\]
where
\begin{align*}
\wh\DD_j^k&=\left\{\left|c_k\right|\leq  \left(\wt M^{(j)}_{\el,\pm}+\wt K_\sigma\de^{r'}\right)
\de^{(1-r)/2}\right\} \,\,\text{ for }k\in \PP_j^\pm\\
\wh\DD_j^{j\pm 2}&\subset\left\{\left|c_{j\pm2}\right|\leq
\wt K_\sigma\wt M^{(j)}_{\adj,\pm} \left(\wt C^{(j)}\de\right)^{1/2}\right\},
\end{align*}
and
\[
\begin{split}
\SSS_{j}= \Big\{& (p_1,q_1,p_2,q_2)\in \RR^4:
|p_1|,|q_1|\leq \wt K_\sigma\wt M_\hyp^{(j)}\left(\wt C^{(j)}\de\right)^{1/2},\\
&p_2=\sigma,  - D^{(j)}\,\de\,\left(\ln(1/\de)-\wt K_\sigma\right)\leq
q_2^{(j)}\leq -D^{(j)}\,\de\,\left(\ln(1/\de)+\wt K_\sigma\right) \Big\},
\end{split}
\]
\item[\textbf{C2}] Let us define the projection $\wt \pi(p,q,c)=(p_2,q_2,c_{j-2},\ldots,c_N)$. Then,
\[
\left[- D^{(j)}\,\de\,(\ln(1/\de)-1/\wt K_\sigma), -
D^{(j)}\,\de\,(\ln(1/\de)+1/\wt K_\sigma)\right]\times  \{\sigma\}\times
\DD_{j,-}^{j+2}\times\ldots\times\DD_{j,-}^{N} \subset\wt\pi\left(\wt \BB_\glob^j(\UU_j)\right)
\]
where
\begin{align*}
\DD_{j,-}^k&=\left\{\left|c_k^{(j)}\right|\leq  \left(\wt m^{(j)}_{\el}-\wt K_\sigma \de^{r'}\right)
\de^{(1-r)/2}\right\} \,\,\text{ for }k\in \PP_j^+\\
\DD_{j,-}^{j+ 2}&=\left\{\left|c_{j+2}^{(j)}\right|\leq
\wt m^{(j)}_{\adj} \left(C^{(j)}\de\right)^{1/2}/\wt K_\sigma\right\}.
\end{align*}
\end{description}

\end{proposition}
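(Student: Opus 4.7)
The plan is to leverage that the transit time $T_{\BB_\glob^j}$ is bounded by some $\wt K_\sigma$ independent of $\de$, $N$, $j$ (as argued just after Corollary \ref{coro:SectionOldVariables}), so an orbit starting in $\UU_j$ can be treated as a bounded-time perturbation of the unperturbed heteroclinic $\ga_j$ given by (\ref{def:heteroclinic:straightened}). First I would obtain a priori bounds by continuous dependence and Gronwall: every point of the $\wt \II_j$-product-like set $\UU_j$ is $O_\sigma(\de^{(1-r)/2})$-close to $\ga_j$ at time zero, and since the vector field (\ref{def:VF:Full:AfterDiagonal}) is smooth and the time interval is bounded, the forward trajectory stays $O_\sigma(\de^{(1-r)/2})$-close to $\ga_j$ throughout transit and intersects the graph section $\{p_2 = w(p_1,q_1,q_2,c)\}$ of Corollary \ref{coro:SectionOldVariables} transversally at a unique time $T^\ast = T^\ast(p_1^0, q_1^0, q_2^0, c^0)$, well defined by the implicit function theorem since $\dot p_2^h\ne 0$ on $\ga_j$ at the crossing.

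Second, I would upgrade these a priori bounds to sharp estimates by splitting (\ref{def:VF:Full:AfterDiagonal}) along $\ga_j$ into three natural blocks: the \emph{heteroclinic block} $(p_2, q_2)$, which at leading order is the two-dimensional integrable system on the plane $L_j$ studied in Section \ref{sec:FeaturesModel}; the \emph{transverse hyperbolic block} $(p_1, q_1)$, exponentially contracting/expanding but, over bounded time, only producing an $O_\sigma(1)$ linear map; and the \emph{elliptic block} $c = (c_k)_{k \in \PP_j \cup \{j \pm 2\}}$, whose unperturbed flow is the pure rotation $c_k(t) = c_k^0 e^{it}$. Writing the integral equation associated to (\ref{def:VF:Full:AfterDiagonal}) by Duhamel and running a Banach fixed-point argument over $[0, T^\ast]$, in complete analogy with the proofs of Lemma \ref{lemma:FullModel:SaddleMap} and Lemma \ref{lemma:Saddle:AdjustingSection}, produces asymptotic formulas for $(p_1^f, q_1^f, q_2^f, c^f)$ with errors quantified by the small size of the perturbation and the bounded transit time.

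The crucial block is the heteroclinic one. The map $q_2^0 \mapsto q_2^f$ obtained by flowing the planar Hamiltonian system $(h_j, \MM_j)$ from $\{p_2 = \sigma\}$ to $\{p_2 = w\}$ is a smooth diffeomorphism near $q_2^0 = 0$, with non-zero derivative $L^{(j)} = L^{(j)}(\sigma)$ at the origin (this is the transversality of $\ga_j$ with the exit graph). Consequently the input interval for $q_2^0$ supplied by condition \textbf{C2} of the $\wt \II_j$-product-like structure of $\UU_j$ maps onto a near-interval of width $\sim L^{(j)} \wt C^{(j)} \de \,\wt m_\hyp^{(j)}$ centered at $\sim - L^{(j)} \wt C^{(j)} \de \ln(1/\de)$; setting $D^{(j)} := L^{(j)} \wt C^{(j)}$ yields the required comparison $\wt C^{(j)}/\wt K_\sigma \le D^{(j)} \le \wt K_\sigma \wt C^{(j)}$ and the claimed shape of $\SSS_j$. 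The elliptic modes $c_k$ with $k \in \PP_j^\pm$ satisfy $|c_k^f| = |c_k^0| + O_\sigma(\de^{r'} \de^{(1-r)/2})$, because their coupling with the hyperbolic sector has size $O(\de)$ integrated over bounded time, producing the $\wt K_\sigma \de^{r'}$ correction in $\wh \DD_j^k$. The adjacent modes $c_{j \pm 2}$ carry the extra factor $\wt K_\sigma$ in $\wh \DD_j^{j\pm 2}$ coming from the $O(1)$ quadratic interaction in $\wt R_{\mix, c_{j \pm 2}}$ along $\ga_j$. After applying $\Theta^j$ from Lemma \ref{lemma:ChangeOfSaddle} these block estimates reorganize exactly into conditions \textbf{C1} and \textbf{C2} of the proposition; the surjectivity part of \textbf{C2} follows block-by-block from the diffeomorphism property for $q_2$ and from the fact that the elliptic perturbation is much smaller than $\wt m_\el^{(j)} \de^{(1-r)/2}$.

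The main obstacle will be pinning down $D^{(j)}$ with enough precision that the target interval in condition \textbf{C2} is actually covered by the image. This requires showing that the $O(q_2^2)$ nonlinear correction to the planar diffeomorphism, the mixed coupling to the elliptic block of size $O_\sigma(\de^{1+r'})$, and the coupling to the transverse hyperbolic modes of size $O_\sigma(\de^{3/2})$, are all negligible compared with the target-interval width $\sim D^{(j)} \de$; because $q_2^0$ has size $\wt C^{(j)} \de \ln(1/\de)$, the nonlinear correction is bounded by $\de^{2-2r} \ln^2(1/\de) \ll \de$ once $\ga$ is large enough relative to $r$, and the remaining couplings are controlled analogously. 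Once these cancellations are verified, following the contractivity scheme of Section \ref{sec:FullSystem}, the product-like structure of $\wt \BB_\glob^j(\UU_j)$ is established and Proposition \ref{prop:HeteroMap} follows.
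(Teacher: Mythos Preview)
Your strategy is essentially the paper's: first obtain a priori upper bounds over the bounded transit time via Gronwall (the paper's Lemma~\ref{lemma:HeteroMap:UpperBounds}), then sharpen them component-by-component via variation of constants (Lemmas~\ref{lemma;HeteroMap:EllipticModes}, \ref{lemma;HeteroMap:AdjacentModes}, and the following $q_2$ lemma). Your identification of $D^{(j)}$ through the derivative of the planar $(p_2,q_2)$-flow is exactly what the paper obtains implicitly by writing $\dot q_2 = \zeta_0(t) q_2 + \zeta_1(t)$ with $\zeta_0$ depending only on $p_2^h$.

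Two corrections. First, the full Shilnikov-style fixed-point machinery of Section~\ref{sec:FullSystem} is unnecessary here: because $T_{\BB_\glob^j}$ is bounded independently of $\de$, plain Gronwall on each block, followed by Duhamel on the scalar equations, already gives the required estimates with much less work. The paper does exactly this and never sets up a contractive operator for the global map.

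Second, you should \emph{not} apply $\Theta^j$ at the end. By definition $\wt\BB_\glob^j = (\Theta^j)^{-1}\circ \BB_\glob^j$ is the flow expressed entirely in the $j$-th saddle coordinates, landing on the pulled-back section $\wt\Sigma_{j+1}^\inn$; the conditions \textbf{C1} and \textbf{C2} of the proposition are stated in those same coordinates. The change to the $(j{+}1)$-coordinates via $\Theta^j$ is performed only afterward, in Lemma~\ref{lemma:HeteroMapOriginalVars}, to deduce Lemma~\ref{lemma:iterative:hetero}. Drop that step and your outline matches the paper's proof.
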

The proof of this proposition is postponed to Section
\ref{sec:ProofPropHeteroMap}.

Once we know the properties of the set $\wt \BB_\glob^j(\UU_j)$, there only remain two final steps. First to deduce  analogous properties for the set $\BB_\glob^j(\UU_j)\subset \Sigma_{j+1}^\inn$. Second, to obtain a parameter set $\II_{j+1}$ and $\II_{j+1}$-product like   set $\VV_j\subset \Sigma_{j+1}^\inn$ which satisfies condition \eqref{cond:ComposeMaps:Heteromap}. These two last steps are summarized in the next lemma. Lemma \ref{lemma:iterative:hetero} follows easily from it.

\begin{lemma}\label{lemma:HeteroMapOriginalVars}
Let us consider a parameter set  $\II_{j+1}$ whose constants
satisfy
\[
 \begin{split}
  D^{(j)}/2\leq C^{(j+1)}\leq 2 D^{(j)}\\
0<m_{\hyp}^{(j+1)}\leq \wt m_\hyp^{(j)}
 \end{split}
\]
and
\[
 \begin{split}
M_{\el,-}^{(j+1)}&=\max\left\{\wt M_{\el,-}^{(j)}+\wt K_\sigma\de^{r'},\wt
K_\sigma\wt M_{\adj,-}^{(j)}\right\}\\
M_{\el,+}^{(j+1)}&=\wt M_{\el,+}^{(j)}+\wt K_\sigma\de^{r'}\\
m_{\el}^{(j+1)}&=\wt m_{\el}^{(j)}-\wt K_\sigma\de^{r'}\\
M_{\adj,+}^{(j+1)}&=\wt m_{\el,+}^{(j)}+\wt K_\sigma\de^{r'}\\
M_{\adj,-}^{(j+1)}&=\wt K_\sigma\wt M_{\hyp}^{(j)}\\
m_{\adj}^{(j+1)}&=\wt m_{\el}^{(j)}+\wt K_\sigma\de^{r'}\\
M_{\hyp}^{(j+1)}&=\max\left\{\wt K_\sigma \wt M_{\adj,+}^{(j)},\wt K_\sigma\right\}.
 \end{split}
\]
Then, the set
\[
 \VV_{j+1}=\BB_\glob^j(\UU_j)\cap\left\{g_{\II_{j+1}}(p_2,q_2,\sigma,\delta)=0\right\},
\]
where $g_{\II_{j+1}}$ is the function defined in \eqref{def:Function_g}, is a $\II_{j+1}$-product-like set and satisfies condition \eqref{cond:ComposeMaps:Heteromap}
\end{lemma}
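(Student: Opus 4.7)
My plan is to push forward the set $\wt\BB_\glob^j(\UU_j)\subset\wt\Sigma_{j+1}^\inn$ provided by Proposition \ref{prop:HeteroMap} through the explicit change of coordinates $\Theta^j$ from Lemma \ref{lemma:ChangeOfSaddle}, producing $\BB_\glob^j(\UU_j)\subset\Sigma_{j+1}^\inn$, and then intersect with the surface $\{g_{\II_{j+1}}(\wt p_2,\wt q_2,\sigma,\de)=0\}$ to obtain $\VV_{j+1}$. The bulk of the work is then a bookkeeping calculation: translate each bound encoded in $\wt\II_j$ into the new coordinate system and verify that the resulting constants match the recursion defining $\II_{j+1}$.

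For condition \textbf{C1} of Definition \ref{def:ProductLikeSet} I would go through the components of $\Theta^j$ one at a time. Since $|\omega p_2+\omega^2 q_2|=\wt r$, the prefactor $(\omega p_2+\omega^2 q_2)/\wt r$ has unit modulus, so $|\wt c_k|=|c_k|$ for $k\in\PP_{j+1}^\pm\cup\{j+3\}$: in particular the old future-adjacent $c_{j+3}$ feeds into $M_{\adj,+}^{(j+1)}=\wt M_{\el,+}^{(j)}+\wt K\de^{r'}$, while the old past-adjacent $c_{j-2}$ becomes past-elliptic after absorbing the size inequality $(\wt C^{(j)}\de)^{1/2}\le \de^{(1-r)/2}$. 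The variable $\wt c_{j-1}$ absorbs the old hyperbolic pair via $|\omega^2 p_1+\omega q_1|\lesssim\wt K_\sigma\wt M_\hyp^{(j)}(\wt C^{(j)}\de)^{1/2}$, giving $M_{\adj,-}^{(j+1)}=\wt K\wt M_\hyp^{(j)}$. On $\wt\Sigma_{j+1}^\inn$ one computes $r^2=1-\sigma^2+O(\de^{1-r})$ and $\wt r=|p_2|(1+O(\de\ln(1/\de)))$, so $\wt p_1=(r/\wt r)q_2$ acquires a bounded, bounded-away-from-zero multiplicative factor which defines $D^{(j)}$ and yields $C^{(j+1)}/\wt C^{(j)}\approx\sqrt{1-\sigma^2}/\sigma$. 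Finally $(\wt p_2,\wt q_2)$ is an $\RR$-linear image of $z=c_{j+2}(\omega p_2+\omega^2 q_2)/\wt r$ with $|z|=|c_{j+2}|$, giving $M_\hyp^{(j+1)}=\wt K\wt M_{\adj,+}^{(j)}$.

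For condition \textbf{C2}, the key observation is that $g_{\II_{j+1}}$ is affine in $(\wt p_2,\wt q_2)$ with principal coefficient $1+a_p(\sigma)$ bounded away from zero, so its zero-set is a one-dimensional curve transverse to the whole two-dimensional range of $(\wt p_2,\wt q_2)$. Since $c_{j+2}\mapsto(\wt p_2,\wt q_2)$ is an $\RR$-linear isomorphism of $\CC\cong\RR^2$ with bounded distortion, condition \textbf{C2} of Proposition \ref{prop:HeteroMap} guarantees a disk of $c_{j+2}$-values of radius comparable to $\wt m_\el^{(j)}\de^{(1-r)/2}/\wt K_\sigma$ whose image in the $(\wt p_2,\wt q_2)$-plane strictly contains the arc $\{g_{\II_{j+1}}=0\}\cap\{|\wt p_2|,|\wt q_2|\leq m_\hyp^{(j+1)}(C^{(j+1)}\de)^{1/2}\}$, provided $m_\hyp^{(j+1)}\leq\wt m_\hyp^{(j)}$ is chosen small. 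The interval of $\wt p_1$-values follows directly from the $q_2$-interval via the same bounded factor $r/\wt r$, and the new future-elliptic disks $\DD_{j+1,-}^k$ come from the phase-rotated old ones.

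The only place where care is needed is to check that the induced constant $D^{(j)}$ obeys the sandwich $\wt C^{(j)}/\wt K\leq C^{(j+1)}\leq\wt K\wt C^{(j)}$; this follows immediately from Corollary \ref{coro:SectionOldVariables}, which confines $r/\wt r$ to a compact subinterval of $(0,\infty)$ depending only on $\sigma$. The time bound $|T_{\BB_\glob^j}|\leq\wt K$ is inherited unchanged from the discussion preceding Proposition \ref{prop:HeteroMap}, since $\Theta^j$ is a time-independent coordinate change. With these verifications the recursion formulas in the statement of the lemma are satisfied and $\VV_{j+1}\subset\BB_\glob^j(\UU_j)$ holds by construction, closing the inductive step.
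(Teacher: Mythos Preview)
Your approach is correct and is precisely the one the paper takes: the paper's entire proof is the single sentence ``It is enough to apply the change of coordinates $\Theta^j$ given in Lemma \ref{lemma:ChangeOfSaddle},'' and your proposal is a detailed (and essentially accurate) expansion of that sentence.

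Two minor slips worth flagging, neither of which affects the argument. First, you write ``the old future-adjacent $c_{j+3}$,'' but in the $j$-th coordinates $c_{j+3}\in\PP_j^+$ is future \emph{elliptic}, not adjacent; the formula you then write, $M_{\adj,+}^{(j+1)}=\wt M_{\el,+}^{(j)}+\wt K\de^{r'}$, is the correct one and is consistent with this. Second, your computation of the ratio is inverted: since $r\approx\sigma$ and $\wt r\approx\sqrt{1-\sigma^2}$ on $\wt\Sigma_{j+1}^\inn$, one gets $C^{(j+1)}/\wt C^{(j)}\approx r/\wt r\approx\sigma/\sqrt{1-\sigma^2}$, not its reciprocal. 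This only affects which way the $\sigma$-dependent constant $\wt K_\sigma$ goes in the sandwich for $C^{(j+1)}$, so the conclusion is unchanged.
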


\begin{proof}
It is enough to apply the change of coordinates $\Theta^j$ given in Lemma \ref{lemma:ChangeOfSaddle}.
\end{proof}

\subsection{Proof of Proposition
\ref{prop:HeteroMap}}\label{sec:ProofPropHeteroMap}
We split the proof of Proposition \ref{prop:HeteroMap} in several lemmas, which will give the needed estimates
for the different modes. First, let us obtain rough bounds for all the
variables, which will be used in the proofs of the forthcoming lemmas. Indeed,
since we are restricted to $\MM(b)=1$ (see \eqref{def:mass}) we know that
\begin{equation}\label{eq:HeteroMap:RoughBoundsEl}
|c_m|<1.
\end{equation}
Analogously, using the change \eqref{def:ChangeToDiagonal}, one can see that
\begin{equation}\label{eq:HeteroMap:RoughBoundsHyp}
|p_i|<2, \,\,|q_i|<2\,\,\,\text{ for }i=1,2.
\end{equation}
Now, we start by obtaining more accurate upper bounds for each mode.
\begin{lemma}\label{lemma:HeteroMap:UpperBounds}
Consider the flow $\Phi^t$ associated to the vector field in
\eqref{def:VF:Full:AfterDiagonal} and a point $(p_1,q_1,q_2,\sigma,c)\in\UU_j\subset\Sigma_j^{\out}$.
Then, there exists a constant $\wt K_\sigma>0$ such that for $t\in [0,T_{\BB_\glob^j}]$, $\Phi^t(p_1,q_1,\sigma,q_2,c)$ satisfies
\[
 \begin{split}
  \left|\Phi_{c_k}^t(p_1,q_1,\sigma,q_2,c)\right|&\leq \wt K_\sigma \wt M^{(j)}_{\el,\pm}\de^{(1-r)/2} \,\,\,\text{ for }m\in\PP_j^\pm\\
\left|\Phi_{c_{j\pm 2}}^t(p_1,q_1,\sigma,q_2,c)\right|&\leq  \wt K_\sigma \wt M^{(j)}_{\adj,\pm}\left(\wt C^{(j)}\de\right)^{1/2}\\
 \end{split}
\]
and
\[
\begin{split}
\left|\Phi_{p_1}^t(p_1,q_1,\sigma,q_2,c)\right|&\leq  \wt K_\sigma \wt M^{(j)}_{\hyp}\left(\wt C^{(j)}\de\right)^{1/2}\\
\left|\Phi_{q_1}^t(p_1,q_1,\sigma,q_2,c)\right|&\leq  \wt  K_\sigma\wt M^{(j)}_{\hyp} \left(\wt C^{(j)}\de\right)^{1/2}\\
\left|\Phi_{p_2}^t(p_1,q_1,\sigma,q_2,c)-p_2^h(t)\right|&\leq  \wt  K_\sigma \de^{r'}\\
\left|\Phi_{q_2}^t(p_1,q_1,\sigma,q_2,c)\right|&\leq  \wt K_\sigma\wt
C^{(j)}\de\ln(1/\de).
\end{split}
\]
\end{lemma}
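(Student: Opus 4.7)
The plan is to set up a Gronwall-type bootstrap argument, exploiting the crucial fact (already observed right after Corollary \ref{coro:SectionOldVariables}) that the transition time $T_{\BB_\glob^j}$ is bounded by a $\sigma$-dependent constant $\wt K_\sigma$ that is independent of $\de$, $j$ and $N$. Since the flight time is $O(1)$ rather than $O(\ln(1/\de))$, any mode whose derivative is bounded by $K_\sigma$ times its own size (plus a small forcing) can only grow by a constant factor along the trajectory, and the required bounds degrade only multiplicatively. This is why, in contrast to the local map of Lemma \ref{lemma:iterative:saddle}, no cancellation trick is needed here.

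Concretely, I would let $T^\ast\in[0, T_{\BB_\glob^j}]$ denote the supremum of those $T$ for which the stated inequalities hold on $[0,T]$ with the constant $\wt K_\sigma$ replaced by $2\wt K_\sigma$. By continuity $T^\ast>0$; assuming $T^\ast<T_{\BB_\glob^j}$, the goal is to improve the bounds on $[0,T^\ast]$ to yield a contradiction. On $[0,T^\ast]$ the bootstrap hypotheses plus the rough bounds \eqref{eq:HeteroMap:RoughBoundsEl}--\eqref{eq:HeteroMap:RoughBoundsHyp} let me estimate the right-hand side of \eqref{def:VF:Full:AfterDiagonal} mode by mode and then integrate. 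For the non-adjacent elliptic modes $c_k$, $k\in\PP_j^\pm$, the terms \eqref{def:VF:Diag:cm:El}--\eqref{def:VF:Diag:cm:Mix} yield $|\dot c_k|\le K_\sigma |c_k|+K_\sigma\de^{(3-3r)/2}$, and Gronwall over time $O(1)$ preserves the $\wt M_{\el,\pm}^{(j)}\de^{(1-r)/2}$ bound. For the hyperbolic modes $(p_1,q_1)$ the analogous bound $|\dot p_1|+|\dot q_1|\le K_\sigma(|p_1|+|q_1|)+K_\sigma\de$ (using smallness of $c_{j-2}$ in \eqref{def:VF:Diag:p1:Mix}--\eqref{def:VF:Diag:q1:Mix}) does the same. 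For $q_2$ the leading contributions in $\ZZZ_{\hyp,q_2}$ and $\ZZZ_{\mix,q_2}$ give $|\dot q_2|\le K_\sigma|q_2|+K_\sigma\wt C^{(j)}\de$, preserving the $\wt C^{(j)}\de\ln(1/\de)$ size. Finally, writing $p_2=p_2^h(t)+\xi$ and linearizing produces $|\dot\xi|\le K_\sigma|\xi|+K_\sigma(|p_1|^2+|q_1|^2+|q_2|+\sum|c_k|^2)$ with $\xi(0)=0$; since the forcing is $O(\de^{1-r})$ and $r'<1/2-2r<1-r$, Gronwall gives $|\xi(t)|\le K_\sigma\de^{1-r}\le K_\sigma\de^{r'}$.

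The main obstacle is the adjacent mode $c_{j+2}$: its equation \eqref{def:VF:Diag:cm:Mix.Adjacent} contains the coupling $-2i(\omega^2p_2+\omega q_2)^2\overline{c_{j+2}}$ in which $p_2\sim p_2^h(t)$ is of size $O(1)$ along the heteroclinic, not small in $\de$. Therefore the estimate one gets is merely $|\dot c_{j+2}|\le K_\sigma|c_{j+2}|$, leading through Gronwall to $|c_{j+2}(t)|\le e^{K_\sigma T^\ast}|c_{j+2}(0)|$, i.e.\ growth by a $\sigma$-dependent but $\de$-independent multiplicative constant. This is consistent with (and the reason for) the recursion $\wt M_{\adj,+}^{(j)}\mapsto \wt K_\sigma \wt M_{\adj,+}^{(j)}$ in Lemma \ref{lemma:iterative:hetero}; the other adjacent mode $c_{j-2}$ is controlled more easily because there the analogous coupling involves the small $(p_1,q_1)$. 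Choosing $\wt K_\sigma$ large enough (depending on $\sigma$ via the structural constants and via $T_{\BB_\glob^j}$) strictly improves every bootstrap inequality, contradicting the definition of $T^\ast$ and thus extending the bounds to the full interval $[0,T_{\BB_\glob^j}]$.
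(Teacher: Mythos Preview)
Your argument is correct and rests on the same mechanism as the paper's proof: since the global transition time $T_{\BB_\glob^j}$ is bounded independently of $\de$, Gronwall over this interval turns any estimate of the form $|\dot X|\le K_\sigma|X|+(\text{small forcing})$ into at most a $\sigma$--dependent multiplicative loss.

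The organization differs, however. You run a simultaneous bootstrap assuming all the refined bounds with constant $2\wt K_\sigma$ and then improving them. The paper instead proceeds \emph{sequentially} and avoids the bootstrap altogether: it first bounds the elliptic modes $c_k$ and the pair $(p_1,q_1)$ using only the crude a priori inequalities $|c_m|<1$, $|p_i|,|q_i|<2$ coming from mass conservation (equations \eqref{eq:HeteroMap:RoughBoundsEl}--\eqref{eq:HeteroMap:RoughBoundsHyp}), and only afterwards uses these already--established refined bounds as input for the $q_2$ and $p_2-p_2^h$ estimates. A second simplification in the paper is to track $\tfrac{d}{dt}|c_k|^2$ rather than $|\dot c_k|$: since all terms in $\dot c_k$ that do not contain $c_k$ or $\ol{c_k}$ are absent, and the purely imaginary contributions drop out of $\tfrac{d}{dt}|c_k|^2$, one obtains a clean homogeneous inequality $\tfrac{d}{dt}|c_k|^2\le K|c_k|^2$ without any inhomogeneous forcing. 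Your bootstrap works equally well; the paper's ordering just makes each step self-contained.
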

We defer the proof of this lemma to the end of the section.

The bounds obtained in Lemma \ref{lemma:HeteroMap:UpperBounds} are not enough to prove Proposition \ref{prop:HeteroMap} since  we need more accurate estimates for the elliptic modes, the future adjacent modes and $q_2$. We obtain them in the following three lemmas.

\begin{lemma}\label{lemma;HeteroMap:EllipticModes}
Consider the flow $\Phi^t$ associated to the vector field in
\eqref{def:VF:Full:AfterDiagonal} and a point $(p_1,q_1,\sigma,q_2,c)\in\Sigma_j^{\out}$.
Then, there exists a constant $\wt K_\sigma>0$ such that for $t\in [0,T_{\BB_\glob^j}]$ and $k\in\PP_j^\pm$,
\[
 \left|\Phi_{c_k}^t(p_1,q_1,\sigma,q_2,c)-c_k e^{iT_{\BB_\glob^j}}\right|\leq \wt
K_\sigma\de^{(1-r)/2+r'}.
\]
\end{lemma}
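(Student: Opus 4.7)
The plan is to run a variation of constants argument on the $c_k$ equation
\[
\dot c_k = ic_k + \ZZZ_{\el,c_k}(c) + \ZZZ_{\mix,c_k}(p,q,c)
\]
from the system \eqref{def:VF:Full:AfterDiagonal}, using the a priori bounds of Lemma \ref{lemma:HeteroMap:UpperBounds} together with the fact that $T_{\BB_\glob^j}$ is independent of $\de$ (it depends only on $\sigma$, as noted in Corollary \ref{coro:SectionOldVariables}). Writing $c_k(t) = e^{it}d_k(t)$ we obtain
\[
d_k(t) = c_k(0) + \int_0^{t} e^{-is}\left[\ZZZ_{\el,c_k}\bigl(c(s)\bigr) + \ZZZ_{\mix,c_k}\bigl(p(s),q(s),c(s)\bigr)\right]ds,
\]
so it suffices to bound the integrand uniformly for $s \in [0,T_{\BB_\glob^j}]$ and multiply by $|T_{\BB_\glob^j}| \leq \wt K_\sigma$.

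From the explicit form \eqref{def:VF:Diag:cm:El} the self-interaction term $\ZZZ_{\el,c_k}$ is cubic in the $c$-variables. Using the a priori bounds of Lemma \ref{lemma:HeteroMap:UpperBounds}, for $k \in \PP_j^\pm$ every product $|c_k||c_\ell||c_m|$ appearing in $\ZZZ_{\el,c_k}$ is at most $\wt K_\sigma \de^{3(1-r)/2}$, except when one of $c_\ell, c_m$ is an adjacent mode, in which case the bound becomes $\wt K_\sigma \de^{(1-r)/2}\cdot \wt C^{(j)}\de \leq \wt K_\sigma \de^{(1-r)/2 + (1-r)}$, where we used the bound $\wt C^{(j)} \leq \de^{-r}$ from \eqref{def:UpperBoundCs}. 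For $k\ne j\pm 2$, the mixing term from \eqref{def:VF:Diag:cm:Mix} is $i\sqrt{3}c_k(p_1q_1+p_2q_2)$. Along the orbit $|p_1 q_1| \leq \wt K_\sigma \wt C^{(j)}\de$ and $|p_2 q_2| \leq \wt K_\sigma \wt C^{(j)}\de\ln(1/\de)$ by Lemma \ref{lemma:HeteroMap:UpperBounds}, so this contribution is at most $\wt K_\sigma \de^{(1-r)/2}\cdot\de^{1-r}\ln(1/\de)$.

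Combining both bounds, the integrand is of size $\wt K_\sigma\,\de^{(1-r)/2 + (1-r)}\ln(1/\de)$. Since the condition $r' < 1/2 - 2r$ gives $r' < 1-r$ (for $r$ small), we have $\de^{(1-r)}\ln(1/\de) \leq \wt K_\sigma \de^{r'}$ for $\de$ sufficiently small, and the total integral is bounded by $\wt K_\sigma \de^{(1-r)/2 + r'}$. Multiplying by $e^{iT_{\BB_\glob^j}}$ to convert back to $c_k$ gives the stated estimate $|\Phi^t_{c_k}(\cdots) - c_k e^{iT_{\BB_\glob^j}}| \leq \wt K_\sigma \de^{(1-r)/2 + r'}$.

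The only step that requires any care beyond bookkeeping is verifying that the adjacent-mode contribution to $\ZZZ_{\el,c_k}$ (which is only $\de^{1-r}$ smaller than $|c_k|$, rather than the expected $\de^{1-r/2}$ smaller for non-adjacent neighbors) still fits inside the target error $\de^{r'}$. This is precisely guaranteed by the choice $r' < 1/2 - 2r$; in particular this is why that constraint is imposed throughout the paper. No absorption/fixed-point argument is needed here, in contrast to the local-map analysis, because the transit time is uniformly bounded independently of $\de$.
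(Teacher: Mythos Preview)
Your proposal is correct and follows essentially the same approach as the paper: write $\dot c_k = ic_k + \gamma_k(t)$, bound $\gamma_k$ using the a priori estimates of Lemma \ref{lemma:HeteroMap:UpperBounds}, apply variation of constants, and use that $T_{\BB_\glob^j}$ is bounded independently of $\de$. The paper's own proof is a two-line sketch stating exactly this, so your version is simply a more detailed execution of the same argument, including the explicit check that the adjacent-mode contribution fits under the $\de^{r'}$ gain via the constraint $r'<1/2-2r$.
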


\begin{proof}
It is enough to point out that, using the bounds obtained in
Lemma \ref{lemma:HeteroMap:UpperBounds}, the equation for $c_k$ in
\eqref{def:VF:Full:AfterDiagonal} can be written as
\[
\dot c_k=i c_k+\gamma_k(t)
\]
where $\gamma$ satisfies $\|\gamma\|_\infty\leq \wt K_\sigma \de^{1-r+r'}$.
Then, to finish the proof of the lemma it is enough to apply the variation
of constants formula and take into account that the time $T_{\BB_\glob^j}$ has an upper
bound independent of $\de$.
\end{proof}

\begin{lemma}\label{lemma;HeteroMap:AdjacentModes}
Fix values $p_1,q_1,q_2, c_{j-2}$ and $c_k$ for $k\in\PP_j^\pm$ such that the set
\[
\DDD=\left\{c_1,\ldots,c_{j-2},p_1,q_1,\sigma,q_2\right\}\times \wt \DD^{j+2}_{j,-}\times \left\{c_{j+3},\ldots,c_{j_N}\right\},
\]
where
\[
\wt \DD^{j+2}_{j,-}=\left\{\left|c_{j+2}\right|\leq\wt m_\adj^{(j)}\left(\wt C^{(j)}\de\right)^{1/2}\right\},
\]
satisfies
\[
\DDD\subset\UU_j.
\]
Consider the flow $\Phi^t$ associated to the vector field in
\eqref{def:VF:Full:AfterDiagonal} and define the following map for points in $\DDD$
\[
F_{\adj}(p_1,q_1,\sigma,q_2,c)=\Phi_{c_{j+ 2}}^{T_{\BB_\glob^j}}(p_1,q_1,\sigma,q_2,c)
\]
Then, there exists $\wt K_\sigma>0$ such that
\[
\left\{\left|c_{j+2}\right|\leq \wt m_\adj^{(j)}\left(\wt C^{(j)}\de\right)^{1/2}/ \wt K_\sigma\right\}\subset F_{\adj}(\DDD).
\]
\end{lemma}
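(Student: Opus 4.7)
The plan is to view the map $c_{j+2}^{0}\mapsto \Phi_{c_{j+2}}^{T_{\BB_\glob^{j}}}$ as a small perturbation of an $\RR$-linear isomorphism of $\CC\cong\RR^{2}$, and then apply a quantitative open mapping argument to deduce that its image covers a disk of the asserted size.

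First, along any orbit starting in $\DDD$ the a priori bounds of Lemma \ref{lemma:HeteroMap:UpperBounds} apply: $p_{1},q_{1}=\OO(\sqrt{\wt C^{(j)}\de})$, $q_{2}=\OO(\wt C^{(j)}\de\ln(1/\de))$, $p_{2}(t)=p_{2}^{h}(t)+\OO(\de^{r'})$, the non--neighbour elliptic modes satisfy $|c_{k}|\le \wt K_{\sigma}\wt M_{\el,\pm}^{(j)}\de^{(1-r)/2}$, and $|c_{j-2}|\le\wt K_{\sigma}\wt M_{\adj,-}^{(j)}(\wt C^{(j)}\de)^{1/2}$. Using \eqref{def:VF:Diag:cm:El} and \eqref{def:VF:Diag:cm:Mix.Adjacent}, the equation for $c_{j+2}$ along such an orbit takes the form
\[
\dot c_{j+2}\;=\;\bigl(i+\alpha(t)\bigr)c_{j+2}\;+\;\beta(t)\,\ol{c_{j+2}}\;+\;\eta(t),
\]
where $\alpha(t)=i\sqrt{3}(p_{1}p_{2}+q_{1}q_{2})+\OO(|c|^{2})=\OO(\wt C^{(j)}\de\ln(1/\de))$, $\beta(t)=-2i(\omega^{2}p_{2}+\omega q_{2})^{2}=-2i\omega(p_{2}^{h}(t))^{2}+\OO_{\sigma}(\de^{r'})$, and $\eta(t)$ collects all terms of $\dot c_{j+2}$ that do not involve $c_{j+2}$ itself; by \eqref{def:VF:Diag:cm:El} these consist only of products of the neighbouring modes $c_{j+1},c_{j+3}$ with $\ol{c_{j+2}}$-independent factors, hence $|\eta(t)|=\OO_{\sigma}(\de^{1-r})$.

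Second, I will analyse the homogeneous real-linear ODE $\dot c=(i+\alpha(t))c+\beta(t)\ol c$ on $[0,T_{\BB_\glob^{j}}]$. Writing $c=x+iy$ and reading off the $2\times2$ matrix, its trace equals $2\Re\alpha(t)=\OO(\wt C^{(j)}\de\ln(1/\de))$. By Liouville's formula the fundamental matrix $\Phi(T_{\BB_\glob^{j}},0)$ therefore has determinant $\exp\bigl(\int_{0}^{T_{\BB_\glob^{j}}}2\Re\alpha\bigr)=1+\OO_{\sigma}(\de^{1-r})$, which is bounded away from $0$. Since $T_{\BB_\glob^{j}}$ is bounded in terms of $\sigma$ alone (as established after Corollary \ref{coro:SectionOldVariables}) and the coefficients are uniformly bounded by a constant depending only on $\sigma$, Gronwall gives $\|\Phi(T_{\BB_\glob^{j}},0)\|\le \wt K_{\sigma}$ and $\|\Phi(T_{\BB_\glob^{j}},0)^{-1}\|\le \wt K_{\sigma}$.

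Third, I will compare the actual map $c_{j+2}^{0}\mapsto c_{j+2}(T_{\BB_\glob^{j}})$ to this linear isomorphism. All other coordinates of the orbit depend on $c_{j+2}^{0}$, but only through nonlinear couplings of order at least $|c_{j+2}|^{2}=\OO(\wt C^{(j)}\de)$. Hence $\alpha(t)$, $\beta(t)$, $\eta(t)$ themselves vary by $\OO_{\sigma}(\wt C^{(j)}\de)$ as $c_{j+2}^{0}$ ranges over $\wt\DD^{j+2}_{j,-}$. Variation-of-constants combined with the bound on $\Phi$ yields
\[
F_{\adj}(c_{j+2}^{0})\;=\;\Phi(T_{\BB_\glob^{j}},0)\,c_{j+2}^{0}\;+\;\Xi(c_{j+2}^{0}),\qquad |\Xi(c_{j+2}^{0})|\le \wt K_{\sigma}\bigl(\de^{1-r}+\wt C^{(j)}\de\cdot \wt m_{\adj}^{(j)}(\wt C^{(j)}\de)^{1/2}\bigr),
\]
so the remainder $\Xi$ is of order strictly smaller than $(\wt C^{(j)}\de)^{1/2}$ (using \eqref{def:UpperBoundCs} and $r$ sufficiently small). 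Moreover a similar estimate with $(c_{j+2}^{0})'-c_{j+2}^{0}$ in place of $c_{j+2}^{0}$ shows that $\Xi$ is Lipschitz in $c_{j+2}^{0}$ with Lipschitz constant $\OO_{\sigma}((\wt C^{(j)}\de)^{1/2})$, which is $o(1)$ compared with the inverse norm of $\Phi$.

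Finally, the statement follows from a standard quantitative open mapping lemma: if $L\colon\RR^{2}\to\RR^{2}$ is linear with $\|L^{-1}\|\le \wt K_{\sigma}$ and $F=L+\Xi$ with $\mathrm{Lip}(\Xi)\le 1/(2\wt K_{\sigma})$, then $F(B(0,\rho))\supset B(F(0),\rho/(2\wt K_{\sigma}))$. Applying this to $L=\Phi(T_{\BB_\glob^{j}},0)$ and $\rho=\wt m_{\adj}^{(j)}(\wt C^{(j)}\de)^{1/2}$ and then enlarging $\wt K_{\sigma}$ to absorb the shift $F(0)$ (which is itself bounded by $\wt K_{\sigma}\de^{1-r}$, and therefore small compared with $\rho$) produces the disk $\{|c_{j+2}|\le \wt m_{\adj}^{(j)}(\wt C^{(j)}\de)^{1/2}/\wt K_{\sigma}\}\subset F_{\adj}(\DDD)$. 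The principal obstacle is the third step, where one must verify that $\alpha,\beta,\eta$ depend on the initial datum $c_{j+2}^{0}$ only through genuinely higher-order couplings; everything else is bookkeeping with Lemma \ref{lemma:HeteroMap:UpperBounds} and the explicit form of the heteroclinic $p_{2}^{h}(t)$ in \eqref{def:heteroclinic:straightened}.
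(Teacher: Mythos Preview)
Your approach is essentially the same as the paper's: both write the $c_{j+2}$ equation as a real-linear system in $(c_{j+2},\ol{c_{j+2}})$ with principal coefficient $-2i\omega(p_2^h(t))^2$ on $\ol{c_{j+2}}$ and a small remainder, and then invoke variation of constants to conclude the image covers a disk. Your version is more explicit than the paper's one-line ``apply the variation of constants formula'' in that you spell out the Liouville determinant bound and the open-mapping step; note, however, that the coefficient you call $\alpha(t)$ should be $i\sqrt{3}(q_1p_1+q_2p_2)$ rather than $i\sqrt{3}(p_1p_2+q_1q_2)$ (cf.\ \eqref{def:VF:Diag:cm:Mix.Adjacent}), and in fact the inhomogeneous term $\eta(t)$ vanishes identically since every monomial in $\ZZZ_{\el,c_{j+2}}+\ZZZ_{\mix,c_{j+2}}$ carries a factor of $c_{j+2}$ or $\ol{c_{j+2}}$---neither correction affects your argument.
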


\begin{proof}
Taking into account the estimates obtained in Lemma \ref{lemma:HeteroMap:UpperBounds},  the equation for $c_{j+2}$ in \eqref{def:VF:Full:AfterDiagonal} can be written as
\[
\frac{d}{dt}\left(\begin{array}{c}
\dps c_{j+2}\\ \dps\ol{c_{j+2}}\end{array}\right) =
\left(\begin{array}{c}
i c_{j+2}- i\omega \left(p_2^h(t)\right)^2 \ol{c_{j+2}}+\gamma_{j+2}(t)\\
-i\ol{c_{j+2}}+ i\omega^2 \left(p_2^h(t)\right)^2 {c_{j+2}}+\ol\gamma_{j+2}(t)
\end{array}\right),
\]
where  $p_2^h$ has been defined in \eqref{def:heteroclinic:straightened}  and  $\gamma$ satisfies $\|\gamma\|_\infty\leq K_\sigma  (\wt C^{(j)}\de)^{1/2}\de^{r'}$. Then, to finish the proof it is enough to apply the variation of constants formula.
\end{proof}

Now we obtain the refined estimates for  $q_2$.
\begin{lemma}
Fix values $p_1,q_1, c_{j\pm 2}$ and $c_k$ for $k\in\PP_j^\pm$ such that
\[
\QQQ=\left\{c_1,\ldots,c_{j-2},p_1,q_1,\sigma\right\}\times \left[-\wt C^{(j)}\de\left(\ln(1/\de)-\wt m_\hyp^{(j)}\right),-\wt C^{(j)}\de\left(\ln(1/\de)+\wt m_\hyp^{(j)}\right)\right]\times \left\{c_{j+2},\ldots,c_{j_N}\right\}
\]
satisfies
\[
\QQQ\subset\UU_j.
\]
Consider the flow $\Phi^t$ associated to the vector field in
\eqref{def:VF:Full:AfterDiagonal} and define the following map for points in $\QQQ$
\[
F_{\hyp}(q_2)=\Phi_{q_2}^{T_{\BB_\glob^j}}(p_1,q_1,\sigma,q_2,c)
\]
Then, there exists $\wt K_\sigma>0$ and $D^{(j)}$ satisfying
\[
\wt C^{(j)}/\wt K_\sigma\leq D^{(j)}\leq \wt K_\sigma \wt C^{(j)}
\]
such that
\[
\left[-D^{(j)}\de\left(\ln(1/\de)-1/\wt K_\sigma\right),-D^{(j)}\de\left(\ln(1/\de)+1/\wt K_\sigma\right)\right]\subset F_\hyp(\QQQ).
\]
\end{lemma}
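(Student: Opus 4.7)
The plan is to treat $F_\hyp$ as a one-parameter family: all inputs other than $q_2^0$ are held fixed, and we regard the trajectory as a function of $q_2^0$ alone. The strategy mirrors the variation-of-constants argument used for the adjacent modes in Lemma \ref{lemma;HeteroMap:AdjacentModes}, with the twist that we now want not just an upper bound but a quantitative lower bound on the image interval.

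First I would invoke the a priori estimates of Lemma \ref{lemma:HeteroMap:UpperBounds}, which along any orbit starting in $\QQQ$ give $p_1, q_1 = O((\wt C^{(j)}\de)^{1/2})$, $c_{j\pm 2} = O((\wt C^{(j)}\de)^{1/2})$, $c_k = O(\de^{(1-r)/2})$ for $k\in\PP_j^\pm$, and $|p_2-p_2^h(t)| = O(\de^{r'})$. Substituting these bounds into \eqref{def:VF:Diag:q2:Hyp}--\eqref{def:VF:Diag:q2:Mix}, and using that $\{q_1=q_2=0,\ c=0\}$ is invariant (so the $q_2$-component of the vector field vanishes identically on this set), the equation for $q_2$ along the trajectory reduces to a scalar inhomogeneous linear ODE of the form
$$\dot q_2 \;=\; A(t)\,q_2 \;+\; g(t) \;+\; O(q_2^2),$$
where $A(t) = -\sqrt{3} - 3\nu_3\,p_2^h(t)^2 + O_\sigma(\wt C^{(j)}\de)$ depends essentially only on $\sigma$ through the heteroclinic, and $\|g\|_\infty \le K_\sigma\,\wt C^{(j)}\de$ (the dominant contribution comes from $2\nu_{02}\,q_1^2\,p_2$).

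Next I would apply variation of constants over the bounded time interval $[0, T_{\BB_\glob^j}]$ (whose length is controlled by $\sigma$ only) to obtain
$$F_\hyp(q_2^0) \;=\; \mu\,q_2^0 \;+\; R, \qquad \mu \;=\; \exp\!\int_0^{T_{\BB_\glob^j}}\! A(t)\,dt,$$
with $|R|\le K_\sigma\,\wt C^{(j)}\de$; the quadratic correction $q_2^2$ is absorbed into $R$ since Lemma \ref{lemma:HeteroMap:UpperBounds} gives $|q_2(t)| \le \wt K_\sigma\,\wt C^{(j)}\de\ln(1/\de)$, so $q_2^2 = O((\wt C^{(j)}\de\ln(1/\de))^2) \ll \wt C^{(j)}\de$. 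Since $T_{\BB_\glob^j}$ and $A(t)$ depend only on $\sigma$ at leading order, the multiplier $\mu$ is bounded above and below by strictly positive $\sigma$-dependent constants. Setting $D^{(j)} := \mu\,\wt C^{(j)}$, possibly adjusted by the $O(1)$ quantity $R/(\wt C^{(j)}\de)$ so as to absorb the additive correction into a shift of the center of the image interval, yields $\wt C^{(j)}/\wt K_\sigma \le D^{(j)} \le \wt K_\sigma\,\wt C^{(j)}$.

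Finally, since $F_\hyp$ is a strictly monotone affine-like function of $q_2^0$ (its derivative is $\mu + O(\wt C^{(j)}\de\ln(1/\de))$, which is nondegenerate), the image of the input interval $[-\wt C^{(j)}\de(\ln(1/\de)-\wt m_\hyp^{(j)}),\,-\wt C^{(j)}\de(\ln(1/\de)+\wt m_\hyp^{(j)})]$ is an interval centered near $-D^{(j)}\de\ln(1/\de)$ of half-width $\mu\,\wt C^{(j)}\de\,\wt m_\hyp^{(j)} = D^{(j)}\de\,\wt m_\hyp^{(j)}$, up to an additive error of order $\wt C^{(j)}\de$. Choosing $\wt K_\sigma$ large enough so that $1/\wt K_\sigma < \wt m_\hyp^{(j)}$ with room left over to absorb this additive error, the target interval $[-D^{(j)}\de(\ln(1/\de)-1/\wt K_\sigma),\,-D^{(j)}\de(\ln(1/\de)+1/\wt K_\sigma)]$ is contained in $F_\hyp(\QQQ)$, which finishes the proof. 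The main delicacy is the uniformity of $\wt K_\sigma$ in $j$: this requires that all $j$-dependence of the remaining inputs enters only multiplicatively through $\wt C^{(j)}$, which is exactly what the product-like structure of $\UU_j$ (Definition \ref{definition:ModifiedProductLike}) guarantees.
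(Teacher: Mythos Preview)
Your proposal is correct and follows essentially the same approach as the paper: reduce the $q_2$-equation along the orbit to a scalar linear inhomogeneous ODE $\dot q_2=\zeta_0(t)q_2+\zeta_1(t)$ with $\zeta_0$ determined by $p_2^h$ and $\|\zeta_1\|_\infty\le \wt K_\sigma\,\wt C^{(j)}\de$, then apply variation of constants. Your treatment is in fact more detailed than the paper's (you explicitly isolate and control the $O(q_2^2)$ correction and spell out the monotonicity/image-interval argument), but the underlying idea is identical.
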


\begin{proof}
Taking into account the estimates obtained in Lemma \ref{lemma:HeteroMap:UpperBounds}, we write the equation for $q_2$ in  \eqref{def:VF:Full:AfterDiagonal} as
\[
 \dot q_2=\zeta_0(t)q_2+\zeta_1(t),
\]
where $\zeta_0$ only depends on $p_2^h$ in \eqref{def:heteroclinic:straightened} and $\zeta_1$ satisfies
\[
\|\zeta_1\|_\infty\leq \wt K_\sigma \wt C^{(j)}\de.
\]
Thus, the proof of the lemma follows from the variation of constants formula.
\end{proof}

We devote the rest of the section to prove Lemma \ref{lemma:HeteroMap:UpperBounds}.
\begin{proof}[Proof of Lemma \ref{lemma:HeteroMap:UpperBounds}]
During the proof of this lemma the time $t$ will always satisfy $t\in[0,T_{\BB_\glob^j}]$ and the norm $\|\cdot\|_\infty$ will always refer to the supremmum taken over this time interval .

We start by obtaining the bounds for the  non-neighbor elliptic modes. By \eqref{def:VF:Full:AfterDiagonal}, one can easily see that for $k\in\PP_j^\pm$
\[
 \frac{d}{dt}|c_k|^2=\frac{1}{2}\left(c_{k-1}^2+c_{k+1}^2\right)\ol
{c_k}^2-\frac{1}{2}\left(\ol{c_{k-1}}^2+\ol{c_{k+1}}^2\right){c_k}^2.
\]
Then, using \eqref{eq:HeteroMap:RoughBoundsEl}, we have that
\[
 \frac{d}{dt}|c_k|^2\leq\left|c_k\right|^2
\]
and therefore, applying Gronwall estimates we obtain that for $t\in
[0,T_\glob^j]$,
\[
\left|\Phi_{c_k}^t(p_1,q_1,\sigma,q_2,c)\right|^2\leq
e^{T_{\BB_\glob^j}}\left|c_k\right|^2\leq  \wt  K_\sigma \wt M^{(j)}_{\el,\pm}\de^{(1-r)}.
\]
Proceeding analogously we deal with the adjacent elliptic mode  $c_{j- 2}$.
Its associated equation is
\[
\begin{split}
 \frac{d}{dt}|c_{j- 2}|^2=&\frac{1}{2}c_{j-3}^2\ol{c_{j- 2}}^2+\frac{1}{2}\ol{c_{j-3}}^2 {c_{j- 2}}^2\\
&-\frac{1}{2}\left(\omega^2
p_1+\omega q_1\right)^2\ol{c_{j-2}}^2-\frac{1}{2}\left(\omega
p_1+\omega^2 q_1\right)^2{c_{j-2}}^2.
\end{split}
\]
Taking into account the bounds in
\eqref{eq:HeteroMap:RoughBoundsEl} and also \eqref{eq:HeteroMap:RoughBoundsHyp}, to  obtain
\[
  \frac{d}{dt}|c_{j- 2}|^2\leq 5|c_{j- 2}|^2
\]
which, applying Gronwall lemma, gives
\[
 \left|\Phi_{c_{j-2}}^t(p_1,q_1,\sigma,q_2,c)\right|^2\leq e^{5T_{\BB_\glob^j}} |c_{j-2}|^2\leq  \wt K_\sigma  \wt M^{(j)}_{\adj,-}\wt C^{(j)}\de.
\]
Analogously, one can obtain
\[
 \left|\Phi_{c_{j+2}}^t(p_1,q_1,\sigma,q_2,c)\right|^2\leq e^{5T_\glob^j} |c_{j+2}|^2\leq \wt  K_\sigma \wt M^{(j)}_{\adj,+}\wt C^{(j)}\de.
\]
Now we obtain the bounds for the hyperbolic modes. We define
\[
\rr_1(t)=(\Phi_{p_1}^t(p_1,q_1,\sigma,q_2,c),\Phi_{q_1}^t(p_1,q_1,\sigma,q_2,c)).
\]
From \eqref{def:Ham:Diagonal},
one can see that $\rr_1$ satisfies an equation of the form $\dot \rr_1= A_1(t) \rr_1$
where $A_1(t)$ is a time dependent matrix (which of course depends on
$\Phi_{p_1}^t(p_1,q_1,\sigma,q_2,c)$ itself). Using \eqref{eq:HeteroMap:RoughBoundsEl} and
\eqref{eq:HeteroMap:RoughBoundsHyp}, one can deduce that
\[
 \|A_1\|_\infty\leq \wt  K_\sigma.
\]
Then, the fundamental matrix
$\Psi$ satisfying $\Psi(0)=\mathrm{Id}$ associated to this system  satisfies
$\|\Psi\|_\infty\leq  \wt K_\sigma$. Since $\rr_1$ can be just written
\[
 \rr_1(t)=\Psi(t)\rr_1(0),
\]
using that by hypothesis $|p_1(0)|,|q_1(0)|\leq \wt M_\hyp^{(j)}\left(\wt C^{(j)}\de\right)^{1/2}$, we have that for $t\in[0,T_{\BB_\glob^j}]$,
\[
\left|\rr_1(t)\right|\leq   \wt K_\sigma \wt M_\hyp^{(j)}\left(\wt C^{(j)}\de\right)^{1/2}.
\]
We finish the proof of the lemma obtaining the estimates for
the $(p_2,q_2)$ components. To this end, let us point out that
the equation for $q_2$ can be written as
\[
\dot q_2=a_1(t)q_2+b_1(t)
\]
where $a_1(t)$ and $b_1(t)$ are functions which depend on
$\Phi_{p_1}^t(p_1,q_1,\sigma,q_2,c)$. Using \eqref{eq:HeteroMap:RoughBoundsHyp}
and the just obtained bounds for the  non-neighbor and adjacent elliptic modes and
for $(p_1,q_1)$ components, one can easily see that
\[
\|a_1\|_\infty\leq\wt K_\sigma \,\,\,\text{ and } \,\,\,\|b_1\|_\infty\leq \wt K_\sigma\left(\wt C^{(j)}\de\right)^{1/2}.
\]
Therefore, applying Gronwall lemma, we can deduce that
\[
 \left|\Phi_{q_2}^t(p_1,q_1,\sigma,q_2,c)\right|\leq \wt  K_\sigma \wt C^{(j)}\de\ln(1/\de).
\]
To obtain the bounds for $p_2$ we define $\xi=p_2-p_2^h$, where $p_2^h$ is the function defined in \eqref{def:heteroclinic:straightened}. Using \eqref{eq:HeteroMap:RoughBoundsHyp} and \eqref{def:heteroclinic:straightened} we have the a priori bound $\|\xi\|_\infty\leq 3$. Therefore, from \eqref{def:VF:Full:AfterDiagonal} we can deduce an equation for $\xi$ of the form
\[
\dot \xi=a_2(t)\xi+b_2(t),
\]
where the functions $a_2$ and $b_2$ satisfy
\[
\|a_2\|_\infty\leq K_\sigma \,\,\,\text{ and }
\,\,\,\|b_2\|_\infty\leq  \wt  K_\sigma \de^{r'}.
\]
Then, applying Gronwall's lemma, we obtain
\[
\|\xi\|_\infty\leq  \wt K_\sigma \de^{r'}
\]
which implies the estimate for $\Phi_{p_2}^t(p_1,q_1,\sigma,q_2,c)-p_2^h$.
This finishes the proof of the lemma.
\end{proof}

\appendix

\section{Proof of Normal Form Theorem \ref{thm:NormalForm}}\label{app:NormalForm}
To proof of Theorem \ref{thm:NormalForm}, we consider as a change of variables
$\Gamma$  the time one map of a Hamiltonian vector field $X_F$, where
$F$ is the Hamiltonian
\[
 F=\frac{1}{4}\sum_{n_1,n_2,n_3,n_4\in\ZZ^2}F_{n_1n_2n_3n_4}\al_{n_1}\ol{\al_{n_2}},
\al_{n_3}\ol{\al_{n_4}}
\]
with
\begin{align*}
F_{n_1n_2n_3n_4}=&
\frac{-i}{|n_1|^2-|n_2|^2+|n_3|^2-|n_4|^2}
\qquad\qquad\text{ if } n_1-n_2+n_3-n_4=0, \\
&\qquad\qquad\qquad\qquad\qquad\qquad\qquad\qquad|n_1|^2-|n_2|^2+|n_3|^2-|n_4|^2\neq 0\\
F_{n_1n_2n_3n_4}=&0 \qquad\qquad\qquad\qquad\qquad\qquad\qquad\quad\,\,\ \text{otherwise.}&
\end{align*}
If we define $\Phi_F^t$ the flow of the vector field associated to the
Hamiltonian $F$, we have that
\[
 \begin{split}
\HH\circ\Gamma=&\left.H\circ   \Phi_F^t\right|_{t=1}\\
=&\HH+\{\HH,F\}+\int_0^1(1-t)\left\{\left\{\HH,F\right\},F\right\}
\circ\Phi_F^tdt\\
=&\DDD +\GG+\{\DDD, F\}\\
&+\{\GG,F\}+\int_0^1(1-t)\left\{\left\{\HH,F\right\},F\right\}\circ\Phi_F^tdt,
 \end{split}
\]
where $\{\cdot,\cdot\}$ denotes the Poisson bracket with respect to the symplectic form $\Omega=\frac{i}{2}\sum_{n\in\ZZ^2}\al_n\wedge \ol{\al_n}$. We define
\[
 \RRR=\{\GG,F\}+\int_0^1(1-t)\left\{\left\{\HH,F\right\},F\right\}\circ\Phi_F^tdt.
\]
Then, it only remains to obtain the desired bounds for $X_\RRR$ and $\Gamma$ and
to see that
\[
 \GG+\{\DDD, F\}=\wt \GG.
\]
To obtain, this last equality, it is enough to use the definition for $F$ to see
that
\[
\begin{split}
  \GG+\{\DDD,
F\}=&\frac{1}{4}\sum_{n_1-n_2+n_3=n_4}\left(1-i(|n_1|^2-|n_2|^2+|n_3|^2-|n_4|^2)F_{n_1n_2n_3n_4}\right)\al_{
n_1}\ol{\al_{n_2}}\al_{n_3}\ol{\al_{n_4}}\\
=&\frac{1}{4}\sum_{\substack{n_1-n_2+n_3=n_4\\|n_1|^2-|n_2|^2+|n_3|^2=|n_4|^2}}
\al_{n_1}\ol{\al_{n_2}}\al_{n_3}\ol{
\al_{n_4}}\\
=& \wt\GG.
\end{split}
\]
Now we obtain the  bounds for $X_\RRR$. We start by bounding $X_{\{\GG,F\}}$, the
vector field associated to the Hamiltonian $\{\GG,F\}$. We have to bound
\[
\left\|X_{\{\GG,F\}}\right\|_{\ell^1}=2 \sum_{n\in \ZZ^2}
\left|\pa_{\ol{\al_n}}\{\GG,F\}\right|.
\]
Then,
\[
 \begin{split}
  \left\|X_{\{\GG,F\}}\right\|_{\ell^1}\leq& 2 \sum_{n,m\in \ZZ^2}
\left|\pa_{\ol{\al_n}}\left(\pa_{\ol{\al_m}}\GG\pa_{\al_m} F\right)\right|+ 2 \sum_{n,m\in
\ZZ^2} \left|\pa_{\ol{\al_n}}\left(\pa_{\al_m}\GG\pa_{\ol{\al_m}} F\right)\right|\\
\leq &2\sum_{n,m\in \ZZ^2} \left|\pa_{\ol{\al_n}\ol{\al_m}}\GG\right|\left|\pa_{\al_m}
F\right|+ 2 \sum_{n,m\in \ZZ^2} \left|\pa_{\ol{\al_m}}\GG\right|\left|\pa_{\ol{\al_n}\al_m}
F\right|\\
&+2\sum_{n,m\in \ZZ^2} \left|\pa_{\ol{\al_n}\al_m}\GG\right|\left|\pa_{\ol{\al_m}} F\right|+
2 \sum_{n,m\in \ZZ^2} \left|\pa_{\al_m}\GG\right|\left|\pa_{\ol{\al_n}\ol{\al_m}} F\right|.
 \end{split}
\]
All the terms can be bounded analogously. As an example, we bound the first one,
\[
\begin{split}
\sum_{n,m\in \ZZ^2} \left|\pa_{\ol{\al_n}\ol{\al_m}}\GG\right|\left|\pa_{\al_m}
F\right|\leq &4\sum_{n,m\in \ZZ^2} \left|\sum_{n_1+n_2=m+n}\al_{n_1}
{\al_{n_2}}\right|\left|\sum_{n_1-n_2+n_3=m}\ol{\al_{n_1}}\al_{n_2}\ol{\al_{n_3}}
\right|\\
\leq &4\sum_{n\in \ZZ^2} \sum_{n_1+n_2=n}|\al_{n_1}||\al_{n_2}|\sum_{m\in
\ZZ^2}\sum_{n_1-n_2+n_3=m}|\al_{n_1}||\al_{n_2}||\al_{n_3}|\\
\leq &\OO\left(\|\al\|_{\ell^1}^5\right),
\end{split}
\]
where, in the first line we have taken into account that
$|F_{n_1n_2n_3n_4}|\leq 1$, and  to obtain the last line we have used that
each sum in the previous line is a convolution product. The other term in the
reminder can be bounded analogously taking into account that
\[
\begin{split}
\int_0^1(1-t)\left\{\left\{\HH,F\right\},F\right\}
\circ\Phi_F^tdt=&\int_0^1(1-t)\left\{\wt\GG-\GG,F\right\}\circ\Phi_F^tdt\\
&+\int_0^1(1-t)\left\{\left\{\GG,F\right\},F\right\}\circ\Phi_F^tdt.
\end{split}
\]
Analogously, one can obtain bounds for $\Gamma-\mathrm{Id}$ recalling that
\[
 \Gamma=\Id+\int_0^1 X_F\circ \Phi_F^t dt.
\]

\section{Proof of Approximation Theorem \ref{thm:Approximation}}\label{app:Approx}
We devote this section to proof the Approximation Theorem \ref{thm:Approximation}.
Throughout this section $C$ denotes any positive constant independent of $N$ and $\la$.

The solution $\beta^\la$ is expressed in rotating coordinates (see change \eqref{eq:InftyODE:VariationConstants}) and $\al$ is not. To compare them in
a simpler way, we consider the equation \eqref{eq:InftyODE:AfterNF} in rotating
coordinates. To this end, we use that equation \eqref{eq:InfiniteODEAfterNormalForm}
also preserves the $\ell^2$ norm and therefore we perform the change of coordinates
\begin{equation}\label{def:Change:RotatinAfterNF}
 \al_n=g_n e^{i\left(G+|n|^2\right)t},
\end{equation}
with $G=-2\|\al\|_{\ell^2}^2$.  Then, the equation for $g=\{g_n\}_{n\in\ZZ^2}$ reads
\begin{equation}\label{eq:AfterNFInRotating}
- i \dot g_n = \EE_n(g)+ \JJ_n(g),
\end{equation}
where $\EE:\ell^1\rightarrow \ell^1$ is the function defined as
\begin{equation}\label{def:Approx:CubicTerm}
 \EE_n(g)=-|g_n|^2g_n+
\sum_{(n_1,n_2,n_3)\in \AAA(n)} g_{n_1} \overline { g_{n_2}}g_{n_3}
\end{equation}
with $ \AAA(n)\subset (\ZZ^2)^3$  defined in (\ref{eq:InftyODE:AfterVariation}),
and $\JJ:\ell^1\rightarrow \ell^1$ is the vector field associated to the Hamiltonian
\[
 \RRR'\left(g\right)=\RRR\left(\left\{g_n e^{i\left(G+|n|^2\right)t}\right\}_{n\in\ZZ^2}\right),
\]
where $\RRR$ is the Hamiltonian introduced in Theorem \ref{thm:NormalForm}.
Therefore, $\JJ$ satisfies
\begin{equation}\label{eq:Approx:BoundQuinticVF}
\|\JJ(g)\|_{\ell^1}=\OO\left(\|g\|^5_{\ell^1}\right).
\end{equation}
Note that  equation \eqref{eq:AfterNFInRotating} and equation
\eqref{eq:InftyODE:AfterVariation} only differ by $\JJ$, that is,
in the fifth degree terms of the equation. Moreover, note that
$g(0)=\al(0)$ and therefore, by the hypotheses of Theorem \ref{thm:Approximation},
\begin{equation}\label{eq:Approx:InitialCond}
g(0)=\beta^\la(0).
\end{equation}
To prove that $g$ and $\beta$ are close  we define the function $\xi$ as
\begin{equation}\label{def:Approx:Error}
 \xi_n=g_n-\beta_n
\end{equation}
and we apply refined Gronwall-like estimates to bound its $\ell^1$ norm. Thanks to \eqref{eq:Approx:InitialCond}, we have that $\xi(0)=0$. Moreover, from equations
\eqref{eq:InftyODE:AfterVariation} and \eqref{eq:AfterNFInRotating}, one can deduce
the equation for $\xi$. It can be written as
\begin{equation}\label{eq:Approx:InfiniteODE}
\dot \xi =\ZZZ^0(t)+\ZZZ^1(t)\xi+\ZZZ^2(\xi,t)
\end{equation}
where
\begin{align}
 \ZZZ^0(t)=&\JJ\left(\beta^\la\right)\label{def:Approx:FirstIteration}\\
\ZZZ^1(t)=&D\EE\left(\beta^\la\right)\label{def:Approx:Linear}\\
\ZZZ^2(t)=&\EE\left(\beta^\la+\xi\right)-\EE\left(\beta^\la\right)-
D\EE\left(\beta^\la\right)\xi+
\JJ\left(\beta^\la+\xi\right)-\JJ\left(\beta^\la\right).
\label{def:Approx:Higher}
\end{align}
Applying the $\ell^1$ norm to equation \eqref{eq:Approx:InfiniteODE}, we obtain
\begin{equation}\label{eq:Approx:Difference}
 \frac{d}{dt}\|\xi\|_{\ell^1}\leq \left\|\ZZZ^0(t)\right\|_{\ell^1}+\left\| \ZZZ^1(t)\xi\right\|_{\ell^1}+\left\| \ZZZ^2(\xi,t)\right\|_{\ell^1}.
\end{equation}
The next three lemmas give estimates for each term in the right hand side of this equation. Their proofs are deferred to the end of this appendix.
\begin{lemma}\label{lemma:Approx:BoundFirstIteration}
The function $\ZZZ^0$
 defined in \eqref{def:Approx:FirstIteration}
 satisfies
$ \left\|\ZZZ^0\right\|_{\ell^1}\leq C\la^{-5} 2^{5N}.$
\end{lemma}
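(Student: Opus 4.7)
The plan is to prove this by a direct application of the quintic estimate on $\JJ$ from Theorem \ref{thm:NormalForm} (restated as \eqref{eq:Approx:BoundQuinticVF} after passing to rotating coordinates via \eqref{def:Change:RotatinAfterNF}), reducing the task to estimating $\|\beta^\la(t)\|_{\ell^1}$. Since $\JJ$ is just the vector field of $\RRR$ expressed in the unitary rotated variables, its $\ell^1$ norm is unchanged and we have $\|\ZZZ^0(t)\|_{\ell^1}=\|\JJ(\beta^\la(t))\|_{\ell^1}\leq C\|\beta^\la(t)\|_{\ell^1}^5$ uniformly in $t$.

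The second ingredient is a sharp estimate for $\|\beta^\la(t)\|_{\ell^1}$. Recall from \eqref{def:RescaledApproxOrbit} that $\beta_n^\la$ is supported on $\Lambda=\bigcup_{j=1}^{N}\Lambda_j$ and takes the constant value $\la^{-1}b_j(\la^{-2}t)$ on $\Lambda_j$. By Proposition \ref{thm:SetLambda}, $|\Lambda_j|=2^{N-1}$ for every $j$, and by mass conservation \eqref{def:mass} for the toy model we have $\sum_{j=1}^N|b_j(\la^{-2}t)|^2=1$. Therefore Cauchy--Schwarz (in $j$) gives
\[
\|\beta^\la(t)\|_{\ell^1}=\la^{-1}\sum_{j=1}^N |\Lambda_j|\,|b_j(\la^{-2}t)|
=\la^{-1}2^{N-1}\sum_{j=1}^N|b_j(\la^{-2}t)|
\leq \la^{-1}2^{N-1}\sqrt N.
\]
Raising this to the fifth power and absorbing the polynomial factor $N^{5/2}\cdot 2^{-5}$ into the generic constant $C$ yields $\|\beta^\la(t)\|_{\ell^1}^5\leq C\la^{-5}2^{5N}$, and combining with the quintic bound above gives $\|\ZZZ^0(t)\|_{\ell^1}\leq C\la^{-5}2^{5N}$ as claimed.

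There is essentially no obstacle; the argument is a one-line consequence of the quintic tail bound \eqref{eq:Approx:BoundQuinticVF} together with the compactness of $\mathrm{supp}\,\beta^\la$ and the mass bound $\MM(b)=1$. The only mild point to keep track of is that the constant $C$ is allowed to absorb a fixed polynomial factor in $N$ (here $N^{5/2}$), which is standard in this kind of estimate since such factors are dominated by the exponential $2^{5N}$. The harder estimates in the approximation argument (Lemmas for $\ZZZ^1$ and $\ZZZ^2$ needed in \eqref{eq:Approx:Difference}) will use finer information about the structure of $\beta^\la$ on $\Lambda$ and condition $6_\Lambda$ (no spreading); by contrast, the bound on $\ZZZ^0$ only requires a crude control on the $\ell^1$ size of the approximating orbit.
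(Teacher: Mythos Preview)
Your proof is correct in spirit and takes a slightly different, more elementary route than the paper. Both arguments start identically with the quintic bound $\|\ZZZ^0\|_{\ell^1}\le C\|\beta^\la\|_{\ell^1}^5$ and the computation $\|\beta^\la(t)\|_{\ell^1}=\la^{-1}2^{N-1}\sum_{j}|b_j(\la^{-2}t)|$. The difference is in how $\sum_j|b_j|$ is bounded: you use Cauchy--Schwarz together with mass conservation $\sum_j|b_j|^2=1$ to get $\sum_j|b_j|\le\sqrt N$, whereas the paper invokes Theorem~3--bis (only three modes are $O(1)$ at any time, the rest are $O(\de^\nu)$ with $N\de^\nu\to 0$) to obtain the sharper $\sum_j|b_j|\le C$ with $C$ genuinely independent of~$N$. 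Your argument is cleaner in that it avoids any appeal to the fine structure of the toy-model orbit.

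One small caveat: the paper explicitly stipulates that $C$ in this appendix is independent of $N$, so strictly speaking your $N^{5/2}$ factor cannot be ``absorbed into $C$'' as written; your bound is $C N^{5/2}\la^{-5}2^{5N}$, not $C\la^{-5}2^{5N}$. This is harmless for the downstream application (the final Gronwall estimate already carries factors like $e^{C\ga N}$ and $\ga N^2$, against which $N^{5/2}$ is negligible once $\la=e^{\kk\ga N}$), but it does mean your argument proves a marginally weaker statement than the lemma as literally stated.
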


\begin{lemma}\label{lemma:Approx:BoundLinear}
The linear operator $\ZZZ^1(t)$
satisfies
$\left\| \ZZZ^1(t)\xi\right\|_{\ell^1}\leq \sum_{n\in\ZZ^2}f_n(t)|\xi_n|,$
where $f_n(t)$ are positive functions satisfying
\begin{equation}\label{eq:Approx:BoundLinearFs}
 \int_0^Tf_n(t)dt\leq C\ga N,
\end{equation}
where $T$ is the time given in \eqref{def:Time:Rescaled} and $\ga$ is the constant given in Theorem \ref{thm:ToyModelOrbit}.
\end{lemma}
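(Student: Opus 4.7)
The plan is to linearize the cubic nonlinearity $\EE$ at $\beta^\la$, apply the triangle inequality in $\ell^1$, and then swap the order of summation so that each coefficient $|\xi_n|$ absorbs a single multiplicative factor $f_n(t)$. Differentiating \eqref{def:Approx:CubicTerm}, the operator $\ZZZ^1(t)=D\EE(\beta^\la)$ produces a diagonal contribution $-2|\beta^\la_n|^2\xi_n - (\beta^\la_n)^2\overline{\xi_n}$ from the term $-|g_n|^2g_n$, and a sum
\[
\sum_{(n_1,n_2,n_3)\in\AAA(n)}\bigl(\xi_{n_1}\overline{\beta^\la_{n_2}}\beta^\la_{n_3}+\beta^\la_{n_1}\overline{\xi_{n_2}}\beta^\la_{n_3}+\beta^\la_{n_1}\overline{\beta^\la_{n_2}}\xi_{n_3}\bigr).
\]
Taking moduli and, for each of the three cubic pieces, reindexing so that $n_1$ (or $n_2$, or $n_3$) plays the role of the free $\ell^1$ index, I obtain a pointwise bound $\|\ZZZ^1(t)\xi\|_{\ell^1}\leq \sum_n f_n(t)|\xi_n|$, where $f_n(t)$ is a sum of $3|\beta^\la_n(t)|^2$ plus terms of the form $|\beta^\la_m(t)||\beta^\la_{m'}(t)|$, indexed by rectangles $(n_1,n_2,n_3,n_4)\in\AAA$ having $n$ as a vertex and the two other ``labelled'' vertices $m,m'$ inside $\La$ (since $\beta^\la$ is supported in $\La$).

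Next I would bound the number of such rectangles uniformly in $n$. If $n\in\La$, closure $1_\La$ forces the fourth vertex into $\La$, and faithfulness $5_\La$ together with $2_\La$, $3_\La$ says that $n$ is a fixed-position vertex of an $O(1)$ number of nuclear families. If $n\notin\La$, then closure $1_\La$ forbids three vertices of the rectangle from lying in $\La$, so exactly two vertices lie in $\La$ and two lie outside; the no-spreading condition $6_\La$ then says there are at most two such rectangles containing $n$. In either case $f_n(t)$ is a sum of $O(1)$ products $|\beta^\la_m(t)||\beta^\la_{m'}(t)|$ with $m,m'\in\La$.

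It then remains to estimate the time integral of each of these products. Writing $\beta^\la_n(t)=\la^{-1}b_j(t/\la^2)$ for $n\in\La_j$ and changing variables $\tau=t/\la^2$ turns the integrand into $|b_j(\tau)b_{j'}(\tau)|$, where $j,j'$ are neighboring generations because $m,m'$ appear together in a rectangle contained in $\La$. The more precise version of Theorem \ref{thm:ToyModelOrbit} (Theorem 3--bis) ensures that on each interval $[t_k,t_{k+1}]$ between consecutive inner sections, all coordinates except $b_{k-1},b_k,b_{k+1}$ are bounded by $\de^\nu$, and each such interval has $\tau$-length at most $\KKK\ln(1/\de)=\KKK\ga N$. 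Since the fixed pair of generation indices $(j,j')$ can intersect $\{k-1,k,k+1\}$ for only $O(1)$ values of $k$, the product $|b_j b_{j'}|$ is effectively supported on $O(1)$ such intervals, yielding
\[
\int_0^T |\beta^\la_m(t)||\beta^\la_{m'}(t)|\,dt = \int_0^{T_0} |b_j(\tau)||b_{j'}(\tau)|\,d\tau \leq C\ga N,
\]
and the diagonal term $\int_0^T|\beta^\la_n|^2\,dt$ is estimated in exactly the same way. Summing the $O(1)$ contributions to $f_n(t)$ proves \eqref{eq:Approx:BoundLinearFs}.

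The main obstacle is maintaining the uniform $O(1)$ count of rectangles when $n\notin\La$: without the no-spreading condition $6_\La$, a single $n$ could be adjacent to many pairs of vertices in $\La$, producing an $N$-dependent factor that would destroy the linear growth in $N$ and in turn force a much smaller choice of $\de$. The condition $6_\La$, newly introduced in this paper, is precisely what rules this out and allows the integral estimate to scale like $\ga N$ rather than $\ga N^2$ or worse.
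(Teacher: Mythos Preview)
Your proof is correct and follows essentially the same route as the paper: linearize $\EE$, swap the order of summation to define $f_n$, split into the cases $n\in\La$ and $n\notin\La$ using conditions $1_\La$--$6_\La$ to get an $O(1)$ rectangle count, and then integrate using Theorem~3--bis. One small slip: when $n\notin\La$ the rectangle is not contained in $\La$, so $m,m'$ need not lie in neighboring generations---but your subsequent argument (that for a fixed pair $(j,j')$ only $O(1)$ values of $k$ have $\{j,j'\}\cap\{k-1,k,k+1\}\neq\emptyset$) holds for any fixed pair of generation indices and does not actually use the neighboring claim.
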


To obtain estimates for  $\ZZZ^2(\xi,t)$ defined in  \eqref{def:Approx:Higher},
we apply bootstrap.

Assume that for $0<t<T^*$ we have
\begin{equation}\label{cond:Bootstrap}
 \|\xi(t)\|_{\ell^1}\leq C\la^{-3/2}2^{-N}.
\end{equation}
\emph{A posteriori} we will show that the time \eqref{def:Time:Rescaled}
satisfies $0<T<T^*$ and therefore
the bootstrap assumption holds.
\begin{lemma}\label{lemma:Approx:BoundHigher}
Assume that condition \eqref{cond:Bootstrap} is satisfied. Then
the  operator $\ZZZ^2(\xi,t)$
satisfies
\[
 \left\| \ZZZ^2(\xi,t)\right\|_{\ell^1}\leq  C\la^{-5/2}\|\xi(t)\|_{\ell^1}.
\]
\end{lemma}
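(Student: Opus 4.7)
The plan is to split $\ZZZ^2(\xi,t)$ into the \emph{cubic remainder} $\EE(\beta^\la+\xi)-\EE(\beta^\la)-D\EE(\beta^\la)\xi$ and the \emph{quintic increment} $\JJ(\beta^\la+\xi)-\JJ(\beta^\la)$, and to estimate each piece in $\ell^1$ using the Banach-algebra property of $\ell^1$ combined with the bootstrap assumption \eqref{cond:Bootstrap}. A preliminary step is to control the size of the approximating orbit: since $\beta^\la$ is supported on $\Lambda$ with $|\Lambda|=N\,2^{N-1}$ and the rescaling gives $\|\beta^\la(t)\|_{\ell^2}^{\,2}=2^{N-1}\la^{-2}$, Cauchy--Schwarz yields
\[
\|\beta^\la(t)\|_{\ell^1}\leq \sqrt{|\Lambda|}\,\|\beta^\la(t)\|_{\ell^2}\leq C\sqrt{N}\,2^{N}\la^{-1}.
\]

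For the cubic remainder I would note that, by \eqref{def:Approx:CubicTerm}, $\EE$ is a homogeneous polynomial of degree three whose coefficients have modulus at most one and whose terms are indexed by convolution sums on $\ZZ^2$. Standard Taylor expansion around $\beta^\la$ together with the Banach-algebra inequality $\|a\ast b\|_{\ell^1}\leq\|a\|_{\ell^1}\|b\|_{\ell^1}$ then gives
\[
\|\EE(\beta^\la+\xi)-\EE(\beta^\la)-D\EE(\beta^\la)\xi\|_{\ell^1}\leq C\bigl(\|\beta^\la\|_{\ell^1}+\|\xi\|_{\ell^1}\bigr)\,\|\xi\|_{\ell^1}^{\,2}.
\]
Inserting the bootstrap bound $\|\xi\|_{\ell^1}\leq C\la^{-3/2}2^{-N}$ and the estimate on $\|\beta^\la\|_{\ell^1}$, the dominant contribution is bounded by $C\|\beta^\la\|_{\ell^1}\|\xi\|_{\ell^1}^{\,2}\leq C\sqrt{N}\,\la^{-5/2}\|\xi\|_{\ell^1}$, which is of the required form $C\la^{-5/2}\|\xi\|_{\ell^1}$ up to the polynomial factor $\sqrt{N}$.

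For the quintic increment I would combine Theorem \ref{thm:NormalForm}, which yields $\|\XX_\RRR(g)\|_{\ell^1}\leq C\|g\|_{\ell^1}^{\,5}$, with the observation that the rotation \eqref{def:Change:RotatinAfterNF} preserves $\ell^1$, so the same estimate holds for $\JJ$. Since $\JJ$ is a polynomial-type vector field on the Banach algebra $\ell^1$, its Fr\'echet derivative obeys $\|D\JJ(g)\|_{\ell^1\to\ell^1}\leq C\|g\|_{\ell^1}^{\,4}$, and the mean value inequality on the segment joining $\beta^\la$ to $\beta^\la+\xi$ yields
\[
\|\JJ(\beta^\la+\xi)-\JJ(\beta^\la)\|_{\ell^1}\leq C\bigl(\|\beta^\la\|_{\ell^1}+\|\xi\|_{\ell^1}\bigr)^{4}\|\xi\|_{\ell^1}\leq C\,N^{2}\,2^{4N}\la^{-4}\,\|\xi\|_{\ell^1}.
\]
This is dominated by $C\la^{-5/2}\|\xi\|_{\ell^1}$ precisely when $\la^{3/2}\gg N^{2}2^{4N}$.

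The main obstacle is this last condition. To win the competition between the exponential growth $2^{4N}$ of $\|\beta^\la\|_{\ell^1}^{\,4}$, coming from the enormous cardinality $|\Lambda|=N\,2^{N-1}$, and the decay $\la^{-4}$ available from the smallness of the rescaled solution, one must take $\la$ exponentially large in $N$. This is exactly where the choice $\la=e^{\kk\ga N}$ from \eqref{def:LambdaOfN} is invoked: choosing $\kk$ so that $e^{3\kk\ga/2}>16$ forces $\la^{3/2}\gg 2^{4N}$ for all large $N$, and the polynomial prefactor $N^{2}$ is absorbed with room to spare. This constraint fixes the lower bound on $\kk$ used throughout the approximation theorem and, via \eqref{def:Time:Rescaled}, is the mechanism by which the diffusion time $T$ is kept polynomial in $\KK$ in Theorem \ref{thm:main}.
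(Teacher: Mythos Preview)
Your splitting into cubic remainder and quintic increment, and the use of the $\ell^1$ Banach-algebra structure together with the bootstrap assumption, is exactly the paper's approach. The only substantive difference is your bound on $\|\beta^\la\|_{\ell^1}$. You use Cauchy--Schwarz over $\Lambda$ and obtain $\|\beta^\la\|_{\ell^1}\leq C\sqrt{N}\,2^{N}\la^{-1}$, which leaves the stray $\sqrt{N}$ in the cubic estimate and $N^{2}$ in the quintic one. The paper instead exploits the dynamical localisation of the toy-model orbit (Theorem~3--bis): at every time at most three of the $b_j$ are $O(1)$ and the rest are $O(\de^{\nu})$, hence $\sum_{j}|b_j(\la^{-2}t)|\leq C$ with $C$ independent of $N$, giving the sharper bound \eqref{eq:Beta:BoundL1}, namely $\|\beta^\la\|_{\ell^1}\leq C\,2^{N}\la^{-1}$. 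With this, $\|\beta^\la\|_{\ell^1}\|\xi\|_{\ell^1}^{2}\leq C\,2^{N}\la^{-1}\cdot\la^{-3/2}2^{-N}\,\|\xi\|_{\ell^1}=C\la^{-5/2}\|\xi\|_{\ell^1}$ cleanly, with no polynomial loss in $N$, which is exactly what the lemma asserts (recall the paper stipulates that $C$ is independent of $N$ and $\la$).

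Your extra polynomial factors are harmless for the downstream Gronwall argument, since they are swallowed by $\la=e^{\kk\ga N}$; and indeed for the quintic increment the paper, like you, tacitly uses the exponential relation between $\la$ and $N$ to pass from $2^{4N}\la^{-4}$ to $C\la^{-5/2}$. But as written your cubic bound does not quite prove the lemma \emph{as stated}. Replacing your Cauchy--Schwarz estimate by \eqref{eq:Beta:BoundL1} closes this gap with no further change to your argument.
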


Combining Lemmas \ref{lemma:Approx:BoundFirstIteration},
\ref{lemma:Approx:BoundLinear}, \ref{lemma:Approx:BoundHigher},
equation \eqref{eq:Approx:Difference} implies
\[ \frac{d}{dt}\|\xi\|_{\ell^1}\leq \sum_{n\in\ZZ^2}\left(f_n(t)+
 C\lambda^{-5/2}\right)|\xi_n|+C\la^{-5} 2^{5N}
\]
To obtain bounds for $\|\xi\|_{\ell^1}$ we write this equation as
\[
\sum_{n\in\ZZ^2} \frac{d}{dt}|\xi_n|\leq
\sum_{n\in\ZZ^2}\left(f_n(t)+C\lambda^{-5/2}\right)|\xi_n|+C\la^{-5} 2^{5N}
\]
and we apply a Gronwall-like argument for each harmonic of $\xi$. Namely, we consider the following change of coordinates,
\begin{equation}\label{def:Approx:GronwallLikeChange}
 \xi_n=\zeta_n e^{\int_0^t\left(f_n(s)+C\lambda^{-5/2}\right)ds}.
\end{equation}
Then, we obtain
\[
\sum_{n\in\ZZ^2} e^{\int_0^t\left(f_n(s)+C\lambda^{-5/2}\right)ds} \frac{d}{dt}|\zeta_n|
\leq C\la^{-5} 2^{5N}
\]
From this equation and taking into account that
\[
 f_n(t)+C\lambda^{-5/2}\geq 0,
\]
we obtain that
\[
\frac{d}{dt}\|\zeta\|_{\ell^1}=\sum_{n\in\ZZ^2} \frac{d}{dt}|\zeta_n|\leq C\la^{-5} 2^{5N}.
\]
Therefore, integrating this equation, taking into account that $\zeta(0)=\xi(0)=0$  and using the bound for
$T$ in \eqref{def:Time:Rescaled} we obtain that
\[
 \|\zeta\|_{\ell^1}\leq C \la^{-3} 2^{5N}\ga N^2
\]
To deduce from this bound, the corresponding bound for $\|\xi\|_{\ell^1}$ it is enough to use the change \eqref{def:Approx:GronwallLikeChange}, the estimate \eqref{eq:Approx:BoundLinearFs} from \ref{lemma:Approx:BoundLinear}
 and the definition of $T$ in \eqref{def:Time:Rescaled}. Then, we obtain
\[
 |\xi_n|\leq e^{C\ga N}e^{\la^{-5/2}T}|\zeta_n|\leq 2e^{C\ga N}|\zeta_n|
\]
which implies
\[
 \|\xi\|_{\ell^1}\leq 2e^{C\ga N} \|\zeta\|_{\ell^1}\leq 2e^{C\ga N}\la^{-3} 2^{5N}\ga N^2.
\]
Therefore, using the condition on $\la$ from Theorem \ref{thm:Approximation} with any $\kk>C$ and taking $N$ big enough,
we obtain that for $t\in [0,T]$
\[
 \|\xi\|_{\ell^1}\leq \la^{-2}
\]
and therefore we can drop the bootstrap assumption \eqref{cond:Bootstrap}.

Finally, taking into account \eqref{def:Approx:Error} and \eqref{def:Change:RotatinAfterNF}
we obtain
\[
 \sum_{n\in\ZZ^2}\left|\al_n e^{-i\left(G+|n|^2\right)t}-\beta_n\right|\leq C\la^{-3/2},
\]
which is equivalent to statement \eqref{eq:Approx:BoundDiff} in Theorem \ref{thm:Approximation}.

It only remains to prove Lemmas \ref{lemma:Approx:BoundFirstIteration},
\ref{lemma:Approx:BoundLinear} and \ref{lemma:Approx:BoundHigher}.
\begin{proof}[Proof of Lemma \ref{lemma:Approx:BoundFirstIteration}]
Taking into account \eqref{eq:Approx:BoundQuinticVF}, we have that
\[
 \left\|\ZZZ^0\right\|_{\ell^1}\leq C\left\|\beta^\la\right\|_{\ell^1}^5.
\]
Therefore it only remains to obtain an upper bound for $\left\|\beta^\la\right\|_{\ell^1}$. Taking into account  that $\mathrm{supp}\{\beta^\la\}\subset\Lambda$, the definition of $\beta^\la$ in \eqref{def:RescaledApproxOrbit} and Theorem \ref{thm:ToyModelOrbit}, we have that
\[
 \left\|\beta^\la(t)\right\|_{\ell^1}\leq \sum_{n\in\Lambda}\left|\beta_n^\la(t)\right|\leq 2^N\la^{-1}\sum_{j=1}^N \left|b_j\left(\la^{-2}t\right)\right|.
\]
Now it only remains to point out that from the results obtained in Theorem 3--bis,
we know that at each time all but three components of $b$ are of size $|b_j|\lesssim \de^\nu$ for certain $\nu>0$ whereas the other two satisfy $|b_j|\leq 1$. Then, using the definition of $\de$ in Theorem \ref{thm:ToyModelOrbit}, we obtain that
\[
\sum_{j=1}^N \left|b_j\left(\la^{-2}t\right)\right|\leq C(1+N\de^\nu)\leq C,
\]
which implies
\begin{equation}\label{eq:Beta:BoundL1}
 \left\|\beta^\la(t)\right\|_{\ell^1}\leq C2^N\la^{-1}.
\end{equation}
This  finishes the proof of the lemma.
\end{proof}

\begin{proof}[Proof of Lemma \ref{lemma:Approx:BoundLinear}]
To proof Lemma \ref{lemma:Approx:BoundLinear} we start by analyzing
each component of $\ZZZ^1(t)\xi$. To this end, we use the function
$\EE$ defined in \eqref{def:Approx:CubicTerm} to obtain
\[
 \left(\ZZZ^1(t)\xi\right)_n=\sum_{k\in\ZZ^2} \pa_{\xi_k}\EE_n\left(\beta^\la\right)\xi_k+\sum_{k\in\ZZ^2} \pa_{\ol \xi_k}\EE_n\left(\beta^\la\right)\ol\xi_k.
\]
We define the functions $f_n$ as
\begin{equation}\label{def:Approx:Defi:fn}
 f_n(t)=\sum_{k\in\ZZ^2} \left|\pa_{\xi_n}\EE_k\left(\beta^\la\right)\right|+\sum_{k\in\ZZ^2}\left| \pa_{\ol \xi_k}\EE_n\left(\beta^\la\right)\right|.
\end{equation}
We analyze them differently whether $n\in\Lambda$ or $n\not\in\Lambda$. We start with the first case.

We fix $n\in\Lambda$ and we want to study which terms in the right hand side of
\eqref{def:Approx:Defi:fn} are non zero. Indeed, each of the terms  $\left|\pa_{\xi_n}\EE_k\left(\beta^\la\right)\right|$ is of the form
$\beta_{n_1}^\la\beta_{n_2}^\la$ with $(n_1,n_2,n)\in\AAA(k)$, $(n,n_2,n_1)\in\AAA(k)$ or
$n_1=n_2=n=k$ (the last case arising due to the term $-|g_n|^2g_n$ in
\eqref{def:Approx:CubicTerm}). Then, these terms are non-zero provided
$\beta_{n_1}^\la\neq 0$ and $\beta_{n_2}^\la\neq 0$. This condition is satisfied provided
$n_1,n_2\in\Lambda$ (see \eqref{def:RescaledApproxOrbit}). Thus, we have that
$n,n_1,n_2\in\Lambda$. Then,  property $1_\La$ of the set $\La$ guarantees that $k\in\Lambda$.
Properties $2_\Lambda$ and $3_\Lambda$ imply that  $n$ only belongs to two nuclear families.
Therefore, it only interacts with seven vertices (recall that
it can interact with itself through the term $-|g_n|^2g_n$ in
\eqref{def:Approx:CubicTerm}). This implies that for a fixed $n$,
\[
\pa_{\xi_n}\EE_k\left(\beta^\la\right)=0
\]
except for seven values of $k$, which correspond to the parents,
children, spouse and sibling of $n$ and $n$ itself. Moreover,
for the same reason, each term $\pa_{\xi_n}\EE_k\left(\beta^\la\right)$
which is non-zero, only contains a finite and independent of $N$ and
$n$ number of summands of the form $\beta_{n_1}\ol\beta_{n_2}$ with
$(n_1,n_2,n)\in\AAA(k)$, $(n,n_2,n_1)\in\AAA(k)$ or $n_1=n_2=n=k$.

Reasoning in the same way, we can obtain analogous results for
the terms $| \pa_{\ol \xi_k}\EE_n(\beta^\la)|$.

From these facts, we can deduce formula
\eqref{eq:Approx:BoundLinearFs} for $n\in\Lambda$. Indeed, we have seen that $f_n$ only involves seven harmonics of $\beta^\la$ and that it is quadratic in them. Then, recalling the definition of $\beta^\la$ in \eqref{def:RescaledApproxOrbit}, Theorem 3-bis ensures  that $f_n(t)$ has size $f_n\sim \la^{-2}$ for a time interval of order $\la^2\ln(1/\de)\sim \la^2 \ga N$ (recall that $\de=e^{- \ga N}$) and has size $f_n\sim\la^{-2}\de^{\nu}\sim \la^{-2}e^{-\ga\nu N}$ for the rest of the time, that is, for a time interval of order $\la^2N\ln(1/\de)\sim \la^2 \ga N^2$. Therefore,
\[
 \int_0^T f_n(t)dt\leq C\left(N+N^2e^{-\ga\nu N}\right)\leq C\ga N.
\]
This finishes the proof for $n\in\Lambda$.

Now we need analgous results for $n\not\in\Lambda$. We need to see which terms of $\left|\pa_{\xi_n}\EE_k\left(\beta^\la\right)\right|$, that are of the form $\beta_{n_1}^\la\beta_{n_2}^\la$,  are non-zero. We know that they are non-zero provided
$(n_1,n_2,n)\in\AAA(k)$ or  $(n,n_2,n_1)\in\AAA(k)$ and  $n_1,n_2\in\Lambda$. Note that know the case $n_1=n_2=n=k$ is excluded since $n\not\in\Lambda$ and $n_1,n_2\in\Lambda$. Since $n\not\in\Lambda$ and $n_1,n_2\in\Lambda$, property $1_\Lambda$ implies that $k\not\in\Lambda$. Then, property $6_\Lambda$ guarantees that there are at most two rectangles with two vertices in $\Lambda$ and two out of $\Lambda$. Therefore, we have that
\[
\pa_{\xi_n}\EE_k\left(\beta^\la\right)=0
\]
except for three values of $k$, which correspond to $n$ itself and the other vertex not belonging to $\Lambda$ of each of these two rectangles. Reasoning as before each term $\pa_{\xi_n}\EE_k\left(\beta^\la\right)$ which is non-zero, only contains a finite and independent of $N$ and $n$ number of summands of the form $\beta_{n_1}\ol\beta_{n_2}$ with $n_1,n_2\in\Lambda$. Then, reasoning as in the previous case, we obtain
\[
 \int_0^T f_n(t)dt\leq C\ga N.
\]
This finishes the proof of the lemma.
\end{proof}

\begin{proof}[Proof of Lemma \ref{lemma:Approx:BoundHigher}]
To prove Lemma \ref{lemma:Approx:BoundHigher}, we split $\ZZZ^2$ in \eqref{def:Approx:Higher} as $\ZZZ^2=\ZZZ^2_1+\ZZZ^2_2$ with
\[
\begin{split}
\ZZZ_1^2(t)=&\EE\left(\beta^\la+\xi\right)-\EE\left(\beta^\la\right)-D\EE\left(\beta^\la\right)\xi\\
\ZZZ_2^2(t)=&\JJ\left(\beta^\la+\xi\right)-\JJ\left(\beta^\la\right).
\end{split}
\]
Using the definition of $\EE$ in \eqref{def:Approx:CubicTerm}, it can be easily seen that
\[
 \left\|\ZZZ_1^2\right\|_{\ell^1}\leq C \left(\|\beta^\la\|_{\ell^1}\left\|\xi\right\|_{\ell^1}^2+\left\|\xi\right\|_{\ell^1}^3\right).
\]
Then, using the bound for $\|\beta^\la\|_{\ell^1}$
obtained in \eqref{eq:Beta:BoundL1} and the bootstrap
assumption \eqref{cond:Bootstrap}, we obtain
\[
 \left\|\ZZZ_1^2\right\|_{\ell^1}\leq C  \la^{-5/2}\left\|\xi\right\|_{\ell^1}.
\]
We proceed analogously for $\ZZZ^2_2$. Indeed, it satisfies
\[
 \left\|\ZZZ_2^2\right\|_{\ell^1}\leq C \sum_{k=1}^5\|\beta^\la\|_{\ell^1}^{5-k}\left\|\xi\right\|_{\ell^1}^k
\]
and applying  \eqref{eq:Beta:BoundL1} and \eqref{cond:Bootstrap} again, we obtain
\[
 \left\|\ZZZ_2^2\right\|_{\ell^1}\leq C  \la^{-5/2}\left\|\xi\right\|_{\ell^1}.
\]
Thus, we can conclude that
\[
 \left\|\ZZZ^2\right\|_{\ell^1}\leq C  \la^{-5/2}\left\|\xi\right\|_{\ell^1}.
\]
\end{proof}

\begin{figure}[t]
  \centering
  \includegraphics[width=4in]{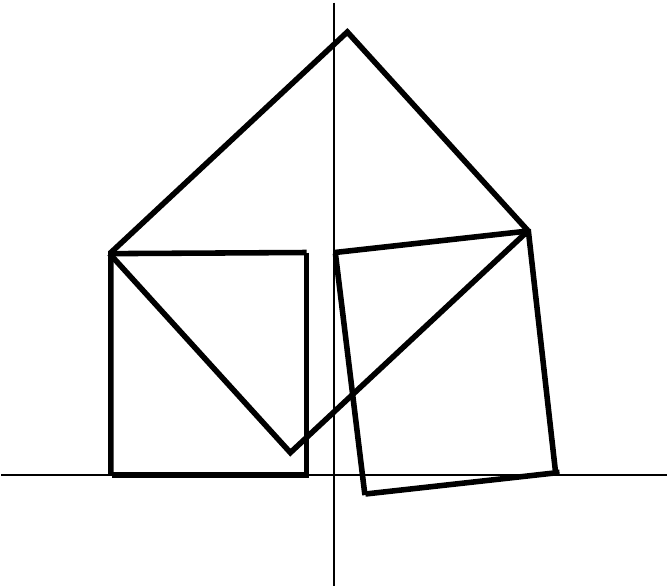}
  \caption{Rectangles}
  \label{fig:rectangles}
\end{figure}

\section{A result for small initial Sobolev norm}\label{app:SmallSobolev}
In Theorem \ref{thm:main} we cannot ensure that the  initial Sobolev norm $\|u(0)\|_{H^s}$
is arbitrarily small as is done in \cite{CollianderKSTT10}. One could impose this condition
at the expense of obtaining a worse estimate for the time $T$.
In this appendix we state an analog of Theorem \ref{thm:main} under assuming
that $\|u(0)\|_{H^s}$ is arbitrarily small.

\begin{theorem}\label{thm:main:slow}
Let $s>1$. Then there exists $c>0$ with the following
property: for any small $\mu\ll1$ and large $\AAA\gg 1$ there exists a
a global solution $u(t,x)$ of (\ref{def:NLS}) and a time $T$ satisfying
\[
0<T\leq \left(\frac{\AAA}{\mu}\right)^{c\ln (\AAA/\mu)}
\]
such that
\[
\|u(T)\|_{H^s}\ge \AAA\,\,\,\text{ and }\,\,\,\|u(0)\|_{H^s}\leq \mu.
\]
\end{theorem}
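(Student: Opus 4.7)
The proof plan is to re-run the proof of Theorem~\ref{thm:main} essentially verbatim, invoking Theorems \ref{thm:NormalForm}, \ref{thm:ToyModelOrbit} and \ref{thm:Approximation} together with Proposition~\ref{thm:SetLambda}, but with a different allocation of the free parameters $(N,\la,R)$: the number of generations $N$ will still be chosen so that the ratio $S_{N-1}/S_3$ is large enough, but the rescaling parameter $\la$ in \eqref{def:RescaledApproxOrbit} will be taken \emph{strictly larger} than the minimum prescribed by \eqref{def:LambdaOfN}. The extra freedom is paid for by taking the minimum size $R$ of frequencies in $\La$ (allowed by the last sentence of Proposition~\ref{thm:SetLambda}) large. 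Concretely, I would pick $N\sim \log(\AAA/\mu)/(s-1)$ so that $\frac{S_{N-1}}{S_3}\ge \tfrac12\,2^{(s-1)(N-4)}\ge 4(\AAA/\mu)^2$, then enforce $\la^{-2}S_3\sim\mu^2$, i.e.\ $\la\sim \mu^{-1}\sqrt{S_3}\sim \mu^{-1}2^{N/2}R^{s}$, using a quantitative upper bound $S_3\lesssim 2^{N}R^{2s}$ of the kind needed below. Finally $R$ is chosen large enough (for instance $R\sim e^{\kk\ga N/s}$) so that the approximation constraint $\la\ge e^{\kk\ga N}$ of Theorem~\ref{thm:Approximation} is satisfied.

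With these choices the computations \eqref{eq:ComputationFinalSobolev}--\eqref{eq:GrowthSobolev:Initial} from the proof of Theorem~\ref{thm:main} go through unchanged and yield simultaneously
\[
\|u(0)\|_{H^s}^{2}\lesssim \la^{-2}S_3\le \mu^2
\qquad\text{and}\qquad
\|u(T)\|_{H^s}^{2}\gtrsim \la^{-2}S_{N-1}\sim \mu^{2}\cdot 2^{(s-1)N}\ge \AAA^2,
\]
the last inequality using the choice of $N$. For the diffusion time I invoke \eqref{def:Time:Rescaled}:
\[
T\lesssim \la^{2}N^{2}\ga \sim \mu^{-2}\,2^{N}R^{2s} N^{2} \sim \mu^{-2}\,e^{c_0(s)\,\ga N}N^{2}.
\]
Inserting $N\sim \log(\AAA/\mu)$ this yields $T\lesssim (\AAA/\mu)^{c_1(s)}$ for some constant $c_1(s)>0$, which is well below the claimed quasi-polynomial bound $(\AAA/\mu)^{c\log(\AAA/\mu)}$ whenever $\AAA/\mu$ exceeds a threshold depending only on $c$ and $c_1(s)$; this threshold is harmless because we are free to require $\AAA/\mu\gg 1$.

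The main obstacle I anticipate is the quantitative \emph{upper} bound $S_3\lesssim 2^{N}R^{2s}$ that I use to ensure $\|u(0)\|_{H^s}\le \mu$. The construction of $\La$ in \cite{CollianderKSTT10} gives lower bounds on the size of frequencies, but the successive ``nuclear family'' rotations defining the generations $\La_j$ can in principle inflate the moduli, so controlling $S_3$ (and therefore the initial $H^s$-norm) from above requires a careful combinatorial tracking of the construction. I expect this to be the quantitative version of Proposition~\ref{thm:SetLambda} referred to at the end of Section~\ref{sec:SketchProof}, and it is the substantive content of this appendix. All remaining ingredients (normal form, toy model, approximation) go through with no modification: in particular the $\ell^{1}$-neighborhood condition for the change $\Gamma$ of Theorem~\ref{thm:NormalForm} is automatic since $\|\al(0)\|_{\ell^1}\sim 2^{N}/\la$ and the above choice gives $\la\gg 2^{N}$ for $\ga$ large enough.
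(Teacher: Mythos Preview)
Your overall strategy is exactly the paper's: rerun the proof of Theorem~\ref{thm:main} with $\KK=\AAA/\mu$, and adjust $\la$ so that $\la^{-2}S_3\le\mu^2$, the substantive work being a quantitative upper bound on $S_3$. You have also correctly identified that this bound is the content of the appendix.

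The gap is in the bound you anticipate. You treat $R$ as a free parameter and hope for $S_3\lesssim 2^{N}R^{2s}$ with $R\sim e^{\kk\ga N/s}$, i.e.\ $S_3$ merely exponential in $N$, which would yield the polynomial time estimate you derive. But $R$ in Proposition~\ref{thm:SetLambda} is only a \emph{lower} bound on the frequencies; it does not control them from above. The CKSTT construction first places the generations in $\Q[i]$ near the bounded prototype embedding and then clears denominators. The least common multiple $D$ of these denominators is what sets the scale of the integer frequencies, and $D$ is not at your disposal: at each of the $N$ generations one must avoid $\sim 60^{N}$ forbidden points on a circle, which forces Pythagorean angles with denominators $\sim 60^{2N}$, and these compound multiplicatively over the $N$ steps. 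The paper carries out this bookkeeping and obtains $|n|\lesssim 60^{3N^2}$ for every $n\in\La$, hence $S_3\lesssim B^{N^2}$ for some absolute $B>0$ (e.g.\ $B=60^{4}$). No scaling by an integer $M$ can reduce this; it can only increase it.

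With the correct bound $S_3\lesssim B^{N^2}$ your computation then reads $\la\sim\mu^{-1}B^{N^2/2}$, which already dominates $e^{\kk\ga N}$ (so the approximation constraint is automatic and the extra scaling by $R$ is unnecessary), and
\[
T\lesssim \la^{2}\ga N^{2}\lesssim \mu^{-2}B^{N^2}\ga N^{2}
\lesssim \left(\frac{\AAA}{\mu}\right)^{c\ln(\AAA/\mu)},
\]
since $N\sim\ln(\AAA/\mu)$. This is precisely the quasi-polynomial estimate in the statement. Your polynomial bound would be a genuine strengthening of the theorem and would require a different, much more efficient construction of $\La$ than the one available.
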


\begin{remark}
Combination of Theorems \ref{thm:main} and \ref{thm:main:slow}
covers all regimes studied in \cite{CollianderKSTT10}.
\end{remark}

The proof of this theorem follows along the same lines as the proof of Theorem \ref{thm:main}
explained in Section \ref{sec:SketchProof} taking $\KK=\AAA/\mu$. The only difference
is the choice of the parameter $\la$
to ensure
\[
\|u(0)\|_{H^s}\leq \mu.
\]
Indeed, as it is explained in Section \ref{sec:SketchProof}, we have that
\[
 \left \|u(0)\right\|^2_{H^s}\lesssim \la^{-2}S_3
\]
and, therefore, one needs to choose $\la$ such that $\la^{-2}S_3\sim \mu$. By
Proposition \ref{thm:SetLambda}, the constant $S_3$, defined in \eqref{def:Sums},
depends on $N$. Nevertheless, in that theorem
there is no quantitative estimate of this dependence.
We will compute it here and show how it affects the estimates for the diffusion time $T$.

We will show that there is a choice of the set $\La$ with $S_3$ from
\eqref{def:Sums} satisfying
\begin{equation}\label{def:BoundS3}
S_3
\lesssim B^{N^2},
\end{equation}
for certain $B>0$ independent of $N$,
e.g. $B=60^4$ applies.

First, using this estimate we derive the time estimate  in Theorem \ref{thm:main:slow}
from \eqref{def:BoundS3}. Later we  prove \eqref{def:BoundS3}.
We choose
\[
 \la\sim\frac{1}{\mu}B^{N^2}
\]
so that $\la^{-2}S_3\sim \mu$. Then, by Proposition \ref{thm:SetLambda}
we have $N\sim \ln \KK$. Taking $\KK=\AAA/\mu$, we know that
there exists a constant $c>0$ such that
\[
 \la\lesssim  \left(\frac{\AAA}{\mu}\right)^{c\ln (\AAA/\mu)},
\]
and therefore, using formula \eqref{def:Time:Rescaled} we obtain the estimate for the time.

Now we prove \eqref{def:BoundS3}. To this end we use the construction of the set $\Lambda$
done in \cite{CollianderKSTT10}. Recall that the authors first construct the set $\Lambda$
inside the Gaussian rationals $\Q[i]$ and then multiplying by the least common multiple
they map it to the Gaussian integers $\ZZ[i]$, which is identified with $\ZZ^2$. Now, we want
to place the points in $\Q[i]$ keeping track of the denominators.
This
gives us the size of the harmonics we are dealing with and, therefore, the size of $S_3$.

The placement of the modes in $\Q[i]$ is done inductively generation by generation.
Namely, we first place $\Lambda_1$, then place $\Lambda_2$
checking that the conditions $1_\Lambda -6_\Lambda$ are satisfied, then
place $\Lambda_3$ and so on. Note that
the modes have to be close to the configuration called \emph{prototype embedding}
in \cite{CollianderKSTT10}, Sect. 4, since then we can ensure that \eqref{def:Growth} is satisfied.

{\it First generation:} To place the first generation we consider a grid of points in $\Q[i]$ with
denominator $60^N$. It is clear that we can place $\Lambda_1$ in this grid with the points close
to the first generation of the \emph{prototype embedding} in \cite{CollianderKSTT10}. It can be
done so that (co)tangent of a slope between any two points in $\Lambda_1$ has numerator and
denominator bounded by $Q_1:=60^N$.

{\it Second generation:} The set $\Lambda_1$ is divided in pairs of modes which are the parents of
different nuclear families. For each of these pairs, we need to place a pair of points of $\Lambda_2$
forming a rectangle with the  other pair. These new pair is going to be the children of the nuclear
family. To place it we consider the circle $C$ having as a diameter the segment between the considered
pair in $\Lambda_1$. Then, the children  have to be placed

\begin{itemize}
\item at the endpoints of a different diameter of $C$.
\item they should belong to $\Q[i]$ and
\item the conditions $1_\Lambda -6_\Lambda$ are satisfied.
\end{itemize}
To see that  the children belong to $\Q[i]$, we have to consider a diameter making a Pythagorean
angle with the previous diameter, that is an angle $\theta$ such that $e^{i\theta}\in\Q[i]$ (see
Figure \ref{fig:childrens-choice}).

Let $n=[\sqrt{R/2}]$ be the integer part of $\sqrt{R/2}$.
We lower bound the number of $\tet$'s whose tangent is rational with numerator and denominator
bounded by $R$ as $\sqrt{R/2}$. To see that notice that any triple of the form
$a=m^2-n^2,\ b=2mn,\ c=m^2+n^2$ with $m<n$ is Pythagorean.  Then there are $n-1$ values for $m$
giving a Pythagorean triple.

\begin{figure}[t]
  \centering
  \includegraphics[width=4in]{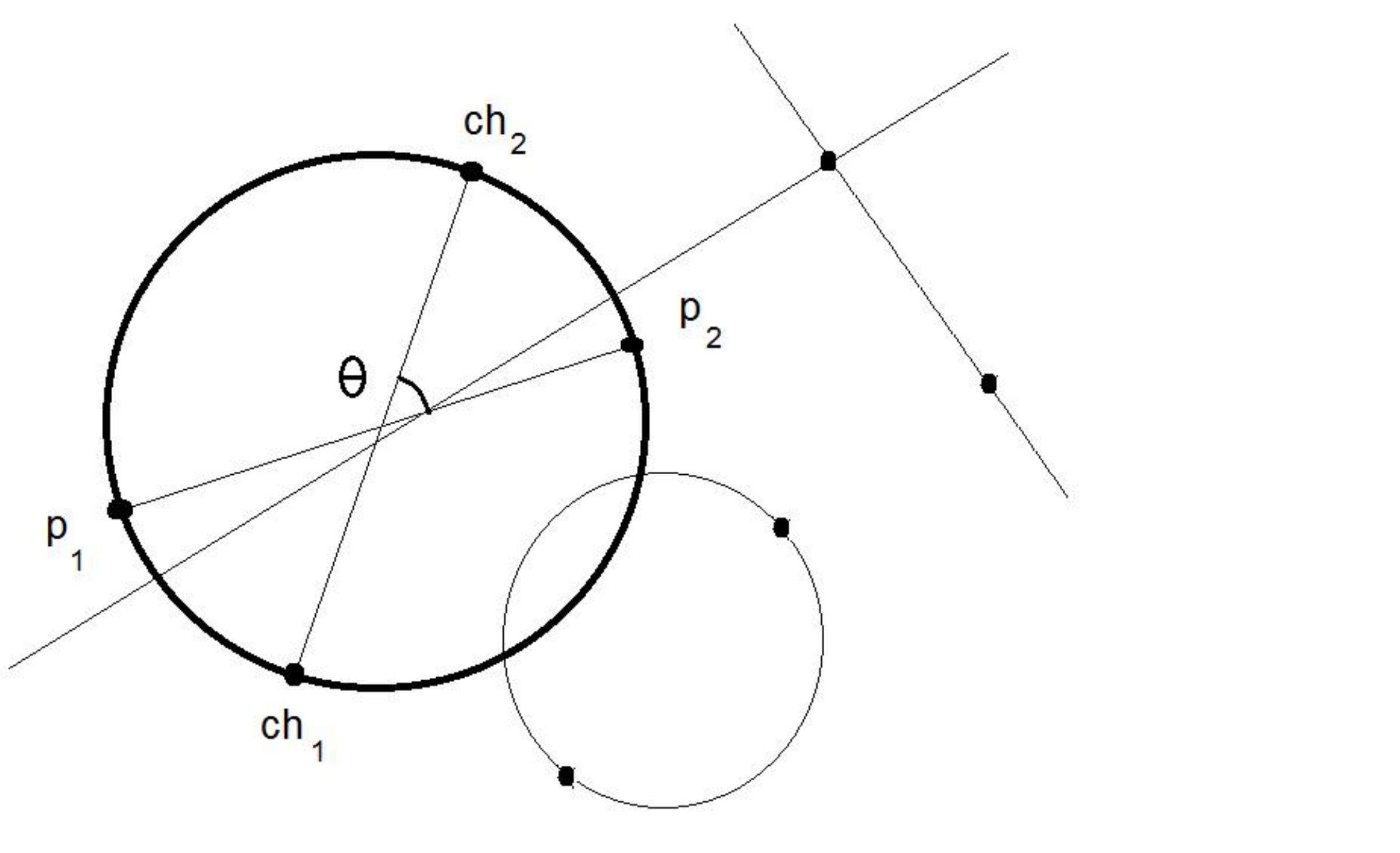}
  \caption{Proper children's choice}
  \label{fig:childrens-choice}
\end{figure}

Conditions $1_\Lambda -6_\Lambda$ are satisfied  provided the modes in $\Lambda_2$ are not placed
in certain points of the circle $C$. The number of these points is of order smaller than $60^N$.
Indeed, we have to exclude:
\begin{itemize}
\item The points of the previous generation ($2^N$ points).
\item The points of $\Lambda_2$ which have already been placed (at most $2^N$).
\item To avoid the existence of more rectangles besides the nuclear families, we proceed as follows.
We consider
\begin{itemize}
\item all the already placed points,
\item all the lines
perpendicular to lines containing two of these points and passing through one of them,
\item all the circles having as a diameter the segment between two of the already placed points
(see Figure \ref{fig:childrens-choice}).
\end{itemize}
Call $\LL$ the set of these lines and $\CCC$
the set of these circles.  The cardinality $|\LL\cup\CCC|$ is at most of order $5^N$.
Then, we have to exclude all the intersections between any object in $\LL\cup\CCC$ with the circle $C$.
\item To ensure that condition $6_\Lambda$ is satisfied, we consider the set $\PP$ of the  points which are
the intersection between any two objects in $\LL\cup\CCC$. It is easy to see that $|\PP|$ is of order at
most $25^N$. Consider the sets
\begin{itemize}
\item $\LL'$ containing the lines which are perpendicular to a line containing a point in $\PP$ and an already placed
point of $\Lambda$, and contain one of these two points,
\item  $\CCC'$ containing the circles having as a diameter a segment whose endpoints are
a point in $\PP$ and an already placed point of $\Lambda$.
\end{itemize}
The cardinality $|\LL'\cup\CCC'|$ is at most of order $60^N$.
Then, we have to exclude also the intersections between an element in $\LL\cup\CCC$ and $C$.
\end{itemize}
We can place the children of the nuclear family at rational points of the circle
$C$ away from the ones just mentioned. To estimate its denominator we apply our estimate
on the number of the Pythagorean triples. We have that the number of $\tet$'s with slopes
whose tangent is given by a rational whose numerator and denominator is bounded by $R$
lower bounded by $\sqrt{R/2}-1$. Thus, we can choose $R= 60^{2N}$. Formula
$\tan (\al + \beta)=(\tan \al + \tan \beta)/(1+\tan \al \tan \beta)$ implies that
$Q_2\le 2\,60^{2N}\,Q_1$. Thus, denominators and numerators in $\La_1\cup \La_2$
are upper bounded by $Q_2$. This grid is accurate enough to place the pairs of
$\Lambda_2$ in the corresponding circles. Iteratively, we can place the following
generations refining the grid at each step by dealing with Gaussian rationals whose
(co)tangent has numerator and denominator bounded by $60^{3jN}$ at the $j$ generation.
Therefore, after placing the $N$ generations and mapping the set $\Lambda$ from $\Q[i]$
to $\ZZ[i]$ we obtain that all the modes $n\in\Lambda$  satisfy
\[
 |n|\lesssim 60^{3N^2}.
\]
This procedure can be done so that the final configuration of modes is close to
the \emph{prototype embedding} in \cite{CollianderKSTT10} to ensure that condition
\eqref{def:Growth} is satisfied. Finally,  to obtain  the estimate \eqref{def:BoundS3},
it is enough to take any $B\ge 60^4$.

\section{Notations}
\begin{itemize}
\item $\KK$ --- growth of the Sobolev norm of
the solution $\|u(t)\|_{H^s}$ from Theorem \ref{thm:main};

\item $s$ --- index of the Sobolev space.

\item $\HH$ --- the Hamiltonian of (\ref{def:NLS}),
defined in (\ref{def:HamForFourier});

\item $\DDD$ --- quadratic part of the Hamiltonian $\HH$ defined in (\ref{def:HamForFourier});

\item $\GG$ --- quartic part of the Hamiltonian $\HH$ defined in (\ref{def:HamForFourier});

\item $\MM$ --- abusing notation, mass of both the solutions of the equation (\ref{def:NLS})
and of the toy model \eqref{def:model}

\item $\{a_n(t)\}_{n\in \ZZ^2}$ --- Fourier coefficients
of the solutions of (\ref{def:NLS})
or, equivalently, solution of system $\dot a_n=2\,i\,\partial_{\overline{a_n}}\HH$;

\item $\Gamma$ --- normal form change for the Hamiltonian \eqref{def:HamForFourier}.
It is given in Theorem \ref{thm:NormalForm}.

\item $\wt\GG$ --- resonant terms of $\GG$.

\item $\RRR$ --- remainder (of degree 5) of the Hamiltonian $\HH$ after performing
one step of normal form, that is remainder of the Hamiltonian $\HH\circ\Gamma$.

\item $\{\al_n(t)\}_{n\in \ZZ^2}$ --- Solutions of the normalized Hamiltonian
$\HH\circ \Gamma$, given in  Theorem \ref{thm:NormalForm};

\item $ \AAA_0(n)\subset (\ZZ^2)^3$ --- collection of the
resonance convolutions defined in (\ref{def:ResonantLatticeBeforeGauge});

\item $\{\beta_n(t)\}_{n\in \ZZ^2}$ --- rotated fourier coefficients,
$\beta_n= \al_n e^{-i(G+|n|^2)t}$. They satisfy
(\ref{eq:InftyODE:AfterVariation}).

\item $ \AAA(n)\subset (\ZZ^2)^3$ --- collection of reduced
resonance convolutions defined after (\ref{eq:InftyODE:AfterVariation});

\item $N-4$ --- number of energy cascades;

\item $\La \subset \ZZ^2$ essential Fourier coefficients given
as a disjoint union of $N$ pairwise disjoint generations:
$\La=\La_1\cup \cdots \La_N$. See Proposition \ref{thm:SetLambda}
and preceding discussion.

\item $\{b_j(t)\}_{j=1}^N$ solution to the Toy Model (\ref{def:model});

\item $h(b)$ --- Hamiltonian of the Toy Model, given in  (\ref{def:Hamiltonian});

\item $\TT_j$ --- periodic orbits of the Toy Model (\ref{def:model})

\item $\{c^{(j)}_k\}_{k\neq j}$ --- coordinates adapted to the periodic orbit
$\TT_j$ after symplectic reduction, given in Section \ref{sec:SaddleMapFirstChanges}.

\item $(p_1,q_1,p_2,q_2)$ --- hyperbolic variables adapted to the periodic
orbit $\TT_j$ after diagonalization, given in Section \ref{sec:SaddleMapFirstChanges}.

\item $\ZZZ_{\hyp,*}, \ZZZ_{\ell,*}, \ZZZ_{\mix,*}$ --- types of remainder
terms of the original Hamiltonian $H$ after symplectic reduction and diagonalization
near the periodic orbit $\TT_j$. Subscript means hyperbolic, elliptic and
mixed remainder respectively (see Lemma \ref{lemma:Diagonalization}).

\item $\Sigma_j^{\inn}$ --- transversal section to the stable manifold of $\TT_j$,
defined in \eqref{def:Section1Saddle}

\item $\Sigma_j^{\out}$ --- transversal section to the unstable manifold of $\TT_j$,
defined in \eqref{def:Section2Saddle}

\item $\BB^j$ --- map from $\Sigma_j^{\inn}$ to $\Sigma_{j+1}^{\inn}$ given by
the flow of the Toy Model (\ref{def:model})
(see Section \ref{sec:ProofToyModelThm}).

\item $\BB^j_\loc$ --- local map from $\Sigma_j^{\inn}$ to $\Sigma_{j}^{\out}$
given by the flow of
(\ref{def:model}), defined in \eqref{def:SaddleMap}.

\item $\BB^j_\glob$ --- global map from $\Sigma_j^{\out}$ to $\Sigma_{j+1}^{\inn}$
given by the flow
(\ref{def:model}), defined in \eqref{def:HeteroMap}.

\item $a=\OO(b)$ means $|b|<Ka$ for some $K$ independent of
$\de,\sigma, N, j$.

\item $a=\OO_\sigma(b)$ means $|b|<Ka$ for some $K$ independent of
$\de, N, j$.

\item $\Psi_{\hyp}$ --- the change of coordinates for the hyperbolic
toy model (see Lemma \ref{lemma:ToyModel:NormalForm}).


\item $\Psi$ --- the change of coordinates for the full
toy model (see Lemma \ref{lemma:FullModel:NormalForm}).

\item $R_{\hyp,*}$, $ R_{\mix,*}$, $,\ZZZ_{\ell,*}$ --- collection of
remainder terms for the Full Toy Model after normal form
transformation $\Psi$ (see Lemma \ref{lemma:FullModel:NormalForm}).

\item $\VV_j\subset \Sigma_j^{\inn}$ ---
an open subset contained in the domain of definition of $\BB^j_\loc$
so that $\BB^j_\loc(\VV_j)\subset \UU_j$.

\item $\UU_j\subset \Sigma_j^{\out}$ --- an open subset
contained in the domain of definition of $\BB^j_\glob$ so that
$\BB^j_\glob(\UU_j)\subset \VV_{j+1}$.

\item $\NNN_j^\pm$ --- initial conditions inside $\Sigma^{\inn}_j$
whose orbits under the flow $\Phi^t$ have cancellation property
(see Lemma \ref{lemma:HypToyModel:SaddleMap})

\item $\WW_j$ --- an auxiliary set in the $(p,q,c)$--space
(see Corollary \ref{coro:SectionOldVariables})

\item $g_{\II_j}(p_2,q_2,\sigma,\de)$ --- the cancellation function,
defined in  (\ref{def:Function_g}) and used in the definition
of  $\NNN_j^\pm$.

\item $T_0$ --- time of evolution of the Toy Model in
Theorem \ref{thm:ToyModelOrbit}.

\item $\ga$ --- constant which gives the relation between $\de$ and $N$.

\item $\KKK$ --- constant from upper bound on time in
Theorem \ref{thm:ToyModelOrbit}.

\item $\la$ --- rescaling parameter see
(\ref{def:Rescaling});

\item $\kk$ --- constant wich gives the relation between $\la$ and $N$.

\item $T$ --- time of evolution after rescaling, see (\ref{def:Time:Rescaled});

\item $\{b^\la_j(t)\}_{j=1}^N$ rescaled solution to the Toy Model, given in
(\ref{def:Rescaling});

\item $\{\beta^\la_n(t)\}_{n\in \ZZ^2}$ the lift of the above
solution to the Toy Model to approximate solution to
(\ref{eq:InftyODE:AfterVariation});



\end{itemize}

\section*{Acknowledgements}
The authors would like to thank Terence Tao for pointing out an incorrect
choice of norms  in the original statement of Theorem \ref{thm:main} in
an earlier version of the preprint. We thank Jim Colliander for providing
several interesting references. A. Simon gave a detailed reading to the paper which
improved the exposition.

The authors warmly thank the Fields Insitute and the Institute for Advanced Study for their hospitality, stimulating atmosphere, and support. The first author is partially supported
by the Spanish MCyT/FEDER grant MTM2009-06973 and the Catalan SGR grant 2009SGR859.
The second author is partially supported by NSF grant DMS-0701271.

\bibliography{references}
\bibliographystyle{alpha}
\end{document}